\newcounter{counter_conj}
\newtheorem{thm}{Theorem}[section]
\newcounter{counter_thm_int}
\newtheorem{thm_int}[counter_thm_int]{Theorem}
\newtheorem{cor_int}[counter_thm_int]{Corollary}
\newtheorem{conj}[counter_conj]{Conjecture}
\newtheorem{lem}[thm]{Lemma}
\newcounter{counter_conj-lem}
\newtheorem{conj-lem}[thm]{Conjecture-Lemma}
\newtheorem{prop}[thm]{Proposition}
\newcounter{counter_conj-prop}
\newtheorem{conj-prop}[thm]{Conjecture-Proposition}
\newcounter{counter_conj-thm}
\newtheorem{conj-thm}[thm]{Conjecture-Theorem}
\newtheorem{cor}[thm]{Corollary}
\newtheorem{defi}[thm]{Definition}
\theoremstyle{remark}
\newtheorem{rem}[thm]{Remark}
  \newcommand{\numbpf}[1]{\num_{\BPF}^{#1}}
  \newcommand{\normbpf}[1]{\left\lVert#1\right\rVert_{\BPF}}
  \newcommand{\normsigma}[2]{\left\lVert#1\right\rVert_{\Sigma,#2}}
  \newcommand{\normbpfs}[1]{\left\lVert#1\right\rVert_{\BPF}^\vee}
\newcommand{\C}{\mathbb{C}}
\newcommand{\R}{\mathbb{R}}
\newcommand{\Q}{\mathbb{Q}}
\newcommand{\N}{\mathbb{N}}
\newcommand{\A}{\mathbb{A}}
\newcommand{\Z}{\mathbb{Z}}
\newcommand{\D}{\mathbb{D}}
\newcommand{\Pp}{\mathbb{P}}
\DeclareMathOperator{\FS}{\mathcal{F}_{\Sigma}}
\newcommand{\K}{K}
\newcommand{\cM}{ \mathcal{M}}
\newcommand{\cP}{ \mathcal{P}}
\newcommand{\Ss}{ \mathcal{S} }
\newcommand{\X}{ \mathcal{X} }
\newcommand{\Y}{ \mathcal{Y} }
\newcommand{\cL}{ \mathcal{L} }
\newcommand{\cO}{ \mathcal{O} }
\newcommand{\fA}{\mathfrak{A}}
\newcommand{\fB}{\mathfrak{B}}
\newcommand{\om}{{\omega}}
\DeclareMathOperator{\ord}{ord}
\DeclareMathOperator{\Crit}{Crit}
\DeclareMathOperator{\CV}{CV}
\DeclareMathOperator{\codim}{codim}
\DeclareMathOperator{\Id}{Id}
\DeclareMathOperator{\Aut}{Aut}
\DeclareMathOperator{\Spec}{Spec}
\DeclareMathOperator{\psef}{Psef}
\DeclareMathOperator{\BPF}{BPF}
\DeclareMathOperator{\cBPF}{c-BPF}
\DeclareMathOperator{\nef}{Nef}
\DeclareMathOperator{\cNef}{c-Nef}
\DeclareMathOperator{\proj}{Proj}
\DeclareMathOperator{\nums}{N^1_{\Sigma}}
\newcommand{\numbpfs}[1]{\num^{#1,\vee}_{\BPF}}
\DeclareMathOperator{\Vect}{Vect}
\DeclareMathOperator{\num}{N}
\DeclareMathOperator{\wnum}{w-N}
\DeclareMathOperator{\cnum}{c-N}
\DeclareMathOperator{\LL}{L}
\author{Nguyen-Bac Dang, Charles Favre}
\title{Spectral interpretations of dynamical degrees and applications}
\begin{document}

\maketitle

\begin{abstract}
We prove that dynamical degrees of rational self-maps on projective varieties can be interpreted as spectral radii of naturally defined operators
on suitable Banach spaces. Generalizing Shokurov's notion of $b$-divisors, we consider the space of $b$-classes of higher codimension cycles, and endow this space with various Banach norms. Building on these constructions, we design a natural extension to higher dimensions of the Picard-Manin space introduced by Cantat and Boucksom-Favre-Jonsson in the case of surfaces. 
 We prove a version of the Hodge index theorem, and a surprising compactness result
in this Banach space. We use these two theorems to infer a precise control of the sequence of degrees of iterates of a map under the assumption $\lambda_1^2>\lambda_2$ on the dynamical degrees. As a consequence, we obtain that the dynamical degrees of an automorphism of the affine $3$-space are all algebraic numbers.
\end{abstract}

\tableofcontents


\section*{Introduction}
\label{sec:intro}
\addcontentsline{toc}{section}{\nameref{sec:intro}}

Let $f\colon X\dashrightarrow X$ be a dominant rational self-map of a normal projective  (irreducible) variety $X$ of dimension $d$ defined over a field $\K$ of characteristic zero. 
We may define its $k$-th degree with respect to a given  ample class $\om$ 
as the intersection product $\deg_{k,\om}(f) := (f^{*}\om^k\cdot \om^{d-k})\in (0,+\infty)$ for any $0\le k\le d$. 
The sequences of degrees $\{\deg_{k,\om}(f^n)\}_n$ give a measure of dynamical complexity. 
Its growth is indeed a fundamental invariant of $f$ that governs many of its dynamical features: e.g.
its entropy~\cite{MR2026895,MR1155288,dinh_sibony_reg_currents_entropy,zbMATH05004659,zbMATH05723568,de_thelin_vigny_2010}; the presence of invariant fibrations~\cite{MR1751620,dinh_nguyen_2011,MR3277202,zbMATH06522286,zbMATH06911551,bianco}; 
or its periodic points~\cite{xie_periodic,blms.12082}. 
It also controls the behaviour of the heights of iterates of points when $f$ is defined over a number field \cite{kawaguchi_silverman,matsuzawa_sano_shibata_surfaces,matsuzawa_sano_shibata}.

When $\K=\C$ and $X = \mathbb{P}^d$, it was proven by Russakovskii and Shiffman~\cite{MR1488341} (see also~\cite{MR1704282})
that the sequence of degrees is sub-multiplicative.
The sub-multiplicativity was generalized later on  by Dinh and Sibony~\cite{dinh_sibony_reg_currents_entropy} for arbitrary $X$ (see Theorem~\ref{thm:sbmlt} below for the exact statement), and
 so  one can define the dynamical degrees
\[ \lambda_k(f) := \lim_{n\to\infty} \deg_{k,\om}(f^n)^{1/n}\ge 1, \text{ for }  k=0, \cdots, d~.\] 
These dynamical degrees do not depend on the choice of polarization $\om$ and are invariant under any birational conjugacy. 
Khovanskii-Teissier's inequalities imply the log-concavity of the
dynamical degrees, in particular we have $\lambda_2(f)\le \lambda_1(f)^2$. 

The sub-multiplicativity property was extended to rational maps defined
over an arbitrary field by Tuyen Truong~\cite{tuyen2}.  The first author recently gave another proof of this fact~\cite{dang_degree_normal} using the notion of base-point free (BPF) classes which originated in~\cite{MATH06741284} as a substitute for nef classes in higher codimensions. The BPF property turns out to be the right notion of positivity for cycles allowing one 
to generalize Siu's inequalities 
to higher codimensions and to obtain good comparison results with complete intersection classes, see~\cite{xiao,popovici} or \cite[\S 3.4]{dang_degree_normal}.

In dimension $2$, under the assumption that $\lambda_2(f)< \lambda_1(f)^2$, Boucksom, Jonsson and the second author~\cite{boucksom_favre_jonsson_deggrowth}  proved
 that $\deg_1(f^n) = C \lambda_1(f)^n + O(\lambda_1^{n/2})$ for some $C>0$. To accomplish this, they constructed a suitable
 Hilbert space in the space of divisors on an "infinite blow-up"\footnote{A similar idea was developed for the Newton map associated to pairs of quadratic maps in $\C^2$ by Hubbard and Papadopol in~\cite{MR2376103}.} of $X$. They proved that $f$ induces a bounded operator on that space
and analyzed the spectral properties of this operator using intersection theory
 on birational models over $X$. This Hilbert space was simultaneously introduced by Cantat in his work on the Cremona group~\cite{cantat_bir_surfaces} 
where it was referred to as the Picard-Manin space\footnote{Manin introduced divisors on the infinite blow-up of a rational surface and described the action of the Cremona group on them, see~\cite[\S 34]{manin_cubic}}. 
By exploiting the geometric properties of this space, Xie proved a semicontinuity property for the dynamical degrees \cite{xie_periodic}, and
Blanc and Cantat~\cite{blanc_cantat} obtained a detailed description of the set of dynamical degrees of birational surface maps.
 
---

In higher dimensions, the degree growth of various classes of examples have been computed both in the physics and mathematics literatures, see~\cite{MR1254598,MR1694727,MR1704016,MR1732080,MR2019672,MR2087743,MR2220002,MR3335696} and~ \cite{MR2111418,MR2428100,bedford_kim_degree_matrix,MR2582437,bedford_truong,favre_wulcan,lin,MR3570024,MR3725886,deserti_degree_examples,blanc_santen_automorphism,MR4030548}. However 
 understanding degree growth and
computing  dynamical degrees remain very hard in full generality. We rely in this paper on methods from functional analysis to address this problem.
 
We introduce several new graded Banach subspaces of the space of numerical classes of cycles in all birational models of $X$,
on which $f$ induces natural bounded operators whose spectral radii are given by its dynamical degrees (Theorem~\ref{thm:dey-degree-interpret}). Using the resolvent method~\cite{zbMATH03142405,yosida},  we go further and build BPF eigenclasses  whose associated eigenvalues equal
   $\lambda_k(f)$ (Theorem~\ref{thm:construct-eigenvec-pushpull}).  
 In the sequel to this paper \cite{dang_favre}, we will further explore how these Banach spaces interact with one another.

Our main contribution is to generalize the Picard-Manin space to arbitrary dimensions (see \S\ref{section_nsigma}).
We denote this space\footnote{The symbol $\Sigma$ in $\nums$ stands for surface.} by $\nums (\X)$ and endow it with a norm $\normsigma{\cdot}{\om}$ which turns it into a Banach space. Here $\X$ stands for the Riemann-Zariski space of $X$ defined as the projective limit of all birational models over $X$ (see \S\ref{sec:numerical_b_class}).
An element in $\nums (\X)$ is a collection of classes in the Néron-Severi space of each of these models, the classes of which are compatible under push-forwards.
We further assume that their restriction to any sufficiently general surface $S\subset X$ belongs to the Picard-Manin space of $S$. The precise definition of  $\nums (\X)$ is quite technical, 
and we need to take all surfaces $S$ whose numerical class is basepoint free in the sense of~\cite{dang_degree_normal}.
One can view a class in $\nums (\X)$ as one associated to a $b$-divisor as defined in~\cite{shokurov_3_fold,shokurov_prelimiting,corti_3_fold,zbMATH05541351,boucksom_fernex_favre}
and satisfying some growth condition. 

We prove that  classes in $\nums (\X)$ still satisfy a suitable version of the Hodge index theorem (Theorem~\ref{thm_hodge_index});
and that $\nums (\X)$ exhibits surprising compactness properties (Theorem~\ref{thm_compactness}) which in special situations allow one to deduce convergence
in norm from weak convergence. 
Using standard arguments from functional analysis, we are then able to describe the main spectral properties of a naturally defined bounded operator $f^*\colon\nums (\X)\to\nums(\X)$. This yields our first theorem. 

\begin{thm_int} \label{thm_int_gap}
 Let $X$ be a normal projective variety defined over a field of characteristic $0$, and let $\om$ be any ample class on $X$.
 For any dominant rational self-map $f\colon X\dashrightarrow X$ satisfying $\lambda_1(f)^2 > \lambda_2(f)$, 
there exists a constant $C> 0$ such that
\[
\deg_{1,\om}(f^n) = C \lambda_1(f)^n + O(\lambda^n)~,
\]
for any $\lambda$ satisfying  $\sqrt{\lambda_2(f)} < \lambda < \lambda_1(f)$.
\end{thm_int}

This gives in particular a positive answer to~\cite[Question~3.3]{MR2139697}.

This result was known for algebraically stable maps on $3$-folds by Tuyen Truong~\cite{MR3255693} and for toric maps~\cite{favre_wulcan}. Observe that in the latter case (and for maps
on Abelian varieties) the assumption $\lambda_1(f)^2 > \lambda_2(f)$ reflects the fact that some integer-valued matrix  has a unique eigenvalue
whose modulus equals its spectral radius.

Similarly, our theorem follows  from a spectral gap property for the operator $f^*\colon\nums (\X)\to\nums(\X)$.
We shall prove that $\lambda_1(f)$ is an isolated point of the spectrum of $f^*$ and is a simple eigenvalue (see Theorem~\ref{thm:spectrum-nums}).

\medskip

When applied to a polynomial map $f\colon \A^d_\K \to \A^d_\K$ of the affine space, the spectral gap property gives
a valuation $v$ on the ring $\K[x_1,\cdots,x_d]$ satisfying $f_*v = \lambda_1(f)\, v$. 
This generalizes one of the main results of Jonsson and the  second author
concerning polynomial maps in dimension two~\cite{favre_jonsson_eigenvaluations,favre_jonsson_dynamical_compactification}.
Note that these works exploited the tree structure of the space of valuations centered at infinity on $\A^2_\K$, whereas our current approach avoids any information about the topology of the
space of valuations on $\A^d_\K$.

From the equation $f_*v = \lambda_1(f)\, v$ and Abhyankar's inequalities, we deduce: 
\begin{thm_int}\label{thm_poly}
Let $K$ be any field of characteristic $0$. 
For any proper polynomial map $f\colon \A^d_\K \to \A^d_\K$ such that $\lambda_1(f)^2 > \lambda_2(f)$, the dynamical degree
$\lambda_1(f)$ is an algebraic number whose degree is at most $d$ over $\Q$.
\end{thm_int}

We expect the properness assumption to be superfluous.

\smallskip

Suppose that $f$ is a polynomial automorphism of $\A^3_K$ such that $\lambda_1(f)\neq1$. 
When $\lambda_1(f)^2 > \lambda_2(f)$ then the previous theorem implies that 
$\lambda_1(f)$ is an algebraic number whose degree is at most $3$. If $\lambda_1(f)^2 = \lambda_2(f)$, then 
we have $\lambda_2(f)^2= \lambda_1(f^{-1})^2 > \lambda_2(f^{-1})$ hence 
$\lambda_2(f)$ is an algebraic number whose degree is at most $3$. It follows that either $\lambda_1(f)$ or $\lambda_1(f)^2$
is algebraic of degree $\le 3$. 
We have proved:
\begin{cor_int}
Dynamical degrees of polynomial automorphisms of $\A^3_K$ are algebraic numbers whose degree over $\Q$ is  at most $6$.
\end{cor_int}
Note that dynamical degrees of polynomial automorphisms of $\mathbb{A}^3$ are rather constrained compared to the ones of arbitrary rational transformations. 
On the one hand, the set of dynamical degrees of rational projective surface automorphisms contains a sequence of algebraic integers of unbounded degree, see e.g.~\cite[\S 7]{blanc_cantat}.
On the other hand, Bell, Diller and Jonsson recently contructed a rational map of $\Pp^2$ whose dynamical degree is transcendental~\cite{bell_diller_jonsson}.

Recall that a  real number $\lambda \ge 1$ is a (weak) Perron number if it is an algebraic integer
such that any of its Galois conjugates $\mu$ satisfies $|\mu|\le \lambda$. 
Integers, real quadratic integers $>1$, Pisot and Salem numbers are particular cases of Perron numbers.
We can now state the following intriguing conjectures by Blanc and Van Santen, see~\cite[Question~1.4.1]{blanc_santen_dynamical}.

\begin{conj} \label{conj1}
Dynamical degrees of a polynomial map of $\A^d_K$ are Perron numbers of degree at most $d$.
\end{conj}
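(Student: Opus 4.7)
The plan is to reduce the conjecture to a Perron-Frobenius-type statement for an integer matrix. Specifically, I would try to exhibit, for each $k$, a finitely generated $\Z$-sub-module $L_k$ of $\numbpfs{k}(\X)$ of rank at most $d$, which is invariant under $f^*$ and contains the eigenclass $T_k$ produced by Theorem~\ref{thm:construct-eigenvec-pushpull}. Since $\lambda_k(f)$ is by definition the spectral radius of $f^*$ acting on $\nums(\X)$ (hence on $\numbpfs{k}(\X)$), its restriction to any finite-dimensional invariant subspace containing $T_k$ still has $\lambda_k(f)$ both as an eigenvalue \emph{and} as spectral radius. The characteristic polynomial of $f^*|_{L_k}$ would then be monic with integer coefficients, forcing $\lambda_k(f)$ to be an algebraic integer of degree at most $d$ whose Galois conjugates are bounded in modulus by the spectral radius, i.e.\ a Perron number of degree at most $d$.

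To construct such an $L_1$, I would follow and extend the approach used in the proof of Theorem~\ref{thm_poly}, building on the existence of an eigenvaluation $v$ satisfying $f_* v = \lambda_1(f)\, v$. The value group $\Gamma_v := v(\K(\A^d)^\times)$ is a finitely generated subgroup of $\R$ of rank at most $d$ by Abhyankar's inequalities, and the inclusion $\lambda_1(f)\,\Gamma_v \subset \Gamma_v$ already displays $\lambda_1(f)$ as an eigenvalue of the integer matrix representing multiplication-by-$\lambda_1(f)$ on $\Gamma_v$. The delicate point is to verify that $\lambda_1(f)$ is the spectral radius of this matrix, which I would transport from the spectral radius property of $f^*$ on $\nums(\X)$ via an identification of $\Gamma_v$ with the $\Z$-span of the orbit $\{(f^n)^*\om\}_{n\ge 0}$ tested against $v$, using the compactness property of Theorem~\ref{thm_compactness}.

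The main obstacle is that for arbitrary polynomial maps, the spectral gap $\lambda_1(f)^2 > \lambda_2(f)$ need not hold, and without it the convergence of normalized iterates $(f^n)^*\om/\lambda_1(f)^n$ in $\nums(\X)$ is not available; the eigenvaluation $v$ may fail to be quasi-monomial and $\Gamma_v$ may not faithfully record the spectral behaviour of $f^*$. A second difficulty appears in the higher codimension case $k\ge 2$, where no direct valuation-theoretic counterpart of $T_k$ is known: producing a finite-dimensional $f^*$-invariant $\Z$-lattice containing $T_k$ would likely require either a new Riemann-Zariski construction tailored to higher codimension cycles, or an independent proof that the degree sequence $\{\deg_{k,\om}(f^n)\}$ satisfies a linear recurrence with integer coefficients. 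I expect the degenerate case of coincidences between consecutive dynamical degrees, together with the passage from $k=1$ to arbitrary $k$, to be the genuine barrier that the techniques of the present paper do not yet overcome.
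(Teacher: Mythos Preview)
The statement you are addressing is labelled \emph{Conjecture} in the paper, not Theorem; the authors do not prove it and explicitly say only that Theorem~\ref{thm_poly} ``gives a strong indication in favor of Conjecture~\ref{conj1}''. So there is no paper proof to compare your proposal against, and your text is best read as a research plan rather than a proof. You yourself flag the main gaps; let me make two of them sharper.

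First, even in the favourable case $k=1$ with $\lambda_1(f)^2>\lambda_2(f)$ and $f$ proper, the argument via the value group $\Gamma_v$ does not yield the Perron property. The relation $\lambda_1(f)\,\Gamma_v\subset\Gamma_v$ exhibits $\lambda_1(f)$ as an eigenvalue of a matrix $A$ with rational (not a priori integer) entries, exactly as in the paper's derivation of Theorem~\ref{thm_poly}; integrality of the entries requires $\Gamma_v\simeq\Z^r$, i.e.\ that $v$ be Abhyankar, which is not known in general (see the paper's remark after Theorem~\ref{thm:eigenvaluation}). More seriously, nothing identifies $\lambda_1(f)$ with the spectral radius of $A$: the only visible eigenvector of $A$ is the vector of real numbers $(e_1,\ldots,e_r)$ forming a basis of $\Gamma_v$, and there is no positivity or cone preserved by $A$ that would force the other roots of its characteristic polynomial to have smaller modulus. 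Your proposed ``transport'' of the spectral-radius property from $f^*$ on $\nums(\X)$ to $A$ has no mechanism: $\Gamma_v$ is not a subspace of $\nums(\X)$, and the map $\alpha\mapsto v(\alpha)$ you would need collapses $\nums(\X)$ to a single real number per class, not to $\Gamma_v$.

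Second, for $k\ge 2$ and for maps without the gap $\lambda_1^2>\lambda_2$, your proposal contains no argument at all, only an acknowledgment that new ideas are required. That is an honest assessment, and it matches the state of the art: the conjecture is open precisely because no one knows how to produce a finite-rank $f^*$-invariant $\Z$-lattice capturing $\lambda_k(f)$ in those cases.
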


\begin{conj} \label{conj2}
Dynamical degrees of a polynomial automorphism of $\A^d_K$ are Perron numbers of degree at most $d-1$.
\end{conj}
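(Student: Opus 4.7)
The statement is the open conjecture of Blanc--Van Santen, so what follows is only a tentative road map building on the spectral machinery developed above. The plan is to combine the eigenvaluation constructed in the course of proving Theorem~\ref{thm_poly} with the duality $\lambda_k(f)=\lambda_{d-k}(f^{-1})$ available for birational automorphisms, exploiting the non-vanishing of the Jacobian of $f$ to save one unit in the Abhyankar-type bound.

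The first step is to reduce to the gap situation $\lambda_1(f)^2>\lambda_2(f)$. Since $f$ is an automorphism of $\A^d_K$, its Jacobian is a non-zero constant, so $\lambda_d(f)=1$; and since $f$ and $f^{-1}$ admit a common resolution, one has $\lambda_k(f)=\lambda_{d-k}(f^{-1})$. Playing $f$ against $f^{-1}$ and invoking Khovanskii--Teissier log-concavity should somewhere along the chain of degrees produce a strict gap. Handling the boundary situations $\lambda_{k-1}\lambda_{k+1}=\lambda_k^2$ will require a limiting argument on perturbations of $f$ together with a semicontinuity property for the operators $f^*\colon\nums(\X)\to\nums(\X)$ constructed in this paper.

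Granting the spectral gap, Theorem~\ref{thm_poly} produces a valuation $v$ on $K[x_1,\dots,x_d]$ with $f_*v=\lambda_1(f)\,v$. Because $f$ preserves a non-zero constant multiple of the form $dx_1\wedge\cdots\wedge dx_d$, I would argue that the Jacobian imposes a non-trivial algebraic relation on $v$, sharpening Abhyankar's inequalities to $\mathrm{rat.rk}(v)+\mathrm{tr.deg}(v)\le d-1$. The $f_*$-action then descends to a non-negative integer-valued linear endomorphism of a free abelian group of rank at most $d-1$ (integrality using that an automorphism carries principal divisors to principal divisors), and $\lambda_1(f)$ becomes its spectral radius. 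Perron--Frobenius then delivers simultaneously the degree bound $\le d-1$ and the Perron property. Passing from $\lambda_1$ to arbitrary $\lambda_k$ follows by the same argument applied to $f^{-1}$ via $\lambda_k(f)=\lambda_{d-k}(f^{-1})$.

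The hard part, and presumably the reason this remains a conjecture, will be twofold. First, extending the spectral gap result to the degenerate case $\lambda_1^2=\lambda_2$, where Theorem~\ref{thm_int_gap} delivers no spectral projector, seems to require a refined continuity theory for the Banach operators $f^*$ under deformation of $f$, to be pursued in the sequel~\cite{dang_favre}. Second, the integrality (and not merely rationality) of the $f_*$-action on the numerical invariants attached to $v$ demands a tighter arithmetic understanding of the eigenvaluation than what the current resolvent construction provides; it is not at all clear that the ambient free abelian group can be chosen in an $f_*$-equivariant way, and this is the step I expect to be the main obstacle.
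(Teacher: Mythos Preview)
You have correctly identified that this is an open conjecture: the paper does not prove Conjecture~\ref{conj2}. It is stated as a question of Blanc and Van Santen, and the only result in its direction is the Corollary for $d=3$, where the paper shows (via Theorem~\ref{thm_poly} applied to $f$ or $f^{-1}$) that the dynamical degrees are algebraic of degree at most~$6$ --- strictly weaker than the conjectured bound $d-1=2$ and with no Perron property. So there is no proof in the paper against which to compare your attempt.

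Your roadmap is reasonable as a sketch of wishes, and you are honest about where the gaps lie. Two comments on the substance. First, the reduction to the gap case by toggling between $f$ and $f^{-1}$ is exactly what the paper does for $d=3$, but in higher dimension it only tells you that \emph{some} $\lambda_k$ satisfies a gap condition, not necessarily $\lambda_1$; your proposed ``limiting/semicontinuity'' workaround for the resonant case is not something the paper develops and would be a substantial new ingredient. Second, and more seriously, the step where you claim the constant-Jacobian condition sharpens Abhyankar's inequality to rational rank plus transcendence degree $\le d-1$ has no support in the paper or in the literature I am aware of; the Jacobian condition concerns a single top-form and does not obviously constrain the value group of an arbitrary eigenvaluation centered at infinity. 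This, together with the integrality issue you already flag, is where a genuine new idea would be needed, and it is precisely why the statement remains a conjecture.
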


Theorem~\ref{thm_poly} gives a strong indication in favor of Conjecture~\ref{conj1}.

Both conjectures hold true in dimension $2$ according to the work of Friedland and Milnor~\cite{friedland_milnor} and Favre and Jonsson~\cite{favre_jonsson_dynamical_compactification}.
Computations for specific families of polynomial maps have been made in dimension $3$, including
quadratic automorphisms (Maegawa~\cite{maegawa});
cubic and triangular automorphisms (Blanc and van Santen~\cite{blanc_santen_automorphism,blanc_santen_dynamical}); and 
shift-like automorphisms (Jonsson, unpublished, see~\cite[\S 4.2]{blanc_santen_dynamical}). 
All these results support both conjectures.

---

Let us indicate some possible generalizations of our work:
\begin{itemize}
\item 
We use the resolution of singularities of algebraic varieties which explains our assumption on the characteristic of $\K$.
The space $\nums(\X)$ is constructed using intersection theory over all birational models over $X$, 
and this theory is much better behaved when the models are smooth. Note however, that 
during the course of the proof of Theorem~\ref{thm:extension}, 
we apply Gruson-Raynaud's flattening theorem which makes the appearance of singular models unavoidable.
In any case, we expect our results to be valid in any characteristic.

\item
We also use the fact that $\K$ is countable in order to have a countable set of birational models for $X$. 
This assumption appears in Lemma~\ref{lem_def_BPF}. It implies $\nums(\X)$ and several other spaces that we construct in Section~\ref{sec:numerical_b_class}
such as $\wnum^{\bullet}(\X), \numbpf{\bullet}(\X)$ and $\numbpfs{\bullet}(\X)$ to be separable (and
the space  $\FS(\X)$ introduced in Section~\ref{sec:FSX} to be Fréchet). 
Note that $X$ and $f$ are both defined over a countable field, hence this countability assumption is
not restrictive for proving Theorems~\ref{thm_int_gap} and~\ref{thm_poly}. 

\item It is likely that our method also extends to K\"ahler manifolds. This is the context of the works of Dinh, Sibony and their co-authors~\cite{dinh_sibony_2004_groupes,dinh_sibony_reg_currents_entropy,dinh_nguyen_truong,blms.12082}, and this usually requires
subtle analytic arguments. In particular, numerical classes of algebraic cycles have then to be replaced by cohomology classes, and the definition of
$\nums(\X)$ in this case remains unclear.

\item The spectrum of $f^*$ in the resonant case $\lambda_1(f)^2=\lambda_2(f)$ is harder to analyze. It is understood only in the case of birational surface maps (Diller and Favre~\cite{diller_favre}), of polynomial maps of $\A^2$ (Favre and Jonsson~\cite{favre_jonsson_dynamical_compactification}) and of toric maps (Lin~\cite{lin} and Favre and Wulcan~\cite{favre_wulcan}). Partial results on degree growths for arbitrary rational maps have been obtained by Urech~\cite{zbMATH06911556}, Cantat and Xie~\cite{cantat_xie_degrees}, and recently by Lonjou and Urech~\cite{urech-lonjou}.  

\item Extensions of Theorem~\ref{thm_poly} to other affine varieties would be interesting to explore. The first author has considered dynamical degrees of tame automorphisms
of the smooth affine quadric $3$-fold~\cite{dang_tame}, for which it is conjectured that the dynamical degrees are integers.

\item Lamy and Przytycki~\cite{lamy} have recently succeeded in putting a natural metric structure on a subspace of the space of valuations at infinity
in $\A^3_K$ turning it into a CAT$(0)$-space. They proved the linearizability of tame finite subgroups of $\Aut[\A^3_K]$ by looking at their induced action on 
this space. It would be interesting to apply our techniques to deduce further consequences on the structure of subgroups of $\Aut[\A^3_K]$ and more generally 
of $\Aut[\A^d_K]$ for any $d\ge3$. 
\end{itemize}

---

Several spaces of $b$-classes play an important role in our paper. 
\begin{itemize}
\item
$\wnum^k(\X)$: Weil $b$-classes of codimension $k$ (\S\ref{section_bclass}). This space is endowed with a weak topology in which compactness can be easily guaranteed.
This space contains most of the spaces below.
\item
$\cnum^k(\X)$: Cartier $b$-classes of codimension $k$ (\S\ref{section_bclass}). Such $b$-classes \emph{live} in a fixed birational model which makes them easy to manipulate. 
In particular, there are natural pairings $\cnum^k(\X)\times\wnum^l(\X)\to\wnum^{k+l}(\X)$. 
Although one can endow $\cnum^k(\X)$ with a natural topology, we shall make no use of such a topology in the paper.
\item
$\BPF^k(\X)$: the cone of basepoint free $b$-classes of codimension $k$ (\S\ref{sec:cone-BPF}). This is the natural positive cone to consider in $\wnum^k(\X)$. It is invariant by the operator induced by a rational map, and generalizes to  arbitrary $b$-classes and codimensions the closure of the ample cone in the Néron-Severi space of a projective variety.
\item
$\numbpf{k}(\X)$: the completion of $\cnum^k(\X)$ for the norm induced by the cone $\BPF^k(\X)$ (\S\ref{section_nbpf}). This is the first natural choice of a Banach space on which a rational map
induces a bounded operator. It is however too small in general to contain an eigenvector of this operator. 
\item
$\numbpfs{k}(\X)$: the completion of $\cnum^k(\X)$ for the dual norm induced by the cone $\BPF^k(\X)$ (\S\ref{sec:dual-norm}). The space $\numbpfs{2}(\X)$ is the natural receptacle for the intersection pairing
on $\nums(\X)$ (see Theorem~\ref{thm_banach}).
\item
$\FS(\X)$: a Fréchet space in which Cartier $b$-divisor classes are dense and for which the Hodge index theorem extends in a natural way (\S\ref{sec:FSX}). 
\item
$\nums(\X)$: a Banach space contained in $\FS(\X)$ in which Cartier $b$-divisor classes remain dense (\S\ref{section_nsigma}). This space is large enough to contain eigenvectors of operators induced by rational maps. It is also small enough so that at the same time Hodge index theorem applies, and the key compactness result Theorem~\ref{thm_compactness} holds.
\end{itemize}
We refer to Section~\ref{sec:dual-norm} below and to our companion paper~\cite{dang_favre} for a comparison between these spaces.

\subsection*{Acknowledgements}
Charles Favre started discussing the spectral interpretation of dynamical degrees back in 2014 with Mattias Jonsson. They obtained
a version of the Hodge index theorem for nef classes, and the construction of nef eigenvectors that is presented here 
(Theorems~\ref{thm:construct-eigenvec-pushpull} and~\ref{thm:construct-eigenvec-pushpull}). 
Although the technical details to complete the spectral analysis turned out to be much trickier than expected,  the influence
of Mattias' ideas to our paper was immense. We thank him for his insights and his thoughtful comments.

Nguyen-Bac Dang is very much indebted to Mikhail Lyubich for his constant financial, scientific and personal support.
He is also grateful to John Lesieutre  for interesting discussions on various aspects of 
positive classes in algebraic geometry.

We are particularly thankful to Junyi Xie and the referees for their careful readings of a first version of this work.
We have received comments from Jeffrey Diller, Romain Dujardin, Stéphane Lamy that helped us improving the paper.

Our project was launched during a symposium on "Algebraic, Complex and Arithmetic Dynamics" held in Schlo{\ss} Elmau in Germany.
We thank the Simons foundation for funding this  conference which created an awesome
inspiring atmosphere.
We finally thank the PIMS institute and the CNRS for their constant support and 
for providing us exceptional working conditions in Vancouver. 


\section{Positive cycles and dynamical degrees}

Let $\K$ be any algebraically closed field of characteristic $0$.

\subsection{Numerical cycles and their intersection} \label{section_numerical}

Let us first suppose that $X$ is a smooth projective variety of dimension $d$ defined over $\K$. 
We let $Z^k(X)$ be the $\R$-vector space freely generated by irreducible subvarieties of dimension $d-k$ in $X$. 
Given any two cycles $\alpha,\beta$ of complementary dimensions in  $X$, 
we denote by $(\alpha \cdot \beta )\in \R$ their algebraic intersection number as defined in \cite[Section 8.1 p.131]{fulton}. 
Note that when the cycles have integral coefficients, their intersection is also an integer.

The numerical space of cycles of codimension $k$, denoted $\num^k(X)$ is defined as the quotient of $Z^k(X)$ by the 
vector space of cycles $z$ such that $(\alpha \cdot z) = 0$ for all cycles $\alpha$ of dimension $k$.
It follows from~\cite[Example 19.1.3]{fulton} (see also \cite[Theorem 2.5.1]{dang_degree_normal})
that $\num^k(X)$ is a finite dimensional $\R$-vector space, and that the pairing 
$\num^k(X) \times \num^{d-k}(X) \to \mathbb{R}$ is perfect.

The space $\num^1(X)$ is the tensor product of the Néron-Severi group of $X$ with $\R$. 
Intersection products of $k$ divisors define numerical cycles of codimension $k$ but these classes do not span $\num^k(X)$ in general (see \cite[Example 2.16]{MATH06741284}).

\medskip

Let $Y$ be a smooth projective variety
of dimension $d'$.
Take any regular morphism
$f\colon X\to Y$. 
One defines the pushforward by $f$ of the cycle $[V]$ associated to  any irreducible subvariety $V$ of $X$ by
the following formula:
\begin{equation*}
f_* [V] = \left \lbrace \begin{array}{ll}
 n [f(V)] &\text{ if } \dim(f(V)) = \dim(V), \\
 0  &\text{ if } \dim(f(V)) < \dim(V),
\end{array} \right.
\end{equation*}
where $n$ is the degree of the field extension $[\K(V):\K(f(V))]$. 
This map descends to a linear map $f_*\colon \num^k(X) \to \num^k(Y)$, see e.g~\cite[Proposition~2.1.4]{dang_degree_normal}.

The pullback $f^*\colon \num^k(Y) \to \num^k(X)$ is defined by duality, and one checks 
that the projection formula 
holds (see \cite[\S 2.3 and Theorem 2.3.2]{dang_degree_normal}):
\begin{equation}\label{eq:projection-form}
 f_*(f^* \alpha \cdot \beta) = \alpha \cdot f_* \beta,
 \end{equation}
for all $\alpha \in \num^k(Y)$, and  $\beta \in \num^l(X)$ for any $k,l$.

\medskip

Let us now discuss the case where $X$ is merely normal and projective. The intersection product of two arbitrary cycles is not defined anymore. However one can intersect Chern classes of vector bundles with arbitrary cycles \cite[Chapter 3]{fulton}. We thus consider the $\R$-vector space generated by cycles obtained as a product:
\begin{equation*}
c_{i_1}(E_1) \cdot  \ldots \cdot c_{i_p}(E_p),
\end{equation*}
where $i_1, \ldots, i_p$ are integers satisfying $i_1 + \ldots + i_p = k$, $E_1 , \ldots , E_p$ are vector bundles on $X$ and $c_i(E)$ denotes the $i$-th Chern class associated to the vector bundle $E$.
The  space of numerical cycles of codimension $k$, also denoted $\num^k(X)$ is the quotient of that vector space by the subspace of classes $\alpha \in Z^k(X)$ spanned by Chern classes such that $(\alpha \cdot \beta)= 0$ for all $\beta \in Z^{d-k}(X)$ and is also a finite dimensional vector space. 
In the case where $X$ is smooth, the two definitions of $\num^k(X)$ coincide (see \cite[\S~2.1]{MATH06741284} or \cite[Theorem 2.4.2 (i)]{dang_degree_normal}).    
\medskip

Now if $X,Y$ are normal projective varieties of dimension $d,d'$ respectively, and $f:X \to Y$ is a regular morphism, then the pullback of vector bundles by $f$ is well-defined and induces a morphism $f^* : \num^k(Y) \to \num^k(X)$. In this degree of generality, it is not known whether the pushforward on cycles descends to the span of Chern classes modulo numerical equivalence,  
the reason being that the pullback is not defined for arbitrary cycles.

If we suppose $Y$ smooth, one uses the perfect pairing $\num^k(Y) \times \num^{d'-k}(Y) \to \R$ on $Y$ and the existence of  the pullback $f^*\colon\num^k(Y) \to \num^k(X)$  to define the pushforward. Precisely, for any intersection $c \in \num^{d-k}(X)$ of Chern classes of vector bundles, the class $f_* c \in \num^{d'-k}(Y)$ is the unique class such that for any cycle $\alpha \in \num^{k}(Y)$: 
\begin{equation*}
  (f_*c \cdot \alpha) = (c\cdot f^*\alpha)~.
  \end{equation*}  
We then obtain a morphism $f_* \colon \num^{d-k}(X) \to \num^{d'-k}(Y)$ compatible with the pushforward $f_* \colon Z^{d-k}(X) \to Z^{d'-k}(Y)$ on cycles representing Chern classes of vector bundles.

When the map $f: X \to Y$ is flat of relative dimension $e$ and $X,Y$ are normal projective varieties, then the pullback $f^*$ is well-defined on $Z^{d'-k}(Y)$ and for any intersection $\beta$ of $e+k$ Cartier divisors, we define the element $f_* \beta \in \num^k(Y)$ as the dual class induced by the linear form:
\begin{equation} \label{eq_def_pushforward_flat}
f_* \beta := z \in Z^{d-k}(Y) \mapsto (\beta\cdot f^* z).
\end{equation}

\subsection{Positive cycles}

We assume in this section  that $X$ is a smooth projective variety.
A class in $\num^k(X)$  is called pseudo-effective, if it belongs to the closure of the convex cone spanned by effective cycles. 
The set of pseudo-effective classes forms a closed salient convex cone $\psef^k(X)$ inside $\num^k(X)$, see~\cite[Theorem 1.4]{MATH06741284}.
For any  $\alpha, \beta \in \num^k(X)$, we write
\[
\alpha \le \beta
\]
whenever the difference $\beta - \alpha $ is pseudo-effective.

A class $\alpha \in \num^1(X)$ is nef if its intersection with any pseudo-effective curve class is non-negative.
The nef cone $\nef^1(X)$ is the closure of the cone of real ample classes, see~\cite[Theorem 1.4.23]{lazarsfeld_positivity_1}. A class $\alpha\in \num^k(X)$ with 
$k\ge2$ is nef when $(\alpha\cdot \beta)\ge0$ for all $\beta\in \psef^{d-k}(X)$.
The nef cone in codimension $k\ge2$ is not included in the pseudo-effective cone in general, see~\cite{debarre_ein_lazarsfeld_voisin}; however the cone of 
basepoint free classes defined below is better behaved, see~\cite{MATH06741284} for a general discussion on this problem.

 A class $\alpha\in\num^k(X)$ is called \textit{strongly basepoint free} 
 if it is the pushforward under a flat morphism of relative dimension $e$ 
 of the intersection of $k+e$ ample divisors. 
The closure of the cone generated by strongly basepoint free classes is called the basepoint free cone of codimension $k$. We denote it by 
$\BPF^k(X)$.

Note that our definition of basepoint free classes differs from  Fulger-Lehmann's original formulation \cite[Definition 5.1]{MATH06741284} but the two approaches are equivalent when $X$ is smooth (see \cite[Corollary 3.3.4]{dang_degree_normal}).
\bigskip

 The main properties of this cone are summarized in the following result. 

\begin{thm}[\cite{MATH06741284}] \label{thm_bpf} Let $X, Y$ be smooth projective varieties. 
\begin{enumerate}
\item The set $\BPF^k(X)$ forms a closed, convex, salient  cone in $\num^k(X)$ and has non-empty interior.
\item The basepoint free cones respect the intersection product, i.e. $\BPF^k(X) \cdot \BPF^l(X) \subset \BPF^{k+l}(X)$ for all $k,l$.
\item Classes in $\BPF^k(X)$ are both pseudo-effective and nef.
\item For any integer $k$ and for any morphism $\pi\colon Y \to X$, 
the inclusion $\pi^* \BPF^k(X) \subset \BPF^k(Y)$ is satisfied. 
\item For any flat morphism $\pi\colon Y \to X$ of relative dimension $e$, one has $\pi_* (\BPF^k(Y)) \subset \BPF^{k-e}(X)$.
\item 
Strongly BPF classes of $\num^k(X)$ are contained in the interior of  $\BPF^k(X)$.
\item The cone $\BPF^1(X)=\nef^1(X)$ coincides with the nef cone.
\end{enumerate}
\end{thm}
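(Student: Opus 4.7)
The plan is to unpack $\BPF^k(X)$ as the closure of the convex cone generated by strongly basepoint free classes---pushforwards $\pi_\ast(A_1\cdots A_{k+e})$ where $\pi\colon Y\to X$ is flat of relative dimension $e$ and $A_1,\ldots,A_{k+e}$ are ample classes on $Y$---and to verify each item by combining flat base change, the projection formula, and approximation of pulled-back classes by ample classes on suitable resolutions.

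Items (2)--(5) and (7) are essentially formal. For (4), given $\sigma\colon Y'\to X$ and $\alpha=\pi_\ast(A_1\cdots A_{k+e})$, I would form the fiber square, invoke flat base change to rewrite $\sigma^\ast\alpha=\pi'_\ast(\sigma'^\ast A_1\cdots\sigma'^\ast A_{k+e})$, and approximate each pulled-back class by an ample class on a resolution of $Y'\times_X Y$; (5) follows from the stability of flatness under composition with additive relative dimensions. For (2), an analogous fiber-product construction, together with the compatibility of the intersection product with flat pullback, expresses the product of two strongly BPF classes as another strongly BPF class. For (3), pseudo-effectiveness of $\alpha$ follows since the flat pushforward of an effective cycle is effective, and nefness is a direct application of the projection formula:
\[
(\alpha\cdot\beta)=(A_1\cdots A_{k+e}\cdot \pi^\ast\beta)\ge 0,
\]
for any $\beta\in\psef^{d-k}(X)$, using that flat pullback preserves pseudo-effectiveness. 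For (7), taking $\pi=\Id$ shows ample classes are strongly BPF, so $\nef^1(X)\subset\BPF^1(X)$; the reverse inclusion is the codimension-one case of (3). The saliency in (1) follows from $\BPF^k(X)\subset\psef^k(X)$, which is salient.

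The delicate point is (6), which also yields the non-empty interior in (1), since strongly BPF classes clearly exist on $X$ (for example intersections of $k$ ample divisor classes, with $\pi=\Id$). The strategy is to show that any strongly BPF $\alpha=\pi_\ast(A_1\cdots A_{k+e})$ can be perturbed in every direction of $\num^k(X)$ while remaining in $\BPF^k(X)$. Wiggling each $A_i$ inside the open cone $\amp(Y)$ produces a local family of strongly BPF classes around $\alpha$; one then completes this to an open neighborhood in $\num^k(X)$ by adding positive combinations of finitely many auxiliary strongly BPF classes $\beta_1,\ldots,\beta_N$ chosen so that their cone fills every remaining direction. The main obstacle is precisely this spanning step: in codimension $k\ge 2$ the intersections of ample classes on $X$ itself do not span $\num^k(X)$ in general, so one genuinely has to enlist flat covers $\pi\colon Y\to X$ of varying relative dimension and exploit the richer positivity available on $Y$ in order to reach every direction. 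Once (6) is in hand, every other part of the theorem reduces to routine intersection-theoretic manipulation.
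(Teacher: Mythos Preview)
The paper does not prove this theorem: it is quoted from Fulger--Lehmann \cite{MATH06741284} with no argument supplied, so there is no ``paper's own proof'' to compare against. Your sketch is therefore an attempt to reconstruct the original proof rather than to match anything in the present paper.

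On its own merits, your treatment of (2)--(5) and (7) is sound and follows the standard route via fiber products, flat base change, and the projection formula. The genuine gap is in (6). Wiggling the $A_i$ inside $\amp(Y)$ only moves $\alpha$ within the image of the multilinear map $(A_1,\ldots,A_{k+e})\mapsto \pi_\ast(A_1\cdots A_{k+e})$, and adding \emph{positive} combinations of auxiliary strongly BPF classes $\beta_j$ lets you reach $\alpha+\sum t_j\beta_j$ with $t_j\ge 0$, but gives no mechanism for reaching $\alpha-\epsilon\beta$ for an arbitrary $\beta\in\num^k(X)$. The phrase ``chosen so that their cone fills every remaining direction'' hides exactly the difficulty: a salient cone cannot fill a whole neighborhood by positive combinations alone. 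The Fulger--Lehmann argument instead shows that an arbitrary $\beta$ can be \emph{absorbed} into a strongly BPF class, essentially by realizing $\beta$ through Chern or Segre classes of vector bundles and exploiting that, after a sufficiently ample twist, these become pushforwards of complete intersections of ample classes on a projective bundle (a flat map). This is the missing idea: you need a way to subtract, not just to add, and that requires expressing every direction in $\num^k(X)$ as a difference of strongly BPF classes in a controlled way relative to the given $\alpha$.
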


Recall that a class $\alpha\in\num^1(X)$ is big if it lies in the interior of the pseudo-effective cone. 
A nef class  $\alpha\in\num^1(X)$ is big iff $(\alpha^d)>0$.
The next result is a consequence of Siu's inequalities, see \cite[Theorem 2.2.15]{lazarsfeld_positivity_1}, \cite{trapani}
which allows us to compare basepoint free classes with complete intersections.
It was proven in a K\"ahler context by~\cite{xiao}, and later generalized to arbitrary algebraic varieties by the first author in~\cite[Corollary~3.4.5]{dang_degree_normal}.

\begin{thm}[Siu's inequalities]\label{thm_siu_bpf} 
There exists a constant $C_d>0$ such that the following holds.

Let $\om\in\num^1(X)$ be any big and nef class over a smooth projective variety $X$ of dimension $d$. 
Then for any basepoint free class $\alpha \in \num^k(X)$, we have
 \begin{equation}\label{eq:siu_bpf}
0\le  \alpha \le C_d\, \dfrac{(\alpha \cdot \om^{d-k})}{(\om^d)} \om^k.
 \end{equation}
\end{thm}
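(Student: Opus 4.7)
The plan is to reduce to strongly basepoint free classes and then apply the classical divisorial Siu's inequality on a suitable flat cover, followed by a pushforward. Since $\BPF^k(X)$ is by definition the closure of the cone generated by strongly basepoint free classes, and since both the pseudo-effectivity of $C_d \tfrac{(\alpha\cdot\om^{d-k})}{(\om^d)}\om^k - \alpha$ and the pseudo-effectivity of $\alpha$ itself are preserved under limits in $\num^k(X)$ (the intersection pairing being continuous and $\psef^k(X)$ being closed by \cite{MATH06741284}), it suffices to establish \eqref{eq:siu_bpf} when $\alpha$ is strongly basepoint free. Write such $\alpha = \pi_*(H_1\cdots H_{k+e})$, where $\pi\colon Y\to X$ is flat of relative dimension $e$ and $H_1,\ldots,H_{k+e}$ are ample Cartier divisors on the smooth projective variety $Y$ of dimension $d+e$.

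The next step takes place entirely on $Y$. I would apply the classical divisorial Siu inequality (Lazarsfeld, Theorem~2.2.15, and Trapani) iteratively: given a big and nef reference class $L$ on $Y$ and any nef class $B$, one has $B \le (d+e)\tfrac{(B\cdot L^{d+e-1})}{(L^{d+e})}L$ up to a pseudo-effective correction. Iterating this for $B = H_1,\ldots,H_{k+e}$ against the reference $L$ yields an estimate of the form
\[
H_1\cdots H_{k+e} \ \le\ C'_{d+e}\, \frac{(H_1\cdots H_{k+e}\cdot L^{d-k})}{(L^{d+e})}\, L^{k+e}
\]
with an absolute constant depending only on dimensions. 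The natural choice is $L=\pi^*\om$, but this need not be big when $\pi$ has positive-dimensional fibers. I would circumvent this by perturbing: fix an ample class $A$ on $Y$, set $L_t := \pi^*\om + tA$ (big and nef for every $t>0$), apply the above estimate for each $t$, and let $t\to 0$.

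Finally, push the resulting inequality forward via $\pi_*$. Using the projection formula \eqref{eq:projection-form} repeatedly, the left-hand side becomes $\alpha$, while $\pi_*((\pi^*\om)^{k+e}) = \om^k\cdot\pi_*((\pi^*\om)^e)$ is a (possibly vanishing) multiple of $\om^k$. For the numerator, the projection formula gives $(H_1\cdots H_{k+e}\cdot(\pi^*\om)^{d-k}) = (\alpha\cdot\om^{d-k})$. After careful bookkeeping the auxiliary normalization $\deg(\pi)$ factors cancel, and passing to the limit $t\to 0$ produces the claimed inequality with a constant $C_d$ depending only on $d$; compatibility of \eqref{eq:siu_bpf} with Theorem~\ref{thm_bpf}(3) also gives the lower bound $0\le \alpha$.

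The main obstacle is ensuring that the constant $C_d$ can be chosen to depend only on the ambient dimension $d$, not on $Y$, $e$, or the chosen presentation. Since a strongly BPF class admits many presentations $\pi_*(H_1\cdots H_{k+e})$ with $e$ arbitrarily large, one needs the iterated Siu estimate on $Y$ to produce constants that are absorbed by pushforward. A secondary difficulty is the non-bigness of $\pi^*\om$: one must verify that the leading ratio stays bounded in the limit $t\to 0$, which follows from continuity of the intersection pairing and the fact that the numerator and denominator scale in the same way in the perturbation parameter.
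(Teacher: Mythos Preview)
The paper does not actually prove this theorem: it is quoted with references to \cite{xiao} (K\"ahler case) and \cite[Corollary~3.4.5]{dang_degree_normal} (algebraic case), so there is no in-paper argument to compare against. Let me therefore assess your proposal on its own merits.

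The central step does not go through as written. Iterating the divisorial Siu inequality $H_i \le (d+e)\,\tfrac{(H_i\cdot L^{d+e-1})}{(L^{d+e})}\,L$ on $Y$ does \emph{not} produce the displayed bound
\[
H_1\cdots H_{k+e}\ \le\ C'_{d+e}\,\frac{(H_1\cdots H_{k+e}\cdot L^{d-k})}{(L^{d+e})}\,L^{k+e}.
\]
Naive iteration gives instead
\[
H_1\cdots H_{k+e}\ \le\ (d+e)^{k+e}\,\frac{\prod_{i=1}^{k+e}(H_i\cdot L^{d+e-1})}{(L^{d+e})^{k+e}}\,L^{k+e},
\]
with a \emph{product} of degree-one intersections in the numerator. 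Passing from this product to the single intersection number $(H_1\cdots H_{k+e}\cdot L^{d-k})$ would require a reverse Khovanskii--Teissier inequality, which is false in general. Worse, if you push forward the product-numerator bound and let $t\to 0$ in $L_t=\pi^*\om+tA$, a direct computation of the leading powers of $t$ shows the right-hand side blows up like $t^{-k}$ rather than remaining bounded, so the perturbation argument collapses.

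What you actually need on $Y$ is already the higher-codimension Siu inequality for complete intersections of nef divisors, with the sharp constant $\binom{d+e}{k+e}$; granted that, your pushforward-and-limit bookkeeping does yield a uniform constant $\binom{d}{k}$ on $X$, independent of $e$. But establishing that complete-intersection case is itself the substantive content of the theorem (for the subcone generated by products of nef divisors), and it is \emph{not} a consequence of iterating the codimension-one statement. This is precisely why the existing proofs (Xiao, Dang) proceed by different routes---via Morse-type inequalities or direct positivity arguments for BPF classes---rather than by the reduction you sketch. Your outline therefore has a genuine circularity at its core, which you correctly flag as ``the main obstacle'' but do not resolve.
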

It is a fact that when $k=1$, we may take $C_d=d$.

\subsection{Dynamical degrees of dominant rational maps}\label{sec:dynamical_degrees}
Fix any big and nef class $\om\in\num^1(X)$ where $X$ is a normal projective variety. 
For any dominant rational self-map $f\colon X \dashrightarrow X$, and for any integer $0 \le k \le d$, we set
\begin{equation*}
\deg_{k,\om}(f) = (\pi_2^* \om^k\cdot \pi_1^* \om^{d-k}),
\end{equation*} 
where $\pi_1, \pi_2$ are the projections from a resolution of singularities \cite{kollar} of the graph of $f$ in $X\times X$ onto the first and second component respectively \footnote{observe that the above notion of degree coincides with the one given in the introduction using the projection formula and setting $f^* = {\pi_1}_*\circ\pi_2^*$ (see also \cite[Definition 5.4.1]{dang_degree_normal})}.
This quantity does not depend on the choice of resolution (see e.g \cite[Theorem 1.(ii)]{dang_degree_normal}), and we call it the $k$-th degree of $f$ with respect to $\om$.
\footnote{when $\om = c_1(L)$ for some line bundle $L\to X$, then $\deg_{k,\om}(f)$ is an integer.}

The basic results concerning the sequence of degrees $\{\deg_{k,\om}(f^n)\}_n$ were first obtained by Dinh and Sibony~\cite{dinh_sibony_une_borne_sup} in the K\"ahler
case, and were further extended to algebraic varieties over any field by T. T. Truong~\cite{tuyen2} and the first author~\cite{dang_degree_normal}.
\begin{thm}\label{thm:sbmlt}
\begin{itemize}
\item
There exists a positive constant $C= C(\om) >0$ such that or any dominant self-map
$f\colon X \dashrightarrow X$ and for any integers $n,m$, we have:
\[
\deg_{k,\om}(f^{n+m}) \le C \deg_{k,\om}(f^n) \deg_{k,\om}(f^m).
\]
\item
For any big and nef class $\omega'$, there exists a constant $C= C(\omega,\omega')>1$ such that for any dominant self-map
$f\colon X \dashrightarrow X$, we have 
\[
\frac1C\, 
\deg_{k,\om}(f)
\le 
\deg_{k,\omega'}(f)
\le C\, \deg_{k,\om}(f).
\]
\end{itemize}
\end{thm}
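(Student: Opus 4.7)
My plan is to use Siu's inequalities (Theorem~\ref{thm_siu_bpf}) as the central tool, since $\omega^k$ and $(\omega')^k$ are BPF classes (they are limits of intersections of ample classes) and pullbacks of big and nef classes by birational morphisms remain big and nef. The key insight is that sub-multiplicativity is morally encoded in Siu's inequality applied on a resolution of the graph of one factor.

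\medskip

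\textbf{Part (2), comparison of polarizations.} I will apply Theorem~\ref{thm_siu_bpf} twice on $X$ itself, once to the BPF class $(\omega')^k$ and once to $(\omega')^{d-k}$, both with respect to the big and nef class $\omega$. This yields
\[
(\omega')^k \le C_d\,\frac{((\omega')^k\cdot\omega^{d-k})}{(\omega^d)}\,\omega^k
\qquad\text{and}\qquad
(\omega')^{d-k} \le C_d\,\frac{((\omega')^{d-k}\cdot\omega^{k})}{(\omega^d)}\,\omega^{d-k}.
\]
Fix a smooth resolution $\Gamma$ of the graph of $f$ with projections $\pi_1,\pi_2:\Gamma\to X$. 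Since $\pi_j^*$ preserves pseudo-effectivity, pulling back the first inequality by $\pi_2$ and intersecting with the BPF class $\pi_1^*(\omega')^{d-k}$ produces $\deg_{k,\omega'}(f)\le C_d\frac{((\omega')^k\cdot\omega^{d-k})}{(\omega^d)}(\pi_2^*\omega^k\cdot\pi_1^*(\omega')^{d-k})$. Applying the second inequality pulled back by $\pi_1$ and intersected with $\pi_2^*\omega^k$ bounds the remaining factor by a multiple of $\deg_{k,\omega}(f)$. The resulting constant depends only on $\omega$ and $\omega'$ (and $d$), and the other direction is symmetric.

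\medskip

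\textbf{Part (1), sub-multiplicativity.} Setting $g:=f^n$ and $h:=f^m$, I will produce a smooth variety $W$ birational to the closure in $X^3$ of $\{(x,h(x),g(h(x)))\}$, with projections $\mu_1,\mu_2,\mu_3:W\to X$. Then $(\mu_1,\mu_3)$ resolves the graph of $g\circ h$, $(\mu_1,\mu_2)$ dominates a resolution of the graph of $h$, and $(\mu_2,\mu_3)$ factors through a smooth resolution $\Gamma_g^{\mathrm{res}}$ of the graph of $g$ with projections $\tau_1,\tau_2:\Gamma_g^{\mathrm{res}}\to X$. On $\Gamma_g^{\mathrm{res}}$, the class $\tau_1^*\omega$ is big and nef (nef by pullback, big since $(\tau_1^*\omega)^d=(\omega^d)>0$) and $\tau_2^*\omega^k$ is BPF by Theorem~\ref{thm_bpf}. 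Siu's inequality then gives
\[
\tau_2^*\omega^k \le C_d\,\frac{\deg_{k,\omega}(g)}{(\omega^d)}\,\tau_1^*\omega^k.
\]
Pulling back to $W$ via the map $W\to\Gamma_g^{\mathrm{res}}$ and intersecting with the BPF class $\mu_1^*\omega^{d-k}$ yields $\deg_{k,\omega}(g\circ h) \le \frac{C_d}{(\omega^d)}\deg_{k,\omega}(g)\cdot(\mu_2^*\omega^k\cdot\mu_1^*\omega^{d-k})$. Finally, since $(\mu_1,\mu_2):W\to X\times X$ is birational onto the graph of $h$, the remaining intersection number equals $\deg_{k,\omega}(h)$. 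Substituting $g=f^n$ and $h=f^m$ completes the proof with $C=C_d/(\omega^d)$.

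\medskip

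\textbf{Main obstacle.} The arithmetic of the argument is straightforward; the delicate point is bookkeeping on the various resolutions. One must verify that $(\mu_1,\mu_2):W\to \Gamma_h$ is genuinely birational (not merely generically finite of higher degree) so that $(\mu_2^*\omega^k\cdot\mu_1^*\omega^{d-k})$ equals $\deg_{k,\omega}(h)$ and not a positive multiple thereof; this comes from the fact that a generic point $x\in X$ is recovered from $(x,h(x))$, so $W\dashrightarrow \Gamma_h$ is birational. One must also be careful that Siu's inequality is being applied on a smooth variety with a class that is genuinely big and nef, which forces us to work with the resolutions $\Gamma_g^{\mathrm{res}}$ and $W$ rather than the possibly singular graphs themselves. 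Everything else is a direct combination of Theorems~\ref{thm_bpf} and~\ref{thm_siu_bpf} with the projection formula~\eqref{eq:projection-form}.
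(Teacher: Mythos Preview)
The paper does not give its own proof of Theorem~\ref{thm:sbmlt}; it is stated as a known result with references to Dinh--Sibony, Truong, and the first author's paper~\cite{dang_degree_normal}. Your strategy via Siu's inequalities for BPF classes is exactly the approach of~\cite{dang_degree_normal}, so in spirit your proposal matches the cited proof.

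There is one technical wrinkle worth flagging. In both parts you apply Siu's inequality on a smaller variety ($X$ in Part~2, $\Gamma_g^{\mathrm{res}}$ in Part~1) and then ``pull back'' the resulting inequality $C\tau_1^*\omega^k - \tau_2^*\omega^k \in \psef^k$ along a morphism to the larger variety ($\Gamma$, resp.\ $W$). But as the paper itself emphasizes in \S\ref{sec:cone-BPF}, pseudo-effectivity of codimension-$k$ classes for $k\ge 2$ is \emph{not} preserved by pullback in general, so this step is not justified as written. The clean fix is to apply Theorem~\ref{thm_siu_bpf} directly on the top variety: on $W$ the class $\mu_2^*\omega$ is big and nef (it is nef as a pullback, and $(\mu_2^*\omega)^d = \deg(\mu_2)\,(\omega^d)>0$), so Siu applied to the BPF class $\mu_3^*\omega^k$ with respect to $\mu_2^*\omega$ gives
\[
\mu_3^*\omega^k \le C_d\,\frac{(\mu_3^*\omega^k\cdot\mu_2^*\omega^{d-k})}{(\mu_2^*\omega)^d}\,\mu_2^*\omega^k
= C_d\,\frac{\deg_{k,\omega}(g)}{(\omega^d)}\,\mu_2^*\omega^k,
\]
the factors of $\deg(\mu_2)=\deg(h)$ cancelling in the ratio. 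Intersecting with the nef class $\mu_1^*\omega^{d-k}$ then gives exactly your conclusion. The same adjustment works in Part~2 (apply Siu on $\Gamma$ with respect to $\pi_2^*\omega$ and then $\pi_1^*\omega$). With this modification your argument is correct and coincides with the proof in~\cite{dang_degree_normal}.
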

Since the sequence $(C\deg_{k,\om}(f^n))_n$ is sub-multiplicative, the sequence $(C\deg_{k,\om}(f^n))^{1/n}$ converges to
its infimum so that we can set:
\begin{equation*}
\lambda_k(f):=\lim_{n\rightarrow +\infty} (\deg_{k,\om}(f^n))^{1/n}~.
\end{equation*} 
The quantity $\lambda_k(f)$ belongs to $[1,+\infty)$, and does not depend on the choice of big and nef class $\omega$.
It also follows from Khovanskii-Teissier inequalities, see e.g.~\cite[Corollary 1.6.3]{lazarsfeld_positivity_1}, that the sequence $k \mapsto \lambda_k(f)$ is log-concave. 
In particular, we always have $\lambda_1(f)^2 \ge  \lambda_2(f)$.


\section{Numerical $b$-classes}\label{sec:numerical_b_class} 

Let $X$ be a normal projective variety of dimension $d$ defined over a field $\K$. From now on, we further assume $\K$ to be countable, algebraically closed
and of characteristic $0$.

A model (resp. a smooth model) over $X$ is a projective birational morphism $\pi \colon X' \to X$  from a normal (resp. smooth) projective variety $X'$. 
Let $\cM_X$ be the category of all smooth models over $X$. Since any projective birational morphism is isomorphic over $X$ to the blow-up of some ideal sheaf on $X$ (see e.g.~\cite[Theorem~7.17]{hartshorne}), 
$\cM_X$ is essentially small. We may thus make a slight abuse of language and think of 
$\cM_X$ as a set (which is countable since $K$ is). 
 It is also a poset for the domination relation defined by $X'\ge X''$ if and only if the map $(\pi'')^{-1}\circ \pi'\colon X'\to X''$ is regular.
Given any two elements $X_1, X_2\in \cM_X$ one can find
$X_3 \in \cM_X$ such that $X_3\ge X_1$ and $X_3\ge X_2$.
Since we work over a field of characteristic $0$, any model $X'$ is dominated by a smooth one.

Although this is not crucial for our discussion, it is convenient to introduce the projective limit over the inductive system $\cM_X$
of all models over $X$, each endowed with the Zariski topology. One obtains in this way a quasi-compact topological space that we denote by $\X$, 
which is the Riemann-Zariski space attached to $X$. We refer to~\cite{vaquie} for a description of this space in terms of valuations on the function field $\K(X)$.

\subsection{Weil and Cartier  $b$-classes} \label{section_bclass}

In this section, we explain the construction of Weil and Cartier $b$-classes. 
Note that the intersection theory is more subtle when the ambient space is not smooth. We use here in a crucial way the existence of desingularization in characteristic zero to define the intersection of $b$-classes.

A Weil $b$-class $\alpha$ of codimension $k$ is a map  which assigns to any smooth model $X'\in \cM_X$ a numerical class $\alpha_{X'}\in \num^k(X')$,
such that $\pi_*(\alpha_{X'})=\alpha_{X''}$ for any pair of \textit{smooth} models $X'\ge X''$ with $\pi= (\pi'')^{-1}\circ \pi'$.
The class $\alpha_{X'}$ is called the incarnation of $\alpha$ in $X'$. 

A Cartier $b$-class is a Weil  $b$-class $\alpha$ for which one can find a model $X''$
and a class $\beta\in \num^k(X'')$
such that $\alpha_{X'} = \pi^*(\beta)$ for any smooth model $X'\ge X''$. When it is the case, we say that 
$\alpha$ is determined in $X''$. A \emph{determination} of a Cartier class $\alpha$ is a pair consisting of a model $X''$ and a class
$\beta \in \num^k(X'')$ such that $\alpha$ is determined in $X''$ and $\alpha_{X''} =\beta$. 

We denote by $[\beta]$ the Cartier $b$-class associated to a class $\beta\in\num^k(X'')$.
We also used the notation $[V]$ for the fundamental class of a subvariety $V\subset X$, but there will be
no chance of confusion in the sequel.

\smallskip

When $d\ge 2$, the space of Weil  $b$-classes is an infinite dimensional real vector space, which we denote by $\wnum^k(\X)$.
It contains the set of Cartier  $b$-classes $\cnum^k(\X)$ as a subspace, and for each smooth model $X'$ the map
$\alpha \mapsto [\alpha]$ induces an injective linear map $\num^k(X')\to\cnum^k(\X)$.

\smallskip

The space  $\wnum^k(\X)$ can be seen as the projective limit of the spaces $\{\num^k(X')\}_{X'\in \cM_X}$ where
morphisms are given by pushforwards on smooth models. It is thus endowed with a natural product (or weak) topology. 
Since $\K$ is countable, $\wnum^k(\X)$ is a projective limit over a countable inductive set of a family of finite dimensional vector spaces. It can thus 
be identified to a closed subset of a product of countably many metrizable and separable topological spaces.
Hence $\wnum^k(\X)$ is  a metrizable, separable topological vector space.
A sequence $\alpha_n \in \wnum^k(\X)$ converges to $\beta \in \wnum^k(\X)$ if and only if
for any smooth model $X'$ we have $(\alpha_n)_{X'} \to \beta_{X'}$ in $\num^k(X')$.

By a diagonal extraction argument, any sequence $\alpha_n \in \wnum^k(\X)$ such that $\alpha_{n,X'}$ is bounded in $\num^k(X')$ for all $X'$ admits
a convergent subsequence.

Since for any $\alpha\in\wnum^k(\X)$ the sequence of Cartier $b$-classes $[\alpha_{X'}]$ converges to $\alpha$,
the space $\cnum^k(\X)$ is dense in $\wnum^k(\X)$.

\begin{rem}
In a similar way, $\cnum^k(\X)$ can be identified with the injective limit of $\{\num^k(X')\}_{\cM_X}$. 
Although we will make no use of it, it may be endowed with a natural topology, see e.g.~\cite[\S I.1, p.27]{yosida}. 
\end{rem}

Pick $\alpha\in \cnum^k(\X)$ and $\beta\in \wnum^l(\X)$. 
Suppose $\alpha$ is determined in a model $X_0$. For any smooth model $X'\ge X_0$, we set 
$(\alpha \cdot \beta)_{X'} := (\alpha_{X'} \cdot \beta_{X'}) \in \num^{k+l}(X')$. 
By the projection formula~\eqref{eq:projection-form}, $\pi_* (\alpha \cdot \beta)_{X''} = (\alpha \cdot \beta)_{X'}$ for any smooth model $X''\ge X'$
so that we may define the class $\alpha \cdot \beta \in \wnum^{k+l}(\X)$ as the unique Weil  $b$-class
whose incarnation in any smooth model dominating $X_0$ is equal to $(\alpha \cdot \beta)_{X'}$. Beware that if the canonical birational map
$\pi\colon X_0\to X'$ is regular so that $X_0\ge X'$, then the incarnation of $(\alpha \cdot \beta)_{X'}$ is equal to $\pi_*(\alpha \cdot \beta)_{X'}$ which is different from $(\alpha_{X'} \cdot \beta_{X'})$  in general.
Observe also that the intersection of two Cartier $b$-classes remains Cartier.

If we let $\wnum^\bullet(\X)= \oplus_k \wnum^k(\X)$, and $\cnum^\bullet(\X)= \oplus_k \cnum^k(\X)$, the intersection product defined above induces
a bilinear pairing \[\cnum^\bullet(\X)\times\wnum^\bullet(\X) \to \wnum^\bullet(\X) \]
respecting the natural grading, and continuous for the weak topology in the second variable. 

Note that $\cnum^d(\X) = \wnum^d(\X)$ is canonically isomorphic to $\R$ so that
Poincar\'e duality~\cite[Theorem 2.4.2]{dang_degree_normal} on each smooth model implies that the induced pairing
\[\cnum^k(\X) \times \wnum^{d-k}(\X) \to \wnum^{d}(\X)= \R\] 
is perfect.

\subsection{Positive cones of  $b$-classes}\label{sec:cone-BPF}

A class $\alpha\in\wnum^k(\X)$ is said to be pseudo-effective (and we write $\alpha\ge0$) when $\alpha_{X'}\ge0$ for all smooth models $X'$.
Note that the pseudo-effectivity is only preserved by push-forward, and not by pull-back\footnote{Blow-up a smooth rational curve $C$
in a smooth threefold $X$ with normal bundle $N_{C/X} = \cO(-1) \oplus  \cO(-1)$, see~\cite[6.20]{debarre_higher} on this specific example or~\cite[Theorem~6.7]{fulton} for a general blow-up formula.}, so that it may happen that a Cartier $b$-class
determined by a pseudo-effective class in a smooth model $X$ is not pseudo-effective in $\wnum^k(\X)$. 
In codimension $1$, a Cartier  $b$-class $\alpha \in \cnum^1(\X)$ is pseudo-effective if and only if one (or any) or its determination on a smooth model is pseudo-effective.

\smallskip

The notion of base-point free $b$-classes is more subtle to define. We build on the construction of nef $b$-classes from~\cite{MR2426355,MR3010168}.
We let $\cBPF^k(\X)$ be the convex cone in $\cnum^k(\X)$ generated by Cartier $b$-classes
$[\alpha]$ with $\alpha\in \BPF^k(X')$ for some \textit{smooth} model $X'$.
Observe that $\Vect(\cBPF^k(\X)) = \cnum^k(\X)$.

Since BPF classes are preserved by pull-backs, a class lies in $\cBPF^k(\X)$ if and only if 
 one (or any) of its determinations is BPF.

\begin{defi} 
The cone $\BPF^k(\X)$ is the (weak) closure in $\wnum^k(\X)$ of the cone $\cBPF^k(\X)$.
\end{defi}
When $k=1$, we write $\nef^1(\X)=\BPF^1(\X)$, and $\cNef(\X)=\cBPF^1(\X)$.
Note that any class in  $\BPF^k(\X)$ is pseudo-effective.
\smallskip

In other words, a class $\alpha\in \wnum^k(\X)$ is BPF if and only if for any model $X'$ one can find a sequence $\alpha_n\in \cBPF^k(\X)$
such that $(\alpha_n)_{X'} \to \alpha_{X'}$ in $\num^k(X')$. In general $\alpha_n$ depends on the model $X'$ and is not determined in $X'$.
A diagonal extraction argument yields:
\begin{lem}\label{lem_def_BPF}
 For any element $\alpha\in\BPF^k(\X)$, there exists a sequence of classes $\alpha_n \in \cBPF^k(\X)$ such that 
 $(\alpha_n)_{X'} \to \alpha_{X'}$  in $\num^k(X')$ for all model $X'$.
\end{lem}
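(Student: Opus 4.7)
The plan is to reduce the statement to the fact that $\wnum^k(\X)$, being a projective limit of finite-dimensional real vector spaces over a \emph{countable} directed set, is metrizable. This metrizability is the only place where the countability hypothesis on $\K$ really enters.

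First I would fix a cofinal sequence of smooth models $X_1\le X_2\le \cdots$ in $\cM_X$. Such a sequence exists because $\cM_X$ is countable (coming from $\card(\K)\le\aleph_0$) and directed: enumerate $\cM_X=\{Y_n\}$, and inductively choose $X_n\in\cM_X$ smooth and dominating both $X_{n-1}$ and $Y_n$. Any smooth model $X'\in\cM_X$ is then dominated by some $X_{n_0}$. I would then fix a norm $\|\cdot\|_n$ on each of the finite dimensional spaces $\num^k(X_n)$.

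Now I would use the definition of $\BPF^k(\X)$ as the weak closure of $\cBPF^k(\X)$ together with the fact that the weak topology on $\wnum^k(\X)$ is the product topology of the $\num^k(X_n)$. The set
\[
U_n := \bigl\{\gamma\in\wnum^k(\X)\;:\;\|\gamma_{X_i}-\alpha_{X_i}\|_i<1/n\text{ for }i=1,\ldots,n\bigr\}
\]
is an open weak neighborhood of $\alpha$, hence must contain some $\alpha_n\in\cBPF^k(\X)$. This is exactly a diagonal extraction: each $\alpha_n$ approximates $\alpha$ on the first $n$ models simultaneously.

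Finally I would verify that $(\alpha_n)_{X'}\to\alpha_{X'}$ for every (smooth) model $X'$. Pick $n_0$ with $X_{n_0}\ge X'$ and let $\pi\colon X_{n_0}\to X'$ be the dominance morphism. For $n\ge n_0$, the construction gives $\|(\alpha_n)_{X_{n_0}}-\alpha_{X_{n_0}}\|_{n_0}<1/n$, so $(\alpha_n)_{X_{n_0}}\to\alpha_{X_{n_0}}$. Applying the continuous linear map $\pi_*\colon\num^k(X_{n_0})\to\num^k(X')$ and using compatibility of incarnations under pushforward yields $(\alpha_n)_{X'}=\pi_*(\alpha_n)_{X_{n_0}}\to\pi_*\alpha_{X_{n_0}}=\alpha_{X'}$. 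Convergence on normal (non-smooth) models then follows by an identical pushforward argument from a smooth model dominating them.

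The only subtle point is the construction of the cofinal sequence, which genuinely requires countability; I expect no real obstacle beyond organizing this diagonal extraction cleanly, since once metrizability is recorded, the argument is essentially a reformulation of the equality of topological closure and sequential closure in a metric space.
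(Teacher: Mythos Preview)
Your proposal is correct and is precisely the diagonal extraction argument the paper alludes to (the paper only says ``A diagonal extraction argument yields'' and does not spell out the proof). Your use of a cofinal sequence in $\cM_X$, made possible by the countability of $\K$, together with the metrizability of the weak topology, is exactly what is needed; the remark about non-smooth models is harmless but unnecessary, since incarnations in $\wnum^k(\X)$ are by definition indexed by smooth models.
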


We mention the following
\begin{conj}
For all $k$, we have 
\[\cBPF^k(\X)= \cnum^k(\X) \cap \BPF^k(\X) .\] 
\end{conj}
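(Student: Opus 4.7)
One inclusion, $\cBPF^k(\X) \subseteq \cnum^k(\X) \cap \BPF^k(\X)$, is immediate from the definitions: every element of $\cBPF^k(\X)$ lies in $\cnum^k(\X)$ by construction and in $\BPF^k(\X)$ since $\cBPF^k(\X)$ is contained in its own weak closure. For the converse, pick $\alpha \in \cnum^k(\X) \cap \BPF^k(\X)$ and fix a smooth model $X_0$ in which $\alpha$ is determined. By the characterization of $\cBPF^k(\X)$ recalled just before the statement of the conjecture---namely that a Cartier $b$-class lies in $\cBPF^k(\X)$ exactly when some (equivalently, any) of its determinations is BPF---it is enough to prove that $\alpha_{X_0} \in \BPF^k(X_0)$.

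The natural strategy is to apply Lemma~\ref{lem_def_BPF} to obtain a sequence $\beta_n \in \cBPF^k(\X)$ with $(\beta_n)_{X'} \to \alpha_{X'}$ in $\num^k(X')$ for every smooth model $X'$; in particular $(\beta_n)_{X_0} \to \alpha_{X_0}$. Since $\BPF^k(X_0)$ is closed in $\num^k(X_0)$ by Theorem~\ref{thm_bpf}(1), it would suffice to show that each incarnation $(\beta_n)_{X_0}$ is itself BPF. Each $\beta_n$ is determined in some smooth model $X_n$ with $(\beta_n)_{X_n} \in \BPF^k(X_n)$, and $(\beta_n)_{X_0}$ is computed by choosing a common smooth refinement $Y_n$ dominating both $X_0$ and $X_n$: pulling $(\beta_n)_{X_n}$ back to $Y_n$ preserves BPF by Theorem~\ref{thm_bpf}(4), but descending from $Y_n$ to $X_0$ via the birational morphism $\tau_n \colon Y_n \to X_0$ is not covered by Theorem~\ref{thm_bpf}(5), which only treats flat morphisms.

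The main obstacle is thus to decide whether pushforward of a BPF class under a birational morphism between smooth projective varieties preserves the BPF property. In codimension one this is standard and yields the conjecture for $k=1$: a nef divisor $D$ on $Y_n$ satisfies $(\tau_n)_* D \cdot C \geq 0$ for every curve $C \subset X_0$, as one verifies by intersecting with the strict transform of $C$ via the projection formula, curves lying in the centre of $\tau_n$ being handled by restricting $(\tau_n)_* D$ to a resolution of the centre. In higher codimensions the question is genuinely more subtle, since the $\BPF^k$ cone is in general strictly smaller than the nef cone and pushforward under birational maps is not compatible with the complete-intersection structure used to define $\BPF^k$. A plausible line of attack is to combine Siu's inequalities (Theorem~\ref{thm_siu_bpf}) with an iterated factorization of $\tau_n$ into smooth blow-ups, and to analyse on each blow-up how a BPF class decomposes into a pullback part and an exceptional part that can be absorbed in the limit. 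Whether this scheme succeeds, or whether genuinely new positivity obstructions appear on cycles supported over the exceptional locus, is the heart of the question and explains why the statement remains conjectural.
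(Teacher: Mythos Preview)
The statement you are addressing is a \emph{Conjecture} in the paper, not a theorem; the paper gives no proof. Immediately after stating it, the authors write that the inclusion $\cBPF^k(\X)\subset \cnum^k(\X)\cap\BPF^k(\X)$ is definitional while ``the other inclusion is quite deep'', and they defer even the case $k=1$ to the sequel~\cite{dang_favre}. Your write-up is therefore not really a proof attempt but an analysis of the difficulty, and as such it is largely on target: you correctly isolate the trivial direction, the natural approximation argument via Lemma~\ref{lem_def_BPF}, and the genuine obstruction, namely that Theorem~\ref{thm_bpf}(5) only controls pushforward under flat morphisms, not under the birational morphisms $\tau_n\colon Y_n\to X_0$ that arise when one compares determinations.

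Where your discussion overreaches is the case $k=1$. You assert that pushforward of a nef divisor under a birational morphism between smooth projective varieties is nef, and justify it in one line by ``intersecting with the strict transform of $C$ via the projection formula, curves lying in the centre of $\tau_n$ being handled by restricting $(\tau_n)_* D$ to a resolution of the centre''. This is not a proof. For a curve $C$ not contained in the image of the exceptional locus the projection formula gives $(\tau_{n*}D\cdot C)=(\pi^*\tau_{n*}D\cdot\widetilde C)=(D\cdot\widetilde C)+(E\cdot\widetilde C)$ with $E=\pi^*\tau_{n*}D-D$ exceptional; one needs the Negativity Lemma to know $E\ge0$, and then $\widetilde C\not\subset\mathrm{supp}(E)$ gives $(E\cdot\widetilde C)\ge0$. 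For $C$ contained in the exceptional image, however, every lift of $C$ lies inside $\mathrm{supp}(E)$, and the sign of $(E\cdot C')$ is not controlled by effectivity alone; your ``restricting to a resolution of the centre'' does not address this. The fact that the authors postpone $k=1$ to a separate paper is a strong signal that this step is not a one-liner. For $k\ge2$ your summary of the difficulty is accurate and matches the paper's stance: nothing is claimed, and the suggested blow-up factorization strategy is speculative.
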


The inclusion $\cBPF^k(\X)\subset \cnum^k(\X) \cap \BPF^k(\X)$ follows from the definition, but the other inclusion is quite deep. 
We shall prove this conjecture when $k=1$ in~\cite{dang_favre}.
We also refer to~\cite[Theorem~5.11]{boucksom_fernex_favre_urbinati} for a result in codimension $1$ of a similar flavor.

\subsection{The norm $\normbpf{\cdot}$}

Pick any big and nef class $\omega\in\num^1(X)$, that we normalize so that $(\om^d)=1$. 
In the sequel, we abuse notation and still denote by $\omega$ the unique Cartier $b$-class determined in $X$
by $\omega$.

For any class $\alpha\in \Vect(\BPF^k(\X))$, we set 
\[
\normbpf{\alpha } = \inf_{ \substack{\alpha = \alpha_+ - \alpha_- \\
\alpha_+,\alpha_- \in {\BPF^k(\X)}}} (\alpha_+ \cdot \omega^{d-k}) + (\alpha_- \cdot \omega^{d-k}). 
\]
Since $(\alpha \cdot \omega^{d-k})\ge 0$ for any BPF class, it follows that $\normbpf{\alpha }\in \R_+$.
We begin with
\begin{lem}\label{lem:pleasethereferee}
For any class $\alpha\in\BPF^k(\X)$, we have 
\begin{equation}\label{eq:5436}
\normbpf{\alpha} = (\alpha \cdot \omega^{d-k})
\text{ and  } 0 \le \alpha \le C_d \normbpf{\alpha} \omega^k
\end{equation}
thus 
 $\normbpf{\alpha}=0$ if and only if $\alpha=0$.
\end{lem}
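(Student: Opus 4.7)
The plan is to establish the two parts of~\eqref{eq:5436} in turn; positive definiteness will then fall out of the upper bound. For the equality $\normbpf{\alpha}=(\alpha\cdot\om^{d-k})$, the decomposition $\alpha=\alpha-0$ immediately yields $\normbpf{\alpha}\le(\alpha\cdot\om^{d-k})$. Conversely, for any admissible splitting $\alpha=\alpha_+-\alpha_-$ with $\alpha_\pm\in\BPF^k(\X)$, Theorem~\ref{thm_bpf}(2) ensures that $\om^{d-k}\in\BPF^{d-k}(\X)$, and BPF classes of complementary codimensions intersect non-negatively; hence $(\alpha_-\cdot\om^{d-k})\ge0$ and
\[
(\alpha\cdot\om^{d-k})=(\alpha_+\cdot\om^{d-k})-(\alpha_-\cdot\om^{d-k})\le(\alpha_+\cdot\om^{d-k})+(\alpha_-\cdot\om^{d-k}),
\]
so taking the infimum yields the reverse inequality.

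For the Siu-type bound $\alpha\le C_d\normbpf{\alpha}\om^k$, I would apply Lemma~\ref{lem_def_BPF} to produce a sequence $\alpha_n\in\cBPF^k(\X)$ whose incarnations $(\alpha_n)_{X'}$ converge to $\alpha_{X'}$ in every smooth model $X'$. Fix such an $X'$. For each $n$, pick a determination $Y_n$ of $\alpha_n$ and choose a smooth model $X_n''$ dominating both $X'$ and $Y_n$, with associated birational maps $\pi_n\colon X_n''\to X'$ and $q_n\colon X_n''\to Y_n$. Then $(\alpha_n)_{X_n''}=q_n^*(\alpha_n)_{Y_n}$ lies in $\BPF^k(X_n'')$ by Theorem~\ref{thm_bpf}(4), while the incarnation $\om_{X_n''}$ is big and nef with $(\om_{X_n''}^d)=1$. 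Theorem~\ref{thm_siu_bpf} applied on $X_n''$ therefore gives
\[
0\le(\alpha_n)_{X_n''}\le C_d\,(\alpha_n\cdot\om^{d-k})\,\om_{X_n''}^k.
\]
Pushing down by the birational map $\pi_n$, which preserves pseudo-effectivity and satisfies $\pi_{n*}\om_{X_n''}^k=\om_{X'}^k$ by the projection formula, produces
\[
0\le(\alpha_n)_{X'}\le C_d\,(\alpha_n\cdot\om^{d-k})\,\om_{X'}^k
\]
in $\num^k(X')$. Because $\om^{d-k}$ is Cartier, the pairing against it is weakly continuous on $\wnum^k(\X)$, so $(\alpha_n\cdot\om^{d-k})\to(\alpha\cdot\om^{d-k})=\normbpf{\alpha}$ by the first part. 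Letting $n\to\infty$ and using closedness of $\psef^k(X')$ yields $0\le\alpha_{X'}\le C_d\normbpf{\alpha}\om_{X'}^k$ in every smooth model $X'$, which is precisely the announced inequality in $\wnum^k(\X)$. If $\normbpf{\alpha}=0$, this same inequality forces $\alpha_{X'}=0$ on every smooth model, hence $\alpha=0$; the converse is obvious.

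The main obstacle is that BPF-ness is not preserved by birational pushforward (Theorem~\ref{thm_bpf}(5) requires flatness), so one cannot apply Siu's inequality directly to $(\alpha_n)_{X'}$. Lifting to a common refinement $X_n''$ circumvents this: there BPF is preserved by the pullback $q_n^*$, and only pseudo-effectivity needs to survive the subsequent birational pushforward by $\pi_n$.
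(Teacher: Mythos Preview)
Your proof is correct and follows essentially the same approach as the paper: the equality $\normbpf{\alpha}=(\alpha\cdot\om^{d-k})$ is obtained identically, and for the Siu bound the paper also approximates $\alpha$ by Cartier BPF classes $\alpha_n$, invokes Siu's inequalities~\eqref{eq:siu_bpf} for each $\alpha_n$, and passes to the limit. Your write-up is simply more explicit about why the Siu estimate holds for a Cartier BPF $b$-class (by lifting to a common dominating model and pushing the pseudo-effective inequalities back down), a step the paper leaves implicit.
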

\begin{proof}
Since $\alpha = \alpha - 0$, we immediately get
$\normbpf{\alpha} \le (\alpha \cdot \omega^{d-k})$.
On the other hand, for any $\alpha_+, \alpha_- \in \BPF^k(\X)$ such that $\alpha = \alpha_+ - \alpha_-$,
we have
\[
(\alpha \cdot \omega^{d-k}) \le (\alpha_+ \cdot \omega^{d-k}) + (\alpha_- \cdot \omega^{d-k})
\]
hence $(\alpha \cdot \omega^{d-k}) \le \normbpf{\alpha}$ proving $\normbpf{\alpha} = (\alpha \cdot \omega^{d-k})$.

Pick a resolution of singularities $X'$ of $X$. Let $\alpha_n$ be any sequence of classes in $\cBPF^k(\X)$ such that
$(\alpha_n)_{X'} \to \alpha_{X'}$. We infer from Siu's inequalities~\eqref{eq:siu_bpf}:
\[
0 \le \alpha_n \le C_d (\alpha_n \cdot \omega^{d-k}) \omega^k = C_d \normbpf{\alpha_n} \omega^k~.
\]
Letting $n\to\infty$, we obtain $0 \le \alpha \le C_d \normbpf{\alpha} \omega^k$
which implies the lemma.
\end{proof}

\begin{prop} \label{prop_intermediate_banach}
There exists a constant $C>0$ depending only on $d$ such that
for all $\alpha\in \Vect(\BPF^k(\X))$, we have 
\begin{equation}\label{eq:siu-vecbpf}
- C \normbpf{\alpha} \omega^k 
\le
\alpha
\le 
C \normbpf{\alpha} \omega^k
\end{equation}
so that the function $\alpha \mapsto\normbpf{\alpha}$ defines a norm on the vector subspace of $\wnum^k(\X)$ spanned by $\BPF^k(\X)$.

Moreover, the normed space $(\Vect(\BPF^k(\X),\normbpf{\cdot})$
is complete. 
\end{prop}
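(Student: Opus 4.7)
The proof splits into establishing the Siu-type bound \eqref{eq:siu-vecbpf}, verifying the norm axioms, and proving completeness. For \eqref{eq:siu-vecbpf} I would take, for $\alpha \in \Vect(\BPF^k(\X))$ and $\varepsilon > 0$, a decomposition $\alpha = \alpha_+ - \alpha_-$ with $\alpha_\pm \in \BPF^k(\X)$ and $\normbpf{\alpha_+} + \normbpf{\alpha_-} \le \normbpf{\alpha} + \varepsilon$. Applying the Siu bound of Lemma~\ref{lem:pleasethereferee} to $\alpha_\pm$ together with $\alpha \le \alpha_+$ and $-\alpha \le \alpha_-$ yields $\pm \alpha \le C_d(\normbpf{\alpha} + \varepsilon)\omega^k$; letting $\varepsilon \to 0$ and using closedness of $\psef^k(X')$ on each smooth model gives \eqref{eq:siu-vecbpf} with $C = C_d$. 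Positive homogeneity is immediate (swap $\alpha_\pm$ for negative scalars), and the triangle inequality follows because $\BPF^k(\X)$ is a convex cone, so the sum of BPF decompositions of $\alpha$ and $\beta$ is a BPF decomposition of $\alpha + \beta$. Positive definiteness is then a consequence of \eqref{eq:siu-vecbpf}: if $\normbpf{\alpha} = 0$ then $\pm \alpha_{X'} \in \psef^k(X')$, and this cone is salient by \cite[Theorem 1.4]{MATH06741284}, so $\alpha_{X'} = 0$ for every smooth model $X'$, hence $\alpha = 0$.

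For completeness I would use the classical absolutely-convergent-subseries device. Given a Cauchy sequence $(\alpha_n)$, extract a subsequence with $\normbpf{\alpha_{n_{j+1}} - \alpha_{n_j}} < 2^{-j}$ and pick BPF decompositions $\alpha_{n_{j+1}} - \alpha_{n_j} = \beta_j^+ - \beta_j^-$ with $(\beta_j^+ \cdot \omega^{d-k}) + (\beta_j^- \cdot \omega^{d-k}) < 2^{-j+1}$. The partial sums $S_N^\pm := \sum_{j=1}^N \beta_j^\pm$ lie in $\BPF^k(\X)$, form monotone increasing sequences in $\psef^k(X')$ on each smooth model $X'$, and are uniformly bounded above by $C_d \bigl(\sum_j (\beta_j^\pm \cdot \omega^{d-k})\bigr) (\omega^k)_{X'}$ via Siu's inequality \eqref{eq:siu_bpf}. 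In the finite-dimensional space $\num^k(X')$ ordered by the closed salient cone $\psef^k(X')$, any order interval $[0,y]$ is compact (pick a strictly positive functional in the interior of the dual cone) and bounded monotone sequences converge; this produces a limit $\gamma^\pm_{X'} \in \BPF^k(X')$. By weak continuity of pushforward, these incarnations are compatible and assemble into a class $\gamma^\pm \in \wnum^k(\X)$ that lies in $\BPF^k(\X)$ since this cone is weakly closed by definition.

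Setting $\alpha_\infty := \alpha_{n_1} + \gamma^+ - \gamma^-$, the tails $\gamma^\pm - S_J^\pm$ are weak limits of BPF classes and therefore lie in $\BPF^k(\X)$, so
\[
\normbpf{\alpha_\infty - \alpha_{n_{J+1}}} \le \bigl((\gamma^+ - S_J^+) \cdot \omega^{d-k}\bigr) + \bigl((\gamma^- - S_J^-) \cdot \omega^{d-k}\bigr).
\]
Intersection against the Cartier class $\omega^{d-k}$ is weakly continuous, so each summand equals $\sum_{j > J}(\beta_j^\pm \cdot \omega^{d-k})$, which tends to zero. Hence $\alpha_{n_j} \to \alpha_\infty$ in norm, and being Cauchy the full sequence converges as well.

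The delicate point is the construction of $\gamma^\pm$: it combines monotone convergence in the order defined by the closed salient cone $\psef^k(X')$ in finite dimension, the Siu bound furnishing uniform upper bounds for the partial sums, pushforward compatibility to glue the incarnations into a Weil $b$-class, and weak closedness of $\BPF^k(\X)$ to keep the limit inside the cone. All other steps reduce to the BPF Siu inequality of Lemma~\ref{lem:pleasethereferee} and the weak continuity of intersection with Cartier classes.
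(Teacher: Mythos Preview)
Your argument is correct and matches the paper's approach: the Siu bound \eqref{eq:siu-vecbpf} via an $\varepsilon$-optimal BPF decomposition, and completeness via a fast subsequence whose consecutive differences have summable BPF decompositions, with the tails furnishing the BPF decomposition of $\alpha_\infty - \alpha_{n_j}$. The paper establishes weak convergence of the partial sums by pairing against Cartier BPF classes rather than by your model-by-model monotone argument, but this is a cosmetic difference; one small inessential slip is your claim that $\gamma^\pm_{X'} \in \BPF^k(X')$ (birational push-forward need not preserve BPF), though you only actually use $\gamma^\pm \in \BPF^k(\X)$, which follows from weak closedness as you say.
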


\begin{rem}
By Siu's inequalities~\eqref{eq:siu_bpf}, replacing $\omega$ by another big and nef class changes the norm $\normbpf{\cdot}$ by an equivalent one. 
 \end{rem}

\begin{proof} Pick any class $\alpha \in \Vect(\BPF^k(\X))$.
Then for any $\epsilon>0$, we may find
two classes $\alpha_\pm\in\BPF^k(\X)$ such that $\alpha=\alpha_+-\alpha_-$ and $\normbpf{\alpha_+}+\normbpf{\alpha_-}\le  \normbpf{\alpha}+\epsilon$
so that~\eqref{eq:siu-vecbpf} follows from~\eqref{eq:5436} by letting $\epsilon\to0$.

\smallskip

It remains to prove that $(\Vect(\BPF^k(\X)),\normbpf{\cdot})$ is complete.  Take a Cauchy sequence $\alpha_n\in\Vect(\BPF^k(\X))$.
  
We first claim that the sequence $\alpha_n$ converges (weakly) in $\wnum^{k}(\X)$. By duality and by Theorem~\ref{thm_bpf} (1), this is equivalent to proving that 
 the sequence $(\alpha_n \cdot [\gamma])$ has a limit for any basepoint free class $\gamma\in\BPF^{d-k}(X')$ for any model $X'$.
We shall prove that the sequence of real numbers $(\alpha_n \cdot [\gamma])$  in fact forms a Cauchy sequence. 
Indeed, for all $n,m$ large enough, the difference $\alpha_n - \alpha_m$ can be decomposed as:
\begin{equation*}
\alpha_n- \alpha_m = \beta_{nm} - \delta_{nm}
\end{equation*}
with $\beta_{nm}, \delta_{nm} \in {\BPF^k(\X)}$ and $(\beta_{nm} \cdot \omega^{d-k}) + (\delta_{nm} \cdot \omega^{d-k}) < \epsilon$. 
Siu's inequalities~\eqref{eq:siu_bpf} yield $\gamma \le C_d \omega^{d-k}$, and 
we have
\begin{equation} \label{eq_cauchy}
|(\alpha_n \cdot \gamma) - (\alpha_m \cdot \gamma)| = |(\beta_{nm} \cdot \gamma) - (\delta_{nm} \cdot \gamma)| \le C_d (\beta_{nm} \cdot \omega^{d-k}) + C_d (\delta_{nm}\cdot \omega^{d-k}) \le C_d \epsilon  
\end{equation}
which proves our claim.
\smallskip

Denote by $\alpha$ the limit in $\wnum^{k}(\X)$ of the sequence $\alpha_n$. 
By taking an appropriate subsequence of $\alpha_n$, we may suppose that $\normbpf{\alpha_{n+1} - \alpha_n} \le 1/n^2$. 
We may thus decompose $\alpha_{n+1}- \alpha_n$ as:
\[
\alpha_{n+1} - \alpha_n = \beta_n - \delta_n,
\]
with $\beta_n, \delta_n \in \BPF^{k}(\X)$ and $ (\beta_n \cdot \omega^{d-k}) + (\delta_n \cdot \omega^{d-k}) \le 2/n^2$.
Define $\alpha_+ = \sum_{m=1}^{+\infty} \beta_m$, $\alpha_- = \sum_{m=1}^{+\infty} \delta_m$, $R_n =\sum_{m\ge n} \beta_m$, and 
$S_n =\sum_{m\ge n} \delta_m$. Observe that these four classes belong to $\BPF^k(\X)$.
By construction:
\begin{equation*}
\alpha -\alpha_1 = \alpha_+ - \alpha_-
\end{equation*}
hence $\alpha\in\Vect(\BPF^k(\X))$, and the difference $\alpha - \alpha_n$ can be expressed as:
\begin{equation*}
\alpha - \alpha_n = R_{n} - S_{n},
\end{equation*}
with $(R_n \cdot \omega^{d-k}) + (S_n \cdot \omega^{d-k}) \le 4\sum_{m\ge n} m^{-2} \to 0$. 
This concludes our proof.
\end{proof}

\begin{cor} \label{cor_intermediate_banach} 
The vector space ${\Vect(\BPF^\bullet(\X))}$, endowed with the norm $\normbpf{\cdot} $ is a graded Banach space for the natural grading by codimension.
\end{cor}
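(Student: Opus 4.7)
The plan is to bootstrap from Proposition~\ref{prop_intermediate_banach} by observing that the codimension grading on $\wnum^\bullet(\X)=\bigoplus_{k=0}^d\wnum^k(\X)$ already provides a direct sum decomposition, and then assemble finitely many Banach spaces into one.

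First I would note that any class $\alpha\in\Vect(\BPF^\bullet(\X))$ can be written uniquely as $\alpha=\sum_{k=0}^d\alpha_k$ with $\alpha_k\in\Vect(\BPF^k(\X))$, because $\BPF^k(\X)\subset\wnum^k(\X)$ and the homogeneous components of an element of $\wnum^\bullet(\X)$ are determined. Hence
\[
\Vect(\BPF^\bullet(\X))=\bigoplus_{k=0}^d\Vect(\BPF^k(\X))
\]
as graded vector spaces. The natural candidate for the norm on the direct sum is $\normbpf{\alpha}=\sum_k\normbpf{\alpha_k}$, which one checks to be compatible with the formula defining $\normbpf{\cdot}$ on each $\Vect(\BPF^k(\X))$ (it reduces to it when $\alpha$ is homogeneous of codimension $k$ since cross-codimension decompositions do not contribute).

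Next I would verify that this defines a norm: positive homogeneity, the triangle inequality, and positive definiteness each follow coordinatewise from Proposition~\ref{prop_intermediate_banach}, which gives $\normbpf{\alpha_k}=0\iff\alpha_k=0$ on each graded piece via Lemma~\ref{lem:pleasethereferee} and the Siu-type bound~\eqref{eq:siu-vecbpf}.

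Finally, completeness is immediate: since $d$ is finite, we are taking a direct sum of finitely many Banach spaces (each $(\Vect(\BPF^k(\X)),\normbpf{\cdot})$ is Banach by Proposition~\ref{prop_intermediate_banach}), and such a finite $\ell^1$-type direct sum is automatically Banach. Given a Cauchy sequence $(\alpha^{(n)})_n$ in $\Vect(\BPF^\bullet(\X))$, each component sequence $(\alpha^{(n)}_k)_n$ is Cauchy in $\Vect(\BPF^k(\X))$, and therefore converges to some limit $\alpha_k$; the class $\alpha=\sum_{k=0}^d\alpha_k$ is then the desired limit in $\Vect(\BPF^\bullet(\X))$.

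There is no real obstacle here—the corollary is a packaging statement. The only subtlety worth stating cleanly is that the norm on the direct sum is genuinely a norm compatible with the grading, which boils down to the observation that the intersection $(\alpha\cdot\om^{d-k})$ automatically selects the codimension-$k$ piece and so the optimization defining $\normbpf{\cdot}$ decouples across codimensions.
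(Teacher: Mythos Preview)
Your proposal is correct and matches the paper's approach: the paper gives no proof at all for this corollary, treating it as an immediate repackaging of Proposition~\ref{prop_intermediate_banach} into a finite direct sum, exactly as you have spelled out.
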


\begin{rem} \label{rem:not-dense}
We expect $\cnum^k(\X)$ \emph{not} to be dense in ${\Vect(\BPF^k(\X))}$ for the norm $\normbpf{\cdot}$ (at least when $0<k<\dim(X)$).
This phenomenon prevents the intersection form from extending continuously to $(\Vect(\BPF^\bullet(\X)), \normbpf{\cdot})$, thereby forcing us to consider
smaller Banach spaces that are better behaved.
\end{rem}

\subsection{The space $\numbpf{k}(\X)$} \label{section_nbpf}
For any $0\le k \le d$, we let  $\numbpf{k}(\X)$ be the closure of $\cnum^k(\X)$ in the Banach space $(\Vect(\BPF^k(\X)),\normbpf{\cdot})$.
Equivalently  $\numbpf{k}(\X)$ is the completion for the norm $\normbpf{\cdot}$ of the space of Cartier $b$-classes  $\cnum^k(\X)$.
We write 
\[\numbpf{\bullet}(\X) = \oplus_k \numbpf{k}(\X).\]

\begin{thm} \label{thm_banach_algebra} The following properties hold.
\begin{enumerate}
\item $(\numbpf{\bullet}(\X), \normbpf{\cdot})$ is a graded Banach space for the natural grading containing $\cnum^\bullet(\X)$ as a dense subset.
\item The intersection product on $\cnum^\bullet(\X)$ extends continuously to $\numbpf{\bullet}(\X)$ and endows the latter with a structure of a graded normed Banach algebra with unit $[X]$. 
\end{enumerate}
\end{thm}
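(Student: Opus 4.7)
For part~(1), the space $\numbpf{k}(\X)$ is by construction the closure in the Banach space $(\Vect(\BPF^k(\X)),\normbpf{\cdot})$ produced by Proposition~\ref{prop_intermediate_banach} of its subspace $\cnum^k(\X)$, and is therefore itself a Banach space containing $\cnum^k(\X)$ as a dense subset. Equipping the direct sum $\numbpf{\bullet}(\X) = \oplus_{k=0}^d \numbpf{k}(\X)$ with the sum norm yields a graded Banach space in which $\cnum^\bullet(\X) = \oplus_k \cnum^k(\X)$ is dense.

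For part~(2), the crux is to prove a submultiplicativity estimate
\begin{equation*}
\normbpf{\alpha\cdot\beta}\,\le\, C_d\, \normbpf{\alpha}\normbpf{\beta}
\end{equation*}
for Cartier $b$-classes $\alpha\in\cnum^k(\X)$, $\beta\in\cnum^l(\X)$, with a constant $C_d$ depending only on $d$. Granted this bound, the bilinear intersection pairing $\cnum^k(\X)\times\cnum^l(\X)\to \cnum^{k+l}(\X)\subset \numbpf{k+l}(\X)$ introduced in Section~\ref{section_bclass} is continuous and therefore admits a unique continuous bilinear extension to the completions. Associativity, commutativity, and the fact that $[X]\in\cnum^0(\X)$ is a two-sided identity with $\normbpf{[X]}=(\omega^d)=1$ pass to the limit by density; replacing $\normbpf{\cdot}$ by the equivalent norm $C_d\normbpf{\cdot}$ on positive-codimension components puts the inequality into the standard Banach algebra form.

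The submultiplicativity itself will follow by combining Siu's inequality with the multiplicativity of BPF classes. Fix $\varepsilon>0$, and using $\Vect(\cBPF^k(\X))=\cnum^k(\X)$, decompose
\begin{equation*}
\alpha=\alpha_+-\alpha_-, \qquad \beta=\beta_+-\beta_-
\end{equation*}
with $\alpha_\pm\in\cBPF^k(\X)$, $\beta_\pm\in\cBPF^l(\X)$, and $\normbpf{\alpha_+}+\normbpf{\alpha_-}\le\normbpf{\alpha}+\varepsilon$, $\normbpf{\beta_+}+\normbpf{\beta_-}\le\normbpf{\beta}+\varepsilon$. By Theorem~\ref{thm_bpf}~(2), each of the four cross-products $\alpha_s\cdot\beta_t$ (with $s,t\in\{+,-\}$) lies in $\cBPF^{k+l}(\X)$, so Lemma~\ref{lem:pleasethereferee} gives $\normbpf{\alpha_s\cdot\beta_t}=(\alpha_s\cdot\beta_t\cdot\omega^{d-k-l})$. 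Applying Siu's inequality~\eqref{eq:5436} to $\alpha_s$ yields $0\le\alpha_s\le C_d\normbpf{\alpha_s}\omega^k$, while the companion class $\beta_t\cdot\omega^{d-k-l}$ lies in $\cBPF^{d-k}(\X)$ (since $\omega\in\cBPF^1(\X)$ and Theorem~\ref{thm_bpf}~(2) applies) and is therefore nef by Theorem~\ref{thm_bpf}~(3). Intersecting a pseudo-effective class with a nef class of complementary codimension is nonnegative, so
\begin{equation*}
(\alpha_s\cdot\beta_t\cdot\omega^{d-k-l})\,\le\, C_d\normbpf{\alpha_s}\,(\omega^k\cdot\beta_t\cdot\omega^{d-k-l})\,=\,C_d\normbpf{\alpha_s}\normbpf{\beta_t}.
\end{equation*}
Summing the four contributions and letting $\varepsilon\to 0$ delivers the desired bound.

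The main subtle point is ensuring that the standard positivity pairing between pseudo-effective and nef classes is legitimate at the level of $b$-classes. This is handled by working at the level of smooth determinations: one picks a smooth model $X'$ on which $\alpha_s$ and $\beta_t$ are both determined, writes the intersection number as $(\alpha_{s,X'}\cdot\beta_{t,X'}\cdot\omega_{X'}^{d-k-l})$ in $\num^d(X')=\R$, and applies the ordinary positivity properties of Theorem~\ref{thm_bpf}; the inequality then descends unambiguously back to $\wnum^d(\X)=\R$. It is also essential that $\omega$ itself lies in $\cBPF^1(\X)$, which is precisely the identification $\nef^1=\BPF^1$ provided by Theorem~\ref{thm_bpf}~(7).
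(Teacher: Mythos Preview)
Your argument is correct and follows the same idea as the paper's: continuity of the product is obtained from Siu's inequalities together with the fact that the BPF cones are closed under intersection. You organize this as a clean submultiplicativity estimate $\normbpf{\alpha\cdot\beta}\le C_d\normbpf{\alpha}\normbpf{\beta}$ for Cartier classes and then extend by density, whereas the paper verifies directly that $(\alpha_n\cdot\beta_n)$ is Cauchy using the telescoping identity $(\alpha_n\cdot\beta_n)-(\alpha_m\cdot\beta_m)=(\alpha_n-\alpha_m)\cdot\beta_n+\alpha_m\cdot(\beta_n-\beta_m)$; the underlying inequality is the same in both cases.

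One small point to tighten: you choose $\alpha_\pm\in\cBPF^k(\X)$ with $\normbpf{\alpha_+}+\normbpf{\alpha_-}\le\normbpf{\alpha}+\varepsilon$, but the infimum defining $\normbpf{\alpha}$ is taken over all $\alpha_\pm\in\BPF^k(\X)$, not just Cartier ones, so it is not a priori clear that Cartier decompositions approach it. This is easily fixed: if $\alpha$ is determined in a smooth model $X'$ and $\alpha=\alpha_+-\alpha_-$ is any near-optimal Weil BPF decomposition, then the incarnations $\alpha_{\pm,X'}$ lie in $\BPF^k(X')$ (proper pushforward preserves BPF, cf.\ \cite{MATH06741284}), and $[\alpha_{\pm,X'}]$ furnish a Cartier BPF decomposition with the same pairing against $\omega^{d-k}$. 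The paper's proof is equally terse on this point.
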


\begin{proof}
The first statement is a direct consequence of the definition. Let us explain why the intersection product extends to $\numbpf{\bullet}(\X)$. 
Take $\alpha \in \numbpf{k}(\X)$ and $\beta \in \numbpf{l}(\X)$ and consider two sequences $\alpha_n ,\beta_n \in \cnum^{\bullet}(\X)$ converging in norm to $\alpha$ and $\beta$ respectively.  We claim that $(\alpha_n \cdot \beta_n)$ forms a Cauchy sequence in $\numbpf{k+l}(\X)$.
Grant this claim. Then we may define $(\alpha\cdot \beta)$ as the limit of  $(\alpha_n \cdot \beta_n)$ as $n\to\infty$ and this limit does not depend on the choice
of approximating sequences $\alpha_n$ and $\beta_n$.

To prove the claim, we fix any $\epsilon >0$, and write 
\begin{equation*}
\alpha_n - \alpha_m = u_{nm}^+ - u_{nm}^-,\text{ and } \beta_n - \beta_m = v_{nm}^+ - v_{nm}^-
\end{equation*} 
with $\normbpf{u_{nm}^+} + \normbpf{ u_{nm}^-}+ \normbpf{v_{nm}^+}+ \normbpf{ v_{nm}^-} \le \epsilon$.
We then have
\begin{align*} \label{eq_difference}
(\alpha_n\cdot \beta_n) - (\alpha_m\cdot \beta_m)
&= \left( (\alpha_n - \alpha_m) \cdot \beta_n\right) + \left(\alpha_m \cdot (\beta _n - \beta_m)\right).
\end{align*}
By Siu's inequalities~\eqref{eq:siu-vecbpf}, we get
\[
\normbpf{\left( (\alpha_n - \alpha_m) \cdot \beta_n\right)}
\le 
C_d\,
\left(\normbpf{u_{nm}^+} + \normbpf{ u_{nm}^-}\right) \sup_n  \normbpf{\beta_n}
\]
so that 
\[\normbpf{(\alpha_n\cdot \beta_n) - (\alpha_m\cdot \beta_m)}
\le 
C_d\,
\epsilon\, \left( \sup_n  \normbpf{\beta_n} + \sup_m  \normbpf{\alpha_m} \right)
\le C' \epsilon~,\]
completing the proof of our claim. 

The proof that the product is continuous follows from the same estimates.
\end{proof}

\subsection{The dual norm $\normbpfs{\cdot}$}\label{sec:dual-norm}
Recall that for any BPF class $\gamma\neq0$ of codimension $k$, we have $(\omega^k \cdot \gamma)\neq0$ by~\eqref{eq:5436}.
For any Cartier $b$-class $\alpha \in \cnum^k(\X)$ we set:
\begin{equation}
\normbpfs{\alpha} := \sup_{\substack{\gamma \in \cBPF^{d-k}(\X) \\
\gamma\neq 0 }} \frac{|(\alpha \cdot \gamma)|}{(\omega^k \cdot \gamma)}.
\end{equation} 
Since for any smooth model $X'$ the pairing $\num^k (X')\times \num^{d-k}(X')\to \R$ is perfect, 
we have $\normbpfs{\alpha}=0$ if and only if $\alpha=0$. Observe also that $\normbpfs{\alpha}= \sup_{\normbpf{\gamma}=1} |(\alpha \cdot \gamma)|$
justifying its appellation of dual norm.

\smallskip

We let $\numbpfs{k}(\X)$ be the completion of $\cnum^k(\X)$ with respect to  $\normbpfs{\cdot}$. The topology induced by this norm being finer than the weak topology, 
we have a continuous injection  $\numbpfs{k}(\X) \subset \wnum^k(\X)$.

\begin{prop}\label{prop:bpf_vs_bpf*}
There exists a constant $C>0$ depending only on $d$ such that 
for any class $\alpha\in\cnum^k(\X)$, we have $\normbpfs{\alpha}\le C \normbpf{\alpha}$. In particular, the inclusion
$\numbpf{k}(\X)\subset \numbpfs{k}(\X)$ is a continuous injection.
\end{prop}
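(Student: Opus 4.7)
I would reduce the statement to the norm inequality $\normbpfs{\alpha} \le C_d\,\normbpf{\alpha}$ for every $\alpha \in \cnum^k(\X)$, with $C_d$ the dimensional constant of Siu's inequalities~\eqref{eq:siu_bpf}. Once that is in hand, the continuous injection $\numbpf{k}(\X) \hookrightarrow \numbpfs{k}(\X)$ follows formally: the identity on $\cnum^k(\X)$ is Lipschitz between the two norms and thus extends uniquely to a bounded linear map $\iota$ between the completions, and $\iota$ is injective because both completions map continuously into the Hausdorff space $\wnum^k(\X)$ — the $\normbpfs{\cdot}$-topology is explicitly finer than the weak topology, and the $\normbpf{\cdot}$-topology is too, as already exploited in the Cauchy sequence argument of Proposition~\ref{prop_intermediate_banach}.

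For the core estimate I would proceed as follows. Fix $\alpha \in \cnum^k(\X)$ and $\epsilon>0$, and pick a decomposition $\alpha=\alpha_+-\alpha_-$ with $\alpha_\pm \in \BPF^k(\X)$ satisfying $\normbpf{\alpha_+}+\normbpf{\alpha_-} \le \normbpf{\alpha}+\epsilon$. For any non-zero $\gamma \in \cBPF^{d-k}(\X)$, Lemma~\ref{lem:pleasethereferee} gives
\[
0 \le \alpha_\pm \le C_d\,\normbpf{\alpha_\pm}\,\omega^k
\]
in $\wnum^k(\X)$. Because $\gamma$ is Cartier, pairing with $\gamma$ yields a well-defined real number, and the intersection of any pseudo-effective Weil $b$-class with a Cartier BPF class is non-negative (checked model by model, using that BPF classes on smooth varieties are nef by Theorem~\ref{thm_bpf}(3)). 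Consequently
\[
0 \le (\alpha_\pm\cdot\gamma) \le C_d\,\normbpf{\alpha_\pm}\,(\omega^k\cdot\gamma),
\]
and the triangle inequality gives
\[
|(\alpha\cdot\gamma)| \le (\alpha_+\cdot\gamma)+(\alpha_-\cdot\gamma) \le C_d\,(\normbpf{\alpha}+\epsilon)\,(\omega^k\cdot\gamma).
\]
Since $(\omega^k\cdot\gamma)>0$ by~\eqref{eq:5436} applied to $\gamma$, I would divide, take the supremum over $\gamma$, and let $\epsilon\to 0$ to conclude.

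There is no serious obstacle here once the optimal $\BPF$-decomposition is used: the proof is a direct transcription of Siu's primal inequality into a statement about the pairing. The only conceptual point to appreciate is that the primal bound in $\wnum^k(\X)$ survives the pairing against any Cartier BPF class — this is precisely what forces the dual norm to be \emph{dominated} by the primal norm rather than the reverse, and is the reason we cannot hope to invert the comparison without further work.
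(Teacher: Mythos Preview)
Your proof is correct and follows essentially the same approach as the paper's: decompose $\alpha$ into BPF classes, apply Siu's inequality~\eqref{eq:5436} to each piece, and pair against $\gamma$. The paper is slightly more casual, writing $\max\{\normbpf{\alpha_+},\normbpf{\alpha_-}\}\le 2\normbpf{\alpha}$ to obtain $C=2C_d$, whereas your $\epsilon$-optimal decomposition yields the sharper $C=C_d$; you also spell out the injectivity of the completion map, which the paper leaves implicit.
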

\begin{proof}
Write $\alpha = \alpha_+- \alpha_-$ with $\alpha_\pm\in\BPF(\X)$
so that  
\[\max \left\{ \normbpf{\alpha_+} , \normbpf{\alpha_-}\right\}\le 2 \normbpf{\alpha}.\] 
By Siu's inequalities~\eqref{eq:siu_bpf}, 
we get $\alpha_\pm \le C_d \normbpf{\alpha} \omega^k$.
For any $\gamma \in \cBPF^{d-k}(\X)$ such that $(\omega^k \cdot \gamma) =1$, by intersecting the previous relation with $\gamma$, we conclude  that
$|(\alpha\cdot \gamma)|\le 2C_d \normbpf{\alpha}$,  hence $\normbpfs{\alpha}\le 2C_d \normbpf{\alpha}$. 
\end{proof}

Let us now discuss the relationship of $\numbpfs{k}(\X)$  with the continuous duals $\numbpf{d-k}(\X)^*$ and $\Vect(\BPF^{d-k}(\X))^*$.

\begin{prop} \label{prop_inclusion_continuous_duals}
The continuous dual $\numbpf{d-k}(\X)^*$ can be canonically identified with the space of classes $\alpha\in \wnum^k(\X)$ for which there exists $C_\alpha>0$
such that
\[|(\alpha\cdot \gamma)| \le C_\alpha \normbpf{\gamma} \text{ for all } \gamma\in\cBPF^{d-k}(\X).\]
In particular, we have the following continuous injections
\[\numbpf{k}(\X)\subset \numbpfs{k}(\X)\subset \numbpf{d-k}(\X)^* \subset \wnum^k(\X)
, \text{ and }\]
\[\Vect(\BPF^{k}(\X))\subset \numbpfs{d-k}(\X)^* \subset \numbpf{d-k}(\X)^*  \subset \wnum^k(\X)
.\]
  \end{prop}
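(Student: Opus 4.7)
The proof naturally splits into (i) the bijective identification of $\numbpf{d-k}(\X)^*$ with the subspace $S\subset\wnum^k(\X)$ of Weil $b$-classes satisfying the displayed bound, and (ii) the chain of continuous injections, which will follow once (i) is available. For the forward direction of (i), given $\phi\in\numbpf{d-k}(\X)^*$, I would restrict $\phi$ via the embedding $\beta\mapsto[\beta]$ to each finite-dimensional subspace $\num^{d-k}(X')\subset\cnum^{d-k}(\X)$; Poincaré duality on the smooth model $X'$ produces a unique $\alpha_{X'}\in\num^k(X')$ with $\phi([\beta])=(\alpha_{X'}\cdot\beta)$. For any birational morphism $\pi\colon X'\to X''$ of smooth models and $\beta\in\num^{d-k}(X'')$, the identity $[\beta]=[\pi^*\beta]$ in $\cnum^{d-k}(\X)$ combined with the projection formula~\eqref{eq:projection-form} gives $\pi_*\alpha_{X'}=\alpha_{X''}$, so $\{\alpha_{X'}\}_{X'}$ assembles into a Weil $b$-class $\alpha\in\wnum^k(\X)$, and continuity of $\phi$ forces $|(\alpha\cdot\gamma)|=|\phi(\gamma)|\le\|\phi\|\normbpf{\gamma}$ on $\cBPF^{d-k}(\X)$, placing $\alpha$ in $S$.

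Conversely, starting with $\alpha\in S$, I would define $\phi_\alpha(\gamma):=(\alpha\cdot\gamma)$ on $\cnum^{d-k}(\X)=\Vect(\cBPF^{d-k}(\X))$ and extend continuously to $\numbpf{d-k}(\X)$. In each smooth model $X'$, the bound on $\cBPF^{d-k}(\X)$ translates, via $\gamma=[\beta]$ for $\beta\in\BPF^{d-k}(X')$, into the finite-dimensional estimate $|(\alpha_{X'}\cdot\beta)|\le C_\alpha(\pi^*\om^k\cdot\beta)$ with $\pi\colon X'\to X$; since $\num^{d-k}(X')=\Vect(\BPF^{d-k}(X'))$ by Theorem~\ref{thm_bpf}(1), decomposing and summing gives $|(\alpha_{X'}\cdot\beta)|\le C_\alpha\|\beta\|_{X'}$ for every $\beta\in\num^{d-k}(X')$, where $\|\beta\|_{X'}:=\inf\{(\pi^*\om^k\cdot\beta_+)+(\pi^*\om^k\cdot\beta_-):\beta=\beta_+-\beta_-,\,\beta_\pm\in\BPF^{d-k}(X')\}$. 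For $\gamma$ determined in $X_0$, this gives $|(\alpha\cdot\gamma)|=|(\alpha_{X_0}\cdot\gamma_{X_0})|\le C_\alpha\|\gamma_{X_0}\|_{X_0}$, and it then remains to compare $\|\gamma_{X_0}\|_{X_0}$ with $\normbpf{\gamma}$ to obtain the required $\normbpf{\cdot}$-continuity.

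The chain of injections will then follow without further serious work. The inclusion $\numbpf{k}(\X)\subset\numbpfs{k}(\X)$ is Proposition~\ref{prop:bpf_vs_bpf*}; for $\numbpfs{k}(\X)\subset\numbpf{d-k}(\X)^*$, the definition of $\normbpfs{\cdot}$ gives $|(\alpha\cdot\gamma)|\le\normbpfs{\alpha}(\om^k\cdot\gamma)=\normbpfs{\alpha}\normbpf{\gamma}$ for $\gamma\in\cBPF^{d-k}(\X)$, placing $\alpha\in\cnum^k(\X)$ in $S$ with $C_\alpha\le\normbpfs{\alpha}$, which extends by continuity to $\numbpfs{k}(\X)$; the inclusion $\numbpf{d-k}(\X)^*\subset\wnum^k(\X)$ is the identification itself. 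For the second line, $\Vect(\BPF^k(\X))\subset\numbpfs{d-k}(\X)^*$ is obtained by approximating $\alpha\in\BPF^k(\X)$ weakly by $\alpha_n\in\cBPF^k(\X)$ via Lemma~\ref{lem_def_BPF}; the weak continuity of intersection in the Weil factor and the bound $|(\alpha_n\cdot\gamma)|\le\normbpf{\alpha_n}\normbpfs{\gamma}$, combined with $\normbpf{\alpha_n}=(\om^{d-k}\cdot\alpha_n)\to\normbpf{\alpha}$, yield the desired functional on $\numbpfs{d-k}(\X)$; finally $\numbpfs{d-k}(\X)^*\subset\numbpf{d-k}(\X)^*$ is the transpose of Proposition~\ref{prop:bpf_vs_bpf*}.

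The main obstacle is the comparison $\|\gamma_{X_0}\|_{X_0}\le C'\normbpf{\gamma}$ needed in the reverse direction: the inequality $\normbpf{\gamma}\le\|\gamma_{X_0}\|_{X_0}$ is immediate since any $\BPF^{d-k}(X_0)$-decomposition of $\gamma_{X_0}$ lifts to a Cartier $\BPF^{d-k}(\X)$-decomposition of $\gamma$ with the same $\omega^k$-weight, but the reverse inequality is subtle because a $\BPF^{d-k}(\X)$-decomposition $\gamma=\gamma_+-\gamma_-$ realizing $\normbpf{\gamma}$ need not have Cartier summands, and pushforward of BPF classes under birational morphisms does not preserve BPF-ness, so the incarnations $(\gamma_\pm)_{X_0}$ are in general not in $\BPF^{d-k}(X_0)$. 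The fix I envisage is to approximate $\gamma_\pm$ weakly by Cartier classes $\gamma_n^\pm\in\cBPF^{d-k}(\X)$ determined in models dominating $X_0$ (possible by the pullback stability in Theorem~\ref{thm_bpf}(4) combined with Lemma~\ref{lem_def_BPF}), with $(\om^k\cdot\gamma_n^\pm)\to(\om^k\cdot\gamma_\pm)$, and then absorb the weakly vanishing Cartier residual $\gamma-(\gamma_n^+-\gamma_n^-)$ into additional small Cartier BPF pieces in $X_0$; controlling the cost of this absorption is the technical heart of the argument.
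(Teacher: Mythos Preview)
Your forward direction and the chain of injections are correct and match the paper; the paper simply invokes the perfect pairing $\wnum^k(\X)\times\cnum^{d-k}(\X)\to\R$ in one stroke rather than running Poincar\'e duality model by model, and for $\Vect(\BPF^{k}(\X))\subset\numbpfs{d-k}(\X)^*$ it observes directly that $|(\alpha\cdot\gamma)|\le(\alpha\cdot\om^{d-k})\,\normbpfs{\gamma}$ from the definition of the dual norm (first for $\alpha\in\cBPF^k(\X)$, then by density for $\alpha\in\BPF^k(\X)$), so no separate approximation of $\alpha$ is needed.

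For the converse of the identification the paper is far shorter than your plan: given the bound on $\cBPF^{d-k}(\X)$, it writes any $\gamma\in\cnum^{d-k}(\X)=\Vect(\cBPF^{d-k}(\X))$ as $\gamma_+-\gamma_-$ with $\gamma_\pm\in\cBPF^{d-k}(\X)$, gets $|(\alpha\cdot\gamma)|\le C_\alpha\big((\gamma_++\gamma_-)\cdot\om^k\big)$, and concludes by density of $\cnum^{d-k}(\X)$ in $\numbpf{d-k}(\X)$. Your detour through a fixed model $X_0$ and the norm $\|\cdot\|_{X_0}$ is unnecessary. That said, you have isolated a genuine subtlety that the paper leaves implicit: the infimum defining $\normbpf{\gamma}$ ranges over summands in $\BPF^{d-k}(\X)$, not only in $\cBPF^{d-k}(\X)$, so the displayed inequality only controls $|(\alpha\cdot\gamma)|$ by the a priori larger ``Cartier'' infimum, and one must know that the two coincide (at least up to a constant) on Cartier classes. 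Your proposed remedy does \emph{not} close this gap: after approximating $\gamma_\pm\in\BPF^{d-k}(\X)$ weakly by $\gamma_n^\pm\in\cBPF^{d-k}(\X)$, the residual $\gamma-(\gamma_n^+-\gamma_n^-)$ is Cartier but tends to zero only weakly, and ``absorbing a weakly small Cartier class into Cartier BPF pieces of small $\om^k$-degree'' is precisely the comparison you are trying to prove---weak smallness gives no control on $\normbpf{\cdot}$.
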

  
 \begin{conj}
 We have the inclusion $\Vect(\BPF^{k}(\X))\subset \numbpfs{k}(\X)$, and the isomorphism
  $(\Vect(\BPF^{d-k}(\X)),\normbpf{\cdot}) \simeq \numbpfs{k}(\X)^*$.
 \end{conj}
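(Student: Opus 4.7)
The statement has two parts and I would treat them as a single duality package, with the inclusion reflecting density and the isometric isomorphism being the bipolar theorem applied to the pairing $\cnum^k(\X)\times\cBPF^{d-k}(\X)\to\R$ normalized by $\normbpf{\cdot}$.

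\textbf{Setting up the duality map.} Define $\Phi\colon\Vect(\BPF^{d-k}(\X))\to\numbpfs{k}(\X)^*$ by $\gamma\mapsto T_\gamma$, where $T_\gamma(\alpha)=(\alpha\cdot\gamma)$ on the dense subspace $\cnum^k(\X)\subset\numbpfs{k}(\X)$. For $\gamma\in\cBPF^{d-k}(\X)$ this is immediate from the definition of $\normbpfs{\cdot}$, which gives $\|T_\gamma\|\le\normbpf{\gamma}$. For $\gamma\in\BPF^{d-k}(\X)$, approximate weakly by $\gamma_n\in\cBPF^{d-k}(\X)$ via Lemma~\ref{lem_def_BPF} with $\normbpf{\gamma_n}=(\gamma_n\cdot\om^k)\to(\gamma\cdot\om^k)=\normbpf{\gamma}$, so $T_\gamma$ extends by continuity to $\numbpfs{k}(\X)$ with $\|T_\gamma\|\le\normbpf{\gamma}$. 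Linearity in $\gamma$ gives the extension to $\Vect(\BPF^{d-k}(\X))$. The reverse inequality $\|T_\gamma\|\ge\normbpf{\gamma}$ on $\BPF^{d-k}$ comes from testing with $\alpha=\om^k$: indeed $\normbpfs{\om^k}=1$ and $T_\gamma(\om^k)=\normbpf{\gamma}$. Injectivity uses the perfect Poincar\'e pairing on each smooth model combined with the density of Cartier classes in $\wnum^\bullet(\X)$.

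\textbf{Surjectivity via the bipolar theorem.} By construction, the closed unit ball of $\normbpfs{\cdot}$ on $\cnum^k(\X)$ is the polar $(B\cup(-B))^\circ$, where $B=\{\gamma\in\cBPF^{d-k}(\X):\normbpf{\gamma}\le 1\}$, viewed inside the algebraic dual via the intersection pairing. The bipolar theorem applied to the completion $\numbpfs{k}(\X)$ identifies the closed unit ball of $\numbpfs{k}(\X)^*$ with the weak-$*$ closed convex hull of $B\cup(-B)$. The content of the isomorphism is then that this hull coincides with the $\normbpf{\cdot}$-unit ball of $\Vect(\BPF^{d-k}(\X))$. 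The inclusion of the hull into $\Vect(\BPF^{d-k}(\X))$ uses that weak-$*$ limits of convex combinations of elements of $B$, interpreted as pairings against Cartier $k$-classes on every smooth model, are themselves classes in $\wnum^{d-k}(\X)$ lying in the weak closure $\BPF^{d-k}(\X)$ of $\cBPF^{d-k}(\X)$, with norm controlled by the intersection with $\om^k$. The reverse inclusion of the $\normbpf{\cdot}$-unit ball into the weak-$*$ closed hull is precisely the weak-$*$ density statement implicit in Lemma~\ref{lem_def_BPF}.

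\textbf{The inclusion $\Vect(\BPF^k(\X))\subset\numbpfs{k}(\X)$.} This is the delicate density statement dual to the above. Given $\alpha\in\BPF^k(\X)$, the natural candidate approximants are the Cartier incarnations $\alpha_n:=[\alpha_{X_n}]$ for a countable cofinal sequence of smooth models $X_n\in\cM_X$. For any fixed $\gamma\in\cBPF^{d-k}(\X)$ determined in some $X_\gamma$, once $X_n\ge X_\gamma$ the projection formula forces $((\alpha-\alpha_n)\cdot\gamma)=0$, giving pointwise (weak) convergence in the dual pairing. The challenge is to upgrade this to uniform control, i.e.\ to bound $\normbpfs{\alpha-\alpha_n}\to 0$, which requires taming the pairing against $\gamma$'s determined in models strictly above $X_n$. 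Writing, in any $X'\ge X_n$ with $\pi\colon X'\to X_n$, the incarnation of $\alpha-\alpha_n$ as the $\pi$-exceptional class $\alpha_{X'}-\pi^*\alpha_{X_n}$, the question reduces to bounding pairings of such exceptional classes with BPF classes in $X'$. This is exactly the regime where the Hodge index theorem (Theorem~\ref{thm_hodge_index}) and the compactness statement (Theorem~\ref{thm_compactness}) of the paper are designed to apply: the exceptional part has a definite sign in a suitable quadratic form, which should provide the uniform estimate via a Cauchy--Schwarz-type argument.

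\textbf{Main obstacle.} The hard part is the inclusion. Weak convergence of $[\alpha_{X_n}]$ to $\alpha$ is easy, but upgrading it to convergence in $\normbpfs{\cdot}$ demands a quantitative bound of the form $|(\beta\cdot\gamma_{X'})|\le C\,\varepsilon_n\,\normbpf{\gamma}$ for all $\gamma\in\cBPF^{d-k}(\X)$ and all $\pi$-exceptional $\beta$ coming from the discrepancy $\alpha_{X'}-\pi^*\alpha_{X_n}$, with $\varepsilon_n\to 0$. The available tools are the Siu inequality on $\BPF$ (which controls $\gamma\le C_d\normbpf{\gamma}\om^{d-k}$) and the yet-to-be-stated Hodge index and compactness theorems; combining them on $\nums(\X)$ to produce the required quadratic estimate on the exceptional subspace is the genuine technical difficulty, and explains why the statement is proposed only as a conjecture.
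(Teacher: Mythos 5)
You should first be aware that the paper contains no proof of this statement: it is stated as an open conjecture, and the authors only announce partial results in the sequel \cite{dang_favre} (the inclusion $\Vect(\nef^1(\X))\subset\numbpfs{1}(\X)$, i.e.\ the case $k=1$ of the first half, and a discussion of the duality $(\Vect(\BPF^{d-1}(\X)),\normbpf{\cdot})\simeq\numbpfs{1}(\X)^*$ only for torus-invariant classes). So there is no proof to compare yours against, and your text must stand on its own — and it does not, by your own admission. For the inclusion $\Vect(\BPF^{k}(\X))\subset\numbpfs{k}(\X)$ you correctly identify the decisive step, namely upgrading the pointwise vanishing $((\alpha-[\alpha_{X_n}])\cdot\gamma)=0$ for $\gamma$ determined below $X_n$ to an estimate uniform over all $\gamma\in\cBPF^{d-k}(\X)$ with $\normbpf{\gamma}\le1$, but the tools you propose to close it do not apply: Theorem~\ref{thm_hodge_index} and Theorem~\ref{thm_compactness} concern $b$-\emph{divisor} classes paired against $\BPF^{d-2}$ classes and the space $\nums(\X)$, and provide no negativity or quadratic estimate for ``exceptional'' classes of codimension $k$; even for $k=1$ no such inequality uniform in the model is available in the paper — that is exactly the content deferred to \cite{dang_favre}. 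So the first half remains a genuine gap, not a technicality.

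Your duality half is the more promising part and is close to a workable argument: since $\cnum^k(\X)$ is separable and the unit ball of $\numbpfs{k}(\X)$ is by construction the prepolar of $B=\{\gamma\in\cBPF^{d-k}(\X):\normbpf{\gamma}\le1\}$, the bipolar theorem does identify the dual unit ball with the weak-$*$ closed balanced convex hull of $B$, and metrizability of bounded sets lets you argue with sequences. But the step you only gesture at is the crux: a weak-$*$ limit of differences is not a priori a difference of limits. You must write each hull element as $\gamma_n^+-\gamma_n^-$ with $\gamma_n^\pm\in\cBPF^{d-k}(\X)$ and $\normbpf{\gamma_n^+}+\normbpf{\gamma_n^-}\le1$, apply Proposition~\ref{prop_bpf_weak_compactness} to the two sequences \emph{separately} to extract weak limits $\gamma^\pm\in\BPF^{d-k}(\X)$ with total mass $\le1$, and then verify on Cartier test classes, using the projection formula, that the limit functional is intersection against $\gamma^+-\gamma^-$; this is where the actual content lies and it is absent from your write-up. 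Also note that your isometry lower bound $\|T_\gamma\|\ge\normbpf{\gamma}$ obtained by testing against $\om^k$ is only valid for $\gamma$ in the cone $\BPF^{d-k}(\X)$ — the norm of a general element of $\Vect(\BPF^{d-k}(\X))$ is an infimum over decompositions and cannot be detected by a single test class — so any norm control of the inverse has to come from the surjectivity-with-mass-bound statement together with injectivity, not from that test. Finally, even if this half were completed, the conjecture as stated includes the inclusion in the first half, which your proposal leaves open.
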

  
 We shall prove the inclusion  $\Vect(\nef^1(\X))\subset \numbpfs{1}(\X)$  in~\cite{dang_favre}. We will also discuss the isomorphim
 $(\Vect(\BPF^{d-1}(\X)),\normbpf{\cdot}) \simeq \numbpfs{1}(\X)^*$ for torus invariant classes.

\begin{proof}
Let $\ell$ be any continuous linear form on $ \numbpf{d-k}(\X)$. Since $ \numbpf{d-k}(\X) \supset \cnum^{d-k}(\X)$ and the pairing 
$\wnum^k(\X) \times  \cnum^{d-k}(\X)\to \R$ is perfect, there exists a (unique) class $\alpha_\ell\in \wnum^k(\X)$ such that
$\ell(\gamma)= (\alpha_\ell\cdot \gamma)$ for any $\gamma\in\cnum^{d-k}(\X)$. By density of $\cnum^{d-k}(\X)$, the map
$\ell \mapsto \alpha_\ell$ is injective.

As $\ell$ is continuous, there exists $C>0$ such that for any $\gamma \in \cBPF^{d-k}(\X)$ we have  $|(\alpha_\ell\cdot \gamma)|\le C \normbpf{\gamma}$, and
the dual norm of $\ell$ is equal to 
\begin{align}\label{eq:dual-norm}
\| \ell\|_* 
&=
 \sup_{\substack{\normbpf{\gamma}=1\\
\gamma \in \cBPF^{d-k}(\X)}} |\ell(\gamma)| 
=
\sup_{\substack{\normbpf{\gamma}=1\\
\gamma \in \cBPF^{d-k}(\X)}} |(\alpha_\ell\cdot \gamma)|.
\end{align}
Conversely any class $\alpha\in\wnum^k(\X)$ such that $|(\alpha\cdot \gamma)|\le C \normbpf{\gamma}$ for all $\gamma \in\cBPF^{d-k}(\X)$
defines a continuous linear form on $ \numbpf{d-k}(\X)$ since $\cnum^{d-k}(\X)$ is dense in $ \numbpf{d-k}(\X)$ by definition. 
This proves the first assertion of the proposition.

If $\alpha \in \cBPF^k(\X)$ and  $\gamma \in \cnum^{d-k}(\X)$, then we have
$|(\alpha\cdot \gamma)| \le (\alpha\cdot \om^{d-k}) \normbpfs{\gamma}$ by definition, 
and this estimate remains valid if $\alpha \in \BPF^k(\X)$ by density.
This shows that the linear map $\gamma \mapsto (\alpha\cdot \gamma)$ extends as a continuous
form to $\numbpfs{d-k}(\X)$, therefore $ \Vect(\BPF^k(\X))\subset \numbpfs{d-k}(\X)^*$. The inclusion $\numbpfs{d-k}(\X)^* \subset \numbpf{d-k}(\X)^*$ follows
from the continuous injection $\numbpf{d-k}(\X)\subset \numbpfs{d-k}(\X)$.

Since $\cnum^k(\X)\subset \Vect(\BPF^k(\X))$ we get  $\cnum^k(\X)\subset \numbpf{d-k}(\X)^*$. The restriction of the dual norm
on $\cnum^k(\X)$ is equal to $\normbpfs{\cdot}$ by~\eqref{eq:dual-norm}, hence the inclusion $\numbpfs{k}(\X)\subset\numbpf{d-k}(\X)^*$ is a continuous injection.
\end{proof}

\begin{prop}
There exists a canonical continuous map 
\[\rho \colon (\Vect(\BPF^{d-k}(\X)) , \normbpf{\cdot} )^* \to \wnum^k(\X)\] 
such that 
$\ell(\gamma)= (\rho(\ell) \cdot \gamma)$ for all $\gamma \in \cnum^{d-k}(\X)$. 
The image of $\rho$ is equal to $\numbpf{d-k}(\X)^*$ and $\rho$ coincides with the  restriction
morphism induced by the inclusion $\numbpf{d-k}(\X)\subset\Vect(\BPF^{d-k}(\X))$.
\end{prop}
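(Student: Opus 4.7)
The plan is to construct $\rho$ componentwise via Poincar\'e duality on each smooth model, then to establish the image statement by combining Proposition~\ref{prop_inclusion_continuous_duals} with the Hahn--Banach extension theorem.

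First I would produce $\rho(\ell) \in \wnum^k(\X)$ for each $\ell \in (\Vect(\BPF^{d-k}(\X)), \normbpf{\cdot})^*$. For any smooth model $X' \in \cM_X$, composing the natural embedding $\num^{d-k}(X') \hookrightarrow \cnum^{d-k}(\X) \subset \Vect(\BPF^{d-k}(\X))$ with $\ell$ yields a linear form on the finite-dimensional space $\num^{d-k}(X')$. By Poincar\'e duality there is a unique class $\alpha_{\ell, X'} \in \num^k(X')$ such that $\ell([\beta]) = (\alpha_{\ell, X'} \cdot \beta)$ for every $\beta \in \num^{d-k}(X')$. Given $X'' \ge X'$ with induced map $\pi\colon X'' \to X'$, the two Cartier $b$-classes $[\pi^* \beta]$ and $[\beta]$ coincide (both admit $\pi^*\beta$ as incarnation on $X''$ and any common dominator), hence the projection formula~\eqref{eq:projection-form} gives
\[(\pi_* \alpha_{\ell, X''} \cdot \beta) = (\alpha_{\ell, X''} \cdot \pi^* \beta) = \ell([\pi^*\beta]) = \ell([\beta]) = (\alpha_{\ell, X'} \cdot \beta),\]
forcing $\pi_* \alpha_{\ell, X''} = \alpha_{\ell, X'}$. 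Thus $\{\alpha_{\ell, X'}\}_{X' \in \cM_X}$ defines a Weil $b$-class $\rho(\ell)$ with $\ell(\gamma) = (\rho(\ell) \cdot \gamma)$ on $\cnum^{d-k}(\X)$ by construction. Continuity is automatic: if $\ell_n \to \ell$ in operator norm, then for every smooth model $X'$ and every $\beta \in \num^{d-k}(X')$ the scalars $(\rho(\ell_n)_{X'} \cdot \beta) = \ell_n([\beta])$ converge to $\ell([\beta])$, so $\rho(\ell_n)_{X'} \to \rho(\ell)_{X'}$ in the finite-dimensional space $\num^k(X')$, which is exactly weak convergence in $\wnum^k(\X)$.

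For the image statement, the inclusion $\rho\bigl((\Vect(\BPF^{d-k}(\X)))^*\bigr) \subset \numbpf{d-k}(\X)^*$ follows directly from Proposition~\ref{prop_inclusion_continuous_duals}, since for any $\gamma \in \cBPF^{d-k}(\X)$ one has $|(\rho(\ell)\cdot \gamma)| = |\ell(\gamma)| \le \|\ell\|_*\, \normbpf{\gamma}$. For the converse, given $\alpha \in \numbpf{d-k}(\X)^*$ regarded inside $\wnum^k(\X)$, the same proposition yields $C_\alpha>0$ with $|(\alpha \cdot \gamma)| \le C_\alpha \normbpf{\gamma}$ on $\cBPF^{d-k}(\X)$. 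Since $\cnum^{d-k}(\X) = \Vect(\cBPF^{d-k}(\X))$, the functional $\gamma \mapsto (\alpha \cdot \gamma)$ on $\cnum^{d-k}(\X)$ is $\normbpf{\cdot}$-continuous and extends by density to $\numbpf{d-k}(\X)$; Hahn--Banach then extends it to a continuous $\ell \in (\Vect(\BPF^{d-k}(\X)))^*$, and necessarily $\rho(\ell) = \alpha$ because $\rho(\ell)$ depends only on the restriction of $\ell$ to $\cnum^{d-k}(\X)$. The identification with the restriction morphism is then just the observation that, under Proposition~\ref{prop_inclusion_continuous_duals}, $\rho(\ell)$ is precisely the $b$-class in $\wnum^k(\X)$ corresponding to the functional $\ell|_{\numbpf{d-k}(\X)} \in \numbpf{d-k}(\X)^*$.

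The main mild obstacle is surjectivity: Hahn--Banach supplies no canonical extension. This ambiguity is harmless here because $\rho$ factors through restriction to the dense subspace $\cnum^{d-k}(\X)$, so every extension of a given continuous functional produces the same image under $\rho$, which is exactly what allows $\rho$ to be identified canonically with the restriction morphism.
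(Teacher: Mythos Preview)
Your proof is correct and follows essentially the same approach as the paper: the paper invokes the perfect pairing $\wnum^k(\X)\times\cnum^{d-k}(\X)\to\R$ to produce $\rho$, which is exactly what you make explicit via model-by-model Poincar\'e duality and the compatibility check under push-forward; and both arguments use Proposition~\ref{prop_inclusion_continuous_duals} together with Hahn--Banach for the image statement and the identification with the restriction morphism. Your version is simply more detailed, in particular on continuity, which the paper leaves implicit.
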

\begin{proof}
The existence of the map $\rho$ is a consequence of the perfect pairing $\wnum^k(\X) \times  \cnum^{d-k}(\X)\to \R$.
Its image is the set of $\alpha\in \wnum^k(\X)$ such that $|(\alpha\cdot \gamma)| \le C_\alpha \normbpf{\gamma}$ for all $\gamma\in\cnum^{d-k}(\X)$, hence
coincides with $\numbpf{d-k}(\X)^*$ by Proposition \ref{prop_inclusion_continuous_duals}.

The restriction morphism induced by the inclusion 
\[\numbpf{d-k}(\X)\subset\Vect(\BPF^{d-k}(\X))\] sends a continuous form $\ell\in \Vect(\BPF^{d-k}(\X))^*$
to its restriction on  $\numbpf{d-k}(\X)$. Since $\rho(\ell)$ is determined by its values on $\cnum^{d-k}(\X)$, which is dense in $\numbpf{d-k}(\X)$, it actually follows that $\rho(\ell)$ equals
$\ell|_{\numbpf{d-k}(\X)}$. 
Conversely, Hahn-Banach theorem implies the surjectivity of $\rho$ onto $(\numbpf{d-k}(\X))^*$. 
\end{proof}

\begin{rem}
The morphism $\rho$ should not be injective in general, see Remark~\ref{rem:not-dense} above.
\end{rem}

\begin{cor}
The intersection pairing on $\cnum^k(\X) \times \cnum^{d-k}(\X) \to \R$ induces a bilinear continuous form
\[\numbpfs{k}(\X) \times  (\Vect(\BPF^{d-k}(\X)), \normbpf{\cdot} ) \to \R.\]
\end{cor}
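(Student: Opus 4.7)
The plan is to first establish the two-sided bound $|(\alpha\cdot\beta)|\le \normbpfs{\alpha}\,\normbpf{\beta}$ on the dense pair $\cnum^{k}(\X)\times \Vect(\BPF^{d-k}(\X))$ via the estimate already noted inside the proof of Proposition~\ref{prop_inclusion_continuous_duals}, and then to extend continuously in the first variable using density of $\cnum^{k}(\X)$ in $\numbpfs{k}(\X)$.

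First I would reduce to the case where $\beta\in\cBPF^{d-k}(\X)$. Indeed, for $\alpha\in\cnum^{k}(\X)$ and $\beta\in\cBPF^{d-k}(\X)$, the very definition of the dual norm yields
\[
|(\alpha\cdot\beta)|\le (\om^{k}\cdot\beta)\,\normbpfs{\alpha}=\normbpf{\beta}\,\normbpfs{\alpha},
\]
where the last equality is \eqref{eq:5436}. Next, for an arbitrary $\beta\in\BPF^{d-k}(\X)$, I would fix a smooth model $X_{0}$ in which $\alpha$ is determined and that dominates $X$, and invoke Lemma~\ref{lem_def_BPF} to pick a sequence $\beta_{n}\in\cBPF^{d-k}(\X)$ with $(\beta_{n})_{X_{0}}\to\beta_{X_{0}}$. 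Then $(\alpha\cdot\beta_{n})=(\alpha_{X_{0}}\cdot(\beta_{n})_{X_{0}})\to (\alpha\cdot\beta)$ and $\normbpf{\beta_{n}}=(\om^{k}_{X_{0}}\cdot(\beta_{n})_{X_{0}})\to\normbpf{\beta}$, so the inequality passes to the limit. Finally, for $\beta\in\Vect(\BPF^{d-k}(\X))$ and any $\epsilon>0$, decompose $\beta=\beta_{+}-\beta_{-}$ with $\beta_{\pm}\in\BPF^{d-k}(\X)$ and $\normbpf{\beta_{+}}+\normbpf{\beta_{-}}\le \normbpf{\beta}+\epsilon$; the triangle inequality and letting $\epsilon\to0$ then give
\begin{equation}\label{eq:keybound-proposal}
|(\alpha\cdot\beta)|\le \normbpf{\beta}\,\normbpfs{\alpha}\qquad\text{for all }\alpha\in\cnum^{k}(\X),\ \beta\in\Vect(\BPF^{d-k}(\X)).
\end{equation}

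With \eqref{eq:keybound-proposal} in hand, for each fixed $\beta\in\Vect(\BPF^{d-k}(\X))$ the $\R$-linear map $\alpha\mapsto (\alpha\cdot\beta)$ on $\cnum^{k}(\X)$ is continuous for $\normbpfs{\cdot}$, hence extends uniquely to a continuous linear form on $\numbpfs{k}(\X)$ by density. Denote the extension by $\langle\alpha,\beta\rangle$; the bound \eqref{eq:keybound-proposal} persists by continuity, so $|\langle\alpha,\beta\rangle|\le \normbpf{\beta}\,\normbpfs{\alpha}$ on the whole of $\numbpfs{k}(\X)\times\Vect(\BPF^{d-k}(\X))$. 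Linearity in $\beta$ for a fixed $\alpha\in\numbpfs{k}(\X)$ follows by approximating $\alpha$ in norm by a sequence in $\cnum^{k}(\X)$, on which linearity in $\beta$ is tautological, and passing to the limit using the same bound. This yields a bilinear form which is bounded by the product of the two norms, hence jointly continuous.

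The only step that is not a direct quotation of Proposition~\ref{prop_inclusion_continuous_duals} is the weak-continuity argument used to transfer the estimate from Cartier BPF classes to $\BPF^{d-k}(\X)$; everything else is formal. I do not foresee a genuine obstacle: the essential work has already been carried out in the previous proposition, and the corollary amounts to repackaging its content as a bounded bilinear map between two Banach spaces.
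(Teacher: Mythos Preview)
Your proof is correct and follows essentially the same route as the paper: the corollary is stated there without proof because the key bound $|(\alpha\cdot\beta)|\le\normbpfs{\alpha}\,\normbpf{\beta}$ is precisely the inclusion $\Vect(\BPF^{d-k}(\X))\subset\numbpfs{k}(\X)^*$ already established in Proposition~\ref{prop_inclusion_continuous_duals}. Your argument simply unpacks that inclusion and the density of $\cnum^{k}(\X)$ in $\numbpfs{k}(\X)$, with the extra care of passing from $\cBPF^{d-k}(\X)$ to $\BPF^{d-k}(\X)$ via Lemma~\ref{lem_def_BPF}.
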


To illustrate our construction, we show that the basepoint free cone satisfies some weak-compactness properties. 

\begin{prop} \label{prop_bpf_weak_compactness} Let $\alpha_n \in \cBPF^k (\X) $ be a sequence  such that $(\alpha_n \cdot \omega^{d-k})_{n\in \mathbb{N}}$ is bounded. 
Then there exists a subsequence $(\alpha_{n_j})$ converging in $\wnum^k(\X)$.
\end{prop}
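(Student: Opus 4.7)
The strategy is to use the Siu inequality built into the definition of the BPF cone to reduce the compactness claim to finite-dimensional compactness on each birational model, and then extract a convergent subsequence by a diagonal argument over the countable family $\cM_X$.

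\textbf{Step 1 (bounds in each model).} Fix the constant $M := \sup_n (\alpha_n \cdot \omega^{d-k}) < \infty$. Each $\alpha_n$ lies in $\cBPF^k(\X) \subset \BPF^k(\X)$, hence by Lemma \ref{lem:pleasethereferee} one has $\normbpf{\alpha_n} = (\alpha_n \cdot \omega^{d-k})$ and
\[ 0 \le \alpha_n \le C_d\, \normbpf{\alpha_n}\, \omega^k \le C_d M\, \omega^k \quad \text{in } \wnum^k(\X). \]
Unpacking the definition, this means that for every smooth model $X' \in \cM_X$, the incarnations $(\alpha_n)_{X'}$ and $C_d M (\omega^k)_{X'} - (\alpha_n)_{X'}$ both lie in $\psef^k(X')$.

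\textbf{Step 2 (compactness on each model).} For a fixed smooth model $X'$, consider
\[ S_{X'} = \bigl\{\beta \in \num^k(X') \,:\, \beta \in \psef^k(X'),\ C_d M (\omega^k)_{X'} - \beta \in \psef^k(X') \bigr\}. \]
Since $\psef^k(X')$ is a closed, convex, salient cone in the finite-dimensional space $\num^k(X')$, the set $S_{X'}$ is bounded: if $\beta_m \in S_{X'}$ were unbounded, passing to a subsequence $\beta_m / \|\beta_m\|$ would yield a nonzero limit $\beta_\infty \in \psef^k(X')$, while $(C_d M (\omega^k)_{X'} - \beta_m)/\|\beta_m\| \to -\beta_\infty$ would force $-\beta_\infty \in \psef^k(X')$, contradicting salience. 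Thus $\{(\alpha_n)_{X'}\}_n \subset S_{X'}$ is relatively compact in $\num^k(X')$.

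\textbf{Step 3 (diagonal extraction).} Because $\K$ is countable, the poset $\cM_X$ of smooth models is countable, so we can enumerate $\cM_X = \{X'_1, X'_2, \dots\}$. By Step 2, successively extract subsequences convergent on each $X'_j$, and take the diagonal subsequence $(\alpha_{n_j})$. Then $(\alpha_{n_j})_{X'}$ converges in $\num^k(X')$ for every smooth model $X'$, which by definition of the weak topology on $\wnum^k(\X)$ means $(\alpha_{n_j})$ converges in $\wnum^k(\X)$. This is precisely the diagonal extraction argument already mentioned in \S\ref{section_bclass}.

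\textbf{Expected difficulty.} There is no real obstacle here: the content is entirely in invoking Lemma \ref{lem:pleasethereferee}, which already encodes Siu's inequality for BPF classes; the rest is finite-dimensional compactness together with the countability of $\cM_X$. The only small point to be careful about is verifying that bounds in the pseudo-effective order give genuine boundedness in $\num^k(X')$, which uses salience of $\psef^k(X')$.
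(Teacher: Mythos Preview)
Your proof is correct, but it proceeds differently from the paper's. The paper shows that the hypothesis, together with Siu's inequality applied to each test class $\gamma \in \cBPF^{d-k}(\X)$, bounds $\normbpfs{\alpha_n}$; it then invokes the continuous inclusion $\numbpfs{k}(\X) \subset \numbpf{d-k}(\X)^*$ from Proposition~\ref{prop_inclusion_continuous_duals} and applies Banach--Alaoglu in that dual space to extract a weak-$*$ (hence weakly in $\wnum^k(\X)$) convergent subsequence. Your argument instead uses Lemma~\ref{lem:pleasethereferee} to get a uniform psef bound $0 \le \alpha_n \le C_d M\,\omega^k$, deduces relative compactness model by model via salience of $\psef^k(X')$, and finishes by the diagonal extraction already noted in \S\ref{section_bclass}. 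Your route is more elementary and self-contained, avoiding the dual-norm machinery; the paper's route, on the other hand, is written precisely to illustrate that machinery and to situate the compactness inside the functional-analytic framework of $\numbpfs{k}$ and $\numbpf{d-k}(\X)^*$.
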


\begin{proof} We show  that the sequence $\alpha_n$ belongs to a  ball in $\numbpfs{k}(\X)$.
Take $\gamma \in \cBPF^{d-k}(\X)$. By Siu's  inequalities~\eqref{eq:siu_bpf}, we have:
\begin{equation*}
0\le (\alpha_n \cdot \gamma) \le C_d (\gamma \cdot \omega^{k}) (\alpha_n \cdot \omega^{d-k}),
\end{equation*}
and so  the sequence $\normbpfs{\alpha_n}$ is bounded. By Proposition \ref{prop_inclusion_continuous_duals}  the inclusion $\numbpfs{k}(\X) \subset \numbpf{d-k}(\X)^*$ is continuous, hence the sequence $\alpha_n$ belongs to a ball inside $\numbpf{d-k}(\X)^*$. We conclude using Banach-Alaoglu's theorem that $(\alpha_n)$ has a subsequence which converges in $\wnum^k(\X)$.
\end{proof}


\section{Intersection of numerical  $b$-divisors}

We introduce a subspace $\nums(\X)$ of $\wnum^1(\X)$ generalizing to higher dimensions the construction of the Picard-Manin space
introduced by Cantat~\cite{cantat_bir_surfaces} and Boucksom-Favre-Jonsson~\cite{boucksom_favre_jonsson_deggrowth}.

\subsection{$\LL^2$-norm on surfaces}

Let $S$ be any normal projective surface defined over $\K$. Recall that one can define in this context an intersection form on Weil divisors
using Mumford numerical pull-back to a resolution of singularities of $S$, see~\cite{MR153682} for details.

Denote by $\Ss$ the Riemann-Zariski space of $S$ as discussed in \S \ref{sec:numerical_b_class}.
For any big and nef class $\om\in\cnum^1(\Ss)$, we set 
\begin{equation}\label{eq:def-L2}
\bar q_\om (\alpha)= 2\frac{(\alpha\cdot \om)^2}{(\om^2)} - (\alpha^2) = \frac{(\alpha\cdot \om)^2}{(\om^2)} - \left( \left(\alpha - \frac{(\alpha\cdot \om)}{(\om^2)}\om\right)^2 \right),
\end{equation}
where $\alpha \in \cnum^1(\Ss)$. Note that the second equality comes from the orthogonal decomposition $\alpha = u + v$ with $u= \dfrac{(\alpha \cdot \omega)}{(\omega^2)} \omega \in \R\cdot \omega$ and $v = \alpha - u \in \omega^\perp$.

Note that $\bar q_\om (\alpha)= \bar q_{t \om} (\alpha)$ for any $t>0$.
It follows from the Hodge index theorem~\cite[Theorem 1.9]{hartshorne} that 
the intersection form is negative definite on $\{ \alpha, \, (\alpha\cdot \omega) =0\}$, so that
\begin{equation}\label{eq:hodge-onS}
\bar q_\om(\alpha) \ge \frac{(\alpha\cdot \om)^2}{(\om^2)}
\end{equation}
and $\bar q_\om$
 defines a positive definite quadratic form on $\cnum^1(\Ss)$.
Taking the square root, the function $\sqrt{\bar q_\omega(\cdot)}$  induces a norm on $\cnum^1(\Ss)$ which depends on the choice of $\omega$.
The next result shows that two big and nef divisors give equivalent norms.
\begin{prop}\label{prop_estimates_minkowski}  
Pick any two big and nef divisors $\om_1, \om_2\in \cnum^1(\Ss)$.
Then for any $\alpha \in \cnum^1(\Ss)$, one has:
\begin{equation*}
\bar q_{\om_1}(\alpha) \le 4\, \frac{(\om_1 \cdot \om_2)^2}{(\om_1^2) (\om_2^2)} \,\bar  q_{\om_2}(\alpha).
\end{equation*}
\end{prop}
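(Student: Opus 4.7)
The plan is to reduce the statement to an elementary polynomial inequality by exploiting the signature structure of the intersection form on a surface. First I will use the scale-invariance of $\bar q_\om$ under $\om \mapsto t\om$ for $t>0$ to normalize $(\om_1^2) = (\om_2^2) = 1$, and set $c := (\om_1 \cdot \om_2)$. The Hodge index theorem in its reverse Cauchy--Schwarz form for big and nef classes gives $c \ge 1$, so the target becomes $\bar q_{\om_1}(\alpha) \le 4c^2\, \bar q_{\om_2}(\alpha)$.

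Next I will perform a Gram--Schmidt style decomposition with respect to $\om_1$, writing $\alpha = a \om_1 + u$ and $\om_2 = c \om_1 + w$ with $u, w \in \om_1^\perp$ (working on a common smooth model resolving the determinations of all classes involved). Hodge index asserts that the intersection form is negative definite on $\om_1^\perp$, so setting $|u|^2 := -(u^2) \ge 0$ turns this subspace into a genuine Euclidean space; a short computation gives $(w^2) = 1-c^2$, and Cauchy--Schwarz for this Euclidean form yields $b^2 \le (c^2-1)|u|^2$ where $b := (u \cdot w)$.

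A direct expansion then produces the explicit formulas $\bar q_{\om_1}(\alpha) = a^2 + |u|^2$ and $\bar q_{\om_2}(\alpha) = 2(ac + b)^2 - a^2 + |u|^2$, so the target inequality unwinds into
\[
(8c^4 - 4c^2 - 1)\, a^2 + 16 c^3 b\, a + 8 c^2 b^2 + (4c^2 - 1)|u|^2 \ge 0.
\]
Viewed as a quadratic in the real variable $a$, the leading coefficient $8c^4 - 4c^2 - 1$ is positive for $c \ge 1$, so non-negativity amounts to a non-positive-discriminant condition which, after substituting the bound $b^2 \le (c^2 - 1)|u|^2$ and simplifying, reduces to $8 c^2 (c^2 - 1)(4 c^2 + 1) \le (8 c^4 - 4 c^2 - 1)(4 c^2 - 1)$; expanding both sides collapses this to $0 \le 1 + 8 c^2$, which is obvious.

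The conceptual content of the proof is that Hodge index is invoked twice in distinct roles: once as the reverse Cauchy--Schwarz inequality $c \ge 1$ (which ensures that the leading coefficient above is positive and that $\om_2$ admits the orthogonal decomposition), and once as negative-definiteness of the form on $\om_1^\perp$ (which provides Cauchy--Schwarz for $b$). The main obstacle I expect is purely bookkeeping; the constant $4$ turns out to be asymptotically sharp, since optimizing over $a,b,|u|$ subject to the constraints shows the true supremum of $\bar q_{\om_1}(\alpha)/\bar q_{\om_2}(\alpha)$ is $(c + \sqrt{c^2-1})^2$, which tends to $4c^2$ from below as $c \to \infty$.
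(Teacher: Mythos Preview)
Your proof is correct and takes a genuinely different route from the paper's. Both arguments begin by normalizing $(\om_1^2)=(\om_2^2)=1$ and invoking Hodge index to get $c=(\om_1\cdot\om_2)\ge 1$. From there the paper projects $\alpha$ onto $\Vect(\om_1,\om_2)$ and its orthocomplement for the intersection form, observes that the orthocomplement contributes the same term $-(\beta^2)\ge 0$ to both $\bar q_{\om_i}(\alpha)$, and thereby reduces to a two-dimensional computation; it then picks the $\bar q_{\om_2}$-orthogonal basis $e_1=\om_1-c\,\om_2$, $e_2=\om_2$, checks that $\bar q_{\om_1}/\bar q_{\om_2}=2c^2-1$ on each basis vector, and concludes via the triangle inequality for $\sqrt{\bar q_{\om_1}}$ together with $\sqrt{x}+\sqrt{y}\le\sqrt{2}\sqrt{x+y}$, landing on the bound $2(2c^2-1)\le 4c^2$. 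You instead project only onto $\R\om_1$, encode everything in the three scalars $a$, $b=(u\cdot w)$, $|u|^2=-(u^2)$, and reduce the target to non-positivity of the discriminant of a quadratic in $a$; the Cauchy--Schwarz bound $b^2\le(c^2-1)|u|^2$ on the negative-definite part then plays exactly the role that the paper's two-dimensional reduction plays. Your approach is more computational but entirely self-contained, and it yields as a byproduct the sharp constant $(c+\sqrt{c^2-1})^2$ (attained at $|u|=a$ with $u$ anti-parallel to $w$), which the paper's triangle-inequality argument does not see.
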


\begin{proof}
We may assume that $\om_1$ and $\om_2$ are determined in a model $S'$ over $S$, and that $(\om_1^2) = (\om_2^2) = 1$. 
Observe that any class $\alpha \in \cnum^1(\Ss)$ can be written as $\alpha = \alpha_1 + \beta$
with $\alpha_1\in\Vect(\om_1,\om_2)\subset \num^1(S')$ and $(\beta\cdot\om_1)=(\beta\cdot\om_2)=0$. 
Observe that  $\Omega :=  (\om_1\cdot \om_2)\ge 1$ by Hodge index theorem.
Since $\bar q_{\om_i}(\alpha)=q_{\om_i}(\alpha_1) -(\beta^2)$, and 
$(\beta^2)\le0$,
we are reduced to prove the inequality when $\alpha\in\Vect(\om_1,\om_2)$.

Suppose that $\alpha =  \om_1+ t\om_2$. A direct computation yields
\begin{align*}
\frac{\bar  q_{\om_1}(\alpha)}{\bar q_{\om_2}(\alpha)}
&=
\frac{ 2(\om_1\cdot \alpha)^2 - (\alpha^2)}{2(\om_2\cdot \alpha)^2 - (\alpha^2)}
=
\frac{( 2\Omega^2 -1) t^2 + 2 \Omega t + 1}
{ t^2 + 2 \Omega t + 2\Omega^2 -1}.
\end{align*}
Taking the limit as $t\rightarrow +\infty$ and the value at $t = -\Omega$ give:
\[
\frac{\bar  q_{\om_1}(\om_2)}{\bar q_{\om_2}(\om_2)}=
\frac{\bar  q_{\om_1}(\om_1 - \Omega \om_2)}{\bar q_{\om_2}(\om_1 - \Omega \om_2)}
=
2\Omega^2 -1.
\]
Since the two vectors $e_1=\om_1 - \Omega \om_2$ and $e_2=\om_2$ form an orthogonal basis for $(\Vect(\om_1,\om_2),q_{\om_2})$, 
we conclude using the above inequality that 
\begin{align*}
\sup_{\Vect(\om_1,\om_2)}
\left(
\frac{\bar  q_{\om_1}(\alpha)}{\bar q_{\om_2}(\alpha)}
\right)^{1/2}
&=
\sup_{t_1, t_2}\, 
\frac{\bar  q_{\om_1}(t_1 e_1 + t_2 e_2)^{1/2}}{\bar q_{\om_2}(t_1 e_1 + t_2 e_2)^{1/2}}
\\
&\le
\sup_{t_1, t_2}\, 
\frac{\bar q_{\om_1}(t_1 e_1)^{1/2} +\bar  q_{\om_1}(t_2 e_2)^{1/2}}{(\bar q_{\om_2}(t_1 e_1) + \bar q_{\om_2}(t_2 e_2))^{1/2}}
\\
&\le
\sup_{t_1, t_2}\, 
 \frac{\bar  q_{\om_2}(t_1 e_1)^{1/2}}{\bar q_{\om_2}(t_1 e_1)^{1/2}} + \frac{\bar q_{\om_2}(t_2 e_2)^{1/2}}{\bar q_{\om_2}(t_2e_2)^{1/2}} 
\le
2\sqrt{2\Omega^2 -1},
\end{align*}
as required.
\end{proof}

\begin{prop} \label{prop_surfaces_continuity_intersection}
 For any big and nef class $\om\in\cnum^1(\Ss)$ such that $(\om^2)=1$, 
  for any $\alpha, \beta \in \cnum^1(\mathcal{S})$, the following inequality holds:
\begin{equation}
|(\alpha \cdot \beta)| \le 3\bar  q_\om(\alpha)^{1/2}\bar  q_\om(\beta)^{1/2}.  
\end{equation}
\end{prop}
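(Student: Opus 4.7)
My plan is to interpret the inequality as a Cauchy--Schwarz estimate for the symmetric bilinear form $B$ polarizing the positive definite quadratic form $\bar q_\omega$. Since $(\omega^2)=1$, the defining formula $\bar q_\omega(\alpha)=2(\alpha\cdot\omega)^2-(\alpha^2)$ gives, by polarization,
\[
B(\alpha,\beta) \;=\; 2(\alpha\cdot\omega)(\beta\cdot\omega)-(\alpha\cdot\beta),
\]
as one checks directly from $B(\alpha,\alpha)=\bar q_\omega(\alpha)$.

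The text has already established that $\bar q_\omega$ is positive definite: this follows from the Hodge index theorem applied on a common smooth model dominating the determinations of $\omega,\alpha,\beta$, on which all three Cartier $b$-classes incarnate simultaneously. Hence Cauchy--Schwarz for $B$ yields
\[
\bigl|B(\alpha,\beta)\bigr| \;\le\; \sqrt{\bar q_\omega(\alpha)\,\bar q_\omega(\beta)}.
\]
Substituting the explicit formula for $B$ and using the triangle inequality gives
\[
\bigl|(\alpha\cdot\beta)\bigr| \;\le\; 2\,\bigl|(\alpha\cdot\omega)(\beta\cdot\omega)\bigr| + \sqrt{\bar q_\omega(\alpha)\,\bar q_\omega(\beta)}.
\]

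To conclude, I apply the pointwise bound $\bigl|(\alpha\cdot\omega)\bigr|\le\sqrt{\bar q_\omega(\alpha)}$, which is an immediate consequence of~\eqref{eq:hodge-onS}, to both factors in the first term of the right-hand side. This produces $(2+1)\sqrt{\bar q_\omega(\alpha)\bar q_\omega(\beta)}=3\sqrt{\bar q_\omega(\alpha)\bar q_\omega(\beta)}$, as desired. There is essentially no serious obstacle here: positive definiteness of $\bar q_\omega$ is already in hand, and the only mildly delicate point is recognizing that the correct bilinear form to feed to Cauchy--Schwarz is not $(\alpha,\beta)\mapsto(\alpha\cdot\beta)$ itself (which is indefinite by Hodge) but rather its shift $B$, which absorbs the sign defect via the $(\alpha\cdot\omega)(\beta\cdot\omega)$ term. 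The constant $3$ is not sharp---a direct orthogonal decomposition $\alpha=a\omega+\alpha^\perp$, $\beta=b\omega+\beta^\perp$ combined with Cauchy--Schwarz on the negative definite restriction of the intersection form to $\omega^\perp$ would give constant $1$---but $3$ suffices for subsequent use and is cleaner to derive.
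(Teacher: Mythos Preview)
Your proof is correct and essentially identical to the paper's: both polarize $\bar q_\omega$ to the bilinear form $B(\alpha,\beta)=2(\alpha\cdot\omega)(\beta\cdot\omega)-(\alpha\cdot\beta)$, apply Cauchy--Schwarz for this positive definite form, and then bound $|(\alpha\cdot\omega)|$ by $\sqrt{\bar q_\omega(\alpha)}$ via~\eqref{eq:hodge-onS}. Your closing remark that an orthogonal decomposition yields constant $1$ is also correct and is a nice addition.
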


\begin{proof} 
Let $\langle \cdot , \cdot  \rangle_\om$ be the bilinear symmetric form associated to $q_\om$ so that 
\begin{equation*}
\langle \alpha, \beta \rangle_\om = 2 (\alpha \cdot \om) (\beta \cdot \om) - (\alpha\cdot \beta)
\end{equation*}
for any $\alpha, \beta \in \cnum^1(\mathcal{S})$. 
Note that this formula is equivalent to 
$(\alpha \cdot\beta) =  2(\alpha\cdot \om) (\beta \cdot \om) - \langle \alpha, \beta \rangle_\om$.
Cauchy-Schwarz and triangle inequalities then imply
\begin{equation*} \label{eq_pairing}
|(\alpha \cdot \beta)| \le 2|(\alpha \cdot \om)(\beta \cdot \om)| + \sqrt{\bar q_\om(\alpha) \bar q_\om(\beta)}.
\end{equation*}
Observe that $|\bar q_\om(\alpha)| \ge  (\alpha\cdot \om)^2$ since the self-intersection of $\alpha - (\alpha \cdot \om)\om$ is non-positive, so that 
\begin{equation*}
|(\alpha\cdot \beta)| \le 3 \sqrt{\bar q_\om(\alpha)\bar q_\om(\beta)},
\end{equation*}
as required.
\end{proof}

We observe the following:
\begin{lem}\label{lem:increase-square}
Let $\alpha$ be any Weil class in $\wnum^1(\mathcal{S})$. For any two smooth models $X'' \ge X'$, we have
$ - (\alpha_{X''}^2) \ge- (\alpha_{X'}^2)$.
\end{lem}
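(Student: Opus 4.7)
The plan is to use the decomposition of $\alpha_{X''}$ into the pullback of $\alpha_{X'}$ plus an exceptional part, and then invoke negative-definiteness of the exceptional intersection matrix.

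Let $\pi \colon X'' \to X'$ denote the birational morphism coming from the domination $X'' \ge X'$. Since $\alpha$ is a Weil $b$-class, by definition we have $\pi_* \alpha_{X''} = \alpha_{X'}$. Set
\[
E := \alpha_{X''} - \pi^* \alpha_{X'} \in \num^1(X'')~.
\]
Applying $\pi_*$ and using that $\pi_* \pi^* = \Id$ (as $\pi$ is birational between smooth surfaces), we get $\pi_* E = 0$, i.e.\ $E$ is $\pi$-exceptional.

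Expanding the self-intersection gives
\[
(\alpha_{X''}^2) = (\pi^* \alpha_{X'})^2 + 2(\pi^* \alpha_{X'} \cdot E) + (E^2)~.
\]
The projection formula yields $(\pi^* \alpha_{X'})^2 = (\alpha_{X'} \cdot \pi_* \pi^* \alpha_{X'}) = (\alpha_{X'}^2)$ and $(\pi^* \alpha_{X'} \cdot E) = (\alpha_{X'} \cdot \pi_* E) = 0$. Thus
\[
(\alpha_{X''}^2) = (\alpha_{X'}^2) + (E^2)~.
\]

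The final step is to observe that $(E^2) \le 0$. This is a classical consequence of the Hodge index theorem (sometimes attributed to Zariski/Mumford in this exact formulation): the intersection form restricted to the $\R$-vector space of $\pi$-exceptional numerical classes is negative definite, since any such class $E$ satisfies $(E \cdot \pi^*\omega) = 0$ for $\omega$ an ample class on $X'$, and the orthogonal complement of $\pi^*\omega$ is negative semi-definite by Hodge index (with equality iff $E$ is numerically a multiple of $\pi^*\omega$, which forces $E = 0$ for exceptional classes). This can also be deduced directly from \eqref{eq:hodge-onS} applied to the class $E$ on $X''$ together with $(E \cdot \pi^*\omega) = 0$.

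Combining, we obtain $(\alpha_{X''}^2) \le (\alpha_{X'}^2)$, hence $-(\alpha_{X''}^2) \ge -(\alpha_{X'}^2)$. The only genuinely non-formal input is the negative (semi-)definiteness of the exceptional intersection form, which is standard for birational morphisms between smooth projective surfaces.
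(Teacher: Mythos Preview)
Your proof is correct and follows essentially the same idea as the paper: decompose $\alpha_{X''} = \pi^*\alpha_{X'} + E$ with $E$ exceptional, expand, and use $(E^2)\le 0$. The only difference is how the last inequality is obtained: the paper invokes Castelnuovo's factorization to reduce to a single point blow-up, where $(E^2)=-1$ is explicit, whereas you treat the general birational morphism in one step via the Hodge index theorem (negative definiteness of the exceptional lattice). Both routes are standard; yours avoids the factorization theorem at the cost of appealing to Hodge index, while the paper's is a hair more elementary computationally.
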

\begin{proof}
By Castelnuovo's factorization, we reduce to the case $X''$ is the blow-up of $X'$ at a single point. Let $E$ be the exceptional divisor of 
the canonical morphism $\pi\colon X'' \to X'$. Then $\alpha_{X''} = \pi^*\alpha_{X'} + t [E]$ for some $t$
so that $(\alpha_{X''})^2=  (\alpha_{X'})^2 - t^2 \le  (\alpha_{X'})^2$.
\end{proof}

\begin{defi}
The space $L^2(\mathcal{S})$ consists of those Weil classes $\alpha\in\wnum^1(\mathcal{S})$
such that $\sup_{X'\in\mathcal{M}_X} - (\alpha_{X'})^2 < +\infty$.
\end{defi}

The space $L^2(\mathcal{S})$ is the Picard-Manin space of $S$, see~\cite{boucksom_favre_jonsson_deggrowth}, or~\cite{cantat_bir_surfaces}.
The quadratic form $\bar q_\om(\cdot)$ induces a scalar product on $L^2(\mathcal{S})$ which endows it with a structure of a Hilbert space.


\subsection{The space $\FS(\X)$}\label{sec:FSX}
Let $X$ be a smooth projective variety of dimension $d$ defined over a countable field $\K$
of characteristic $0$, and let $\om\in\cnum^1(\X)$ be any big and nef class.
For any non-zero class $\gamma \in \BPF^{d-2}(\X)$ and any $\alpha \in \cnum^1(\X)$, we set:
\begin{equation} \label{formula_q_w}
q_{\om,\gamma} (\alpha) :=  2 \frac{(\alpha \cdot  \om \cdot \gamma )^2}{(\om^2\cdot\gamma)^2}  -  \frac{( \alpha^2 \cdot \gamma)}{(\om^2\cdot\gamma)}~.
\end{equation}

Note that $(\om^2\cdot \gamma) >0$ by~\eqref{eq:5436} hence $q_{\om,\gamma} (\alpha)$ is well-defined, and that
$q_{\om,\gamma}$ is a quadratic form in $\alpha$ which is homogeneous of degree $0$ in $\gamma$ (resp. of degree $-2$ in $\om$).

{Observe that when $\dim X=2$, this quadratic form is related to equation \eqref{eq:def-L2} by the relation $q_{\omega, \gamma} = \bar q_\omega / (\omega^2) $.}

\begin{lem}\label{lem:hausd}
For all classes $\alpha\in\cnum^1(\X)$, $0\neq\gamma\in\BPF^{d-2}(\X)$,
we have 
\begin{equation}\label{eq:pos-nums}
q_{\om,\gamma} (\alpha)= \frac1{(\om^2\cdot\gamma)}\left[ \frac{(\alpha\cdot \om\cdot\gamma)^2}{(\om^2\cdot\gamma)} - \left( \left(\alpha - \frac{(\alpha\cdot \om\cdot\gamma)}{(\om^2\cdot\gamma)}\om\right)^2\cdot\gamma \right)\right]
\ge 0.
\end{equation}
Moreover the equality $q_{\om,\gamma} (\alpha)=0$ holds for all $\gamma\in\cBPF^{d-2}(\X)$
iff $\alpha=0$. 
\end{lem}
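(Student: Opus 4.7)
The identity displayed in the lemma is a purely algebraic rearrangement: setting $t := (\alpha\cdot\omega\cdot\gamma)/(\omega^2\cdot\gamma)$ (well-defined since $\gamma\neq 0$ forces $(\omega^2\cdot\gamma) = \normbpf{\gamma} > 0$), I would expand
\[
((\alpha-t\omega)^2\cdot\gamma) = (\alpha^2\cdot\gamma) - 2t(\alpha\cdot\omega\cdot\gamma) + t^2(\omega^2\cdot\gamma)
\]
by bilinearity; the last two terms combine into $-(\alpha\cdot\omega\cdot\gamma)^2/(\omega^2\cdot\gamma)$, and the claimed formula for $q_{\omega,\gamma}(\alpha)$ falls out after dividing by $(\omega^2\cdot\gamma)$.

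The plan for the positivity $q_{\omega,\gamma}(\alpha)\ge0$ is to reduce to the surface-level Hodge index theorem via Bertini. Since $\cBPF^{d-2}(\X)$ is weakly dense in $\BPF^{d-2}(\X)$ and the pairing $\cnum^2(\X)\times\wnum^{d-2}(\X)\to\R$ is weakly continuous in the second variable, I would first approximate $\gamma$ by classes of the form $\pi_*(A_1\cdots A_{d-2+e})$ arising from a flat morphism $\pi\colon Y\to X'$ with $A_i$ ample. For such a strongly BPF class I pick general smooth members $D_i\in|m_i A_i|$ with $m_i\gg 0$; by Bertini, $S := D_1\cap\cdots\cap D_{d-2+e}$ is a smooth projective surface in $Y$. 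The projection formula turns the three intersection numbers $(\omega^2\cdot\gamma)$, $(\alpha\cdot\omega\cdot\gamma)$, $(\alpha^2\cdot\gamma)$ into $(w^2)$, $(a\cdot w)$, $(a^2)$ with $a:=(\pi^*\alpha)|_S$ and $w:=(\pi^*\omega)|_S$, up to the common positive factor $\prod m_i$. Under this reduction $q_{\omega,\gamma}(\alpha)$ becomes $\bar q_w(a)/(w^2)$, and since $\omega$ is big and nef while the $A_i$ are ample, $w$ is big and nef on $S$ with $(w^2)>0$; the inequality~\eqref{eq:hodge-onS} then yields $\bar q_w(a)\ge 0$, which passes to the weak limit.

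For the non-degeneracy, the identity above expresses $(\omega^2\cdot\gamma)\,q_{\omega,\gamma}(\alpha)$ as the sum of two manifestly non-negative pieces: $(\alpha\cdot\omega\cdot\gamma)^2/(\omega^2\cdot\gamma)\ge 0$, and $-((\alpha - t\omega)^2\cdot\gamma)\ge 0$, the latter because on the Bertini surface $S$ the class $(\alpha - t\omega)|_S$ lies in $w^\perp$, on which the intersection form is negative definite by Hodge index. If $q_{\omega,\gamma}(\alpha)=0$ for every $\gamma\in\cBPF^{d-2}(\X)$, both pieces must vanish, yielding $(\alpha\cdot\omega\cdot\gamma)=0$ and $(\alpha^2\cdot\gamma)=0$ for all such $\gamma$. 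Fix a smooth model $X'$ on which $\alpha$ is determined; since $\cBPF^{d-2}(\X)$ spans $\cnum^{d-2}(\X)$ and the Poincaré pairing on $X'$ is perfect, this forces $\alpha_{X'}\cdot\omega_{X'}=0$ in $\num^{d-1}(X')$ and $\alpha_{X'}^2=0$ in $\num^2(X')$. Specializing once more to a Bertini surface $S\subset X'$ cut out by very ample divisors, one then has $(\alpha|_S\cdot\omega|_S)=(\alpha|_S^2)=0$ on $S$, whence Hodge index on $S$ (applied to $\alpha|_S\in w^\perp$ with $(a^2)=0$ in a negative-definite subspace) forces $\alpha|_S=0$ in $\num^1(S)$. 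Finally, the Lefschetz hyperplane theorem, applied inductively to the ample hypersurfaces cutting out $S$, gives the injectivity $\num^1(X')\hookrightarrow\num^1(S)$, so $\alpha_{X'}=0$, i.e.\ $\alpha=0$ as a Cartier $b$-class.

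The main obstacle I expect is the careful execution of the Bertini step in the generality of a flat pushforward — in particular, verifying that the restricted polarisation $w$ remains big on $S$, which I would handle via positivity of mixed intersection numbers of big nef and ample classes. Once that is in place, both assertions of the lemma are direct consequences of the classical Hodge index theorem on surfaces combined with the Lefschetz hyperplane theorem.
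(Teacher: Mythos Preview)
Your proof is correct. The algebraic identity and the positivity argument (density of strongly BPF classes, restriction to a Bertini surface, Hodge index) are exactly the paper's approach.

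The non-degeneracy argument, however, takes a genuinely different route. The paper avoids Lefschetz entirely: to show $\alpha=0$ it proves $(\alpha\cdot\beta)=0$ for every strongly BPF $\beta\in\cBPF^{d-1}(\X)$ by writing $\beta=p_*(D_1\cdots D_{e+d-1})$, letting $S$ represent $D_1\cdots D_{e+d-2}$, so that the associated $\gamma=p_*[S]$ is one of the test classes for which $q_{\om,\gamma}(\alpha)=0$; this forces $\imath^*p^*\alpha=0$ on $S$, and then $(\alpha\cdot\beta)=(\imath^*p^*\alpha\cdot\imath^*D_{e+d-1})=0$. Your argument instead lifts the vanishing to $\alpha\cdot\omega=0$ and $\alpha^2=0$ as full classes in $\num^2(X')$ (note: you wrote $\num^{d-1}(X')$ for $\alpha\cdot\omega$, but both live in $\num^2$), restricts to a complete intersection surface $S\subset X'$, and then invokes the Lefschetz hyperplane theorem for $\num^1$ to conclude. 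The paper's route stays entirely within the BPF formalism it has already built, and the trick of realising an arbitrary strongly BPF curve class as ``one more ample divisor'' on the same auxiliary surface recurs later in the paper (e.g.\ in the proof of the Hodge index theorem on $\FS(\X)$). Your route trades that combinatorial observation for a standard external input; both are perfectly valid.
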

\begin{proof}
The equality~\eqref{eq:pos-nums}
follows from a direct computation. To prove $q_{\om,\gamma} (\alpha)\ge0$ we may assume by continuity  
of the intersection product with respect to the weak topology
that $\gamma$ is Cartier, and normalize it so that $(\om^2\cdot\gamma)=1$.
Fix a smooth model $X'$ on which  the three classes $\alpha, \om$ and $\gamma$ are all determined. 
Again by continuity, we may assume that $\gamma_{X'}$ is strongly basepoint free so that
we have a flat morphism $p\colon Y \to X'$ of relative dimension $e$ between smooth projective varieties, 
and $e+d-2$ very ample divisors $D_1 , \ldots , D_{e+d-2}$ on $Y$ such that $\gamma_{X'} = p_* (D_1 \cdot \ldots \cdot D_{e+d-2})$. 
Let $S$ be any codimension $e+d-2$ smooth subvariety in $Y$  representing the class $D_1 \cdot \ldots \cdot D_{e+d-2}$ and such that $p(S)$ is of pure dimension $2$.
Let $\imath \colon S \hookrightarrow Y$  and $\jmath \colon p(S) \hookrightarrow X'$ be the two natural inclusions and by $\tilde{p} \colon S \to p(S)$ the morphism induced by $p$ on $S$ so that the following diagram is commutative:
\begin{equation*}
\xymatrix{ S \ar[d]^{\tilde{p}} \ar[r]^\imath & Y \ar[d]^{p}  \\
p(S) \ar[r]^{\jmath} & X'.}
\end{equation*}
Using the projection formula, we get:
\begin{align*}
q_{\om,\gamma}(\alpha) &= 2 (\alpha \cdot \om \cdot \gamma)^2 - (\alpha^2 \cdot \gamma)\\
 &= 2 (p^* \alpha_{X'} \cdot p^*\om_{X'} \cdot D_1 \cdot \ldots \cdot D_{e+d-2})^2 - (p^* \alpha_{X'}^2 \cdot D_1 \cdot \ldots \cdot D_{e+d-2}) \\
  &= (\imath^* p^* \alpha_{X'} \cdot \imath^* p^* \om_{X'} )^2 - (\imath^* p^* \alpha_{X'}^2 ) = q_{\imath^* p^* \om_{X'} }(\imath^* p^* \alpha_{X'})\ge 0
\end{align*} 
by~\eqref{eq:hodge-onS}.

Suppose that $q_{\om,\gamma} (\alpha)=0$ for all non-zero $\gamma\in\cBPF^{d-2}(\X)$. It is enough to prove that $(\alpha \cdot \beta)=0$
for any strongly basepoint free class $\beta \in \cBPF^{d-1}(\X)$.  Replacing $\beta$ by a multiple if necessary,
one can find a smooth model $X'$, a flat morphism $p\colon Y \to X'$ of relative dimension $e$ between smooth projective varieties, and $e+d-1$ very ample divisors $D_1 , \ldots , D_{e+d-1}$ on $Y$ such that $\beta = p_* (D_1 \cdot \ldots \cdot D_{e+d-1})$. 

As before, let $S$ be a codimension $e+d-2$ smooth subvariety in $Y$  representing the class $D_1 \cdot \ldots \cdot D_{e+d-2}$  such that $\dim(p(S))=2$, and
 $\imath \colon S \hookrightarrow Y$  and $\jmath \colon p(S) \hookrightarrow X'$ the natural inclusions. We obtain in a similar way:
 \[
 0= q_{\om,\gamma}(\alpha) = q_{\imath^* p^* \om_{X'} }(\imath^* p^* \alpha_{X'})
\] 
which implies $\imath^* p^* \alpha = 0$ in $\num^1(S)$ since $q_{\imath^* p^* \om }$ is a norm on $\cnum^1(\Ss)$.
Now we may write
\[
(\alpha \cdot \beta) = (p^* \alpha \cdot D_1 \cdot \ldots \cdot D_{e+d-1}) = (\imath^* p^*\alpha \cdot \imath^* D_{e+d-1})=0
\]
as was to be shown.
\end{proof}

It follows that the (countable) family of semi-norms $\{q_{\om,\gamma}^{1/2}\}$ where $\gamma$ ranges over all BPF classes defines a Hausdorff topology on $\cnum^1(\X)$, see~\cite[\S I.1]{yosida}. 

\begin{defi}
The space $\FS(\X)$ is defined as the completion of $\cnum^1(\X)$ with respect to the family of semi-norms $q_{\omega, \gamma}^{1/2}$ for $\gamma \in \BPF^{d-2}(\X)$.
\end{defi}

The space $\FS(\X)$ is a Fréchet space
\footnote{when $\K$ is no longer assumed to be countable, there is no countable family of Cartier basepoint free classes that is dense in $\BPF^{d-2}(\X)$, so $\FS(\X)$
is not a Fr\'echet space.}, see~\cite[p.52]{yosida}.
We call the natural topology of $\FS(\X)$ induced by $(q_{\omega, \gamma}^{1/2})_{\gamma \in \BPF^{d-2}(\X)}$  the Fr\'echet topology.

A sequence $(\alpha_n)_n \in \cnum^1(\X)$ is a Cauchy sequence for the Fr\'echet topology if for all $\gamma \in \BPF^{d-2}(\X)$, $(\alpha_n)$ is Cauchy for the semi-norm $q_{\omega,\gamma}^{1/2}$. 
This topology is weaker than the topology we shall define in section \S \ref{section_nsigma}, for which the convergence will be assumed uniform in the choice of $\gamma \in \BPF^{d-2}(\X)$.

We begin with the following observation.
\begin{prop}\label{prop:equi-semi-norms}
For any two big nef divisors $\om_1,\om_2$ on $X$, there exists a constant $C>1$ which depends on $\om_1, \om_2$ such that  
\[
C^{-1}\, q_{\om_2,\gamma} 
\le 
q_{\om_1,\gamma} 
\le
C\, q_{\om_2,\gamma} 
\]
for all non-zero classes $\gamma \in \BPF^{d-2}(\X)$.
\end{prop}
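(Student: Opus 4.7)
The plan is to reduce the inequality to a statement on surfaces, where we can apply Proposition~\ref{prop_estimates_minkowski}, and then control the resulting constant uniformly in $\gamma$ via Siu-type comparisons between $\om_1$ and $\om_2$ on $X$.

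First I would observe that both sides of the desired inequality are continuous in $\gamma$ for the weak topology on $\wnum^{d-2}(\X)$ (since they are polynomial expressions in intersection numbers, and $(\om_i^2\cdot\gamma)>0$ for any non-zero BPF class $\gamma$). By Lemma~\ref{lem_def_BPF} and Theorem~\ref{thm_bpf}~(6), strongly basepoint free Cartier classes are dense in $\BPF^{d-2}(\X)$, so it is enough to prove the inequality for such $\gamma$. Pick then a smooth model $X'$ on which $\om_1,\om_2,\gamma$ are all determined, a flat morphism $p\colon Y\to X'$ of relative dimension $e$, and very ample divisors $D_1,\dots,D_{e+d-2}$ on $Y$ with $\gamma_{X'}=p_*(D_1\cdots D_{e+d-2})$. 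Applying Bertini, I choose a smooth surface $S\subset Y$ representing $D_1\cdots D_{e+d-2}$ whose image in $X'$ is of pure dimension $2$, and denote by $\om_{i,S}=\imath^*p^*(\om_i)_{X'}$ the induced big and nef classes on $S$.

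The next step is the computation, already essentially carried out in the proof of Lemma~\ref{lem:hausd}: the projection formula gives
\[
(\alpha^2\cdot\gamma)=(\alpha_S^2),\quad (\alpha\cdot\om_i\cdot\gamma)=(\alpha_S\cdot\om_{i,S}),\quad (\om_i^2\cdot\gamma)=(\om_{i,S}^2)
\]
for any Cartier $\alpha\in\cnum^1(\X)$ (with $\alpha_S=\imath^*p^*\alpha_{X'}$), so that a direct manipulation of~\eqref{formula_q_w} yields the identity
\[
q_{\om_i,\gamma}(\alpha)=\frac{\bar q_{\om_{i,S}}(\alpha_S)}{(\om_{i,S}^2)}.
\]
Combining with Proposition~\ref{prop_estimates_minkowski} applied on $S$, I obtain
\[
\frac{q_{\om_1,\gamma}(\alpha)}{q_{\om_2,\gamma}(\alpha)}
\le 4\,\frac{(\om_{1,S}\cdot\om_{2,S})^2}{(\om_{1,S}^2)^2}
= 4\,\frac{(\om_1\cdot\om_2\cdot\gamma)^2}{(\om_1^2\cdot\gamma)^2}.
\]

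The final and crucial step is to bound the right hand side independently of $\gamma$. Since $\om_1,\om_2$ are big and nef on $X$, they are in particular basepoint free in codimension $1$, hence Siu's inequalities (Theorem~\ref{thm_siu_bpf}) furnish constants $C_{12},C_{21}>0$ depending only on $\om_1,\om_2$ with $\om_1\le C_{12}\,\om_2$ and $\om_2\le C_{21}\,\om_1$ in $\num^1(X)$. Intersecting the second relation with the BPF class $\om_1\cdot\gamma$ (which is BPF by Theorem~\ref{thm_bpf}~(2)) yields $(\om_1\cdot\om_2\cdot\gamma)\le C_{21}(\om_1^2\cdot\gamma)$, whence the right hand side above is bounded by $4C_{21}^2$. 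Swapping the roles of $\om_1$ and $\om_2$ gives the reverse inequality with constant $4C_{12}^2$, and the proposition follows with $C=4\max(C_{12}^2,C_{21}^2)$.

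The only real subtlety I foresee lies in the density/continuity reduction to strongly BPF $\gamma$: one must check that neither $q_{\om_1,\gamma}$ nor $q_{\om_2,\gamma}$ degenerates at the limit, which amounts to the lower bound $(\om_i^2\cdot\gamma)>0$ for non-zero $\gamma\in\BPF^{d-2}(\X)$ guaranteed by~\eqref{eq:5436}. Everything else is a straightforward unwinding of the surface case.
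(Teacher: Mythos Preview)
Your proof is correct and follows essentially the same route as the paper's own argument: reduce to a strongly basepoint free class $\gamma$, restrict to a surface to invoke Proposition~\ref{prop_estimates_minkowski} and obtain the intermediate bound $q_{\om_1,\gamma}(\alpha)\le 4\,(\om_1\cdot\om_2\cdot\gamma)^2/(\om_1^2\cdot\gamma)^2\, q_{\om_2,\gamma}(\alpha)$, and then use Siu's inequality $\om_2\le C_{21}\om_1$ to kill the dependence on $\gamma$. The paper states the intermediate inequality~\eqref{eq:7654} directly as a consequence of Proposition~\ref{prop_estimates_minkowski}, relying on the surface reduction already performed in the proof of Lemma~\ref{lem:hausd}; you spell this out, and you also make explicit the density/continuity step needed to pass from strongly BPF $\gamma$ to arbitrary $\gamma\in\BPF^{d-2}(\X)$, which the paper leaves tacit.
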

In particular, the space $\FS(\X)$ does not depend on the choice of a big and nef class $\om$.

\begin{proof}
We may suppose that $(\om_1^{d})=(\om_2^{d})=1$. 
We prove the proposition for $C=\left (2d \max((\om_1\cdot \om_2^{d-1}),(\om_2\cdot \om_1^{d-1}) )\right )^2$.
Proposition~\ref{prop_estimates_minkowski} implies that for all $\alpha\in \cnum^1(\X)$ and all strongly basepoint free classes $\gamma \in \cBPF^{d-2}(\X)$,
\begin{equation}\label{eq:7654}
q_{\om_1,\gamma} (\alpha) \le  4\frac{(\om_1\cdot \om_2 \cdot \gamma)^2}{(\om_1^2 \cdot \gamma)^2} q_{\om_2, \gamma}(\alpha).
\end{equation}
By Siu's  inequalities~\eqref{eq:siu_bpf}, we have $\om_2 \le d (\om_2 \cdot \om_1^{d-1})\om_1$ and the above inequality gives:
\[
q_{\om_1,\gamma}(\alpha) \le 4d^2 (\om_2\cdot \om_1^{d-1})^2
q_{\om_2,\gamma}(\alpha), 
\]
proving that the two semi-norms are equivalent.
\end{proof}

\begin{thm}
The inclusion $\cnum^1(\X) \subset \wnum^1(\X)$ extends as a continuous morphism
$\imath\colon \FS(\X) \rightarrow \wnum^1(\X)$ whose image is included in the set of classes $\alpha \in \wnum^1(\X)$
for which
\begin{equation}\label{eq:sup-norm}
\sup_{X'\in \cM_X
 } |(\alpha_{X'}^2 \cdot \gamma)|  
 \le C (\gamma\cdot\om^2)
\end{equation}
for some $C= C_\alpha>0$ and for all  $\gamma \in \BPF^{d-2}(\X)$.
\end{thm}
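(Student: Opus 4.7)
The plan proceeds in three main stages building on Proposition~\ref{prop_surfaces_continuity_intersection} and the surface-reduction technique of Lemma~\ref{lem:hausd}. First I would establish a Cauchy--Schwarz-type inequality for codimension one $b$-classes:
\[
|(\alpha \cdot \beta \cdot \gamma)| \le 3(\om^2 \cdot \gamma)\, q_{\om, \gamma}(\alpha)^{1/2}\, q_{\om, \gamma}(\beta)^{1/2}
\]
for all $\alpha, \beta \in \cnum^1(\X)$ and $\gamma \in \BPF^{d-2}(\X)$. This follows from Proposition~\ref{prop_surfaces_continuity_intersection} by mimicking Lemma~\ref{lem:hausd}: assume $\gamma$ strongly basepoint free of the form $p_*(D_1\cdots D_{d+e-2})$ for a flat cover $p\colon Y \to X'$, restrict to a generic smooth surface $S \subset Y$ representing $D_1 \cdots D_{d+e-2}$ with two-dimensional image, apply the surface inequality to $\imath^* p^*\alpha$ and $\imath^* p^*\beta$ on $S$, and extend to general $\gamma \in \BPF^{d-2}(\X)$ by weak continuity.

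Next, I would establish a monotonicity property across models. Denote by $q^{X'}_{\om,\gamma}(\alpha_{X'})$ the same formula as $q_{\om,\gamma}$ but with all intersections evaluated via the incarnations on $X'$. For Cartier $\alpha \in \cnum^1(\X)$ and a smooth model $X'$ dominating the determination of $\gamma$, I claim $q^{X'}_{\om,\gamma}(\alpha_{X'}) \le q_{\om,\gamma}(\alpha)$. Indeed, picking $X_0 \ge X'$ also dominating the determination of $\alpha$ and writing $\alpha_{X_0} = \pi^*\alpha_{X'} + E$ for $\pi\colon X_0 \to X'$ with $\pi_* E = 0$, the projection formula shows the $(\alpha\cdot\om\cdot\gamma)$-terms agree while the difference $q_{\om,\gamma}(\alpha) - q^{X'}_{\om,\gamma}(\alpha_{X'})$ reduces to $-(E^2 \cdot \pi^*\gamma_{X'})/(\om^2 \cdot \gamma)$. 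The non-positivity $(E^2 \cdot \pi^*\gamma_{X'}) \le 0$ is obtained by restricting to a generic smooth surface $S \subset X_0$ cut out by hyperplane sections of (a flat representative of) $\pi^*\gamma_{X'}$, where $E|_S$ becomes exceptional for a birational surface map and has non-positive self-intersection by the classical Hodge index theorem on $S$. Specializing $\gamma = \om^{d-2}$, the Khovanskii--Teissier--Hodge index theorem makes $q^{X'}_{\om, \om^{d-2}}$ a norm on the finite-dimensional space $\num^1(X')$, and the monotonicity makes the restriction $(\cnum^1(\X), q_{\om, \om^{d-2}}^{1/2}) \to (\num^1(X'), (q^{X'}_{\om, \om^{d-2}})^{1/2})$ a $1$-Lipschitz map.

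Combining these ingredients yields both assertions of the theorem. Any Cauchy sequence $\alpha_n$ in $\FS(\X)$ restricts to a Cauchy sequence $\alpha_{n,X'}$ in the finite-dimensional normed space $(\num^1(X'), (q^{X'}_{\om, \om^{d-2}})^{1/2})$ for every $X' \in \cM_X$, hence converges; compatibility with push-forwards (by continuity in finite dimensions) defines the limit $\imath(\tilde\alpha) \in \wnum^1(\X)$, and $\imath$ is continuous. For the image bound~\eqref{eq:sup-norm}, apply the Cauchy--Schwarz inequality to the Cartier $b$-class $\eta = [\alpha_{X'}]$ with determination $\alpha_{X'}$ on $X'$, noting that $(\eta^2 \cdot \gamma) = (\alpha_{X'}^2 \cdot \gamma_{X'})$:
\[
|(\alpha_{X'}^2 \cdot \gamma_{X'})| \le 3(\om^2\cdot\gamma)\, q_{\om,\gamma}(\eta) = 3(\om^2\cdot\gamma)\, q^{X'}_{\om,\gamma}(\alpha_{X'}) \le 3(\om^2\cdot\gamma)\, q_{\om,\gamma}(\alpha_n),
\]
by monotonicity, so letting $n \to \infty$ yields~\eqref{eq:sup-norm} with $C_\alpha$ controlled by the Fréchet seminorms of $\tilde\alpha$. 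The main obstacle is the monotonicity step: both the sign control $(E^2 \cdot \pi^*\gamma_{X'}) \le 0$ via surface-reduction and the handling of smooth models $X'$ that do not dominate the determination of $\gamma$, for which one must pass to a common higher model and exploit an approximation argument based on the density of strongly basepoint free classes in $\BPF^{d-2}(\X)$.
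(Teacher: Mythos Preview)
Your argument for the extension $\imath\colon\FS(\X)\to\wnum^1(\X)$ is correct, though it takes a slightly different path from the paper: you use the monotonicity $q^{X'}_{\om,\om^{d-2}}(\alpha_{X'})\le q_{\om,\om^{d-2}}(\alpha)$ to exhibit each restriction $\cnum^1(\X)\to\num^1(X')$ as $1$-Lipschitz, whereas the paper pairs directly against an arbitrary $\beta\in\cnum^{d-1}(\X)$ written as $\beta=(D\cdot\gamma)$ and applies Proposition~\ref{prop_surfaces_continuity_intersection}. Both work.

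However, there is a genuine gap in your derivation of~\eqref{eq:sup-norm}. Your final inequality reads
\[
|(\alpha_{X'}^2\cdot\gamma)|\le 3(\om^2\cdot\gamma)\,q_{\om,\gamma}(\tilde\alpha),
\]
and this is a bound with constant $C=3\,q_{\om,\gamma}(\tilde\alpha)$ that \emph{depends on $\gamma$}. The statement asks for a single $C_\alpha$ valid for all $\gamma\in\BPF^{d-2}(\X)$, i.e.\ you need $\sup_{\gamma}q_{\om,\gamma}(\tilde\alpha)<\infty$. This is not automatic for $\tilde\alpha\in\FS(\X)$: convergence in the Fr\'echet topology only says that $q_{\om,\gamma}(\alpha_n)$ is Cauchy for each fixed $\gamma$, not that the limits are uniformly bounded in $\gamma$. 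Saying ``$C_\alpha$ is controlled by the Fr\'echet seminorms'' precisely sidesteps the issue, since those seminorms form an a priori unbounded family.

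The paper closes this gap with the Banach--Steinhaus theorem (uniform boundedness principle). Once you have established, for each fixed $\gamma$, that $\sup_{X'}|L_{X'}(\gamma)|<\infty$ where $L_{X'}(\gamma)=(\alpha_{X'}^2\cdot\gamma)$, you observe that each $L_{X'}$ is a continuous linear form on the Banach space $(\Vect(\BPF^{d-2}(\X)),\normbpf{\cdot})$. Pointwise boundedness of the family $\{L_{X'}\}_{X'\in\cM_X}$ then forces $\sup_{X'}\|L_{X'}\|<\infty$, which is exactly~\eqref{eq:sup-norm}. Your monotonicity-plus-Cauchy--Schwarz inequality already gives the pointwise boundedness, so inserting this uniform boundedness step would complete your argument.
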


\begin{rem}
We do not know whether $\imath$ is injective or not (see Conjecture~\ref{conjimath} below)
\end{rem}

\begin{proof}
Let $(\alpha_n)_n$ be any Cauchy sequence for the Fr\'echet topology, and let us prove that $\alpha_n$ converges weakly in $\wnum^1(\X)$.
We need to show that $\alpha_n\cdot \beta$ is convergent for any Cartier class $\beta\in\cnum^{d-1}(\X)$. 
We may suppose by Theorem \ref{thm_bpf} that $\beta=(D\cdot\gamma)$ with $\gamma\in\cBPF^{d-2}(\X)$, and $D\in\cNef(\X)$.
The assumption that $\alpha_n$ is a Cauchy sequence in $\FS(\X)$ shows that $q_{\om,\gamma}(\alpha_n - \alpha_m) \to0$ uniformly in $n,m$, 
hence $(\alpha_n\cdot D\cdot \gamma)$ is also a Cauchy sequence by Proposition~\ref{prop_surfaces_continuity_intersection}.
The inclusion morphism $\imath_\Sigma\colon \cnum^1(\X) \to \wnum^1(\X)$ thus extends to $\FS(\X)$ so that 
$\imath_\Sigma(\alpha_n) \to \imath_\Sigma(\alpha)$ weakly in $\wnum^1(\X)$ if $\alpha_n\to\alpha$ in $\FS(\X)$.

\medskip
Now pick any $\alpha\in\FS(\X)$ and any class $\gamma\in\cBPF^{d-2}(\X)$. By Lemma~\ref{lem:increase-square}, the sequence of intersection numbers
$ - (\alpha_{X'}^2\cdot \gamma)$ is increasing in the poset of smooth models $X'\in\cM_X$. We claim that $ \sup_{X'} - (\alpha_{X'}^2\cdot \gamma)$ is finite.  Take any Cauchy sequence $\alpha_n \in \cnum^1(\X)$ converging to $\alpha$ in $\FS(\X)$. 
As $\alpha_n$ converges to $\alpha$ weakly, the sequence $(\alpha_n \cdot \om \cdot \gamma)$ converges to $(\alpha \cdot \om \cdot \gamma)$. But $\alpha_n$ is a Cauchy sequence for the semi-norm $q_{\omega, \gamma}$, hence the sequence $|(\alpha_n^2 \cdot \gamma)| $ is bounded, say by $M>0$ by the next lemma.

\begin{lem}\label{lem:bounded-norm}
For any non-zero $\gamma\in\BPF^{d-2}(\X)$, if $\alpha_n$ is $q_{\om,\gamma}$-Cauchy then both sequences
$(\alpha_n\cdot\om\cdot\gamma)$ and $(\alpha_n^2\cdot\gamma)$ are Cauchy.
\end{lem}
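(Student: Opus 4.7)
The plan is to exploit that $q_{\om,\gamma}$ is a positive semi-definite quadratic form on $\cnum^1(\X)$ by Lemma~\ref{lem:hausd}, and that $\om$ itself plays the role of a unit vector for this form. Denote by $\langle\cdot,\cdot\rangle_{\om,\gamma}$ the symmetric bilinear form obtained by polarising~\eqref{formula_q_w}. Once the behaviour of $q_{\om,\gamma}$ at $\om$ is pinned down, both sequences $(\alpha_n\cdot\om\cdot\gamma)$ and $(\alpha_n^2\cdot\gamma)$ can be extracted from $q_{\om,\gamma}(\alpha_n)$ and $q_{\om,\gamma}(\alpha_n-\alpha_m)$ by purely algebraic manipulations.

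A direct computation from~\eqref{formula_q_w} yields
$$\langle\alpha,\om\rangle_{\om,\gamma}=\frac{(\alpha\cdot\om\cdot\gamma)}{(\om^2\cdot\gamma)},\qquad q_{\om,\gamma}(\om)=1.$$
The Cauchy--Schwarz inequality for the positive semi-definite form $q_{\om,\gamma}$ thus gives
$$|(\alpha\cdot\om\cdot\gamma)|\le(\om^2\cdot\gamma)\,q_{\om,\gamma}(\alpha)^{1/2}.$$
Applied linearly to $\alpha=\alpha_n-\alpha_m$, this shows that $(\alpha_n\cdot\om\cdot\gamma)$ is Cauchy in $\R$, and in particular bounded.

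For the second claim, I rearrange~\eqref{formula_q_w} into the identity
$$(\alpha^2\cdot\gamma)=2\frac{(\alpha\cdot\om\cdot\gamma)^2}{(\om^2\cdot\gamma)}-(\om^2\cdot\gamma)\,q_{\om,\gamma}(\alpha).$$
Evaluated at $\alpha_n$, the first summand is Cauchy since $(\alpha_n\cdot\om\cdot\gamma)$ is Cauchy and bounded, so its square is as well. For the second summand, the reverse triangle inequality for the seminorm $q_{\om,\gamma}^{1/2}$ gives $|q_{\om,\gamma}(\alpha_n)^{1/2}-q_{\om,\gamma}(\alpha_m)^{1/2}|\le q_{\om,\gamma}(\alpha_n-\alpha_m)^{1/2}$, so $q_{\om,\gamma}(\alpha_n)^{1/2}$ is Cauchy and bounded in $\R$, whence so is its square $q_{\om,\gamma}(\alpha_n)$. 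Summing proves $(\alpha_n^2\cdot\gamma)$ is Cauchy.

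I do not expect any serious obstacle: the crux is simply to observe that $\om$ is a unit vector for $q_{\om,\gamma}$, so that Cauchy--Schwarz already controls the $\om$-component by the seminorm $q_{\om,\gamma}^{1/2}$; the remainder is formal manipulation of Cauchy sequences in seminormed spaces.
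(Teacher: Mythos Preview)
Your proof is correct and follows essentially the same route as the paper's. The paper normalises $(\om^2\cdot\gamma)=1$ and writes $q_{\om,\gamma}(\alpha)=(\alpha\cdot\om\cdot\gamma)^2+\Delta$ with $\Delta\ge0$ (the orthogonal decomposition from~\eqref{eq:pos-nums}) to obtain $|(\alpha\cdot\om\cdot\gamma)|\le q_{\om,\gamma}(\alpha)^{1/2}$ directly, whereas you reach the same bound via Cauchy--Schwarz using $q_{\om,\gamma}(\om)=1$; after that, both arguments invoke the reverse triangle inequality and solve~\eqref{formula_q_w} for $(\alpha_n^2\cdot\gamma)$.
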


We infer
\[\sup_{X' \in\cM_X} -(\alpha_{n,X'}^2 \cdot \gamma ) = -(\alpha_{n}^2 \cdot \gamma ) \le M,
\]
and taking the limit as $n\rightarrow +\infty$ in each model yields
$\sup_{X' \in\cM_X} |(\alpha_{X'}^2 \cdot \gamma )| \le M$
as required.

Now the linear form $L_{X'}(\gamma) := (\alpha_{X'}^2 \cdot \gamma )$ is continuous on $\Vect(\BPF^{d-2}(\X))$
and for each $\gamma$ we have  $\sup_{X'} |L_{X'}(\gamma) |<\infty$. 
By Banach-Steinhaus theorem (a.k.a. the uniform boundedness principle), we get $\sup_{X'} |L_{X'}(\gamma)| \le C \normbpf{\gamma}$.
\end{proof}

\begin{proof}[Proof of Lemma~\ref{lem:bounded-norm}]
This follows from the observation that if $(\om^2\cdot\gamma)=1$, then we have
\[
q_{\om,\gamma}(\alpha) = (\alpha\cdot\om\cdot\gamma)^2 + \Delta 
\]
where $\Delta = - ((\alpha - (\alpha\cdot\om) \om )^2 \cdot \gamma )$.
By~\eqref{eq:pos-nums}, $\Delta \ge0$  for any BPF class.
If $q_{\om,\gamma}^2(\alpha_n-\alpha_m)\le \epsilon$, then $|(\alpha_n - \alpha_m\cdot\om\cdot\gamma)| \le \epsilon$
and the triangle inequality implies $q_{\om,\gamma}(\alpha_n)$ to be a Cauchy sequence.
Using~\eqref{formula_q_w}, this implies  $(\alpha^2_n\cdot\gamma)$ to be a Cauchy sequence too.
\end{proof}


\subsection{The pairing $\FS(\X)\times \FS(\X) \to \wnum^{2}(\X)$}

Let $\om\in\cnum^1(\X)$ be any big and nef  class.

\begin{thm}For any $\gamma \in \BPF^{d-2}(\X)$,
the map
$\alpha, \beta\mapsto (\alpha\cdot\beta\cdot\gamma)\in\R$
with $\alpha,\beta\in\cnum^1(\X)$ defines a bilinear
form on $\FS(\X)$. It induces a bilinear map
\begin{equation}\label{eq:intersection-FS}
\FS(\X)\times \FS(\X) \to \wnum^{2}(\X)
\end{equation}
which is continuous in the following sense: 
if $\alpha_n\to\alpha$ and $\beta_n\to\beta$
in the Fr\'echet topology, then $(\alpha_n\cdot\beta_n)\to (\alpha\cdot\beta)$ weakly.  
Moreover it
satisfies
\begin{equation}\label{eq:bound-norms}
|(\alpha \cdot \beta\cdot\gamma)| \le 3 q_{\om,\gamma}(\alpha)^{1/2} q_{\om,\gamma}(\beta)^{1/2} (\om^2\cdot\gamma).
\end{equation}
\end{thm}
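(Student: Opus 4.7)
The plan is to establish the estimate \eqref{eq:bound-norms} first for Cartier classes, then extend everything by density, and finally construct the Weil $b$-class intersection via Poincaré duality.

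\textbf{Step 1: The estimate on Cartier classes.} For $\alpha,\beta\in\cnum^1(\X)$ and non-zero $\gamma\in\BPF^{d-2}(\X)$, I would adapt the proof of Proposition~\ref{prop_surfaces_continuity_intersection}. Set $\Omega:=(\om^2\cdot\gamma)>0$ and consider the symmetric bilinear form
\[
b_{\om,\gamma}(\alpha,\beta) := \frac{2(\alpha\cdot\om\cdot\gamma)(\beta\cdot\om\cdot\gamma)}{\Omega^2} - \frac{(\alpha\cdot\beta\cdot\gamma)}{\Omega},
\]
whose associated quadratic form is $q_{\om,\gamma}$. Positivity of $q_{\om,\gamma}$ (Lemma~\ref{lem:hausd}) and Cauchy–Schwarz give $|b_{\om,\gamma}(\alpha,\beta)|\le q_{\om,\gamma}(\alpha)^{1/2}q_{\om,\gamma}(\beta)^{1/2}$, while the explicit formula \eqref{eq:pos-nums} yields $|(\alpha\cdot\om\cdot\gamma)|\le \Omega\,q_{\om,\gamma}(\alpha)^{1/2}$. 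Substituting into
\[
(\alpha\cdot\beta\cdot\gamma)=\tfrac{2(\alpha\cdot\om\cdot\gamma)(\beta\cdot\om\cdot\gamma)}{\Omega}-\Omega\,b_{\om,\gamma}(\alpha,\beta)
\]
and applying the triangle inequality delivers \eqref{eq:bound-norms} with constant $3$.

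\textbf{Step 2: Extension to $\FS(\X)$.} The estimate shows that for each fixed $\gamma\in\BPF^{d-2}(\X)$ the map $(\alpha,\beta)\mapsto(\alpha\cdot\beta\cdot\gamma)$ is continuous on $\cnum^1(\X)\times\cnum^1(\X)$ with respect to $q_{\om,\gamma}^{1/2}\times q_{\om,\gamma}^{1/2}$. By density of $\cnum^1(\X)$ in $\FS(\X)$ it extends uniquely to a bilinear form on $\FS(\X)\times\FS(\X)$, and the bound \eqref{eq:bound-norms} propagates to the extension.

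\textbf{Step 3: Construction of the $\wnum^2(\X)$-valued pairing.} Fix $\alpha,\beta\in\FS(\X)$. Additivity of $\gamma\mapsto(\alpha\cdot\beta\cdot\gamma)$ on $\BPF^{d-2}(\X)$ passes to the limit from Cauchy approximations in $\cnum^1(\X)$. Since $\BPF^{d-2}(\X)$ has non-empty interior in each $\num^{d-2}(X')$ by Theorem~\ref{thm_bpf}(1), every Cartier class in $\num^{d-2}(X')$ is a difference of two strongly basepoint free classes; I extend linearly to obtain, for each smooth model $X'$, a linear form on $\num^{d-2}(X')$. The perfect Poincaré pairing on $X'$ produces a unique class $(\alpha\cdot\beta)_{X'}\in\num^2(X')$. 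The compatibility $\pi_*(\alpha\cdot\beta)_{X''}=(\alpha\cdot\beta)_{X'}$ for $X''\ge X'$ follows from the projection formula, using that a Cartier $b$-class $[\delta]$ with $\delta\in\num^{d-2}(X')$ is represented by $\pi^*\delta$ in $X''$. This assembles into a well-defined element $(\alpha\cdot\beta)\in\wnum^2(\X)$.

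\textbf{Step 4: Weak continuity.} If $\alpha_n\to\alpha$ and $\beta_n\to\beta$ in the Fréchet topology, then $q_{\om,\gamma}(\beta_n)$ is bounded for each $\gamma$ (Lemma~\ref{lem:bounded-norm}). Writing $(\alpha_n\cdot\beta_n - \alpha\cdot\beta)\cdot\gamma=((\alpha_n-\alpha)\cdot\beta_n)\cdot\gamma+(\alpha\cdot(\beta_n-\beta))\cdot\gamma$ and applying \eqref{eq:bound-norms} to each term shows $(\alpha_n\cdot\beta_n\cdot\gamma)\to(\alpha\cdot\beta\cdot\gamma)$ for every $\gamma\in\BPF^{d-2}(\X)$. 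Since BPF classes span $\cnum^{d-2}(\X)$ and weak convergence in $\wnum^2(\X)$ is equivalent to convergence of the incarnations $\num^2(X')$, tested via Poincaré duality against $\num^{d-2}(X')$, this yields the desired weak convergence.

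The main obstacle I anticipate is Step 3, namely verifying that the linear functional produced from the scalar pairing genuinely assembles into a Weil $b$-class with compatible incarnations, rather than just a family of functionals. The other steps reduce, once Lemma~\ref{lem:hausd} is in hand, to Cauchy–Schwarz and standard density arguments.
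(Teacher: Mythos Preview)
Your proposal is correct. The main difference from the paper lies in Step~1: the paper establishes~\eqref{eq:bound-norms} by first reducing to the case where $\gamma$ is strongly basepoint free, choosing a smooth surface $S$ representing the complete intersection class upstairs, and then invoking Proposition~\ref{prop_surfaces_continuity_intersection} on $S$ together with the projection formula. You instead work intrinsically, applying Cauchy--Schwarz directly to the positive semidefinite form $q_{\om,\gamma}$ (whose positivity was already secured in Lemma~\ref{lem:hausd} by the same surface reduction). Your route is cleaner in that it avoids repeating the passage to a surface, while the paper's route makes the geometric origin of the estimate more transparent. In Steps~3 and~4 you spell out the Poincar\'e-duality construction of the Weil $b$-class and the weak-continuity argument more explicitly than the paper, which dispatches these points in a single sentence citing Lemma~\ref{lem:bounded-norm}; your version is more detailed but not essentially different.
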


\begin{proof}
The map extends by continuity as a multi-linear map to $\FS(\X)\times \FS(\X)\times\Vect(\BPF^{d-2}(\X))$ by Lemma~\ref{lem:bounded-norm}.
It remains to prove~\eqref{eq:bound-norms}.

By continuity, we may suppose that $\alpha,\beta \in \cnum^1(\X)$, and $\gamma \in \cBPF^{d-2}(\X)$ is a strongly basepoint free class such that $(\om^2 \cdot \gamma)=1$. 
 Write $\gamma= p_* (D_1 \cdot \ldots \cdot D_{e+d-2})$ where $D_1, \ldots, D_{e+d-2}$ are very ample divisors and where $p\colon Y \to X'$ is a flat morphism of relative dimension $e$. 
 Take a smooth surface $S$ whose cycle class represents 
 $\hat{\gamma}:=(D_1 \cdot \ldots \cdot D_{e+d-2})\in\num^{e+d-2}(Y)$ and denote by $\imath\colon S \to Y$ the natural inclusion morphism.
We obtain
 \[
(\alpha \cdot \beta\cdot\gamma) = (\imath^* p^*\alpha \cdot \imath^* p^*\beta ).
\]
By Proposition~\ref{prop_surfaces_continuity_intersection}, and the projection formula we have:
\begin{equation*}
|(\alpha \cdot \beta\cdot\gamma)| \le 3 \sqrt{q_{\imath^* p^*\om}(\imath^*p^*\alpha) q_{\imath^* p^*\om}(\imath^*p^*\beta )}
=3 q_{\om,\gamma}(\alpha)^{1/2} q_{\om,\gamma}(\beta)^{1/2}.
\end{equation*}
This concludes the proof.
\end{proof}

From the continuity of the pairing with respect to the Fr\'echet topology, and from the Banach-Steinhaus theorem applied to the Banach space
$(\Vect(\BPF^{d-2}(\X)),\normbpf{\cdot})$, we infer:
\begin{cor}\label{cor:comput}
For any $\alpha\in\FS(\X)$ and any $\gamma\in\BPF^{d-2}(\X)\setminus\{0\}$, we have 
\[
q_{\om,\gamma}(\alpha) =  2 \frac{(\alpha \cdot  \om \cdot \gamma )^2}{(\om^2\cdot\gamma)^2}  -  \frac{( \alpha^2 \cdot \gamma)}{(\om^2\cdot\gamma)}
~,\]
and
\[\sup_{0\neq \gamma\in\BPF^{d-2}(\X)} q_{\om,\gamma}(\alpha) <\infty.\]
\end{cor}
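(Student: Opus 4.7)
The plan is to first extend the formula \eqref{formula_q_w} from Cartier $b$-classes to $\alpha\in\FS(\X)$ by density, and then derive the uniform bound by a Banach-Steinhaus argument on the Banach space $V := (\Vect(\BPF^{d-2}(\X)),\normbpf{\cdot})$.

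For the identity, fix $\alpha\in\FS(\X)$ and $\gamma\in\BPF^{d-2}(\X)\setminus\{0\}$, and pick $\alpha_n\in\cnum^1(\X)$ with $q_{\om,\gamma'}(\alpha_n-\alpha)\to 0$ for every $\gamma'\in\BPF^{d-2}(\X)$. The trilinear bound \eqref{eq:bound-norms} implies that the maps $\beta\mapsto(\beta\cdot\om\cdot\gamma)$ and $\beta\mapsto(\beta^2\cdot\gamma)$ are continuous on $\FS(\X)$ with respect to the semi-norm $q_{\om,\gamma}^{1/2}$; moreover $q_{\om,\gamma}(\alpha_n)\to q_{\om,\gamma}(\alpha)$ by definition of the Fréchet topology. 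Since \eqref{formula_q_w} holds tautologically for each Cartier $\alpha_n$, passing to the limit yields the identity for $\alpha$.

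For the uniform bound I consider the linear forms on $V$
\[
\ell_n^{(2)}(\gamma):=(\alpha_n^2\cdot\gamma),\qquad \ell_n^{(1)}(\gamma):=(\alpha_n\cdot\om\cdot\gamma).
\]
The classes $\alpha_n^2$ and $\alpha_n\cdot\om$ are Cartier $b$-classes determined in a single smooth model $X_n$; writing them as differences of basepoint free classes in $X_n$ and applying Siu's inequality \eqref{eq:siu_bpf} gives $|\ell_n^{(i)}(\gamma)|\le C_{\alpha_n}\normbpf{\gamma}$, so $\ell_n^{(i)}$ is continuous on $V$. For each fixed $\gamma$ the sequences $(\ell_n^{(i)}(\gamma))_n$ are Cauchy in $n$: expanding $\alpha_n^2-\alpha_m^2=(\alpha_n-\alpha_m)\cdot(\alpha_n+\alpha_m)$ and applying \eqref{eq:bound-norms} together with $q_{\om,\gamma}(\alpha_n-\alpha_m)\to 0$, the boundedness of $q_{\om,\gamma}(\alpha_n+\alpha_m)$, and $q_{\om,\gamma}(\om)=1$ one obtains pointwise boundedness in $n$ of both families on $V$. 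The Banach-Steinhaus theorem then produces constants $C,C'>0$ independent of $n$ and $\gamma$ with $|\ell_n^{(2)}(\gamma)|\le C\normbpf{\gamma}$ and $|\ell_n^{(1)}(\gamma)|\le C'\normbpf{\gamma}$.

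Letting $n\to\infty$ and invoking $\normbpf{\gamma}=(\om^2\cdot\gamma)$ for $\gamma\in\BPF^{d-2}(\X)$ (Lemma~\ref{lem:pleasethereferee} with $k=d-2$) yields $|(\alpha^2\cdot\gamma)|\le C(\om^2\cdot\gamma)$ and $|(\alpha\cdot\om\cdot\gamma)|\le C'(\om^2\cdot\gamma)$. Substituting into the identity from the first step bounds $q_{\om,\gamma}(\alpha)$ by $2(C')^2+C$ uniformly in $\gamma$, which is the second assertion. The one delicate point is verifying the continuity of $\ell_n^{(i)}$ on all of $V$ rather than on the dense subcone spanned by strongly basepoint free Cartier classes; this is exactly what Siu's inequality in the single model $X_n$ delivers, once $\alpha_n^2$ and $\alpha_n\cdot\om$ are decomposed there.
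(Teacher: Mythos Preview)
Your proof is correct and follows essentially the same approach as the paper: the identity comes from continuity of the pairing~\eqref{eq:intersection-FS} in the Fr\'echet topology, and the uniform bound from Banach--Steinhaus on $(\Vect(\BPF^{d-2}(\X)),\normbpf{\cdot})$. The paper states only this one-line justification; your write-up supplies exactly the details one would expect (the only cosmetic difference is that the paper, in the proof of the preceding theorem, indexes the Banach--Steinhaus family by models $X'$ via $L_{X'}(\gamma)=(\alpha_{X'}^2\cdot\gamma)$ rather than by the approximating sequence $\alpha_n$, but the two are interchangeable here).
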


The following three questions remain open. 
\begin{conj}\label{conjimath}
The canonical morphism $\imath_{\Sigma}\colon\FS(\X)\to\wnum^1(\X)$ is injective.
That is suppose $\alpha_n\in \cnum^1(\X)$ is a Cauchy sequence for any $q_{\om,\gamma}$-seminorm, and
$\alpha_n \to0$ weakly. Is it true that $q_{\om,\gamma}(\alpha_n) \to 0$ for all $0\neq\gamma \in\BPF^{d-2}(\X)$? 
\end{conj}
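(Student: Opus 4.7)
The natural approach is to exploit the Hilbert structure on the Picard--Manin space of a surface, via the same type of surface reduction used in Lemma~\ref{lem:hausd}. Suppose $\alpha_n \in \cnum^1(\X)$ is $q_{\om, \gamma}$-Cauchy for every $\gamma$ and $\alpha_n \to 0$ weakly in $\wnum^1(\X)$. Since $q_{\om,\gamma}^{1/2}(\alpha_n)$ is then a Cauchy sequence of real numbers, it converges to some $L_\gamma \ge 0$, and the goal is to show $L_\gamma = 0$ for every $0\neq \gamma \in \BPF^{d-2}(\X)$.

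My first step would be to reduce to the case where $\gamma$ is strongly basepoint free and Cartier. Given a general $\gamma \in \BPF^{d-2}(\X)$, approximate it weakly by strongly BPF Cartier classes $\gamma_j$ (using the definition of $\BPF^{d-2}(\X)$ and Lemma~\ref{lem_def_BPF}). Corollary~\ref{cor:comput} yields $\sup_\gamma q_{\om,\gamma}(\alpha) < \infty$ for any $\alpha \in \FS(\X)$, which together with the explicit formula \eqref{formula_q_w} and weak continuity of the triple intersection should let one interchange $\lim_n$ with $\lim_j$ and reduce the problem to $\gamma$ strongly BPF.

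Next, fix such a $\gamma$ determined in a smooth model $X'$, write $\gamma_{X'} = p_*(D_1 \cdots D_{e+d-2})$ for a flat $p\colon Y \to X'$ with very ample $D_i$, and choose a smooth surface $\imath\colon S \hookrightarrow Y$ representing $D_1 \cdots D_{e+d-2}$. When $\alpha_n$ is determined in $X_n \ge X'$, one sets $Y_n := Y \times_{X'} X_n$, with induced flat $p_n\colon Y_n \to X_n$, and lets $S_n$ be the strict transform of $S$ in $Y_n$. The identity
\[
q_{\om,\gamma}(\alpha_n) \;=\; q_{\imath_n^* p_n^* \om_{X_n}}\bigl(\imath_n^* p_n^* \alpha_{n,X_n}\bigr)
\]
then computes the seminorm on a surface birational to $S$. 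The surfaces $S_n$ form a cofiltered system of smooth models over a fixed Riemann--Zariski surface $\Ss_S$, and the pulled-back classes $\imath_n^* p_n^* \alpha_{n,X_n}$ should assemble into a Cauchy sequence in the Picard--Manin--Hilbert space $L^2(\Ss_S)$. If so, then the weak hypothesis $\alpha_n \to 0$ in $\wnum^1(\X)$ forces each incarnation $\imath_n^* p_n^* \alpha_{n,X_n}$ to go to $0$ in $\num^1(S_n)$, hence the Cauchy sequence has weak limit $0$ in $L^2(\Ss_S)$; since Picard--Manin is Hilbert, this gives convergence in $L^2$-norm, i.e.\ $q_{\om,\gamma}(\alpha_n) \to 0$.

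The hardest step is assembling the varying surfaces $S_n$ into a single coherent tower over $\Ss_S$ and transferring the Cauchy and weak-convergence hypotheses cleanly. The difficulty is that the incarnations $\imath_n^*p_n^*\alpha_{n,X_n}$ live on different models $S_n$, and the $q$-Cauchy property on $\X$ only controls their values after restriction; one must show this descends to a $L^2(\Ss_S)$-Cauchy property, which is essentially a compatibility statement between the base-change surfaces and the Picard--Manin structure. A secondary obstacle is that weak convergence in $\wnum^1(\X)$ tests only against Cartier classes on models of $X$, whereas $S_n$ lies in a flat cover of $X_n$, so pushing weak convergence to the surface level requires that Cartier classes on $S_n$ be reachable from pulled-back Cartier data on $X_n$, which is where I expect the argument, if it works at all, to require substantially new input beyond the formalism developed in the paper so far.
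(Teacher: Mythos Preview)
This statement is a \emph{conjecture}, not a theorem: the paper does not prove it and explicitly leaves it open. The paper does remark, immediately after stating the conjecture, that the partial case where $\gamma\in\cBPF^{d-2}(\X)$ is Cartier \emph{can} be proved. Your surface-reduction argument (the second and third paragraphs of your proposal) is exactly the kind of argument that handles this Cartier case, and the restriction operator of \S\ref{sec:restrict} supplies the machinery for assembling the tower of surfaces and transferring the Cauchy and weak-convergence hypotheses. So the technical obstacles you flag at the end are real but surmountable using what the paper already builds.

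The genuine gap is in your first step, the reduction from general $\gamma\in\BPF^{d-2}(\X)$ to Cartier $\gamma$. You write that Corollary~\ref{cor:comput} together with ``weak continuity of the triple intersection should let one interchange $\lim_n$ with $\lim_j$''. This is precisely where the argument breaks down and why the statement remains a conjecture. For fixed Cartier $\alpha_n$, the map $\gamma\mapsto(\alpha_n^2\cdot\gamma)$ is weakly continuous because $\alpha_n^2$ is Cartier; but the rate of convergence as $\gamma_j\to\gamma$ depends on the model in which $\alpha_n$ is determined, and there is no uniformity in $n$. Equivalently, once you pass to the Fr\'echet limit $\alpha\in\FS(\X)$, the class $\alpha^2$ lies only in $\wnum^2(\X)$, and the pairing $\wnum^2(\X)\times\wnum^{d-2}(\X)\to\R$ is not defined for two Weil classes. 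Knowing $\alpha^2=0$ as a Weil class (which follows from the Cartier case) does not force $q_{\om,\gamma}(\alpha)=0$ for non-Cartier $\gamma$, because the latter is defined as $\lim_n q_{\om,\gamma}(\alpha_n)$ and there is no continuous extension of the pairing that would identify this limit with an evaluation of $\alpha^2$ against $\gamma$. The bound $\sup_\gamma q_{\om,\gamma}(\alpha)<\infty$ from Corollary~\ref{cor:comput} gives uniform boundedness, not the equicontinuity needed to interchange limits. In short, you have located the difficulty in the wrong place: the surface reduction is the tractable part, and the approximation in $\gamma$ is the open problem.
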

Note that under the above assumptions, it is possible to prove that $q_{\om,\gamma}(\alpha_n) \to 0$ for all $\gamma \in\cBPF^{d-2}(\X)$.

\begin{conj}\label{conjFS}
For any $\alpha \in \FS(\X)$, we have $\alpha_{X'}:=[\imath_\Sigma(\alpha)_{X'}] \to \alpha$ in the Fr\'echet topology. 
\end{conj}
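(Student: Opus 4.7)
The plan is to treat the incarnation operator $P_{X'}\colon \eta \mapsto [\imath_\Sigma(\eta)_{X'}]$ as a ``truncation'' that plays the role of an orthogonal projection with respect to each semi-norm $q_{\om,\gamma}^{1/2}$. The key claim is that $P_{X'}$ is a $q_{\om,\gamma}^{1/2}$-contraction on $\FS(\X)$ whenever $\gamma \in \cBPF^{d-2}(\X)$ is strongly basepoint free and is determined in a model $X_1$ dominated by $X'$. Combined with a Cauchy sequence argument, this will yield the desired convergence $\alpha_{X'}\to\alpha$ in the Fréchet topology along $\cM_X$.

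To establish the contraction for Cartier $\eta$ determined in some $X_0$, I would pick a smooth model $X''\geq X_0, X'$, set $\xi = \eta_{X''}$, and observe that $(P_{X'}\eta)_{X''} = (\pi')^*(\pi')_*\xi =: \hat\xi$, where $\pi'\colon X''\to X'$. The projection formula ensures that the linear part $(\eta\cdot\om\cdot\gamma)$ of $q_{\om,\gamma}$ is preserved by $P_{X'}$, so
\[
q_{\om,\gamma}(\eta) - q_{\om,\gamma}(P_{X'}\eta)
=\frac{(\hat\xi^2 - \xi^2)\cdot\gamma_{X''}}{(\om^2\cdot\gamma)}
=\frac{\bigl(((\pi')_*\xi)^2 - (\pi')_*(\xi^2)\bigr)\cdot\gamma_{X'}}{(\om^2\cdot\gamma)}\, .
\]
Writing $\gamma = p_*(D_1\cdots D_{e+d-2})$ with $p\colon Y\to X_1$ flat, choosing a generic smooth surface $S'\subset Y$ cut out by the $D_i$, and base-changing to obtain $p''\colon Y''\to X''$, $S''=S'\times_{X'}X''$, $\mu\colon S''\to S'$, the flat base change $p^*(\pi')_* = (\pi^{**})_*(p'')^*$ together with the compatible identity $(\imath')^*(\pi^{**})_* = \mu_*(\imath'')^*$ identifies the numerator with $(\mu_*\tilde\xi)^2 - \tilde\xi^2$ computed on the surface $S''$, where $\tilde\xi := (\imath'')^*(p'')^*\xi \in \num^1(S'')$. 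Applying Lemma~\ref{lem:increase-square} to the Weil class on the Riemann-Zariski space of $S'$ with incarnation $\tilde\xi$ on $S''$ yields $\tilde\xi^2 \leq (\mu_*\tilde\xi)^2$, which proves the inequality for Cartier $\eta$. Since $P_{X'}$ factors through the finite-dimensional space $\num^1(X')$, it is continuous from $\FS(\X)$ into itself, and the contraction extends to all of $\FS(\X)$ by density.

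To conclude, take $\alpha\in\FS(\X)$ and a Cauchy sequence $\beta_n\in\cnum^1(\X)$ with $\beta_n\to\alpha$ in the Fréchet topology. Fix a strongly BPF Cartier $\gamma$ determined in $X_1$, and $\epsilon>0$. Choose $N$ with $q_{\om,\gamma}(\alpha-\beta_N)^{1/2}<\epsilon$, and let $X_N$ be the determination of $\beta_N$. For any $X'\geq X_N\vee X_1$, one has $[\beta_{N,X'}]=\beta_N$, hence $\alpha_{X'}-\beta_N = P_{X'}(\alpha-\beta_N)$, and the contraction gives $q_{\om,\gamma}(\alpha_{X'}-\beta_N)^{1/2}<\epsilon$. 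The triangle inequality then yields $q_{\om,\gamma}(\alpha-\alpha_{X'})^{1/2}<2\epsilon$, so $\alpha_{X'}\to\alpha$ cofinally for each strongly BPF Cartier $\gamma$. To handle a general $\gamma\in\BPF^{d-2}(\X)$, I would approximate by strongly BPF Cartier classes via Lemma~\ref{lem_def_BPF}, and use the uniform bound from Corollary~\ref{cor:comput} together with continuity of the pairing in the Weil class $\gamma$ to pass to the limit.

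The main obstacle is the base-change identity $(\imath')^*(\pi^{**})_* = \mu_*(\imath'')^*$ used in the surface reduction: it concerns the interaction of a closed embedding with the proper birational morphism $\pi^{**}\colon Y''\to Y$ and is not automatic. It should be handled by choosing the very ample $D_i$ in sufficiently general position so that $S'$ meets the image in $Y$ of the exceptional locus of $\pi^{**}$ transversely, placing the computation inside the framework of Fulton's base change for regular embeddings. A secondary technical point is the precise verification that the continuity of $q_{\om,\gamma}(\cdot)$ in $\gamma$ along a weakly convergent approximating sequence of strongly BPF classes holds uniformly on bounded sets of $\FS(\X)$; this should follow from the estimate~\eqref{eq:bound-norms} and Banach-Steinhaus.
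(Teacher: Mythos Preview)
The statement you are trying to prove is listed in the paper as an open \emph{conjecture}; there is no proof to compare against. Your contraction claim $q_{\om,\gamma}(P_{X'}\eta)\le q_{\om,\gamma}(\eta)$ is valid only under the hypothesis you state, namely that $\gamma$ is determined in some model $X_1\le X'$; both the preservation of the linear part $(\eta\cdot\om\cdot\gamma)$ and the surface reduction for the quadratic part use this in an essential way. Without it the inequality can fail. Take $d=3$, $X'=\Pp^3$, and $X''$ the blow-up of $\Pp^3$ along a line, with exceptional divisor $E$ and $H=(\pi')^*c_1(\cO(1))$; set $\om=[H]$, let $\eta$ be the Cartier $b$-class determined in $X''$ by $-H+E$, and let $\gamma$ be the Cartier $b$-class determined in $X''$ by the nef (hence BPF) divisor $H-E$. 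Using $H^3=1$, $H^2E=0$, $HE^2=-1$, $E^3=-2$ one checks $(\om^2\cdot\gamma)=1$, $(\eta\cdot\om\cdot\gamma)=0$, $(\eta^2\cdot\gamma)=0$, so $q_{\om,\gamma}(\eta)=0$, whereas $(P_{X'}\eta\cdot\om\cdot\gamma)=-1$ and $((P_{X'}\eta)^2\cdot\gamma)=1$, so $q_{\om,\gamma}(P_{X'}\eta)=1$.

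This is precisely where your argument breaks. The Fr\'echet topology on $\FS(\X)$ is defined using \emph{all} $\gamma\in\BPF^{d-2}(\X)$, including Weil classes determined in no model. For such $\gamma$ there is no $X'$ to which your contraction applies, and the step you label ``secondary''---approximating $\gamma$ weakly by Cartier BPF classes $\gamma_n$ and passing to the limit---cannot close the gap: the model required for the contraction with respect to $\gamma_n$ runs off to infinity as $n\to\infty$, so one cannot fix $X'$ and let $n\to\infty$, and the counterexample above shows that no uniform version of the contraction is available. The paper's remark after Conjecture~\ref{conjimath} (that the case $\gamma\in\cBPF^{d-2}(\X)$ is tractable) indicates the authors had in mind essentially your argument for Cartier $\gamma$ and regard the passage to general $\gamma\in\BPF^{d-2}(\X)$ as the genuine obstruction.
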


Note that Conjecture~\ref{conjFS} implies Conjecture~\ref{conjimath}.

\begin{conj}
The image $\imath_\Sigma(\FS(\X))$ coincides with the set of Weil classes $\alpha$ such that $\sup_{X'} |(\alpha_{X'}^2\cdot \gamma)|< \infty$ for all $\gamma \in\BPF^{d-2}(\X)$.
\end{conj}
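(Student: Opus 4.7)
The plan is to prove both inclusions. The direction $\imath_\Sigma(\FS(\X)) \subseteq \{\alpha\in\wnum^1(\X): \sup_{X'}|(\alpha_{X'}^2\cdot\gamma)|<\infty\text{ for all }\gamma\in\BPF^{d-2}(\X)\}$ is immediate from the theorem proved above, as inequality~\eqref{eq:sup-norm} in fact yields the uniform estimate $\sup_{X'}|(\alpha_{X'}^2\cdot\gamma)|\le C_\alpha(\om^2\cdot\gamma)$ for every $\alpha$ in the image. For the reverse inclusion, given a Weil class $\alpha\in\wnum^1(\X)$ satisfying the finiteness hypothesis, the natural candidate preimage is the limit of the net of Cartier incarnations $\alpha^{X'}:=[\alpha_{X'}]\in\cnum^1(\X)$, so the task reduces to showing that $(\alpha^{X'})_{X'\in\cM_X}$ is Cauchy in the Fréchet topology of $\FS(\X)$, and that its limit maps to $\alpha$ under $\imath_\Sigma$.

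My first step would be to apply Banach-Steinhaus to upgrade the pointwise finiteness to a uniform bound: each linear functional $L_{X'}(\gamma):=(\alpha_{X'}^2\cdot\gamma)$ is continuous on the Banach space $(\Vect(\BPF^{d-2}(\X)),\normbpf{\cdot})$ with operator norm $\normbpfs{\alpha_{X'}^2}<\infty$ (by Proposition~\ref{prop_inclusion_continuous_duals}), so pointwise boundedness yields $|L_{X'}(\gamma)|\le C\normbpf{\gamma}$ uniformly in both $X'$ and $\gamma$. I would then handle the Cartier semi-norms: for $\gamma\in\cBPF^{d-2}(\X)$ determined at a model $X_0$, and any $X''\ge X'\ge X_0$ with $\pi\colon X''\to X'$, the projection formula applied to $\pi_*\alpha_{X''}=\alpha_{X'}$ forces the cross term $((\alpha^{X''}-\alpha^{X'})\cdot\om\cdot\gamma)$ to vanish (since $\om$ and $\gamma$ both pull back to $X''$ from $X'$), while expanding $((\alpha^{X''}-\alpha^{X'})^2\cdot\gamma)$ yields $L_{X''}(\gamma)-L_{X'}(\gamma)$. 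Hence
\begin{equation*}
q_{\om,\gamma}(\alpha^{X''}-\alpha^{X'})=\frac{L_{X'}(\gamma)-L_{X''}(\gamma)}{(\om^2\cdot\gamma)}.
\end{equation*}
Positivity of $q_{\om,\gamma}$ (Lemma~\ref{lem:hausd}) then shows that $X'\mapsto L_{X'}(\gamma)$ is monotonically non-increasing along $\cM_X$; combined with the Banach-Steinhaus bound, it must converge, so $q_{\om,\gamma}(\alpha^{X''}-\alpha^{X'})\to 0$ for every Cartier BPF $\gamma$.

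The main obstacle will be extending this Cauchy property from Cartier classes to arbitrary $\gamma\in\BPF^{d-2}(\X)$, since the Fréchet topology is defined by the entire BPF family of semi-norms. Approximating $\gamma$ by Cartier classes $\gamma_n\in\cBPF^{d-2}(\X)$ via Lemma~\ref{lem_def_BPF} preserves the monotonicity of $L_{X'}$ under weak limits, but now the cross term $((\alpha^{X''}-\alpha^{X'})\cdot\om\cdot\gamma)=(\alpha_{X''}\cdot\om_{X''}\cdot\gamma_{X''})-(\alpha_{X'}\cdot\om_{X'}\cdot\gamma_{X'})$ no longer vanishes, because $\gamma_{X''}$ is not a pullback of $\gamma_{X'}$ when $\gamma$ is not Cartier. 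My approach would be to pick a strongly basepoint-free approximation $\gamma\approx p_*(D_1\cdots D_{e+d-2})$ with $p\colon Y\to X''$ flat, restrict everything to a smooth surface $S\subset Y$ cut out by the $D_i$'s, and apply Proposition~\ref{prop_surfaces_continuity_intersection} on $S$ to bound $|((\alpha^{X''}-\alpha^{X'})\cdot\om\cdot\gamma)|$ by $3\,q_{\om,\gamma}(\alpha^{X''}-\alpha^{X'})^{1/2}(\om^2\cdot\gamma)$, and then close up with the uniform Banach-Steinhaus bound. Controlling this cross term uniformly in the directed net $\cM_X$ from only the self-intersection hypothesis seems genuinely delicate and may well be equivalent to proving Conjecture~\ref{conjFS} independently---which is presumably why the statement remains conjectural. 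Once Cauchy-ness has been secured for every BPF semi-norm, the limit $\tilde\alpha\in\FS(\X)$ automatically satisfies $\imath_\Sigma(\tilde\alpha)=\alpha$, by continuity of $\imath_\Sigma$ combined with the weak convergence $\alpha^{X'}\to\alpha$ in $\wnum^1(\X)$ (which holds because $(\alpha^{X'})_{X''}=\alpha_{X''}$ for all $X'\ge X''$).
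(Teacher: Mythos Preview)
The statement you are trying to prove is listed in the paper as one of three \emph{open conjectures}; the paper explicitly says ``The following three questions remain open'' and provides no proof. So there is no paper proof to compare against, and any complete argument you produce would be genuinely new.

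Your outline of the forward inclusion is fine (and is indeed exactly what the paper proves in the theorem preceding the conjectures). For the reverse inclusion, your approach via the Cartier incarnations $[\alpha_{X'}]$ is the natural one, and your computation showing that for \emph{Cartier} $\gamma$ the cross term vanishes and $q_{\om,\gamma}(\alpha^{X''}-\alpha^{X'})=(L_{X'}(\gamma)-L_{X''}(\gamma))/(\om^2\cdot\gamma)$ is correct. This does give the Cauchy property for the semi-norms indexed by $\cBPF^{d-2}(\X)$.

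The genuine gap is precisely the one you identify: extending Cauchy-ness to all $\gamma\in\BPF^{d-2}(\X)$. Your suggested fix---bounding the cross term by $3\,q_{\om,\gamma}(\alpha^{X''}-\alpha^{X'})^{1/2}(\om^2\cdot\gamma)$ via restriction to a surface---does not close the argument, because that inequality only tells you $q_{\om,\gamma}(\alpha^{X''}-\alpha^{X'})$ is controlled by itself plus the self-intersection term, which is circular. More fundamentally, proving that $[\alpha_{X'}]\to\alpha$ in the Fr\'echet topology is exactly Conjecture~\ref{conjFS}, and the paper observes that Conjecture~\ref{conjFS} already implies Conjecture~\ref{conjimath}; you have correctly diagnosed that your reverse-inclusion strategy is at least as hard as this other open problem. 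In short, your proposal is a sensible plan of attack, but it does not resolve the conjecture, and neither does the paper.
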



\subsection{The Banach space $\nums(\X)$}
\label{section_nsigma}
Our spectral analysis will take place in a Banach space instead of the Fr\'echet space $\FS(\X)$. To that end, we
consider the function
$ \normsigma{\cdot}{\om} \colon \cnum^1(\X) \to \R\cup\{\infty\}$ defined by:
\begin{equation} \label{def_norm_l20}
\normsigma{\alpha}{\om} :=
\sup_{ \substack{\gamma \in \cBPF^{d-2}(\X) \\
(\gamma \cdot \om^2) = 1}}  \left( 2 (\alpha \cdot  \om \cdot \gamma )^2  -  ( \alpha^2 \cdot \gamma) \right)^{1/2}
= 
\sup_{0\neq\gamma \in \BPF^{d-2}(\X)} 
q_{\om,\gamma}(\alpha)^{1/2}~.
\end{equation}
We shall see below that $\normsigma{\alpha}{\om}$ is finite for any Cartier divisor class.

Observe that
\[
\normsigma{\alpha}{t \om}
= 
t^{-1}\, \normsigma{\alpha}{\om}
\]
for all $t>0$.

\begin{thm} \label{thm_norm_l2} For any big nef divisor $\om\in\cnum^1(\X)$, and for any $\alpha\in\cnum^1(\X)$, the quantity 
$\normsigma{\alpha}{\om}$ is finite, and the function $\normsigma{\cdot }{\om} \colon \cnum^1(\X) \to \R$ defines a norm. 

Moreover, for any pair of big nef divisors $\om,\om'$ on $X$, the two norms $\normsigma{\cdot}{\om}$ and $\normsigma{\cdot}{\om'}$ are equivalent.
\end{thm}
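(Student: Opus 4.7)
The plan is to establish three claims in turn: finiteness of $\normsigma{\alpha}{\om}$ on $\cnum^1(\X)$, the norm axioms, and equivalence under a change of polarization. The common thread is to leverage the equivalence of $q_{\om,\gamma}^{1/2}$ with a genuine seminorm (Lemma~\ref{lem:hausd}) and the positivity of BPF intersections (Theorem~\ref{thm_bpf}).

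For finiteness, I would fix a Cartier $b$-class $\alpha$, take a smooth model $X'$ on which $\alpha$ is determined, and decompose $\alpha_{X'}=A_1-A_2$ with each $A_i$ ample on $X'$ (possible because the ample cone is open and spans $\num^1(X')$). Since the seminorm triangle inequality for $q_{\om,\gamma}^{1/2}$ gives $q_{\om,\gamma}(\alpha)^{1/2}\le q_{\om,\gamma}(A_1)^{1/2}+q_{\om,\gamma}(A_2)^{1/2}$, it suffices to bound each term. The class $\om_{X'}$ is big and nef on $X'$ (its top self-intersection equals $(\om^d)>0$), so Siu's inequality (Theorem~\ref{thm_siu_bpf}) yields $C>0$ with $A_i\le C\om_{X'}$ in $\num^1(X')$; since pseudo-effectivity of a Cartier $b$-class is checked on any determination, this lifts to $0\le[A_i]\le C\om$ in $\cnum^1(\X)$. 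For any $\gamma\in\cBPF^{d-2}(\X)$ normalized by $(\gamma\cdot\om^2)=1$, the inequalities $((C\om-A_i)\cdot\om\cdot\gamma)\ge 0$ and $((C\om-A_i)\cdot A_i\cdot\gamma)\ge 0$ then hold, because each is the pairing of a pseudo-effective class with a product of BPF classes, which is nef by Theorem~\ref{thm_bpf}(2)--(3). These give $0\le(A_i\cdot\om\cdot\gamma)\le C$ and $0\le(A_i^2\cdot\gamma)\le C^2$, hence $q_{\om,\gamma}(A_i)\le 2C^2$ independently of $\gamma$.

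For the norm axioms, homogeneity is immediate from the quadratic nature of $q_{\om,\gamma}$. Subadditivity follows from taking the supremum over $\gamma$ of the seminorm triangle inequality. Definiteness is exactly the second assertion of Lemma~\ref{lem:hausd}: if $\normsigma{\alpha}{\om}=0$ then $q_{\om,\gamma}(\alpha)=0$ for every $\gamma\in\cBPF^{d-2}(\X)$, forcing $\alpha=0$. For equivalence under a change of polarization, Proposition~\ref{prop:equi-semi-norms} supplies a constant $C=C(\om_1,\om_2)>0$ such that $C^{-1}q_{\om_2,\gamma}\le q_{\om_1,\gamma}\le C\,q_{\om_2,\gamma}$ \emph{uniformly} in $\gamma\in\BPF^{d-2}(\X)$; taking the supremum over $\gamma$ on each side yields the equivalence of $\normsigma{\cdot}{\om_1}$ and $\normsigma{\cdot}{\om_2}$.

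The main obstacle is the finiteness step, and more specifically the uniform-in-$\gamma$ control of the self-intersection term $(\alpha^2\cdot\gamma)$. A naive bound on $(\alpha^2\cdot\gamma)$ using $\normbpf{\gamma}$ alone would be too weak, since $\gamma$ ranges over an infinite-dimensional cone and $q_{\om,\gamma}$ involves both $(\alpha\cdot\om\cdot\gamma)^2$ and $(\alpha^2\cdot\gamma)$ divided by $(\om^2\cdot\gamma)$. The trick is to sandwich $\alpha$ (up to writing it as a difference of ample classes) between $-C\om$ and $C\om$ using Siu's inequality on a fixed model, and then exploit the BPF-stable-by-product machinery so that these bounds survive after pairing with the moving BPF class $\gamma$.
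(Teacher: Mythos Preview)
Your proof is correct and follows essentially the same strategy as the paper: decompose $\alpha$ as a difference of nef (ample) Cartier $b$-classes, bound $q_{\om,\gamma}$ on such classes uniformly in $\gamma$ via Siu's inequality and BPF positivity, then invoke Lemma~\ref{lem:hausd} and Proposition~\ref{prop:equi-semi-norms} for the norm axioms and equivalence. The only minor variation is where Siu's inequality is applied: the paper uses it on the test class $\gamma$ (giving $\gamma\le C_\om\,\om^{d-2}$, whence $(\alpha\cdot\om\cdot\gamma)\le C_\om(\alpha\cdot\om^{d-1})$ for nef $\alpha$, and the $(\alpha^2\cdot\gamma)$ term is simply dropped by positivity), whereas you apply it to the ample pieces $A_i$ (giving $A_i\le C\om$) and then bound both terms of $q_{\om,\gamma}(A_i)$ directly; this yields the same conclusion with no substantive difference.
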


\begin{proof}
Siu's  inequalities~\eqref{eq:siu_bpf} imply $\gamma \le C_\om \om^{d-2}$ for any BPF class such that $(\om^2\cdot \gamma)=1$, with $C_\om= C_d/(\omega^d)>0$, hence
for $\alpha \in \nef^1(\X)\cap \cnum^1(\X)$,
\begin{equation} \label{eq_nef_comp}
\normsigma{\alpha}{\om}^2 = 2  (\alpha \cdot \om \cdot \gamma)^2 - (\alpha^2 \cdot \gamma) \le 2 C^2_\om (\alpha\cdot \om^{d-1})^2<\infty
\end{equation}
if $\alpha$ is nef. 
Since any Cartier $b$-divisor class  is the difference of two nef $b$-divisors, we get that $\normsigma{\cdot}{\om}$ is well-defined on $\cnum^1(\X)$.
It is a norm by Lemma~\ref{lem:hausd} and Proposition~\ref{prop:equi-semi-norms} implies $\normsigma{\cdot}{\om}$ and $\normsigma{\cdot}{\om'}$ are equivalent.
\end{proof}

\begin{defi} We let $\nums(\X)$ be the Banach space obtained as the completion of the space $\cnum^1(\X)$ with respect to the norm $\normsigma{\cdot}{\om}$.   
\end{defi}

\begin{rem} 
When $X$ is a surface,  $\FS(\mathcal{X})=\nums(\mathcal{X}) = \LL^2(\mathcal{X})$ is a Hilbert space.
\end{rem}

\begin{thm} \label{thm_banach} 
We have a canonical injection $\nums(\X) \subset \FS(\X)$. For any pair of classes $\alpha,\beta\in \nums(\X)$
the intersection product $\alpha\cdot \beta$ defined by~\eqref{eq:intersection-FS} belongs to $\numbpfs{2}(\X)$ and
we have: 
\begin{align} \label{def_norm_l2}
&\normsigma{\alpha}{\om}^2 =
\sup_{\substack{\gamma \in \cBPF^{d-2}(\X) \\
(\gamma \cdot \om^2)>0}} 
q_{\om,\gamma}(\alpha)~.
\\
\label{eq:bound-norms2}
&\normbpfs{\alpha \cdot \beta} \le 3 \normsigma{\alpha}{\om} \normsigma{\beta}{\om},
\end{align}
The restriction of $\imath_\Sigma \colon \FS(\X) \to \wnum^1(\X) $ to $\nums(\X)$ is injective.
Moreover, there exists a constant $C>0$ such that $\normsigma{\cdot}{\om}\le C\,\normbpf{\cdot}$ on $\cnum^1(\X)$.
In particular, we have the following continuous injections:
\begin{equation*}
( \numbpf{1}(\X), \normbpf{\cdot})  \hookrightarrow (\nums(\X), \normsigma{\cdot}{\om}) \hookrightarrow \wnum^1(\X)
\end{equation*}
where the latter space is endowed with the weak topology.
\end{thm}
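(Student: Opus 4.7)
The plan is to establish the various assertions of Theorem~\ref{thm_banach} in four stages, deferring the injectivity of $\imath_\Sigma|_{\nums(\X)}$, which is the main obstacle, to the end.

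\textbf{Stage 1 (inclusion into $\FS(\X)$, the formula~\eqref{def_norm_l2}, comparison with $\normbpf{\cdot}$).} Since $\normsigma{\cdot}{\om}$ dominates each seminorm $q_{\om,\gamma}^{1/2}$ by its very definition, every $\normsigma{\cdot}{\om}$-Cauchy sequence in $\cnum^1(\X)$ is Cauchy for every defining seminorm of the Fr\'echet topology, and the identity on $\cnum^1(\X)$ extends by completeness to a continuous linear map $\nums(\X)\to\FS(\X)$. Formula~\eqref{def_norm_l2} for $\alpha\in\nums(\X)$ follows by density: for any Cartier Cauchy approximation $\alpha_n\to\alpha$, the triangle inequality yields $q_{\om,\gamma}(\alpha_n)^{1/2}\to q_{\om,\gamma}(\alpha)^{1/2}$ uniformly in $\gamma$, so the suprema converge. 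To prove $\normsigma{\cdot}{\om}\le C\normbpf{\cdot}$ on $\cnum^1(\X)$, decompose $\alpha=\alpha_+-\alpha_-$ with $\alpha_\pm\in\cBPF^1(\X)$; since Theorem~\ref{thm_bpf}(2)--(3) imply $\alpha_\pm^2\in\cBPF^2(\X)\subset\nef^2(\X)$, we have $(\alpha_\pm^2\cdot\gamma)\ge 0$, so $q_{\om,\gamma}(\alpha_\pm)\le 2(\alpha_\pm\cdot\om\cdot\gamma)^2$. Siu's inequality~\eqref{eq:siu_bpf} gives $\gamma\le C_\om\om^{d-2}$ when $(\om^2\cdot\gamma)=1$, so $(\alpha_\pm\cdot\om\cdot\gamma)\le C_\om\normbpf{\alpha_\pm}$ by Lemma~\ref{lem:pleasethereferee}; taking the supremum over $\gamma$ and the infimum over decompositions concludes this stage.

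\textbf{Stage 2 (extension of the intersection pairing).} For Cartier $\alpha,\beta\in\cnum^1(\X)$ and $\gamma\in\cBPF^{d-2}(\X)$ normalized by $(\om^2\cdot\gamma)=1$, inequality~\eqref{eq:bound-norms} combined with $q_{\om,\gamma}^{1/2}(\cdot)\le\normsigma{\cdot}{\om}$ yields $|(\alpha\cdot\beta\cdot\gamma)|\le 3\normsigma{\alpha}{\om}\normsigma{\beta}{\om}$, hence $\normbpfs{\alpha\cdot\beta}\le 3\normsigma{\alpha}{\om}\normsigma{\beta}{\om}$ by the definition of the dual norm. A bilinear extension by density on Cartier Cauchy sequences then produces a continuous bilinear map $\nums(\X)\times\nums(\X)\to\numbpfs{2}(\X)$ satisfying~\eqref{eq:bound-norms2}, and this map coincides with the pairing~\eqref{eq:intersection-FS} inherited from $\FS(\X)$.

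\textbf{Stage 3 (injectivity of $\imath_\Sigma|_{\nums(\X)}$; the main obstacle).} Let $\alpha\in\nums(\X)$ satisfy $\imath_\Sigma(\alpha)=0$, and pick a Cartier Cauchy sequence $\alpha_n\to\alpha$, so $\alpha_n\to 0$ weakly in $\wnum^1(\X)$; the goal is $\normsigma{\alpha_n}{\om}\to 0$. A direct computation gives $\normsigma{\om}{\om}=1$, so by Stage~2 the sequence $\alpha_n\cdot\om$ is Cauchy in $\numbpfs{2}(\X)$; its weak limit vanishes and the continuous injection $\numbpfs{2}(\X)\subset\wnum^2(\X)$ from Proposition~\ref{prop_inclusion_continuous_duals} forces $\normbpfs{\alpha_n\cdot\om}\to 0$, i.e.~$(\alpha_n\cdot\om\cdot\gamma)\to 0$ uniformly in $\gamma$ with $(\om^2\cdot\gamma)=1$. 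For any fixed $N$, the same reasoning applied to $\alpha_n\cdot\alpha_N$ gives $\normbpfs{\alpha_n\cdot\alpha_N}\to 0$ as $n\to\infty$. To handle the quadratic term, we decompose
\[
(\alpha_n^2\cdot\gamma) = \bigl((\alpha_n-\alpha_N)\cdot\alpha_n\cdot\gamma\bigr) + (\alpha_n\cdot\alpha_N\cdot\gamma);
\]
the first summand is bounded via~\eqref{eq:bound-norms} by $3\,\normsigma{\alpha_n-\alpha_N}{\om}\sup_m\normsigma{\alpha_m}{\om}$ uniformly in $\gamma$, while the second is dominated by $\normbpfs{\alpha_n\cdot\alpha_N}$. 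Choosing $N$ first to make the Cauchy remainder small and then letting $n\to\infty$ yields $\sup_\gamma|(\alpha_n^2\cdot\gamma)|\to 0$; combined with the uniform vanishing of $(\alpha_n\cdot\om\cdot\gamma)$, Stage~1 gives $\normsigma{\alpha_n}{\om}^2 = \sup_\gamma q_{\om,\gamma}(\alpha_n)\to 0$, hence $\alpha=0$.

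\textbf{Stage 4 (continuous injections).} The bound of Stage~1 extends the identity on $\cnum^1(\X)$ to a continuous linear map $\numbpf{1}(\X)\to\nums(\X)$. Composing with $\imath_\Sigma|_{\nums(\X)}$ recovers the injection $\numbpf{1}(\X)\subset\wnum^1(\X)$ from Proposition~\ref{prop_inclusion_continuous_duals}, forcing $\numbpf{1}(\X)\to\nums(\X)$ to be injective. Together with Stage~3, we obtain the desired chain of continuous injections $\numbpf{1}(\X)\hookrightarrow\nums(\X)\hookrightarrow\wnum^1(\X)$.
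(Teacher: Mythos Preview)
Your proposal is correct; Stages~1, 2, and~4 parallel the paper's argument essentially line for line. The one genuine difference is Stage~3 (injectivity of $\imath_\Sigma|_{\nums(\X)}$). The paper works directly with the limit class: having established the extended pairing with bound~\eqref{eq:bound-norms2}, it observes that $\imath_\Sigma(\alpha)=0$ forces $(\alpha\cdot\alpha_n\cdot\gamma)=0$ for Cartier $\alpha_n$ and $\gamma$ (since $\alpha_n\cdot\gamma\in\cnum^{d-1}(\X)$), and then reads off
\[
|(\alpha^2\cdot\gamma)| = |(\alpha\cdot(\alpha-\alpha_n)\cdot\gamma)| \le 3\,\normsigma{\alpha}{\om}\normsigma{\alpha-\alpha_n}{\om}\to 0,
\]
giving $\normsigma{\alpha}{\om}=0$ in one stroke. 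You instead stay at the level of the Cauchy sequence $(\alpha_n)$ and show $\normsigma{\alpha_n}{\om}\to 0$, importing the injectivity of $\numbpfs{2}(\X)\hookrightarrow\wnum^2(\X)$ to turn weak convergence $\alpha_n\cdot\om\to 0$ and $\alpha_n\cdot\alpha_N\to 0$ into norm convergence, followed by a two-parameter $\epsilon$-argument. Both arguments are valid; the paper's is shorter and exploits the extended pairing it just built, while yours has the minor advantage of never needing to unpack what $(\alpha\cdot\alpha_n\cdot\gamma)$ means for abstract $\alpha\in\nums(\X)$, at the cost of invoking the separately proved injection $\numbpfs{2}(\X)\subset\wnum^2(\X)$.
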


\begin{rem}\label{rem:compare-for-nef}
Note that if $\alpha\in\nef^1(\X)$, then 
\[
\sqrt{2}\, \sup_{\substack{\gamma\in\cBPF^{d-2}(\X)\\(\gamma\cdot\om^2)=1}} (\alpha \cdot \om \cdot \gamma)
\ge \normsigma{\alpha}{\om}
\ge 
\sup_{\substack{\gamma\in\cBPF^{d-2}(\X)\\(\gamma\cdot\om^2)=1}} (\alpha \cdot \om \cdot \gamma)
\]
so that Siu's  inequalities~\eqref{eq:siu_bpf} and the fact that $\om$ is Cartier give
\begin{equation}\label{eq:666}
\sqrt{2}C_d \, \frac{(\alpha\cdot\om^{d-1})}{(\om^d)}
\ge \normsigma{\alpha}{\om}
\ge  \frac{(\alpha\cdot\om^{d-1})}{ (\om^d)}.
\end{equation}
\end{rem}

\begin{rem}
In the sequel to this paper~\cite{dang_favre}, we shall prove the inclusions 
$\nef^1(\X)\subset\nums(\X) \subset \numbpfs{1}(\X)$.
\end{rem}

\begin{proof}[Proof of Theorem \ref{thm_banach}] 
Note that any Cauchy sequence for the norm $\normsigma{\cdot}{\om}$ is a Cauchy sequence for $q_{\om,\gamma}$ for all $\gamma\in\BPF^{d-1}(\X)$ satisfying $(\gamma \cdot \om^2)>0$.
This implies the existence of a canonical embedding $\nums(\X)\subset \FS(\X)$.

Let us prove the identity~\eqref{def_norm_l2}.
 Pick any sequence $\alpha_n\in\cnum^1(\X)$,
such that \[\normsigma{\alpha_n - \alpha}{\om}^2 = \sup_{0\neq\gamma\in\cBPF^{d-2}(\X)} 
q_{\om,\gamma}(\alpha_n-\alpha) \to 0,\]
{where the right hand side follows from  \eqref{def_norm_l20}.}\\
 By definition, we have 
$\normsigma{\alpha}{\om}= \lim_{n\to\infty} \normsigma{\alpha_n}{\om}$.
Choose any $\epsilon>0$. 
Since $\alpha_n$ is Cartier, one can find $\gamma_n\in\cBPF^{d-1}(\X)$
such that $q_{\om,\gamma_n}(\alpha_n) \ge \normsigma{\alpha_n}{\om}-\epsilon$
which implies for $n$ large enough
\[q_{\om,\gamma_n}(\alpha)\ge 
 q_{\om,\gamma_n}(\alpha_n) - \epsilon 
\ge  \normsigma{\alpha_n}{\om}^2-2 \epsilon
\ge  \normsigma{\alpha}{\om}^2-3 \epsilon
\] concluding the proof.
The second identity~\eqref{eq:bound-norms2} follows from~\eqref{eq:bound-norms} and we obtain that $\alpha\cdot\beta$  belongs to $\numbpfs{2}(\X)$.

\medskip

Take $\alpha \in \cNef^1(\X)$. For any class $\gamma \in \cBPF^{d-2}(\X)$ such that $(\om^2\cdot \gamma) =1$,  we have 
$
q_{\omega, \gamma}(\alpha) \le 2 C_\om^2 (\alpha \cdot \om^{d-1})^2
$
by~\eqref{eq_nef_comp} 
which proves that $\normsigma{\alpha}{\om} \le \sqrt{2} C_\om \normbpf{\alpha}$.
For a general class  $\alpha \in\cnum^1(\X)$, decompose $\alpha=\alpha_+-\alpha_-$ with 
\[\max\{\normbpf{\alpha_+},\normbpf{\alpha_-}\}\le 2\normbpf{\alpha}.\] The previous estimates imply
\begin{equation}\label{eq:SCHSD}
\normsigma{\alpha}{\om} \le C'_\om \normbpf{\alpha},
\end{equation}
where $C'_\om=  2\sqrt{2} C_\om = 2\sqrt{2} \cdot C_d/(\omega^d)>0$.
This implies that the completion of $\cnum^1(\X)$ with respect to  $\normbpf{\cdot}$ injects continuously into $\nums(\X)$.

Let us now show that the canonical homomorphism $\imath_\Sigma \colon \nums(\X)\to\wnum^1(\X)$  is injective.
Pick $\alpha\in\nums(\X)$ such that $\imath_\Sigma(\alpha)=0$, i.e. $(\alpha\cdot\beta)=0$ for all Cartier $b$-class $\beta \in \cnum^{d-1}(\X)$.
By what precedes, we have $\normsigma{\alpha}{\om}= \sup \{  2(\alpha \cdot \om \cdot \gamma)^2 - (\alpha^2\cdot\gamma)\} =  \sup \{ - (\alpha^2\cdot\gamma)\}$ where the supremum is taken over all
$\gamma\in\cBPF^{d-2}(\X)$ such that $(\gamma \cdot \om^2)=1$.

Take any sequence $\alpha_n\in\cnum^1(\X)$ such that $\normsigma{\alpha_n-\alpha}{\om}\to0$.
Then since $(\alpha \cdot \alpha_n \cdot \gamma) = 0$ for all $\gamma \in \cBPF^{d-2}(\X)$ satisfying  $(\gamma\cdot\om^2)=1$, we get
\[
|(\alpha^2\cdot\gamma)|
=
|(\alpha^2\cdot\gamma)- (\alpha\cdot\alpha_n\cdot\gamma)|
\le
3  \normsigma{\alpha}{\om} \normsigma{\alpha -\alpha_n}{\om}
 \to 0\]
 concluding the proof.
\end{proof}


\subsection{Compactness in $\nums(\X)$}\label{sec:compac}

The following theorem is a key ingredient to our approach. 
It is a surprising compactness result in the Banach space $\nums(\X)$ endowed with its norm topology.

\begin{thm} \label{thm_compactness} Suppose that $\alpha_n \in \cnum^1(\X)$ is a sequence satisfying the following two conditions.
\begin{enumerate}
\item The sequence $\normbpfs{(\alpha_n \cdot \om)}$ is bounded.
\item There exists a class $\beta \in \numbpfs{2}(\X)$ such that 
\[\normbpfs{(\alpha_n \cdot \alpha_m) - \beta} \rightarrow  0\]  uniformly in $n,m \rightarrow +\infty$.
\end{enumerate}
Then one can find $\alpha \in \nums(\X)$, and one can extract a subsequence of $\alpha_{n_j}$ 
such that $\normsigma{\alpha_{n_j} - \alpha}{\om} \to0$.
\end{thm}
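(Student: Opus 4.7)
My plan is to reduce the claimed norm convergence in $\nums(\X)$ to a compactness statement for the auxiliary sequence $\alpha_n \cdot \om$ in the dual-type norm $\normbpfs{\cdot}$, and then to extract such a compact subsequence using hypothesis (2).

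First, the elementary bound $q_{\om,\gamma}(\xi) \le 2(\xi\cdot\om\cdot\gamma)^2 + |(\xi^2\cdot\gamma)|$ for $(\om^2\cdot\gamma)=1$ gives $\normsigma{\xi}{\om}^2 \le 2\normbpfs{\xi\cdot\om}^2 + \normbpfs{\xi^2}$. Applied to $\xi=\alpha_n$, together with hypotheses (1)–(2), this shows that $\normsigma{\alpha_n}{\om}$ is uniformly bounded; applied to $\xi=\alpha_n-\alpha_m$, it yields
\[
\normsigma{\alpha_n-\alpha_m}{\om}^2 \;\le\; 2\normbpfs{(\alpha_n-\alpha_m)\cdot\om}^2 + \normbpfs{(\alpha_n-\alpha_m)^2}.
\]
Decomposing $(\alpha_n-\alpha_m)^2 = (\alpha_n^2-\beta) - 2(\alpha_n\cdot\alpha_m-\beta) + (\alpha_m^2-\beta)$ and invoking hypothesis (2), the second term on the right tends to $0$ uniformly as $n,m\to\infty$. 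The entire problem thus reduces to extracting a subsequence $\alpha_{n_j}$ along which $\alpha_{n_j}\cdot\om$ is $\normbpfs{\cdot}$-Cauchy.

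To obtain such a subsequence, I note that $\alpha_n\cdot\om$ is bounded in $\numbpfs{2}(\X)$, which embeds continuously into $\numbpf{d-2}(\X)^*$ by Proposition~\ref{prop_inclusion_continuous_duals}. Since $\numbpf{d-2}(\X)$ is separable (Lemma~\ref{lem_def_BPF}), the Banach–Alaoglu theorem yields a weak-$*$ convergent subsequence $\alpha_{n_j}\cdot\om \to L$. Promoting this weak-$*$ convergence to $\normbpfs{\cdot}$-norm convergence is the crux of the argument. The extra information I exploit is that, by continuity of intersection with $\om^2$, the ``products'' $(\alpha_{n_j}\cdot\om)\cdot(\alpha_{n_k}\cdot\om) = \alpha_{n_j}\alpha_{n_k}\om^2$ converge to $\beta\om^2$ in $\normbpfs{\cdot}$ uniformly in $j,k$ (again by hypothesis (2)). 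Combined with Cauchy–Schwarz for the positive definite quadratic form $q_{\om,\gamma}$ on the surfaces $S$ cut out by $\gamma\in\cBPF^{d-2}(\X)$ (Proposition~\ref{prop_surfaces_continuity_intersection}), this uniform ``inner product'' convergence should force $\alpha_{n_j}\cdot\om$ to be $\normbpfs{\cdot}$-Cauchy, in analogy with the Radon–Riesz property in Hilbert spaces (weak convergence plus convergence of norms implies strong convergence).

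The most delicate point, which I expect to be the main obstacle, is precisely this last step: in a general Banach space, $\mu_n^2\to 0$ does not imply $\mu_n\to 0$ (numerical classes with vanishing self-intersection abound), so converting the uniform convergence of the quadratic expression $\alpha_{n_j}\alpha_{n_k}\om^2$ into the uniform convergence of the linear expression $(\alpha_{n_j}-\alpha_{n_k})\cdot\om$ requires using the Hilbertian rigidity on each surface $S$ in an essential way. Once established, combining with the initial reduction gives that $\alpha_{n_j}$ is Cauchy in $\normsigma{\cdot}{\om}$ and hence converges to some $\alpha\in\nums(\X)$.
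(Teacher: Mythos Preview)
Your opening reduction is correct and coincides with the paper's: the inequality
\[
\normsigma{\xi}{\om}^2 \le 2\,\normbpfs{\xi\cdot\om}^2 + \normbpfs{\xi^2}
\]
applied to $\xi=\alpha_n-\alpha_m$ is exactly the bound the paper derives in its equation~\eqref{eq_interm_compacity} and the line following it, and the treatment of $\normbpfs{(\alpha_n-\alpha_m)^2}$ via hypothesis~(2) is identical. So the task is indeed reduced to controlling $\normbpfs{(\alpha_{n_j}-\alpha_{n_k})\cdot\om}=\sup_{(\gamma\cdot\om^2)=1}|((\alpha_{n_j}-\alpha_{n_k})\cdot\om\cdot\gamma)|$.

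The gap is in that last step, and your Radon--Riesz analogy does not close it. Radon--Riesz works inside a \emph{single} Hilbert space; here $\normbpfs{\cdot\,\om}$ is a supremum over an uncountable family of Hilbert semi-norms $q_{\om,\gamma}$, and the weak-$*$ convergence you extract via Banach--Alaoglu gives only that $((\alpha_{n_j}-\alpha_{n_k})\cdot\om\cdot\gamma)\to 0$ \emph{pointwise} in $\gamma$, with no uniformity. The information that the ``inner products'' $(\alpha_{n_j}\cdot\alpha_{n_k}\cdot\gamma)$ converge uniformly in $j,k,\gamma$ does not by itself upgrade this: the term $((\alpha_{n_j}-\alpha_{n_k})\cdot\om\cdot\gamma)^2$ appears in $q_{\om,\gamma}(\alpha_{n_j}-\alpha_{n_k})$ with the \emph{wrong sign} relative to $((\alpha_{n_j}-\alpha_{n_k})^2\cdot\gamma)$, so you cannot bound it from above by the latter. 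Nor does knowing that $\alpha_{n_j}\alpha_{n_k}\om^2\to\beta\om^2$ in $\normbpfs{\cdot}$ help, since this is a codimension-$4$ class and there is no mechanism to extract a linear quantity from it.

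What the paper actually does is prove the a priori estimate (Lemma~\ref{lem_FS_to_nums})
\[
((\alpha\cdot\om\cdot\gamma))^2 \le C\bigl(q_{\om,\om^{d-2}}(\alpha)+\normbpfs{\alpha^2}\bigr)(\om^2\cdot\gamma)^2,
\]
valid \emph{uniformly} over all $\gamma\in\BPF^{d-2}(\X)$. The right-hand side involves only the \emph{single} seminorm $q_{\om,\om^{d-2}}$, and $q_{\om,\om^{d-2}}(\alpha_{n_j}-\alpha_{n_k})$ is Cauchy because the subsequence was already seen to be Cauchy in $\FS(\X)$ (hence for this particular $\gamma=\om^{d-2}$). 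The proof of this lemma is the real content of the theorem: one writes a strongly BPF class $\gamma=p_*(D_1\cdots D_{e+d-2})$ and peels off the divisors $D_i$ one at a time, applying Proposition~\ref{prop_surfaces_continuity_intersection} at each stage to trade $D_i$ for a copy of $\om$, picking up a controlled multiplicative constant and an additive $\normbpfs{\alpha^2}$-error. After $d-2$ steps all the $D_i$ are gone and one is left with $q_{\om,\om^{d-2}}(\alpha)$. Nothing like this inductive reduction appears in your outline, and without it the uniformity in $\gamma$ cannot be obtained.
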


\begin{proof}
Since $\normbpfs{(\alpha_n \cdot \om)}$ is bounded and since $\alpha_n\cdot \om$ are Cartier classes, we have that $|(\alpha_n\cdot \om \cdot \gamma)| $ is uniformly bounded for all $\gamma \in \BPF^{d-2}(\X)$ satisfying $(\gamma \cdot \om^2)=1$.  Applying Banach-Alaoglu's theorem in the Banach space $(\Vect(\BPF^{d-2}(\X)), \normbpf{\cdot})$, we  extract a subsequence of $(\alpha_{n_j} \cdot \om)$ such that  $(\alpha_{n_j} \cdot \om\cdot\gamma)$ converges for all $\gamma\in\BPF^{d-2}(\X)$ such that $(\gamma \cdot \om^2)=1$. To simplify notation we shall suppose that $(\alpha_{n} \cdot \om\cdot\gamma)$ is converging.

Pick any $\epsilon>0$. We may find $p,q$ and $N$ such that for all $n,m\ge N$, we have
$\normbpfs{(\alpha_n \cdot \alpha_m) - (\alpha_p \cdot \alpha_q)}\le \epsilon$, 
so that  
\begin{equation}\label{eq:76543}
|(\alpha_n \cdot \alpha_m\cdot\gamma) - (\alpha_p \cdot \alpha_q \cdot \gamma)|\le \epsilon (\gamma\cdot\om^2)
\end{equation}
 for all  $\gamma \in \cBPF^{d-2}(\X)$.
Since all classes $\alpha_n, \alpha_m , \alpha_p, \alpha_q$ are Cartier, this upper bound still holds for any   $\gamma \in \BPF^{d-2}(\X)$ by density.

Choose any $\gamma\in\BPF^{d-2}(\X)$ such that $(\om^2\cdot\gamma)=1$.
Then for all $n,m\ge N$, we have:
\begin{multline} \label{eq_interm_compacity}
q_{\omega, \gamma} (\alpha_n - \alpha_m ) 
= 2 ((\alpha_n - \alpha_m) \cdot \omega \cdot \gamma)^2 - ((\alpha_n - \alpha_m)^2 \cdot \gamma)
\\
\le 2 ((\alpha_n - \alpha_m) \cdot \omega \cdot \gamma)^2+ 4\epsilon \ \ \ \ \ \ \ \ \ \ \ \ \ \ \ \ \ \ \ \ \ \ \ \ \ \ \ \ \ \ \ \ \ \  
\end{multline}
which implies $q_{\omega, \gamma} (\alpha_n - \alpha_m ) $ to be a Cauchy sequence. 
This proves $\alpha_n$ converges in $\FS(\X)$ to some class $\alpha$. 

We shall now argue that $\alpha_n$ is a Cauchy sequence inside $\nums(\X)$.
We rely on the following lemma.
\begin{lem} \label{lem_FS_to_nums} 
There exists a constant $C>0$ depending only on $d$, such that:
\begin{equation*}
(\alpha \cdot \omega \cdot \gamma)^2 \le C  \left( q_{\om ,\om^{d-2}}(\alpha ) + \normbpfs{\alpha^2} \right) (\om^2 \cdot \gamma)^2
\end{equation*}
for all $\alpha\in\cnum^1(\X)$
and for any $\gamma \in \BPF^{d-2}(\X)$ such that $(\om^2\cdot\gamma)>0$.
\end{lem}
The lemma applied to $\alpha_n - \alpha_m$ together with \eqref{eq_interm_compacity} implies for all $n,m\ge N$:
\begin{align*}
\normsigma{\alpha_n - \alpha_m}{\om}^2 &= \sup_{\substack{\gamma \in \BPF^{d-2}(\X) \\
(\om^2 \cdot \gamma)= 1}} 2 ((\alpha_n - \alpha_m)\cdot \om \cdot \gamma)^2 - \left ( (\alpha_n - \alpha_m)^2 \cdot \gamma \right ), \\
& \le  4\epsilon +  2 \sup_{\substack{\gamma \in \BPF^{d-2}(\X) \\
(\om^2 \cdot \gamma)= 1}} ((\alpha_n - \alpha_m) \cdot \om \cdot \gamma)^2 ,
\\
&\le   C' \epsilon +  2C q_{\om ,\om^{d-2}}(\alpha_n - \alpha_m)  \le C'' \epsilon,
\end{align*}
 since $q_{\om ,\om^{d-2}}(\alpha_n - \alpha_m)$ is a Cauchy sequence and where $C'= 4+ 4 C$ and $C''= C' + 2 C$. 
This concludes the proof of the theorem.
\end{proof}

\begin{proof}[Proof of Lemma~\ref{lem_FS_to_nums}]
 
By homogeneity we may assume $(\gamma\cdot\om^2)=1$.
Suppose first that $\gamma$ is a strongly basepoint free class (hence Cartier).

By definition, there exists a smooth model $X'$ and a flat morphism $\pi : Y \to X'$ of relative dimension $e$ such that
\begin{equation*}
\gamma = \pi_* (D_1 \cdot \ldots \cdot D_{e + d-2})
\end{equation*}
where $D_1, \ldots , D_{e+d-2} \in \nef^1(Y)$, so that 
$(D_1 \cdot \ldots \cdot D_{e+d-2} \cdot \pi^* \om^2) =1$.
Set
\begin{equation*}
\beta_i = (D_i \cdot \ldots \cdot D_{e+d-2})\in\BPF^{e+d-i-1}(\Y)
\end{equation*}
 so that $\beta_{i} = D_i\cdot \beta_{i+1}$ for each $i =1, \cdots, e+d-3$ where $\Y$ is the collection of models over $Y$. 
We rescale inductively these classes replacing $\beta_{i+1}$ by $t_i^{-1} \beta_{i+1}$  and $D_i$ by $t_i D_i$ with $t_i= (\beta_{i+1} \cdot\pi^* \omega^{i+2})$. More precisely, we rescale $\beta_2$ and $D_1$, then $\beta_3$ and $D_2$, and we stop once we have rescaled $\beta_{d-1}$ and $D_{d-2}$.  
We arrive to the situation where:
\begin{equation} \label{eq_condition_scaling}
 (\beta_i \cdot \pi^* \om^{i+1}) = 1
\end{equation}  
for all $i= 1, \cdots,  e+ d-2$. Note that this condition is automatic for $i=1$.

By Proposition \ref{prop_surfaces_continuity_intersection} we have:
\begin{align*}
(\alpha \cdot \omega \cdot \gamma)^2 &= |\pi^* (\alpha \cdot \om) \cdot D_1 \cdot \ldots \cdot D_{e+d-2}|^2, \\
 & \le 9 q_{\pi^*\omega, \beta_2 \cdot \pi^*\omega}(\pi^*\alpha ) q_{\pi^*\om, \beta_2 \cdot \pi^*\om} (D_1), \\
& \le 18 q_{\pi^*\om, \beta_2\cdot \pi^*\om}(\pi^*\alpha ) (D_1\cdot \pi^*\om^2\cdot \beta_2)^2
\\& \le 18 q_{\pi^*\om, \beta_2\cdot \pi^*\om}(\pi^*\alpha  ).
\end{align*}
Let $\epsilon := \normbpfs{\alpha^2}$.
We proceed estimating the term $q_{\pi^*\om, \beta_2\cdot \pi^* \om}(\pi^*\alpha)$. 
To do so, we write:
\begin{align*}
q_{\pi^*\om, \beta_2\cdot \pi^*\om}(\pi^*&\alpha ) 
\\ &= 2 ( \pi^*(\alpha \cdot \om) \cdot \beta_2 \cdot \pi^*\om)^2
- (\pi^*\alpha^2 \cdot \beta_2 \cdot \pi^* \om) \\
 & 
 \leqslant
 2 (\pi^*\alpha \cdot D_2 \cdot  \pi^*\om^2  \cdot \beta_3)^2 + \epsilon , \\
 & 
 \mathop{\leqslant}\limits^{\text{Prop.\ref{prop_surfaces_continuity_intersection}}} 
  18 q_{\pi^*\om , \beta_3 \cdot \pi^*\om^2}(\pi^*\alpha) q_{\om , \beta_3 \cdot \pi^*\om^2}(D_2) + \epsilon, \\
 & \le 36 q_{\pi^*\om , \beta_3 \cdot \pi^*\om^2}(\pi^*\alpha) + \epsilon,
\end{align*}
where the first and the third inequalities follow from the fact that $(\beta_2 \cdot \pi^* \om^3) = 1$.
An immediate induction yields
\begin{align*}
(\alpha \cdot \omega \cdot \gamma)^2 &\le  
18 q_{\pi^*\om, \beta_2\cdot \pi^*\om}(\pi^*\alpha )
\\
& \le 36\cdot 18\, q_{\pi^*\om , \beta_3 \cdot \pi^*\om^2}(\pi^*\alpha) +  18\, \epsilon
\\
& \le 36^{d-3}\cdot18\, q_{\pi^*\om ,\beta_{d-1} \pi^*\om^{d-2}} (\pi^*\alpha) +  18\cdot  36^{d-3}\, \epsilon.
\end{align*}
Now observe that $\pi_* \beta_{d-1}$ is a class in $\num^0(X')$, hence it is a multiple of the fundamental class $[X']$. 
The projection formula yields
$1=(\beta_{d-1}\cdot \pi^*\om^{d}) = (\pi_* \beta_{d-1}\cdot \om^d)$
and
$ q_{\pi^*\om ,\beta_{d-1} \pi^*\om^{d-2}} (\pi^*\alpha)  = q_{\om, \om^{d-2}}(\alpha)$ (recall the normalization $(\om^d)=1$), hence:
\begin{equation*}
(\alpha \cdot \omega \cdot \gamma)^2
 \le C(d) \left(q_{\om, \om^{d-2}}(\alpha) + \epsilon\right), 
\end{equation*}
where $C(d)=36^{d-3}\cdot 18$, which concludes the proof when $\gamma \in \cBPF^{d-2}(\X)$ is strongly basepoint free.

\smallskip

Take any finite family of strongly basepoint free classes $\gamma_1, \cdots, \gamma_k$, and set $\gamma = \sum_1^k\gamma_i$.
By linearity, we have:
\begin{align*}
|(\alpha \cdot \om \cdot \gamma)| &\le \sum_1^k |(\alpha \cdot \om \cdot \gamma_i)|, \\
 &\le \sum_1^k C(d)^{1/2} (\om^2 \cdot \gamma_i)  \left(q_{\om , \om^{d-2}}(\alpha) + \epsilon\right)^{1/2}, \\
 & \le  C(d)^{1/2}\, (\om^2 \cdot \gamma) \left(q_{\om , \om^{d-2}}(\alpha) + \epsilon\right)^{1/2}.
\end{align*} 
Finally pick any class $\gamma \in \BPF^{d-2}(\X)$. Then one can find a sequence of strongly basepoint free classes $\gamma^{(n)}_1, \cdots, \gamma^{(n)}_{k_n}$
such that $\gamma^{(n)} := \sum_1^{k_n}\gamma^{(n)}_i$ converges weakly to $\gamma$. 
We have
\[
|(\alpha  \cdot \om \cdot \gamma^{(n)})| 
\le  C(d)^{1/2}\, (\om^2 \cdot \gamma^{(n)}) \left(q_{\om , \om^{d-2}}(\alpha ) + \epsilon\right)^{1/2}.
\]
Since the two classes $(\alpha\cdot \om)$, $\om^2$ are Cartier, we may let $n\to \infty$ and the weak convergence $\gamma^{(n)}\to\gamma$ yields the required estimate.
\end{proof}


\subsection{Restriction operator}\label{sec:restrict}

This section is technical in nature. It will be used to transfer the Hodge index theorem from $L^2$--classes on surfaces to $\FS(\X)$.
As before, we suppose that $\K$ is algebraically closed, countable and of characteristic $0$, and we fix some
extension $\K'/\K$ which is countable, algebraically closed and has infinite transcendence degree over $K$.
Recall that for any smooth algebraic variety $X$ defined over $\K$ the canonical homomorphism $\num^1(X)\to\num^1(X_{\K'})$
is an isomorphism, see e.g.~\cite[Proposition~3.2]{maulik_poonen}.

\medskip
We begin with some general terminology.

Given any generically finite morphism $\pi\colon Y\to Z$ between equidimensional algebraic varieties, we let $\Crit(\pi)$ be the locus where $\pi$ is not locally \'etale. When $\pi$ is proper, the set of critical values $\CV(\pi) = \pi(\Crit(\pi))$ is an algebraic sub-variety. When $Y$ is smooth, then $\Crit(\pi)$ is a Cartier divisor; when $\pi$ is birational, 
then $\CV(\pi)$ has codimension at least $2$ in $Z$.

Two irreducible subvarieties $Y$ and $Z$ of an algebraic variety intersect properly when 
the codimension of any irreducible component of $Y\cap Z$ is equal to $\codim(Y)+\codim(Z)$.
Two subvarieties $Y$ and $Z$ intersect properly when all their irreducible components intersect properly (see e.g the definition \cite[p.7]{fulton}).

\medskip

Take any class $\sigma\in\cnum^{d-2}(\X)$
 which is determined in some model $X_1$ as the pushforward under a \emph{smooth} morphism of a complete intersection $\sigma_Y$ of very ample divisors on a smooth projective variety $Y_1$. 

If $X_2$ is any other smooth model of $X$ dominating $X_1$, we
 can consider the fibered product $Y_2=Y_1 \times_{X_1} X_2 $ and one gets the following diagram. 
\begin{equation*}
\xymatrix{Y_2 \ar[d]^{\varpi} \ar[r]^{p_2}& X_2 \ar[d]^{\pi} \\
Y_1 \ar[r]^{p_1} & X_1,}
\end{equation*}
 where the horizontal arrows are smooth morphisms and the vertical arrows are birational. Observe that by construction, $Y_2$ is also a smooth variety,~\cite[p.268]{hartshorne}.
 
Since the transcendence degree of $K'$ over $K$ is infinite, one can find a smooth surface $S\subset Y_{1,K'}$ defined over $K'$
whose class in $\num^{d+e-2}(Y_{1,K'})$ is equal to $\sigma_Y$, and intersects properly all subvarieties of $Y_1$ defined over $K$. 
In particular, it intersects properly  $\CV(\varpi)$ for \emph{all} proper 
birational morphisms $\varpi\colon Y_2\to Y_1$
defined over $K$. 
 Let $\Ss$ be the Riemann-Zariski space associated to $S$.
Observe that the incarnation of $\sigma$ in $Y_2$ is given by the strict transform of $S$ by $\varpi$, and this incarnation is smooth.
\smallskip

We define a restriction linear operator $r_{S}\colon \cnum^1(\X_\K)\to \cnum^1(\Ss_{\K'})$ as follows. 
Pick any element of $\cnum^1(\X_\K)$ determined in a model $\pi\colon X_2\to X_1$ by a class $\alpha \in \num^1(X_{2,\K})$. 
Consider the smooth model $Y_2$ of $Y_1$ and set $\varpi: Y_2 \to Y_1$ the corresponding birational morphism and $p_2 : Y_2 \to X_2$ the corresponding smooth morphism.
Since $S$ intersects  $\CV(\varpi)$ properly, the strict transform $S_{Y_2}$ of $S$ inside $Y_{2,\K'}$ represents the class $p_2^*[S]$. 
Then we let $r_S([\alpha])$ be the Cartier $b$-class determined by the image of $p_2^*\alpha_{| S_{Y_2}}$ in $\num^1(Y_{2,\K'})$.

Note that if $ X_3 \to X_2$ is another model, then we have the following commutative diagram
\begin{equation} \label{diagram_restriction}
\xymatrix{ 
 S_{Y_3} \ar[d]^{\varpi_{S,3}} \ar@{^{(}->}[r]& Y_{3,K'} \ar[r]^{p_3} \ar[d]^{\varpi_3} & X_{3,K'} \ar[d]^{} \\
S_{Y_2}
\ar@{^{(}->}[r]
\ar[d]^{\varpi_{S,2}}
&
Y_{2,\K'}
\ar[d]^{\varpi_2}
\ar[r]^{p_2} & X_{2, K'} \ar[d]^{} \\
S_{Y_1}
\ar@{^{(}->}[r]&
Y_{1,\K'} \ar[r]^{p_1} & X_{1,K'}
}
\end{equation}
and
$\varpi_3^*(p_2^*\alpha)|_{S_{Y_3}}= (\varpi_{S,3})^* (p_2^*\alpha|_{S_{Y_2}})$, 
so that $r_S([\alpha])$ does not depend on the choice of models.

\begin{prop}
Let $\om$ be any big and nef class in $\num^1(X_2)$, and write $\om_S :=p_2^*\om|_{S_{Y_2}}\in\num^1(S_{Y_2})$ using the same notation as in the above discussion.

The restriction operator  $r_{S}\colon \cnum^1(\X_\K)\to \cnum^1(\Ss_{\K'})$ preserves nef classes, 
and $\normsigma{r_S(\alpha)}{\om_S} \le \sqrt{(\om^2\cdot\sigma)}\, \normsigma{\alpha}{\om}$ for any Cartier class $\alpha\in \cnum^1(\X_{\K})$. 

In particular $r_S$ extends to a continuous operator $r_{S}\colon \FS(\X_\K)\to L^2(\Ss_{\K'})$,
which satisfies
\begin{equation}\label{eq-compat-restr}
(\alpha\cdot\alpha'\cdot\sigma)= (r_S(\alpha)\cdot r_S(\alpha'))
\end{equation}
for any $\alpha, \alpha'\in\FS(\X_\K)$.
\end{prop}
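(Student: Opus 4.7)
The plan is to derive everything from the single intersection identity
\begin{equation*}
(r_S(\alpha)\cdot r_S(\alpha'))_{S_{Y_2}} = (\alpha\cdot\alpha'\cdot\sigma)
\end{equation*}
for Cartier classes $\alpha,\alpha'\in\cnum^1(\X_\K)$. Once this identity is in hand, the norm bound becomes a short calculation, nef preservation reduces to a standard fact about restrictions, and the extension to $\FS(\X_\K)$ is a density argument.

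First I will prove the identity. Fix a common smooth model $X_2$ on which $\alpha$, $\alpha'$ and $\omega$ are determined, and form the Cartesian diagram~\eqref{diagram_restriction}. Since $S\subset Y_{1,\K'}$ was chosen to meet $\CV(\varpi_2)$ properly, the strict transform $S_{Y_2}$ represents $\varpi_2^*\sigma_Y$ in $\num^{e+d-2}(Y_{2,\K'})$, and $r_S(\alpha)=(p_2^*\alpha)|_{S_{Y_2}}$ by definition. I will evaluate the top intersection $(p_2^*\alpha\cdot p_2^*\alpha'\cdot\varpi_2^*\sigma_Y)_{Y_2}$ in two ways. Restricting to $S_{Y_2}$ yields $(r_S(\alpha)\cdot r_S(\alpha'))_{S_{Y_2}}$. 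On the other hand, pushing via the flat map $p_2$ together with flat base change applied to the Cartesian square $Y_2 = Y_1\times_{X_1} X_2$ gives $(p_2)_*\varpi_2^*\sigma_Y = \pi^*(p_1)_*\sigma_Y = \pi^*\sigma_{X_1} = \sigma_{X_2}$, so the projection formula produces $(p_2)_*(p_2^*\alpha\cdot p_2^*\alpha'\cdot\varpi_2^*\sigma_Y) = \alpha_{X_2}\cdot\alpha'_{X_2}\cdot\sigma_{X_2}$, whose degree on $X_2$ is $(\alpha\cdot\alpha'\cdot\sigma)$. Equating the two evaluations is the identity. This is the delicate step; everything else is routine.

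Specializing the identity to $\alpha'\in\{\alpha,\omega\}$ gives $(\omega_S^2)=(\omega^2\cdot\sigma)$, $(r_S(\alpha)\cdot\omega_S)=(\alpha\cdot\omega\cdot\sigma)$ and $(r_S(\alpha)^2)=(\alpha^2\cdot\sigma)$, so that
\begin{equation*}
\normsigma{r_S(\alpha)}{\omega_S}^2 = \bar q_{\omega_S}(r_S(\alpha)) = \frac{2(\alpha\cdot\omega\cdot\sigma)^2}{(\omega^2\cdot\sigma)} - (\alpha^2\cdot\sigma) = (\omega^2\cdot\sigma)\, q_{\omega,\sigma}(\alpha).
\end{equation*}
Since $\sigma$ is a nonzero strongly basepoint free class, $(\omega^2\cdot\sigma)>0$ by~\eqref{eq:5436}, so $\gamma := \sigma/(\omega^2\cdot\sigma)$ is admissible in the supremum defining $\normsigma{\alpha}{\omega}^2$; the degree-zero homogeneity of $q_{\omega,\cdot}$ in its second argument yields $q_{\omega,\sigma}(\alpha)=q_{\omega,\gamma}(\alpha)\le \normsigma{\alpha}{\omega}^2$, which combined with the displayed equation gives the desired norm inequality. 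For nef preservation: if $\alpha_{X_2}$ is nef then $p_2^*\alpha_{X_2}$ is nef on $Y_{2,\K'}$, and its restriction to the smooth surface $S_{Y_2}$ remains nef by standard properties of the nef cone under pullback and smooth restriction.

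Finally, the norm bound shows that $r_S\colon(\cnum^1(\X_\K),q_{\omega,\sigma}^{1/2})\to L^2(\Ss_{\K'})$ is continuous; since $q_{\omega,\sigma}^{1/2}$ is one of the seminorms defining the Fr\'echet topology on $\FS(\X_\K)$, the operator extends continuously to $\FS(\X_\K)$. The identity~\eqref{eq-compat-restr} then propagates from Cartier classes by density: its right-hand side is continuous on $\FS(\X_\K)\times\FS(\X_\K)$ through the pairing~\eqref{eq:intersection-FS} paired with the Cartier class $\sigma$, while its left-hand side is the $L^2$-inner product composed with the now-continuous operator $r_S$.
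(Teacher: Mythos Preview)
Your proof is correct and follows essentially the same route as the paper's: both hinge on the identity $(r_S(\alpha)\cdot r_S(\alpha'))=(\alpha\cdot\alpha'\cdot\sigma)$ for Cartier classes, deduce the norm bound by the same direct computation, and extend by density. The paper simply asserts this identity as ``clear'' and omits nef preservation entirely; you supply both, which is an improvement. One small remark: the base change $(p_2)_*\varpi^*\sigma_Y=\pi^*(p_1)_*\sigma_Y$ you invoke is not literally the flat base change of Fulton Prop.~1.7 (since $\pi$ is not flat), but it follows from it by duality on the smooth corners of the square, or equivalently by pushing forward via $\pi$ first and using $p_1^*\pi_*=\varpi_*p_2^*$---either way your conclusion is correct.
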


\begin{rem} Since the class $\sigma$ is strongly basepoint free, the morphism $r_S$ preserves pseudo-effective classes since for each $i$ the pushforward map induced by the composite morphism $S_{Y_i} \to Y_{i,K'} \to X_{2,K'}$ preserves pseudo-effectivity. 
\end{rem}

\begin{proof}
Without loss of generality, we may suppose $\alpha$ is determined by a class $\beta\in \num^1(X_3)$ where $X_3$ is a model dominating $X_2$. 
Then $r_S(\alpha)$ is the Cartier $b$-class determined by $\beta_{S} := p_3^* \beta|_{S_{Y_3}}$ using the notation of diagram \eqref{diagram_restriction}, and
\[
\normsigma{r_S(\alpha)}{\om_S}^2
= 
2 \frac{(\beta_S\cdot\om_S)^2}{(\om_S^2)} - (\beta^2_S)
=
2 \frac{(\beta\cdot\om\cdot \sigma)^2}{(\om^2\cdot\sigma)} - (\beta^2\cdot\sigma)
\le
(\om^2\cdot\sigma)\, \normsigma{\alpha}{\om}^2
\]
as required. Note that~\eqref{eq-compat-restr} is clear whenever $\alpha,\beta\in\cnum^1(\X)$ and the result follows by density.
\end{proof}

Pick $\alpha\in\wnum^1(\Ss)$. For each smooth model $\pi\colon X_3\to X$ (defined over $\K$), 
we may consider the successive images: 
\[\alpha \in \wnum^1(\Ss) \mapsto \jmath_* \alpha \in \wnum^{d-1}(\Y_{K'}) \mapsto  p_* \jmath_* \alpha \in \wnum^{d-1}(\X_{K'})\] 
under 
 the pushforward by the injection morphism
$\jmath\colon S_{Y_3} \to Y_{3,\K'}$ and by $p_3 : Y_{3,K'} \to X_{3,K'}$.
Since $Y_{3,K'}$ and $S_{Y_3}$ are both smooth (see \S \ref{section_numerical}), this defines a class $p_*\jmath_*(\alpha)\in \num^{d-1}(X_{3,\K'})$, hence a class  $p_*\jmath_*(\alpha)\in \num^{d-1}(X_3)$ since $\num^{d-1}(X_3)\to\num^{d-1}(X_{3,\K'})$ is an isomorphism (by Poincar\'e duality, see the discussion above).

We thus obtain a natural operator $\jmath_S\colon \wnum^1(\Ss)\to \wnum^{d-1}(\X)$
which makes the following diagram commutative:
\[
\xymatrix{ 
\nums (\X)
\ar[r]^{r_S}
\ar[dr]^{\cdot \sigma}
&
\nums (\Ss)
\ar[d]^{\jmath_S}
\\
&\wnum^{d-1}(\X)
}
\]

\subsection{Hodge-index theorem on $\FS(\X)$}

The  product on $\FS(\X)$ naturally extends  to a continuous product on $\FS(\X)_\C:=\FS(\X)\otimes_\R\C$ and we have the following generalization of Hodge index theorem. 

\begin{thm}\label{thm_hodge_index} 
Let $\alpha, \beta \in \FS(\X)_\C$  be two classes such that 
$(\alpha \cdot \bar\alpha\cdot \gamma)\ge0$, $(\beta\cdot \bar\beta \cdot \gamma) \ge 0$ for all $\gamma\in \cBPF^{d-2}(\X)$.
If $\alpha\cdot \bar\beta=0$ in $\wnum^{2}(\X)$, then 
$\imath_\Sigma(\alpha)$ and $\imath_\Sigma(\beta)$ are proportional in  $\wnum^1(\X)$.
\end{thm}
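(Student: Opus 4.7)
The strategy is to transport the classical Hodge index theorem from the Picard--Manin space of a surface to $\FS(\X)$, using the restriction operator $r_S$ built in Section~\ref{sec:restrict}. All intersection products, operators and spaces are silently complexified. Setting $d:=\dim X$, we may assume $d\ge 3$; the case $d=2$ reduces directly to the Hodge index theorem on $L^2(\X)$, since then $\FS(\X)=L^2(\X)$.

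Fix any smooth model $X'\in\cM_X$ and a very ample divisor $H$ on $X'$. For $d-2$ sufficiently general sections $H_1,\dots,H_{d-2}\in|H|$ defined over $\K'$, Bertini's theorem ensures that $S:=H_1\cap\dots\cap H_{d-2}\subset X'_{\K'}$ is a smooth surface whose class $\sigma:=H_1\cdots H_{d-2}\in\cBPF^{d-2}(\X)$ is strongly basepoint free. The restriction operator $r_S\colon\FS(\X)_\C\to L^2(\Ss_{\K'})_\C$ satisfies $(u\cdot\bar v\cdot\sigma)=(r_S(u)\cdot\overline{r_S(v)})$ by~\eqref{eq-compat-restr}, so the hypotheses translate into $r_S(\alpha)\cdot\overline{r_S(\alpha)}\ge 0$, $r_S(\beta)\cdot\overline{r_S(\beta)}\ge 0$ and $r_S(\alpha)\cdot\overline{r_S(\beta)}=0$. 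The intersection form on the Picard--Manin space is Hermitian of signature $(1,\infty)$ on its complexification, and the standard Hodge index theorem in this Hilbert-space setting gives that $r_S(\alpha)$ and $r_S(\beta)$ are linearly dependent in $L^2(\Ss_{\K'})_\C$. Reading the incarnation of this relation on the base surface $S$, the classes $\imath_\Sigma(\alpha)_{X'}|_S$ and $\imath_\Sigma(\beta)_{X'}|_S$ are linearly dependent in $\num^1(S)_\C$. Applying the Lefschetz hyperplane theorem iteratively to each $H_i$, the restriction $\num^1(X'_{\K'})\to\num^1(S)$ is injective; combined with the base-change isomorphism $\num^1(X')_\C\simeq\num^1(X'_{\K'})_\C$, this yields that $\imath_\Sigma(\alpha)_{X'}$ and $\imath_\Sigma(\beta)_{X'}$ are linearly dependent in $\num^1(X')_\C$ for every $X'\in\cM_X$.

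It remains to glue these model-wise dependencies into a single global relation. For each $X'$, set
\[
\cL(X'):=\{(c_1,c_2)\in\C^2 \;:\; c_1\imath_\Sigma(\alpha)_{X'}+c_2\imath_\Sigma(\beta)_{X'}=0\},
\]
a linear subspace of dimension at least $1$ by the previous paragraph. Pushforward compatibility of Weil incarnations forces $\cL(X')\subset\cL(X'')$ whenever $X'\ge X''$. If $\cL(X')=\C^2$ for every $X'$, then $\imath_\Sigma(\alpha)=\imath_\Sigma(\beta)=0$ in $\wnum^1(\X)_\C$ and the conclusion is immediate. Otherwise there exists $X_0'\in\cM_X$ with $\dim\cL(X_0')=1$; for any $X'\in\cM_X$ we may choose $X''\in\cM_X$ dominating both $X'$ and $X_0'$, so that $\cL(X'')\subset\cL(X_0')$ combined with $\dim\cL(X'')\ge 1$ forces $\cL(X'')=\cL(X_0')$, and then $\cL(X'')\subset\cL(X')$ yields $\cL(X_0')\subset\cL(X')$. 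Any nonzero $(c_1,c_2)\in\cL(X_0')$ hence satisfies $c_1\imath_\Sigma(\alpha)+c_2\imath_\Sigma(\beta)=0$ in $\wnum^1(\X)_\C$, as required.

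The main technical ingredient is the Hodge index theorem on the Picard--Manin space of a surface, imported from~\cite{boucksom_favre_jonsson_deggrowth,cantat_bir_surfaces}. The main obstacle is the final paragraph: the proportionality constant produced on each surface $S$ a priori depends on $X'$ and $\sigma$, and turning the collection of surface-level relations into a single global one relies crucially on the pushforward compatibility of Weil incarnations together with the directedness of $\cM_X$.
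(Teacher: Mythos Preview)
Your argument is correct and takes a genuinely different route from the paper's.

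Both proofs begin the same way: restrict $\alpha,\beta$ to a generic complete-intersection surface $S$ via the operator $r_S$, and invoke the Hodge index theorem on the Picard--Manin space $L^2(\Ss)$ to obtain that $r_S(\alpha)$ and $r_S(\beta)$ are proportional. The divergence occurs in how one turns this surface-level proportionality into a global statement. The paper pushes forward via $\jmath_S$ to obtain $(\alpha\cdot\sigma)=t(\theta)\,(\beta\cdot\sigma)$ in $\wnum^{d-1}(\X)$, and then devotes the bulk of the proof to showing that the constant $t(\theta)$ is independent of $\theta$: this is done by varying one very ample divisor at a time (so that consecutive $\theta$'s share a common factor $\theta\cdot D_1'=\theta'\cdot D_1$), and by passing to a fibered product $Y\times_{X'}Y'$ to link complete intersections living on different flat covers. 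One concludes because such $\sigma$'s span $\cnum^{d-2}(\X)$. Your approach is more direct: you read off the incarnation of the relation on the base model $S$ to get that $\imath_\Sigma(\alpha)_{X'}|_S$ and $\imath_\Sigma(\beta)_{X'}|_S$ are dependent in $\num^1(S)_\C$, then invoke Grothendieck--Lefschetz to lift the dependency to $\num^1(X')_\C$, and finally glue the model-wise relations via the directedness of $\cM_X$. This avoids entirely the analysis of $t(\theta)$ and the fibered-product trick; the paper's route, in turn, avoids importing Lefschetz and stays within the intersection-theoretic framework built up in the paper.

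One point deserves an extra line of justification: the passage ``reading the incarnation of this relation on the base surface $S$'' implicitly uses that $(r_S(u))_S=\imath_\Sigma(u)_{X'}|_S$ for every $u\in\FS(\X)$. For Cartier $u$ determined in a model $X_2\ge X'$ this amounts to $(\varpi_S)_*\bigl(u_{X_2}|_{S_{X_2}}\bigr)=(\pi_*u_{X_2})|_S$, which holds because $S$ meets $\CV(\pi)$ properly (so the $\pi$-exceptional part of $u_{X_2}$, once restricted to $S_{X_2}$, is contracted by $\varpi_S$); the general case follows by density and weak continuity. This is correct but not stated in the paper, so it is worth spelling out.
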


Observe that the positivity conditions $(\alpha \cdot \bar\alpha\cdot \gamma)\ge0$, $(\beta\cdot \bar\beta \cdot \gamma) \ge 0$ are satisfied as soon as $\alpha$ and $\beta$ are real nef classes.

When both classes belong to $\nums(\X)$, the previous theorem takes the following form:
\begin{cor}\label{cor_hodge_index} 
Let $\alpha, \beta \in \nums(\X)_\C$  be two classes such that 
$(\alpha \cdot \bar\alpha\cdot \gamma)\ge0$, $(\beta\cdot \bar\beta \cdot \gamma) \ge 0$ for all $\gamma\in \cBPF^{d-2}(\X)$.
If $\alpha\cdot \bar\beta=0$ in $\wnum^{2}(\X)$, then 
$\alpha$ and $\beta$ are proportional.
\end{cor}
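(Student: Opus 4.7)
The plan is to deduce the corollary directly from Theorem \ref{thm_hodge_index} by exploiting the fact that $\nums(\X)$ embeds into $\FS(\X)$ and that the canonical map $\imath_\Sigma$ remains injective on $\nums(\X)$. Both of these ingredients are established in Theorem \ref{thm_banach}.

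First, I would observe that by Theorem \ref{thm_banach}, we have a canonical continuous injection $\nums(\X) \hookrightarrow \FS(\X)$, which tensoring with $\C$ gives a canonical inclusion $\nums(\X)_\C \subset \FS(\X)_\C$. Thus the classes $\alpha,\beta \in \nums(\X)_\C$ in the hypothesis can legitimately be viewed as elements of $\FS(\X)_\C$, and the intersection pairing on $\FS(\X)_\C$ restricts to the pairing defined on $\nums(\X)_\C$ via \eqref{eq:intersection-FS}. The hypotheses of Theorem \ref{thm_hodge_index} are therefore satisfied, and we conclude that the images $\imath_\Sigma(\alpha)$ and $\imath_\Sigma(\beta)$ are proportional in $\wnum^1(\X)_\C$.

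Next, I would invoke the injectivity of $\imath_\Sigma\colon \nums(\X)\to\wnum^1(\X)$, which is the last assertion of Theorem \ref{thm_banach}. Complexifying yields an injective $\C$-linear map $\imath_{\Sigma,\C}\colon \nums(\X)_\C \to \wnum^1(\X)_\C$. Writing $\imath_\Sigma(\alpha)=\lambda\,\imath_\Sigma(\beta)$ for some $\lambda\in\C$ (the case $\beta=0$ being trivial), we obtain $\imath_{\Sigma,\C}(\alpha-\lambda\beta)=0$, and injectivity forces $\alpha=\lambda\beta$ in $\nums(\X)_\C$, which is exactly the desired proportionality.

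There is essentially no obstacle here beyond checking these two formal ingredients: the only point requiring the slightest care is to verify that the hypothesis $\alpha\cdot\bar\beta=0$ in $\wnum^2(\X)$ is compatible between the two frameworks, which is immediate since the pairing on $\nums(\X)_\C$ takes values in $\numbpfs{2}(\X)_\C \subset \wnum^2(\X)_\C$ by \eqref{eq:bound-norms2} and agrees with the extended pairing on $\FS(\X)_\C$ by density of $\cnum^1(\X)$. Thus the corollary is truly just a two-line consequence of Theorem \ref{thm_hodge_index} combined with the injectivity part of Theorem \ref{thm_banach}.
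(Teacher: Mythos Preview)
Your proposal is correct and follows exactly the argument the paper intends: the corollary is stated immediately after Theorem~\ref{thm_hodge_index} with the remark ``When both classes belong to $\nums(\X)$, the previous theorem takes the following form,'' and no separate proof is given. The deduction you spell out --- apply Theorem~\ref{thm_hodge_index} via the inclusion $\nums(\X)\subset\FS(\X)$, then use the injectivity of $\imath_\Sigma$ on $\nums(\X)$ from Theorem~\ref{thm_banach} --- is precisely what the paper leaves implicit.
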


\begin{proof}
Pick any two classes $\alpha , \beta \in \FS(\X)_\C$ satisfying the above conditions.
Pick any BPF class $\sigma\in\cnum^{d-2}(\X)$  such that
 $\sigma= p_*(D_1 \cdot \ldots \cdot D_{e+d-2})$ where $D_1, \ldots , D_{e+d-2}$ are very ample divisors on a smooth
projective variety $Y$ equipped with a projective smooth morphism $p\colon Y\to X'$ of relative dimension $e$. Write $\theta= (D_1 \cdot \ldots \cdot D_{e+d-2})$ so that
$\sigma=p_*(\theta)$. As in the previous section,  we may choose a smooth surface $S\subset Y$ defined over an extension of $\K$, of infinite transcendence degree, whose fundamental class represents $\theta$. Using the notation of \S\ref{sec:restrict} and~\eqref{eq-compat-restr}, we have:
\[
0= (\alpha\cdot \bar\beta \cdot \sigma) =  (p^*(\alpha)\cdot p^*(\bar\beta) \cdot \theta)= (r_S(p^*(\alpha))\cdot r_S(p^*(\bar\beta)))~.
\]
Since by assumption, $(r_S(p^*(\alpha))\cdot r_S(p^*(\bar \alpha)) )=(r_S(p^*(\beta))\cdot r_S(p^*(\bar \beta)))\ge0$,
Hodge index theorem on $L^2(\Ss)$ implies that $r_S(\alpha)$ and $r_S(\beta)$ are proportional. Taking the images of $r_S(\alpha)$ and $r_{S}(\beta)$ by
$\jmath_S$, we get $t(\theta)\ge 0$ such that $(p^*(\alpha)\cdot\theta)= t(\theta)\, (p^*(\beta)\cdot\theta)$ whenever $(\beta\cdot\sigma)\neq0$.

\smallskip

We claim that $t(\theta)$ does not depend on the choice of $\theta$. 
Observe that linear combinations of Chern classes of vector bundles generate $\cnum^{d-2}(\X)$, hence
the set of classes $\sigma=p_*(\theta)$  satisfying our conditions  generate $\cnum^{d-2}(\X)$ too. And
we conclude that $\imath_\Sigma(\alpha)$ and $\imath_\Sigma(\beta)$ are proportional in $\wnum^1(\X)$.

It thus remains to prove the claim. 
Pick first any other very ample divisor $D'_1$ on $Y$, and write
$\theta' =(D'_1 \cdot D_2 \cdot \ldots \cdot D_{e+d-2})$, $\sigma'=p_*(\theta')$.
Suppose moreover that $(p^*(\beta)\cdot\theta)$ and $(p^*(\beta)\cdot\theta')$ are both non zero.
Observe that $( D_1' \cdot \theta) = ( D_1 \cdot \theta')$ so that
\[
t(\theta) (p^*(\beta) \cdot \theta\cdot D_1')= (p^*(\alpha) \cdot \theta\cdot D_1') =  (p^*(\alpha) \cdot \theta' \cdot D_1) = t(\theta') (p^*(\beta) \cdot \theta\cdot D_1').
\]
Since $D_1'$ is ample in $Y$, we have $((D_1')^2 \cdot \theta) >0$. Recall that by assumption, 
$(p^*(\beta)\cdot p^*(\bar{\beta}) \cdot \theta)\ge 0$, so that the Hodge index theorem on $L^2(\Ss)$
implies $(p^*(\beta) \cdot \theta\cdot D_1') \neq 0$. Dividing the above displayed equation by $(p^*(\beta) \cdot \theta\cdot D_1')$ yields
$t(\theta) =t(\theta')$ in this case. 

Iterating this argument implies $t(\theta)=t(\theta')$ whenever $\theta$ and $\theta'$ are push-forwards of complete intersection classes in $Y$.
Finally suppose $\theta'$ is a complete intersection class in some 
smooth projective variety $Y'$ endowed with a flat morphism $p' : Y' \to X'$ of relative dimension $e'$. Then the fibered product $Y'\times_{p'\times p} Y$  defines a smooth projective variety, flat over $X'$ and any complete intersection on $Y$ or on $Y'$ can be pulled back to $Y'\times_{p'\times p} Y$. We are thus reduced to the previous situation and $t(\theta)=t(\theta')$ as well.
\end{proof}


\section{Spectral interpretation of dynamical degrees}
Let $X$ and $Y$ be any two normal projective varieties of dimension $d$ defined over a countable, algebraically closed field $K$ of characteristic $0$.
Let $f \colon  X \dashrightarrow Y$ be any dominant (hence generically finite) rational map.
Denote by $\X, \Y$ the Riemann-Zariski spaces of $X$ and $Y$ respectively.

\subsection{Action of rational maps on $b$-cycles}
Let $\alpha\in\wnum^k(\X)$ and pick any smooth model $Y'$ over $Y$.
Then there exists a model $X'$ over $X$ such that the map $f_{X'/Y'}\colon X'\to Y'$ induced by $f$ is regular. 
We set $(f_*\alpha)_{Y'} := (f_{X'/Y'})_* \alpha_{X'}$. This definition does not depend on the choice of $X'$, and
yields a linear map $f_*\colon \wnum^k(\X) \to \wnum^k(\Y)$, which is continuous for the weak topology, and preserves
pseudo-effective classes.

\smallskip

Suppose now that $\beta\in\cnum^k(\Y)$ is determined in some
 model $Y'$ over $Y$. As above, choose any model $X'$ over $X$ such that the map $f_{X'/Y'}\colon X'\to Y'$ induced by $f$ is regular.
We let $f^*\beta$ be the Cartier $b$-class determined by $(f_{X'/Y'})^*(\beta_{Y'})$. Again this definition does not depend on the choice of $Y'$, 
and yields a linear map $f^*\colon \cnum^k(\Y) \to \cnum^k(\X)$ satisfying
$f^*(\cBPF^k (\Y))\subset\cBPF^k (\X)$ since basepoint free classes are preserved by pullback.

\begin{thm}\label{thm:extension}
For all $k \leq \dim X$, the following property holds:
\begin{equation}
f_* ( \cBPF^k(\X)) \subset \cBPF^k(\Y). \label{eq:f*BPF}
\end{equation}
In particular, we have $f_* ( \cnum^k(\X)) \subset \cnum^k(\Y)$, and 
$f_*(\BPF^k (\X))\subset\BPF^k (\Y)$.
 \end{thm}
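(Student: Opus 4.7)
The plan is to reduce by linearity and closedness of the BPF cone to a single strongly basepoint free class $\alpha_0$ on a smooth model of $X$, and then to exhibit $f_*[\alpha_0]$ explicitly as the pushforward through a flat morphism to a smooth model of $Y$ of an ample complete intersection, the flat morphism being produced by Gruson-Raynaud's flattening theorem.

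For the reduction, $\cBPF^k(\X)$ is the convex cone generated by classes $[\alpha_0]$ with $\alpha_0\in\BPF^k(X_0)$ on some smooth model $X_0$, and $\BPF^k(X_0)$ is the closure inside $\num^k(X_0)$ of the strongly basepoint free cone. Fix a smooth model $Y_0$ of $Y$ and a smooth model $X_1\ge X_0$ such that $f$ induces a regular map $f_{X_1/Y_0}\colon X_1\to Y_0$. Since $f_{X_1/Y_0\,*}\colon\num^k(X_1)\to\num^k(Y_0)$ is continuous and $\BPF^k(Y_0)$ is closed, it is enough to treat a single strongly BPF class $\alpha_0=(p_0)_*(D_1\cdot\ldots\cdot D_{e+k})$, where $p_0\colon W_0\to X_0$ is flat of relative dimension $e$ between smooth projective varieties and the $D_i$ are very ample on $W_0$; the general case will follow by passing to a weak limit inside $\BPF^k(Y_0)$.

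Set $W_1:=W_0\times_{X_0}X_1$, so that the first projection $p_1\colon W_1\to X_1$ is flat of relative dimension $e$ and the second $\mu\colon W_1\to W_0$ is birational. Flat base change for cycles shows that the incarnation of $f_*[\alpha_0]$ on $Y_0$ equals $g_*(\mu^*D_1\cdot\ldots\cdot\mu^*D_{e+k})$, where $g:=f_{X_1/Y_0}\circ p_1\colon W_1\to Y_0$. Applying Gruson-Raynaud's flattening theorem to $g$ yields a birational modification $\nu\colon\tilde Y_0\to Y_0$ such that the strict transform $\tilde W_1$ of $W_1$ becomes flat over $\tilde Y_0$ via the induced $\tilde g\colon\tilde W_1\to\tilde Y_0$. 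Choose a smooth model $Y'\ge\tilde Y_0$ of $Y$, set $W':=\tilde W_1\times_{\tilde Y_0}Y'$ with its induced flat morphism $q\colon W'\to Y'$ of relative dimension $e$, and pick a resolution of singularities $\widehat W\to W'$. Writing $D'_i$ for the pullbacks of $D_i$ to $W'$, the class $\beta:=q_*(D'_1\cdot\ldots\cdot D'_{e+k})\in\num^k(Y')$ is basepoint free on the smooth model $Y'$, and the projection formula for $q$ combined with flat base change along any further modification $Y''\ge Y'$ gives $[\beta]=f_*[\alpha_0]$ in $\wnum^k(\Y)$, hence $f_*[\alpha_0]\in\cBPF^k(\Y)$.

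The two consequences are then formal: $f_*(\cnum^k(\X))\subset\cnum^k(\Y)$ follows from the identity $\cnum^k(\X)=\Vect(\cBPF^k(\X))$ together with linearity of $f_*$, while $f_*(\BPF^k(\X))\subset\BPF^k(\Y)$ follows from Lemma~\ref{lem_def_BPF} combined with the weak continuity of $f_*\colon\wnum^k(\X)\to\wnum^k(\Y)$ and the weak closedness of $\BPF^k(\Y)$. The main obstacle will be dealing with the unavoidable singular intermediate model $W'$: the resolution $\widehat W\to W'$ typically destroys flatness over $Y'$, so one must either work directly with the flat presentation on the singular $W'$ via the Fulger-Lehmann variant of BPF on the smooth target $Y'$ (which agrees with our notion since $Y'$ is smooth), or iteratively alternate Gruson-Raynaud's flattening with resolution of singularities; this is precisely the reason the paper signals that singular models cannot be bypassed in this argument.
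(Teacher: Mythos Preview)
Your overall plan is in the same spirit as the paper's, but two steps do not go through as written.

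First, the reduction to strongly BPF classes is not set up correctly. You flatten the composite $g=f_{X_1/Y_0}\circ p_1$, so the model $Y'$ you obtain depends on the presentation $(W_0,p_0,D_i)$ of the particular strongly BPF class $\alpha_0^{(n)}$. When you then try to ``pass to a weak limit inside $\BPF^k(Y_0)$'', you would need to know that the incarnation in $Y_0$ of each $f_*[\alpha_0^{(n)}]$ is BPF, but this incarnation is a pushforward under the birational map $Y'_n\to Y_0$, and birational pushforward does not preserve BPF in general. Even granting that, landing in $\BPF^k(Y_0)$ only tells you about one incarnation, not that $f_*[\alpha_0]$ is Cartier BPF. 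The paper avoids this by flattening $f$ itself (Theorem~\ref{thm_raynaud_light} applied to $f_{X'/Y}$), producing a single model $Y'$ depending only on $X'$ in which \emph{every} $f_*\alpha$ with $\alpha$ determined in $X$ is determined; the approximation can then be carried out inside the finite-dimensional space $\num^k(Y')$.

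Second, and more seriously, your assertion that $\beta=q_*(D'_1\cdots D'_{e+k})$ is basepoint free on $Y'$ is exactly the crux and is not justified. The classes $D'_i$ are pullbacks of ample divisors under a birational map, hence only big and nef on the (singular) variety $W'$; the definition of strongly BPF used in the paper requires ample divisors on the source. You flag this obstacle in your last paragraph but do not resolve it: invoking ``the Fulger--Lehmann variant'' is the right idea, but you would still have to explain how to rewrite a nef complete intersection on a singular $W'$ as a general fiber of a flat family over $Y'$, and your alternative of iterating flattening with resolution has no obvious reason to terminate. The paper's argument sidesteps the issue by starting from the Fulger--Lehmann presentation of $\alpha$ as $(s|_{F_w})_*[F_w]$ with $s\colon U\to X$ flat and $p\colon U\to W$ proper. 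After flattening $f$, the composite $f_{X''/Y'}\circ s''$ is flat (composition of flat maps), so $(f_*\alpha)_{Y'}$ is again a general fiber pushed through a flat map: strongly BPF without any ampleness surviving a birational pullback. The paper also verifies carefully, via proper intersections with the critical-value loci $\CV(q\circ q')$ and $\CV(\pi)$, that $(f_*\alpha)_{Y''}=\pi^*(f_*\alpha)_{Y'}$ for every $Y''\ge Y'$; your appeal to ``projection formula combined with flat base change'' skips this step.
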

 
Theorem \ref{thm:extension} relies on the following flattening theorem which is a 
particular case of Gruson-Raynaud's theorem.
Recall that by generic flatness (see Proposition 29.27.1 of the StacksProject), for any morphism $f \colon X \to Y$ between algebraic varieties, there exists a Zariski dense open subset $U\subset Y$
such that the restriction of $f\colon f^{-1} (U) \to U$ is flat. 

\begin{thm} \label{thm_raynaud_light} Let $f \colon X \to Y$ be a dominant morphism of projective varieties and a Zariski dense open subset $U\subset Y$
such that $f\colon f^{-1} (U) \to U$ is flat. 

Then there exist a  projective variety $Y'$ and a blowing-up $\pi\colon Y' \to Y$ with center in $Y\setminus U$ 
such that the map $X' \to Y'$ is flat, where $X'$ denotes the Zariski closure in the fibered product $X \times_{Y} Y'$ 
of  $f^{-1}(U) \times_{U} \pi^{-1}(U)$. 
\end{thm}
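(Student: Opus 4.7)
The statement is a specialization of the Raynaud–Gruson flattening theorem (\emph{Critères de platitude et de projectivité}, Inventiones 1971), so the plan is essentially to verify that the hypotheses of that general theorem are met and that its conclusion yields the particular form stated here.

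First, I would invoke the general Raynaud–Gruson theorem in the following form: given a finite type morphism $g\colon Z \to Y$ of Noetherian schemes and a quasi-compact open $U \subset Y$ over which $g$ is flat, there exists a $U$-admissible blow-up $\pi\colon Y' \to Y$ (i.e.\ one whose center is a closed subscheme of $Y$ supported in $Y \setminus U$) such that the strict transform of $Z$ under $\pi$ is flat over $Y'$. Applied to our $f\colon X\to Y$, together with the open $U\subset Y$ over which $f$ is flat by hypothesis, this immediately produces a blow-up $\pi\colon Y'\to Y$ whose center lies in $Y\setminus U$.

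Second, I would identify the strict transform of $X$ under $\pi$ with the scheme $X'$ of the statement. By definition, the strict transform of $X\to Y$ under the blow-up $\pi$ is the scheme-theoretic closure, inside the fibered product $X\times_Y Y'$, of the preimage of $\pi^{-1}(U)$; since $\pi$ is an isomorphism over $U$, this preimage is exactly $f^{-1}(U)\times_U \pi^{-1}(U)$. Hence the strict transform coincides with the Zariski closure $X'$ described in the theorem, and by Raynaud–Gruson the induced morphism $X' \to Y'$ is flat.

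Third, I would check projectivity of $Y'$: because the center of $\pi$ is a closed subscheme of the projective variety $Y$, the blow-up $Y' \to Y$ is projective, and composing with $Y\hookrightarrow \mathbb{P}^N$ makes $Y'$ projective over $\K$. No step poses a genuine obstacle — the only real content is the Raynaud–Gruson theorem itself, which is applied as a black box; one just needs to be careful that the quasi-compactness hypothesis on $U$ in the general formulation is automatic here since $Y$ is Noetherian.
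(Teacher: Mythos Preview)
Your proposal is correct and matches the paper's own treatment: the paper does not give a self-contained proof either, but simply cites Raynaud--Gruson (and Hironaka for the complex case) as a black box, just as you do. Your additional verifications---identifying the strict transform with the $X'$ of the statement and checking that $Y'$ is projective---are appropriate and not spelled out in the paper, so if anything your write-up is slightly more complete.
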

Note that the previous result yields a commutative diagram 
\[
\xymatrixcolsep{5pc}\xymatrix{
X' \ar@{~>}[r] \ar[d]& Y'\ar[d]
\\
X \ar[r]^f & Y}
\]
such that $X' \to Y'$ is a flat morphism, and $Y' \to Y$, $X'\to X$ are proper birational morphisms.

 We refer to~\cite[Theorem~1 on p.26]{MR302645} for a  short proof of this result, or to~\cite{MR393556} for a proof in the complex case; see also~\cite{guignard} for a recent account on Gruson-Raynaud's flattening theorem.

\begin{proof}[Proof of Theorem~\ref{thm:extension}]
Grant~\eqref{eq:f*BPF}. Since $\cBPF^k(\X)$ generates $\cnum^k(\X)$, \\
 $f_* ( \cnum^k(\X)) \subset \cnum^k(\Y)$ follows
and we get $f_*(\BPF^k (\X))\subset\BPF^k (\Y)$ by continuity.

It thus remains to prove~\eqref{eq:f*BPF}. 
To that end, we first fix a model $X'$ over $X$ such that the map $f_{X'/Y}$ is regular.

By Theorem \ref{thm_raynaud_light}, there exist  projective varieties $X'', Y'$ and proper birational morphisms $Y' \to Y$, $X'' \to X'$ such that 
$f_{X''/Y'}\colon  X'' \to Y'$ is flat and the next diagram is commutative:
\[
\xymatrixcolsep{5pc}\xymatrix{
X'' \ar@{~>}[r]^{f_{X''/Y'}} \ar[d]& Y'\ar[d]
\\
X' \ar[r]^{f_{X'/Y}} & Y}
\]
Since flatness is preserved by base change, we may replace $Y'$ by any of its resolution of singularities and
thus assume that $Y'$ is smooth.

We also fix any birational proper morphism $\pi'\colon Y''\to Y'$ such that ${Y''}$ is smooth. By base change we obtain a birational proper morphism $q\colon\tilde{X}\to X''$
and a proper flat morphism $f_{\tilde{X}/\tilde{Y}}\colon \tilde{X}\to Y''$. 
Choose a smooth model $X^{(3)}$ which dominates $\tilde{X}$ so that  the following  diagram
is commutative (flat maps are indicated by curly arrows). 
\[
\xymatrixcolsep{5pc}\xymatrix{ U^{(3)} \ar@{~>}[r]^{s_3} \ar[dd] & X^{(3)} \ar[d]^{q'} \ar@/^1pc/[rdd]^{f_{X^{(3)}/Y''}} & \\
&
\tilde{X} \ar@{~>}[rd]^{f_{\tilde{X}/Y''}} 
\ar[d]_q
&
\\ 
U''
\ar@{~>}[r]^{s''}
\ar[dd]
&
X''
\ar@{~>}[dr]^{f_{X''/Y'}}
\ar[d]
\ar[d]^{} 
&
Y''
\ar[d]^\pi
\\
&
X'
\ar[d]
\ar[dr]^{f_{X'/Y}}
&
Y'
\ar[d]
\\
U
\ar@{~>}[r]^s
\ar[d]^p
&
X
\ar@{.>}[r]^f
&
Y
\\
W
&
&
}
\]
Take any BPF Cartier class $\alpha$ determined in $X$. We shall first prove that $f_*\alpha$ is Cartier and determined in $Y'$, that 
is $(f_*\alpha)_{Y''} = \pi^*((f_*\alpha)_{Y'})$.

By density and since strongly basepoint free classes generate $\cnum^k(\X)$, we  are reduced to proving the result  when the class $\alpha$ is also strongly basepoint free.
Using the definition of strongly basepoint free classes from \cite[Definition 5.1]{MATH06741284}, one can thus find an equidimensional quasi-projecti\-ve scheme $U$ of finite type, a flat morphism $s\colon U \to X$ 
and a proper morphism $p\colon U \to W$ of relative dimension $n-k$ to a quasi-projective scheme
$W$ such that each irreducible component of $U$ surjects onto $W$, and
$\alpha_X = (s|_{F_w})_* [F_w]$ where $[F_w]$ is the fundamental class of a  fiber of $p$ over a general point $w \in W$.

Base change yields two flat maps $s''\colon U'' \to X'', s_3 \colon U^{(3)} \to X^{(3)}$ and  proper morphisms $p''\colon U'' \to W$, $p_3 \colon U^{(3)} \to W$.

Recall that a cycle $Z$ properly intersects  an algebraic subvariety $A$ if for any 
irreducible components $Z_i$ and $A_j$ of $Z$ and $A$ respectively, we have $\codim(Z_i\cap A_j)=\codim(Z_i)+ \codim(A_j)$.

For any generic point $w \in W$, denote by $Z_w$ the cycle in $X''$ given as the pushforward by $s''$  of the fundamental class of a fiber  $p''^{-1}(w)$. 

By~\cite[Remark~5.2]{MATH06741284}, for a generic element $w$ in $W$, the cycles 
$Z_w$ and $f_{X''/Y'} (Z_w)$
properly intersect $\CV(q \circ q')$ and $\CV(\pi)$ respectively
so that by~\cite[Lemma~4.2]{tuyen2}, $\alpha_{X^{(3)}}$  is represented by $ (q \circ q')^{-1}(Z_w)$
and $\pi^*(f_*\alpha)_{Y'}$ by $\pi^{-1}(f_{X''/Y'} (Z_w))$.
Now we have
$(f_{X^{(3)}/Y''})_*((q\circ q')^{-1}(Z_w)) = \pi^{-1}(f_{X''/Y'}(Z_w))$
which implies $(f_*\alpha)_{Y''} = \pi^*(f_*\alpha)_{Y'}$ as required.

\smallskip

Finally $(f_*\alpha)_{Y'}$ is the image under the flat map $f_{X''/Y} \circ s''$ 
of the fundamental class of a fiber of $U''\to W$. It is thus strongly basepoint free.
This concludes the proof of~\eqref{eq:f*BPF}.
\end{proof}

Given $\beta\in\wnum^k(\Y)$, we may define $f^*\beta\in  \wnum^k(\X)$ by duality so that $(f^*\beta\cdot\alpha)=(\beta\cdot f_*\alpha)$
for any $\alpha\in\cnum^{d-k}(\X)$. Then $f^*\colon\wnum^k(\Y) \to \wnum^k(\X)$ is a linear map, which coincides with the pull-back on Cartier $b$-classes, 
and $f^*(\BPF^k (\Y))\subset\BPF^k (\X)$ by continuity. 
By continuity and density of $\Vect(\BPF^k(\Y)),\Vect(\BPF^k(\X))$ in $\wnum^k(\Y)$ and $\wnum^k(\X)$ respectively, we obtain:
 \begin{cor}\label{cor:thm-extension}
The pull-back morphism extends to a linear map $f^*\colon\wnum^k(\Y) \to \wnum^k(\X)$ which is continuous
for the weak topology and satisfies $f^*(\BPF^k (\Y))\subset\BPF^k (\X)$, and we have 
\begin{equation}\label{eq:proj-form}
f_*(\alpha\cdot f^*\beta) =(f_*\alpha\cdot \beta)
\end{equation}
for any $b$-classes $\alpha,\beta$ such that one of them is Cartier.

Moreover, we have \[(f^*\alpha\cdot f^*\beta) = f^*(\alpha\cdot \beta) = \deg(f) (\alpha\cdot \beta)\] for any
$\alpha\in\numbpf{k}(\Y)$, $\beta\in\numbpf{d-k}(\Y)$, where $1\le \deg(f) = [K(X):K(Y)]$ is the topological degree of $f$.
\end{cor}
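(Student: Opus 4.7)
The plan is to bootstrap from Theorem~\ref{thm:extension}, which gives us that $f_*$ preserves Cartier $b$-classes and BPF classes. First I would \emph{define} $f^*\colon\wnum^k(\Y)\to\wnum^k(\X)$ by duality: since the pairing $\wnum^k(\X)\times\cnum^{d-k}(\X)\to\R$ is perfect, and since $f_*(\cnum^{d-k}(\X))\subset\cnum^{d-k}(\Y)$ by Theorem~\ref{thm:extension}, for any $\beta\in\wnum^k(\Y)$ there is a unique class $f^*\beta\in\wnum^k(\X)$ satisfying $(f^*\beta\cdot\alpha)=(\beta\cdot f_*\alpha)$ for all Cartier $\alpha\in\cnum^{d-k}(\X)$. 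One then checks that this definition agrees with the pull-back already constructed on $\cnum^k(\Y)$ by working in a single pair of models $X'\ge f^{-1}Y'$ and invoking the ordinary projection formula~\eqref{eq:projection-form} on smooth varieties. Weak continuity of $f^*$ is immediate from the duality definition: if $\beta_n\to\beta$ weakly in $\wnum^k(\Y)$ then $(\beta_n\cdot f_*\alpha)\to(\beta\cdot f_*\alpha)$ for every Cartier $\alpha$, since $f_*\alpha$ is Cartier. The inclusion $f^*(\BPF^k(\Y))\subset\BPF^k(\X)$ then follows: pull-back preserves Cartier BPF classes (already noted before the statement of Theorem~\ref{thm:extension}), the cone $\BPF^k(\Y)$ is the weak closure of $\cBPF^k(\Y)$, and $f^*$ is weakly continuous.

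For the projection formula~\eqref{eq:proj-form}, assume first that $\alpha\in\cnum^{d-l}(\X)$ is Cartier (where $\beta\in\wnum^l(\Y)$). Testing the identity against a Cartier class $\gamma\in\cnum^{d-k-l}(\Y)$, both sides pair with $\gamma$ to give $(\alpha\cdot f^*\beta\cdot f^*\gamma)$ via the duality definition of $f^*$, since $\alpha\cdot f^*\gamma$ is Cartier. To evaluate the right-hand side, approximate $\beta$ in the weak topology by a sequence of Cartier classes $\beta_n$; then both $f_*(\alpha\cdot f^*\beta_n)$ and $f_*\alpha\cdot\beta_n$ reduce to the standard projection formula on a suitable smooth model dominating both $X$ and $Y$, and passing to the limit (which is legitimate since intersection with a Cartier class is weakly continuous in the second variable, cf.~\S\ref{section_bclass}) yields the desired identity. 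The symmetric case where $\beta$ is Cartier is similar, reducing once again by density to the case of both classes being Cartier on a single smooth model.

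For the ``moreover'' part, first handle the Cartier case: if $\alpha\in\cnum^k(\Y)$ and $\beta\in\cnum^{d-k}(\Y)$ are both Cartier and determined in a common model $Y'$ where $f$ lifts to a regular map $f_{X'/Y'}\colon X'\to Y'$ between smooth varieties, then on $X'$ the identity $(f_{X'/Y'}^*\alpha_{Y'}\cdot f_{X'/Y'}^*\beta_{Y'})=\deg(f)(\alpha_{Y'}\cdot\beta_{Y'})$ is standard, and the equality $f^*(\alpha\cdot\beta)=\deg(f)(\alpha\cdot\beta)$ in $\wnum^d=\R$ comes from $f_*[X]=\deg(f)[Y]$ through the duality definition. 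To extend to $\numbpf{k}(\Y)$ and $\numbpf{d-k}(\Y)$, I would show that $f^*$ restricts to a bounded operator $\numbpf{\bullet}(\Y)\to\numbpf{\bullet}(\X)$: for $\alpha\in\cBPF^k(\Y)$, the projection formula gives $\normbpf{f^*\alpha}=(f^*\alpha\cdot\om_X^{d-k})=(\alpha\cdot f_*\om_X^{d-k})$, and Siu's inequalities~\eqref{eq:siu_bpf} applied to $f_*\om_X^{d-k}\in\cBPF^{d-k}(\Y)$ (which is BPF by Theorem~\ref{thm:extension}) bound this by a constant multiple of $\normbpf{\alpha}$; the bound on the whole of $\Vect(\BPF^k)$ follows by the $\pm$ decomposition, giving the required continuous extension. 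The desired identity then follows from the continuity of the intersection pairing on $\numbpf{\bullet}$ established in Theorem~\ref{thm_banach_algebra}, by density of $\cnum^\bullet(\Y)$.

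The main obstacle I expect is the compatibility of the two constructions of the intersection and the pull-back when passing from Cartier classes to completions. Specifically, in the projection formula one must be careful that $f^*\beta$ need not be Cartier, so $\alpha\cdot f^*\beta$ lives only in $\wnum^{\bullet}(\X)$; testing the identity against Cartier test classes and exploiting the weak continuity of the Cartier--Weil pairing is what makes the argument work, but one has to verify this at each step rather than manipulate the expressions symbolically. Once this is done, the ``moreover'' statement is essentially an exercise in extending by continuity.
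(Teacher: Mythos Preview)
Your proposal is correct and follows the same strategy as the paper: define $f^*$ on Weil classes by duality with $f_*$ on Cartier classes (using Theorem~\ref{thm:extension}), deduce weak continuity and BPF preservation, then obtain the projection formula and the degree identity by reducing to the Cartier case via density and continuity. The paper compresses this into the single sentence preceding the corollary (``By continuity and density of $\Vect(\BPF^k(\Y))$\ldots''), so you have essentially written out the details the authors leave implicit; the only organizational difference is that you inline the boundedness of $f^*$ on $\numbpf{\bullet}$, which the paper postpones to Proposition~\ref{prop:pullBPF}. (Minor notational slip: you write $\alpha\in\cnum^{d-l}(\X)$ but then test against $\gamma\in\cnum^{d-k-l}(\Y)$; you presumably meant $\alpha\in\cnum^{k}(\X)$.)
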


\subsection{Pullback and pushforward morphisms on $\numbpf{\bullet}(\X)$}

Let $\om_X$ and $\om_Y$ be two big and nef Cartier $b$-divisors on $X$ and $Y$ respectively. 
Recall that the $k$-degree of $f$ with respect to these polarizations is by definiton the quantity: 
\[
\deg_k(f):=(f^* \om_Y^k \cdot \om_X^{d-k}) >0.
\]

\begin{prop} \label{prop:pullBPF}
The pullback morphism $f^*\colon \cnum^k(\Y) \to \cnum^k(\X)$ extends as a linear operator $f^*\colon  \numbpf{k}(\Y) \to \numbpf{k}(\X)$
which satisfies:
\begin{equation}\label{eq:bdnormbpf}
\frac{\deg_k(f)}{(\om_Y^d)}
\le
\normbpf{f^*}
\le
C\,\frac{\deg_k(f)}{(\om_Y^d)}
\end{equation}
for some constant $C>0$ depending only on $d$.
\end{prop}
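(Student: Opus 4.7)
The plan is to exploit two key tools: the projection formula (Corollary~\ref{cor:thm-extension}) and Siu's inequalities (Theorem~\ref{thm_siu_bpf}), combined with the fact from Theorem~\ref{thm:extension} that both pullback and pushforward preserve the BPF cones. The whole proof amounts to computing $\normbpf{f^*\alpha}$ on BPF classes using the identity $\normbpf{\beta}=(\beta\cdot\om^{d-k})$ for BPF $\beta$ from Lemma~\ref{lem:pleasethereferee}, then using the infimum definition of $\normbpf{\cdot}$ to extend to Cartier classes.

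For the upper bound, I would proceed as follows. Pick $\alpha\in\cBPF^k(\Y)$. By Theorem~\ref{thm:extension}, $f^*\alpha\in\cBPF^k(\X)$, hence Lemma~\ref{lem:pleasethereferee} and the projection formula~\eqref{eq:proj-form} give
\[
\normbpf{f^*\alpha}=(f^*\alpha\cdot\om_X^{d-k})=(\alpha\cdot f_*\om_X^{d-k}).
\]
Since $\om_X$ is big and nef Cartier, $\om_X^{d-k}\in\cBPF^{d-k}(\X)$ by Theorem~\ref{thm_bpf}(2), and Theorem~\ref{thm:extension} yields $f_*\om_X^{d-k}\in\BPF^{d-k}(\Y)$. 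Applying Siu's inequalities on $Y$ to this class, together with the identity $(f_*\om_X^{d-k}\cdot\om_Y^k)=(\om_X^{d-k}\cdot f^*\om_Y^k)=\deg_k(f)$, yields
\[
f_*\om_X^{d-k}\le C_d\,\frac{\deg_k(f)}{(\om_Y^d)}\,\om_Y^{d-k}.
\]
Intersecting with $\alpha$ gives $\normbpf{f^*\alpha}\le C_d\frac{\deg_k(f)}{(\om_Y^d)}\normbpf{\alpha}$ on the BPF cone. For an arbitrary Cartier class $\alpha\in\cnum^k(\Y)$, decompose $\alpha=\alpha_+-\alpha_-$ with $\alpha_\pm\in\BPF^k(\Y)$, apply the previous bound to each piece (using that $f^*$ preserves $\BPF^k$), and take infimum over decompositions to obtain the upper bound in~\eqref{eq:bdnormbpf}.

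For the lower bound, I would test $f^*$ on the class $\om_Y^k\in\cBPF^k(\Y)$. Since both $\om_Y^k$ and $f^*\om_Y^k$ are BPF, Lemma~\ref{lem:pleasethereferee} gives
\[
\normbpf{\om_Y^k}=(\om_Y^d),\qquad \normbpf{f^*\om_Y^k}=(f^*\om_Y^k\cdot\om_X^{d-k})=\deg_k(f),
\]
so the operator norm is bounded below by the ratio $\deg_k(f)/(\om_Y^d)$. Finally, since $f^*\colon\cnum^k(\Y)\to\cnum^k(\X)$ is bounded with respect to $\normbpf{\cdot}$ and $\cnum^k(\Y)$ is dense in $\numbpf{k}(\Y)$ by construction, $f^*$ extends uniquely by continuity to a bounded operator $\numbpf{k}(\Y)\to\numbpf{k}(\X)$ with the same norm.

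I do not expect any serious obstacle: the argument is essentially a two-line application of Siu's inequalities once one notices that $f_*\om_X^{d-k}$ is BPF on $Y$. The only subtle point worth verifying is that the decomposition $\alpha=\alpha_+-\alpha_-$ used for the upper bound can be taken with $\alpha_\pm\in\BPF^k(\Y)$ (not necessarily Cartier), which is exactly the setup under which $\normbpf{\cdot}$ is defined in Proposition~\ref{prop_intermediate_banach}, and that $f^*$ is already defined on such classes via Corollary~\ref{cor:thm-extension}.
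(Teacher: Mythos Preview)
Your proof is correct and follows essentially the same approach as the paper: both use Siu's inequalities combined with preservation of BPF classes under $f^*$ and $f_*$, and the lower bound via testing on $\om_Y^k$ is identical. The only cosmetic difference is that you apply Siu to the fixed class $f_*\om_X^{d-k}$ on $Y$ and then intersect with $\alpha$, whereas the paper applies Siu directly to $\alpha_\pm$ on $Y$ and then pulls back; these are dual manipulations yielding the same constant $C_d$.
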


Note that since $f^*\colon \cnum^\bullet(\Y) \to \cnum^\bullet(\X)$ is a graded ring homomorphism, its extension to $f^*\colon  \numbpf{\bullet}(\Y) \to \numbpf{\bullet}(\X)$
is a Banach ring homomorphism.

\begin{proof} 
We prove~\eqref{eq:bdnormbpf}. The lower bound is clear since
\[
\normbpf{f^*} \ge \frac{\normbpf{f^*\om_Y^k}}{\normbpf{\om_Y^k}} = \frac{\deg_k(f)}{(\om_Y^d)}
\]
For the upper bound, we pick any class $\alpha \in \numbpf{k}(\Y)$, and write $\alpha=\alpha_+ - \alpha_-$ 
with $\alpha_+, \alpha_-\in \BPF^k(\Y)$ such that:
\begin{equation*}
\normbpf{\alpha}\le (\alpha_+ \cdot \om_Y^{d-k}) + (\alpha_- \cdot \om_Y^{d-k}) \le \normbpf{\alpha} + \epsilon
\end{equation*}
for some $\epsilon>0$. Since $f^* \alpha_\pm$ are BPF, we obtain using Siu's inequalities~\eqref{eq:siu_bpf} applied to $f^* \alpha_{\pm}$ and $f^*\omega^k$:
\begin{align*}
\normbpf{f^*\alpha}
&\le 
(f^*\alpha_+ \cdot \om_X^{d-k}) + (f^*\alpha_- \cdot \om_X^{d-k})
\\
& \mathop{\le}\limits^{\eqref{eq:siu_bpf}}
C_d (\normbpf{\alpha_+}+\normbpf{\alpha_-})
\frac{(f^*\om_Y^k \cdot \om_X^{d-k})}{(\om_Y^d)}
\\
&\le 
C_d (\normbpf{\alpha}+\epsilon) \frac{\deg_k(f)}{(\om_Y^d)}.
\end{align*}
We conclude letting $\epsilon\to0$.
\end{proof}

\begin{prop} \label{prop:pushBPF}
The pushforward morphism $f_*\colon \cnum^k(\X) \to \cnum^k(\Y)$ extends as a linear operator $f_*\colon  \numbpf{k}(\X) \to \numbpf{k}(\Y)$
which satisfies:
\begin{equation}\label{eq:bdnormbpf*}
\frac{\deg_{d-k}(f)}{(\om_X^d)}
\le
\normbpf{f_*}
\le
C\,\frac{\deg_{d-k}(f)}{(\om_X^d)}
\end{equation}
for some constant $C>0$ depending only on $d$.
\end{prop}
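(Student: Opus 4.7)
The plan is to mirror the proof of Proposition \ref{prop:pullBPF} for the pullback, substituting Theorem~\ref{thm:extension} (which tells us that $f_*$ preserves the $\cBPF$-cone) for the elementary fact that pullbacks preserve basepoint-freeness, and using the projection formula \eqref{eq:proj-form} to transform intersections with $\om_Y$ on $Y$ into intersections with $f^*\om_Y$ on $X$.

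For the lower bound I would simply test $f_*$ on the class $\om_X^k\in\cBPF^k(\X)$. By Lemma~\ref{lem:pleasethereferee} we have $\normbpf{\om_X^k}=(\om_X^d)$, and by the projection formula
\[
\normbpf{f_*\om_X^k}=(f_*\om_X^k\cdot\om_Y^{d-k})=(\om_X^k\cdot f^*\om_Y^{d-k})=\deg_{d-k}(f),
\]
which immediately gives $\normbpf{f_*}\ge\deg_{d-k}(f)/(\om_X^d)$.

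For the upper bound, pick $\alpha\in\cnum^k(\X)$ and, given $\epsilon>0$, decompose $\alpha=\alpha_+-\alpha_-$ with $\alpha_\pm\in\BPF^k(\X)$ and $\normbpf{\alpha_+}+\normbpf{\alpha_-}\le\normbpf{\alpha}+\epsilon$. By Theorem~\ref{thm:extension}, $f_*\alpha_\pm\in\BPF^k(\Y)$, hence by Lemma~\ref{lem:pleasethereferee} and the projection formula
\[
\normbpf{f_*\alpha_\pm}=(f_*\alpha_\pm\cdot\om_Y^{d-k})=(\alpha_\pm\cdot f^*\om_Y^{d-k}).
\]
Now apply Siu's inequalities \eqref{eq:siu_bpf} to the BPF class $\alpha_\pm$, which gives $\alpha_\pm\le C_d\,(\alpha_\pm\cdot\om_X^{d-k})(\om_X^d)^{-1}\om_X^k$; intersecting with the BPF class $f^*\om_Y^{d-k}$ (pullback preserves BPF) yields
\[
(\alpha_\pm\cdot f^*\om_Y^{d-k})\le C_d\,\frac{\normbpf{\alpha_\pm}}{(\om_X^d)}(\om_X^k\cdot f^*\om_Y^{d-k})=C_d\,\frac{\normbpf{\alpha_\pm}\,\deg_{d-k}(f)}{(\om_X^d)}.
\]
Summing over $\pm$ and letting $\epsilon\to 0$ gives $\normbpf{f_*\alpha}\le C\,\normbpf{\alpha}\,\deg_{d-k}(f)/(\om_X^d)$ on $\cnum^k(\X)$.

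This shows $f_*$ is bounded on $\cnum^k(\X)$ with respect to $\normbpf{\cdot}$ taking values in $\cnum^k(\Y)\subset\numbpf{k}(\Y)$. Since $\cnum^k(\X)$ is dense in $\numbpf{k}(\X)$ by definition, $f_*$ extends uniquely to a bounded linear map $\numbpf{k}(\X)\to\numbpf{k}(\Y)$ satisfying \eqref{eq:bdnormbpf*}. I don't anticipate a real obstacle: the essential geometric input (namely $f_*(\cBPF^k)\subset\cBPF^k$) has already been established in Theorem~\ref{thm:extension} via Gruson--Raynaud flattening, and the remainder is a purely formal application of the projection formula and Siu's inequalities.
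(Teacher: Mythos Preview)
Your proof is correct and is exactly the argument the paper has in mind: it explicitly says the proof is ``identical to the previous one'' using the projection formula $(f^*\alpha\cdot\beta)=(\alpha\cdot f_*\beta)$, and you have carried this out with the right ingredients (Theorem~\ref{thm:extension} for $f_*(\BPF^k)\subset\BPF^k$, Lemma~\ref{lem:pleasethereferee} for $\normbpf{\cdot}$ on BPF classes, and Siu's inequality~\eqref{eq:siu_bpf}).
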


Using $(f^*\alpha\cdot \beta)= (\alpha\cdot f_*\beta)$, the proof is identical to the previous one and left to the reader.

\subsection{Pullback and pushforward morphisms on $\numbpfs{\bullet}(\X)$}

\begin{prop} \label{prop_pullback_bpfs}
For any integer $k\le d$,  the linear maps $f^*  : \cnum^k(\Y) \to \cnum^k(\X)$ and $f_*: \cnum^k(\X) \to \cnum^k(\Y)$ extend as  bounded linear operators $f^* :  \numbpfs{k}(\Y) \to \numbpfs{k}(\X)$ and $f_* : \numbpfs{k}(\X) \to \numbpfs{k}(\Y)$ respectively. Moreover,
\begin{equation*}
\dfrac{\deg_k(f)}{(\omega_Y^d)} \le \normbpfs{f^*} \le C \dfrac{ \deg_{k}(f)}{(\omega_Y^d)}, 
\end{equation*}
and 
\begin{equation*}
\dfrac{\deg_{d-k}(f)}{(\omega_X^d)} \le \normbpfs{f_*} \le C  \dfrac{\deg_{d-k}(f)}{(\omega_Y^d)}, 
\end{equation*}
for some constant $C>0$ depending only on $d$.
\end{prop}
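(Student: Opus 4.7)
The plan is to prove the two inequalities in parallel, using the projection formula to convert each intersection pairing into one defining the target norm, and then Siu's inequalities (Theorem~\ref{thm_siu_bpf}) applied to the BPF classes $f^*\omega_Y^k$ and $f_*\omega_X^k$ to compare them with $\omega_X^k$ and $\omega_Y^k$ respectively. Since $\cnum^k$ is dense in $\numbpfs{k}$ by construction, continuity of $f^*$ and $f_*$ on the dense Cartier subspace will automatically produce the desired bounded extensions. All that is actually being proved is a norm estimate.

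For the upper bound on $f^* \colon \numbpfs{k}(\Y)\to\numbpfs{k}(\X)$, I would fix $\alpha\in\cnum^k(\Y)$ and any $\gamma\in\cBPF^{d-k}(\X)$. By~\eqref{eq:proj-form}, $(f^*\alpha\cdot\gamma)=(\alpha\cdot f_*\gamma)$, and $f_*\gamma\in \cBPF^{d-k}(\Y)$ by Theorem~\ref{thm:extension}, so the definition of $\normbpfs{\alpha}$ gives
\[
|(f^*\alpha\cdot\gamma)|\le \normbpfs{\alpha}\,(\omega_Y^k\cdot f_*\gamma)=\normbpfs{\alpha}\,(f^*\omega_Y^k\cdot\gamma),
\]
applying~\eqref{eq:proj-form} again. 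Since $f^*\omega_Y^k\in\cBPF^k(\X)$ (pullback preserves BPF, Theorem~\ref{thm_bpf}), Siu's inequality yields
\[
f^*\omega_Y^k\le C_d\,\frac{(f^*\omega_Y^k\cdot\omega_X^{d-k})}{(\omega_X^d)}\,\omega_X^k=C_d\,\frac{\deg_k(f)}{(\omega_X^d)}\,\omega_X^k,
\]
and intersecting with $\gamma$ and dividing by $(\omega_X^k\cdot\gamma)$ gives $\normbpfs{f^*\alpha}\le C\,\frac{\deg_k(f)}{(\omega_Y^d)}\,\normbpfs{\alpha}$ once we normalize $(\omega_X^d)=(\omega_Y^d)=1$. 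The lower bound follows by testing on $\alpha=\omega_Y^k$, which satisfies $\normbpfs{\omega_Y^k}=1$, and using $\gamma=\omega_X^{d-k}$ to compute $\normbpfs{f^*\omega_Y^k}\ge\deg_k(f)/(\omega_X^d)$.

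The treatment of $f_*$ is completely symmetric. For $\alpha\in\cnum^k(\X)$ and $\gamma\in\cBPF^{d-k}(\Y)$, the projection formula gives $(f_*\alpha\cdot\gamma)=(\alpha\cdot f^*\gamma)$ with $f^*\gamma\in\cBPF^{d-k}(\X)$ (Theorem~\ref{thm_bpf}), so
\[
|(f_*\alpha\cdot\gamma)|\le\normbpfs{\alpha}\,(\omega_X^k\cdot f^*\gamma)=\normbpfs{\alpha}\,(f_*\omega_X^k\cdot\gamma).
\]
Now $f_*\omega_X^k\in\cBPF^k(\Y)$ by Theorem~\ref{thm:extension}, and Siu's inequalities on $Y$ give $f_*\omega_X^k\le C_d\,\frac{\deg_{d-k}(f)}{(\omega_Y^d)}\,\omega_Y^k$, using $(f_*\omega_X^k\cdot\omega_Y^{d-k})=(\omega_X^k\cdot f^*\omega_Y^{d-k})=\deg_{d-k}(f)$. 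Dividing by $(\omega_Y^k\cdot\gamma)$ yields the upper bound. The lower bound follows by testing with $\alpha=\omega_X^k$ against $\gamma=\omega_Y^{d-k}$.

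There is no real obstacle here: the argument is a direct transcription of the proof of Proposition~\ref{prop:pullBPF}, with the two applications of the projection formula (first to move $f^*$ across the pairing defining $\normbpfs{\cdot}$, second to recognize the resulting quantity as an intersection with $f^*\omega_Y^k$ or $f_*\omega_X^k$) replacing the single application used in the $\normbpf{\cdot}$ setting. The only point requiring attention is to check that each pushforward or pullback indeed lands in the BPF cone on the correct side, which is exactly what Theorems~\ref{thm_bpf} and~\ref{thm:extension} provide.
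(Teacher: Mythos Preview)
Your proof is correct and is precisely the argument the paper has in mind: the paper itself leaves this proof to the reader, indicating only that it uses Siu's inequalities and the projection formula, and your two-step use of~\eqref{eq:proj-form} together with Siu applied to $f^*\omega_Y^k$ (respectively $f_*\omega_X^k$) is exactly that. The only cosmetic point is that your computation naturally produces $(\omega_X^d)$ in some denominators where the statement prints $(\omega_Y^d)$; your normalization $(\omega_X^d)=(\omega_Y^d)=1$ disposes of this, and in any case the paper's own statement is already inconsistent on this point (compare the lower and upper bounds for $f_*$), so no correction is needed.
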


The proof uses Siu's  inequalities~\eqref{eq:siu_bpf} and the projection formula and is left to the reader.

\subsection{Pullback and pushforward morphisms on $\nums(\X)$}

Let $f:X \dashrightarrow Y$ be any dominant rational map between two normal and projective varieties. We suppose that the dimension of $Y$ is $d\ge 2$ but we may have $\dim(X)\neq \dim(Y)$. We fix big and nef classes $\om_X, \om_Y$ on $X$ and $Y$ respectively and write $\deg_1(f) := (f^*\om_Y\cdot \om_X^{\delta -1})$ where $\delta = \dim(X)$.

We shall denote by $\normsigma{L}{\omega}$ the norm of  a bounded linear operator $L : (\nums(\Y), \normsigma{\cdot}{\omega_Y} ) \to (\nums(\X),\normsigma{\cdot}{\omega_X})$.

\begin{prop} \label{prop_continuity_pullback} 
The  linear map  $f^*\colon \cnum^1(\Y) \to \cnum^1(\X)$ extends as a bounded linear operator
$f^*\colon  \nums(\Y) \to \nums(\X)$
that satisfies:
\begin{equation}
\label{eq:bdnormspull}
\frac{\deg_1(f)}{(\om_X^\delta)}
\le
\normsigma{f^*}{\om}
\le
C
\frac{\deg_1(f)}{(\om_X^\delta)}
\end{equation}
where  $C>0$ depends only on $\delta$. Moreover, we have
$f^*(\alpha\cdot\beta)=(f^*\alpha\cdot f^*\beta)\in\numbpfs{2}(\X)$ for all $\alpha,\beta\in\nums(\X)$.
\end{prop}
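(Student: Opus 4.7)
The lower bound is obtained by testing $f^*$ on $\alpha = \om_Y$. A direct computation yields $q_{\om_Y, \gamma}(\om_Y) = 1$ for every $\gamma$, so $\normsigma{\om_Y}{\om_Y} = 1$; since $f^*\om_Y$ is nef on $X$, Remark~\ref{rem:compare-for-nef} gives $\normsigma{f^*\om_Y}{\om_X} \ge (f^*\om_Y \cdot \om_X^{\delta-1})/(\om_X^\delta) = \deg_1(f)/(\om_X^\delta)$, which is the lower bound in~\eqref{eq:bdnormspull}. For the upper bound, set $\lambda := \deg_1(f)/(\om_X^\delta)$ and fix $\alpha \in \cnum^1(\Y)$ together with $\gamma \in \cBPF^{\delta-2}(\X)$ normalized so that $(\om_X^2 \cdot \gamma) = 1$. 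Using that $f^*$ is a ring homomorphism on Cartier $b$-classes (Corollary~\ref{cor:thm-extension}) together with the projection formula~\eqref{eq:proj-form}, one obtains the key identity
\[
q_{f^*\om_Y, \gamma}(f^*\alpha) \;=\; q_{\om_Y, f_*\gamma}(\alpha) \;\le\; \normsigma{\alpha}{\om_Y}^2,
\]
valid whenever $(\om_Y^2 \cdot f_*\gamma) = ((f^*\om_Y)^2 \cdot \gamma) > 0$.

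To pass from $q_{f^*\om_Y, \gamma}$ back to $q_{\om_X, \gamma}$, the plan is to apply Siu's inequality (Theorem~\ref{thm_siu_bpf}) to the BPF class $f^*\om_Y$ on $X$, which yields $f^*\om_Y \le C_\delta \lambda\, \om_X$, and to invoke the surface-level estimate~\eqref{eq:7654} extracted from the proof of Proposition~\ref{prop:equi-semi-norms}:
\[
q_{\om_X, \gamma}(f^*\alpha) \;\le\; 4\,\frac{(\om_X \cdot f^*\om_Y \cdot \gamma)^2}{(\om_X^2 \cdot \gamma)^2}\, q_{f^*\om_Y, \gamma}(f^*\alpha) \;\le\; 4 C_\delta^2 \lambda^2\, \normsigma{\alpha}{\om_Y}^2.
\]
The main technical obstacle is the degenerate case $f_*\gamma = 0$, in which $f^*\om_Y|_S$ fails to be big on the flattening surface of $\gamma$, and both the identity above and the derivation of~\eqref{eq:7654} via Proposition~\ref{prop_estimates_minkowski} break down. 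I would handle this by perturbation: fix a BPF class $\eta_0 \in \cBPF^{\delta - 2}(\X)$ with $f_*\eta_0 \neq 0$—for instance $\eta_0 := \om_X^{\delta - d} \cdot f^*\om_Y^{d-2}$, whose pushforward is a positive multiple of $\om_Y^{d-2}$ by the projection formula and the dominance of $f$—and set $\gamma_\epsilon := \gamma + \epsilon\, \eta_0$. Then $f_*\gamma_\epsilon \neq 0$ for every $\epsilon > 0$, so the previous estimate applies to $\gamma_\epsilon$ with constants independent of $\epsilon$; the polynomial form of $q_{\om_X, \cdot}(f^*\alpha)$ in its second argument then allows one to pass to the limit $\epsilon \to 0^+$. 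Taking the supremum over $\gamma$ yields $\normsigma{f^*\alpha}{\om_X} \le 2 C_\delta \lambda\, \normsigma{\alpha}{\om_Y}$ for all $\alpha \in \cnum^1(\Y)$, and density of $\cnum^1(\Y)$ in $\nums(\Y)$ extends $f^*$ to the required bounded operator.

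Finally, for the identity $f^*(\alpha \cdot \beta) = f^*\alpha \cdot f^*\beta$ in $\numbpfs{2}(\X)$, both sides define continuous bilinear maps $\nums(\Y) \times \nums(\Y) \to \numbpfs{2}(\X)$: the right-hand side as $f^* \times f^*$ composed with the pairing of Theorem~\ref{thm_banach}, and the left-hand side as the same pairing on $\nums(\Y)$ composed with the bounded operator $f^* : \numbpfs{2}(\Y) \to \numbpfs{2}(\X)$ from Proposition~\ref{prop_pullback_bpfs}. They agree on the dense subset $\cnum^1(\Y) \times \cnum^1(\Y)$ by Corollary~\ref{cor:thm-extension}, so the identity extends by density to all of $\nums(\Y) \times \nums(\Y)$.
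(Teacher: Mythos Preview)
Your proof is correct and follows the same strategy as the paper: the lower bound by evaluating on $\om_Y$, and the upper bound via the chain
\[
q_{\om_X,\gamma}(f^*\alpha)\ \overset{\eqref{eq:7654}}{\le}\ 4\,\frac{(\om_X\cdot f^*\om_Y\cdot\gamma)^2}{(\om_X^2\cdot\gamma)^2}\,q_{f^*\om_Y,\gamma}(f^*\alpha)\ \overset{\text{Siu}}{\le}\ 4C_\delta^2\lambda^2\,q_{\om_Y,f_*\gamma}(\alpha),
\]
followed by density. The paper's proof is essentially identical, except that it does \emph{not} address the degenerate case $f_*\gamma=0$ (equivalently $((f^*\om_Y)^2\cdot\gamma)=0$), where $q_{f^*\om_Y,\gamma}$ is undefined and the surface-level Proposition~\ref{prop_estimates_minkowski} underlying~\eqref{eq:7654} does not directly apply. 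Your perturbation $\gamma_\epsilon=\gamma+\epsilon\,\eta_0$ is a legitimate fix for this gap; just note two small points. First, your specific choice $\eta_0=\om_X^{\delta-d}\cdot f^*\om_Y^{d-2}$ has $f_*\eta_0\neq 0$ provided $(\om_X^{\delta-d}\cdot(f^*\om_Y)^d)>0$, which is clear if $\om_X$ is taken ample (harmless by Theorem~\ref{thm_norm_l2}); alternatively $\eta_0=\om_X^{\delta-2}$ works since $(\om_Y^2\cdot f_*\om_X^{\delta-2})=((f^*\om_Y)^2\cdot\om_X^{\delta-2})>0$. Second, since $\gamma_\epsilon$ need not be strongly BPF, you are implicitly using that~\eqref{eq:7654} extends by weak continuity to all BPF $\gamma$ with $((f^*\om_Y)^2\cdot\gamma)>0$, which is immediate since both sides are continuous in $\gamma$ on that locus. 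Your density argument for the multiplicativity $f^*(\alpha\cdot\beta)=(f^*\alpha\cdot f^*\beta)$ is also correct, and in fact the paper's proof omits this part entirely.
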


\begin{rem}
Although we shall not use it, one can check that $f^*$ also induces a continuous operator on  the Fr\'echet space $\FS(\X)$.
\end{rem}

\begin{proof}
Normalize $\om_X$ so that $(\om_X^\delta)=1$.
Recall from \eqref{eq:pos-nums}  that:
\begin{equation*}
\normsigma{f^*\om_Y}{\om_X}^2 = \sup_{\substack{\gamma \in \cBPF^{\delta-2}(\X)\\
(\gamma \cdot \om_X^2)=1}}  (f^* \om_Y \cdot \om_X \cdot \gamma)^2 - \left ( \left (f^*\om_Y - (f^* \om_Y \cdot \om_X \cdot \gamma) \om_X \right )^2 \cdot \gamma \right ).
\end{equation*}
Since the second term in the supremum is always non-positive, we deduce by taking $\gamma = \om_X^{\delta-2}$ that:
\begin{equation*}
\deg_1(f) \le \normsigma{f^*\om_Y}{\om_X}.
\end{equation*}

For the upper bound, one is
reduced to estimate $\normsigma{f^*\alpha}{\om_X}$ for any given $\alpha\in\cnum^1(\Y)$.
For any strongly basepoint free Cartier class $\gamma\in \cBPF^{\delta-2}(\X)$, we have:
\begin{align*}
\frac{q_{\om_X,\gamma}(f^*\alpha)}{(\om_X^2\cdot\gamma)}
&\le
4\frac{(\om_X \cdot f^* \om_Y\cdot \gamma)^2 q_{f^*\om_Y, \gamma} (f^* \alpha)}{(\om_X^2\cdot \gamma)^2}
\\
&\le
(4C_\delta^2)\, \deg_1(f)^2
{q_{f^*\om_Y, \gamma} (f^* \alpha)}
\\
&\le
(4C_\delta^2)\, \deg_1(f)^2
{q_{\om_Y, f_*\gamma} (\alpha)},
\end{align*}

The first inequality follows from~\eqref{eq:7654}; the second 
from  Siu's  inequalities~\eqref{eq:siu_bpf} $\gamma \le C_\delta (\gamma\cdot \om_X^2) \om_X^{\delta-2}$;  and the last one from~\eqref{eq:f*BPF} and the projection formula.

We conclude that 
\[
\normsigma{f^*\alpha}{\om_X}^2 
\le 
(4C_\delta^2)\, \deg_1(f)^2\, \normsigma{\alpha}{\om_Y}^2. 
\]
by taking supremum over $\gamma$.
\end{proof}

The situation for the push-forward is more involved.
Recall that an operator between Banach spaces is said to be closed if its graph is closed.  
 We refer to \cite[\S 2.6]{brezis} for basic notions on unbounded operators.

\begin{prop} \label{prop_continuity_pushforward} 
When $\dim X = \dim Y = d$, the  push-forward $f_*\colon  \nums(\X) \to \nums(\Y)$
defines an unbounded closed and densely defined operator
whose domain contains the subspace $\Vect(\nef^1(\X)) \supset\cnum^1(\X)  $, 
and satisfies:
\begin{equation}
\label{eq:bdnormspush}
\normsigma{f_*\alpha}{\om_Y}
\le C \deg_{d-1}(f) \normsigma{\alpha}{\om_X}
\end{equation}
for all $\alpha\in\nef^1(\X)$,
where  $C$ is a positive constant depending only on $d$. 
\end{prop}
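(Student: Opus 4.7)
The proof has four ingredients: the inclusion $\Vect(\nef^1(\X)) \subset \nums(\X)$, the norm estimate \eqref{eq:bdnormspush}, closedness of $f_*$, and density of its domain. The inclusion is immediate: any Cartier nef $b$-class has finite $\normbpf{\cdot}$-norm, so $\Vect(\nef^1(\X)) \subset \numbpf{1}(\X)$, and Theorem~\ref{thm_banach} provides the continuous injection $\numbpf{1}(\X) \hookrightarrow \nums(\X)$. Density follows since $\cnum^1(\X)$ is contained in this subspace and is dense in $\nums(\X)$ by construction. Closedness is a general argument: the weak push-forward $f_*\colon \wnum^1(\X) \to \wnum^1(\Y)$ is continuous, and $\nums(\X),\nums(\Y)$ inject continuously into the corresponding weak spaces, so if $\alpha_n \to \alpha$ in $\nums(\X)$ and $f_*\alpha_n \to \beta$ in $\nums(\Y)$, uniqueness of weak limits in $\wnum^1(\Y)$ gives $f_*\alpha = \beta$.

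For the estimate \eqref{eq:bdnormspush}, fix $\alpha \in \cnum^1(\X) \cap \nef^1(\X)$ and a strongly basepoint free $\gamma \in \cBPF^{d-2}(\Y)$ normalised by $(\om_Y^2 \cdot \gamma) = 1$. I would split
\[
q_{\om_Y,\gamma}(f_*\alpha) = 2(f_*\alpha \cdot \om_Y \cdot \gamma)^2 - \bigl((f_*\alpha)^2 \cdot \gamma\bigr)
\]
and handle the two terms separately. The first is controlled directly: by the projection formula $(f_*\alpha \cdot \om_Y \cdot \gamma) = (\alpha \cdot f^*(\om_Y \cdot \gamma))$, and Siu's inequality \eqref{eq:siu_bpf} applied first to $f^*(\om_Y \cdot \gamma) \in \BPF^{d-1}(\X)$ and then to $f_*\om_X \in \BPF^1(\Y)$, together with the nef comparison~\eqref{eq:666} which yields $(\alpha \cdot \om_X^{d-1})/(\om_X^d) \le \normsigma{\alpha}{\om_X}$, produces the bound $(f_*\alpha \cdot \om_Y \cdot \gamma) \le C\, \deg_{d-1}(f)\, \normsigma{\alpha}{\om_X}/(\om_Y^d)$.

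The delicate negative term $((f_*\alpha)^2 \cdot \gamma)$ is handled via the surface restriction machinery of section~\ref{sec:restrict}. Writing $\gamma = p_*(D_1 \cdots D_{e+d-2})$ with $p\colon Y_0 \to Y$ smooth of relative dimension $e$, I would choose, over a field extension of infinite transcendence degree, a generic smooth surface $S \subset Y_0$ representing the complete intersection. Resolving $f$ to a morphism $\tilde f \colon \tilde X \to Y$ and forming the smooth fibered product $\tilde Y_0 := \tilde X \times_Y Y_0$ with its projections $p'\colon \tilde Y_0 \to \tilde X$ and $\tilde f_0 \colon \tilde Y_0 \to Y_0$, the preimage $T := \tilde f_0^{-1}(S) \subset \tilde Y_0$ is a smooth surface for generic $S$ and $g := \tilde f_0|_T \colon T \to S$ is a generically finite dominant morphism. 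The key compatibility
\[
r_S(f_*\alpha) = g_* r_T(\alpha) \quad \text{in } L^2(\Ss),
\]
obtained by chasing projection formulae through the enlarged version of diagram~\eqref{diagram_restriction}, together with the identity $\bar q_{\om_{Y,S}}(r_S(f_*\alpha)) = q_{\om_Y,\gamma}(f_*\alpha)$ and the two-dimensional Boucksom--Favre--Jonsson bound for push-forward on the Picard--Manin space of a surface, reduces the estimate to intersection-theoretic inequalities on $\tilde Y_0$. These are settled by a further application of Siu's inequalities on $\tilde X$ and $Y_0$, which absorbs all the auxiliary intersection numbers into a single $\deg_{d-1}(f)$.

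The main obstacle I anticipate is the rigorous verification of the identity $r_S(f_*\alpha) = g_* r_T(\alpha)$: one must simultaneously control the transversality of $S$ with the critical loci of all proper birational morphisms entering the definition of $r_S$ and $r_T$, uniformly over a cofinal family of models, and ensure that the identity is preserved under enlargement of the chosen models. The infinite transcendence-degree genericity argument of section~\ref{sec:restrict} should provide enough flexibility, but careful bookkeeping across all the relevant base changes is where the real technical work lies.
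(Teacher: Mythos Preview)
Your density and closedness arguments are correct and match the paper's. The divergence is in the estimate \eqref{eq:bdnormspush}, where you have missed the one observation that makes the proof immediate: since $f_*$ preserves BPF $b$-classes (Theorem~\ref{thm:extension} with $k=1$, recalling $\BPF^1=\nef^1$), the class $f_*\alpha$ is again nef. The comparison \eqref{eq:666} therefore applies \emph{on both sides}: it bounds $\normsigma{f_*\alpha}{\om_Y}$ above by a constant times $(f_*\alpha\cdot\om_Y^{d-1})$, and bounds $\normsigma{\alpha}{\om_X}$ below by $(\alpha\cdot\om_X^{d-1})/(\om_X^d)$. The projection formula and a single application of Siu's inequality to $f^*\om_Y^{d-1}$ then give
\[
(f_*\alpha\cdot\om_Y^{d-1}) = (\alpha\cdot f^*\om_Y^{d-1}) \le \frac{d}{(\om_X^d)}\deg_{d-1}(f)\,(\alpha\cdot\om_X^{d-1}),
\]
and the three-line proof is complete. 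There is no need to split $q_{\om_Y,\gamma}(f_*\alpha)$ or to confront the term $((f_*\alpha)^2\cdot\gamma)$ at all.

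Your proposed route through surface restriction is not only far heavier, it has genuine gaps as it stands. The compatibility $r_S(f_*\alpha)=g_* r_T(\alpha)$ is nowhere established in the paper and is not a formal consequence of diagram~\eqref{diagram_restriction}: the restriction operator $r_S$ is built by tracking strict transforms across \emph{all} smooth models of $X$, and making push-forward commute with this requires controlling how the flattening of $f$ (which produces singular models, cf.\ Theorem~\ref{thm_raynaud_light}) interacts with those strict transforms. Moreover, your surface $T=\tilde f_0^{-1}(S)$ is the preimage under a morphism that is only generically finite, not smooth or even flat, so smoothness of $T$ does not follow from genericity of $S$ alone. These issues may be surmountable, but they are precisely the sort of difficulty that Remark after the proposition flags as the reason $f_*$ is \emph{not} expected to be bounded on all of $\nums(\X)$; the restriction to nef classes is what makes the estimate tractable, and you should exploit it.
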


\begin{proof}
Recall that $f_*$ is a linear map from $\wnum^1(\X)$ to $\wnum^1(\Y)$.
The domain of  $f_*\colon  \nums(\X) \to \nums(\Y)$ is by definition the set of classes $\alpha\in\nums(\X)$
such that $f_*\alpha$ lies in $\nums(\X)$. It contains
$\cnum^1(\X)$ hence $f_*$ is densely defined. 

Suppose that $\alpha_n$ is a sequence of classes in the domain of $f_*$, that 
$\alpha_n\to \alpha$, and $f_*\alpha_n\to \beta$ in $\nums(\X)$. Then these convergences
hold in the weak topology, and  $f_*$ being weakly continuous we get $\beta = f_*\alpha$
hence $f_*$ is closed.

Suppose that $\alpha$ is nef. Since $f_*\alpha$ is again nef,~\eqref{eq:666} yields:
\begin{align*}
\normsigma{f_*\alpha}{\om_Y}
& \le \sqrt{2}C_d (f_*\alpha\cdot\om_Y^{d-1})
\\
&
\le \frac{\sqrt{2}dC_d}{(\om_X^d)}\deg_{d-1}(f)  (\alpha\cdot \om_X^{d-1}) 
\\
& \le \sqrt{2}dC_d\, \deg_{d-1}(f) \normsigma{\alpha}{\om_X},
\end{align*}
which concludes the proof.
\end{proof}

\begin{rem}
There is a difficulty in estimating directly $((f_*\alpha)^2\cdot\gamma)$ when $\alpha$ is not a nef class.
If $\alpha$ is Cartier, then it decomposes as a difference of two nef classes 
but the $\normsigma{\cdot}{\om}$-norm of these nef classes are not
comparable to the one of $\alpha$.
It follows that the push-forward operator is unlikely to be bounded on $\nums(\X)$. 
Note that when $f$ is birational, we have $f_* = (f^{-1})^*$ so that $f_*$ is bounded.
\end{rem}

\subsection{Dynamical degrees are spectral radii}
In this section, we let $f \colon  X \dashrightarrow X$ be any dominant (hence generically finite) rational self-map.
Let $\om$ be any big and nef $b$-class on $X$. Recall the definition of dynamical degrees from \S\ref{sec:dynamical_degrees}.

For any bounded operator $L\colon E\to E$ on a Banach space, we let $\rho(L)=\lim_{n\to\infty} \| L^n\|^{1/n}$ be its spectral radius.

In the next result, we endow $\Vect(\BPF^k(\X))$ with the norm $\normbpf{\cdot}$.
\begin{thm}\label{thm:dey-degree-interpret}
For any rational dominant self-map $f \colon  X \dashrightarrow X$, we have
\begin{align*}
\lambda_k(f)
&=\rho \left(f^*|\numbpf{k}\right) = \rho \left(f^*|\Vect(\BPF^k)\right)
=\rho \left(f^*| \numbpfs{k}\right) 
\\
&=\rho \left(f_*|\numbpf{d-k}\right)=\rho (f_*| \Vect(\BPF^{d-k}))
=\rho (f_*| \numbpfs{d-k}), 
\end{align*}
and
\[
\lambda_1(f)= \rho(f^*| \nums ).\]
\end{thm}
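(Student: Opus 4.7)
The plan is to sandwich the operator norm of $(f^*)^n$ (resp.\ $(f_*)^n$) between two multiples of $\deg_{k,\om}(f^n)$ on each of the Banach spaces listed in the theorem, and then take $n$-th roots. A preliminary ingredient is the compositional identity $(f^*)^n = (f^n)^*$ (and dually $(f_*)^n = (f^n)_*$) as operators on $b$-classes. This is the key advantage of working over the Riemann-Zariski space: on any sufficiently high smooth model on which the iterate $f^n$ is resolved as a regular morphism, both sides agree with the pullback (resp.\ pushforward) of that regular morphism. The identity then extends by continuity and density of $\cnum^k(\X)$ to $\numbpf{k}(\X)$, $\numbpfs{k}(\X)$, $\Vect(\BPF^k(\X))$ and $\nums(\X)$.

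With the compositional identity in hand, the upper bound in each case comes from applying Propositions~\ref{prop:pullBPF}, \ref{prop:pushBPF}, \ref{prop_pullback_bpfs} and \ref{prop_continuity_pullback} to the rational map $f^n$ rather than $f$. For example, Proposition~\ref{prop:pullBPF} applied to $f^n$ yields
\[ \normbpf{(f^*)^n} \;=\; \normbpf{(f^n)^*} \;\le\; C\, \deg_{k,\om}(f^n)/(\om^d), \]
with $C$ depending only on $d$, so that $\rho(f^*|\numbpf{k}) \le \lim_n \bigl(C\deg_{k,\om}(f^n)/(\om^d)\bigr)^{1/n} = \lambda_k(f)$. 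The other propositions cover the analogous bounds for $f_*$ on $\numbpf{d-k}(\X)$ and for $f^*, f_*$ on $\numbpfs{\bullet}(\X)$; the assertion for $\nums(\X)$ uses Proposition~\ref{prop_continuity_pullback} with $k=1$.

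For the matching lower bounds I evaluate the operator on the test class $\om^k$ (resp.\ $\om^{d-k}$ for the pushforward). Since $\om$ is big and nef, Theorem~\ref{thm_bpf}(2) gives $\om^k \in \cBPF^k(\X)$, and Theorem~\ref{thm:extension} together with Corollary~\ref{cor:thm-extension} ensures that $(f^n)^*\om^k$ remains in $\BPF^k(\X)$. Lemma~\ref{lem:pleasethereferee} then computes
\[ \normbpf{(f^n)^*\om^k} \;=\; \bigl((f^n)^*\om^k \cdot \om^{d-k}\bigr) \;=\; \deg_{k,\om}(f^n), \]
so that $\normbpf{(f^*)^n} \ge \deg_{k,\om}(f^n)/\normbpf{\om^k}$ and $\rho(f^*|\numbpf{k}) \ge \lambda_k(f)$. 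The dual-norm version follows from the trivial estimate $\normbpfs{\alpha} \ge |(\alpha\cdot\om^{d-k})|/(\om^d)$ obtained by choosing $\gamma = \om^{d-k}$ in the definition of $\normbpfs{\cdot}$; the inclusion $\Vect(\BPF^k) \supset \numbpf{k}$ handles the third variant. The pushforward cases are symmetric, using $f_*(\BPF^{d-k}) \subset \BPF^{d-k}$ from Theorem~\ref{thm:extension} and the projection formula $\bigl((f^n)_*\om^{d-k}\cdot\om^k\bigr) = \bigl(\om^{d-k}\cdot(f^n)^*\om^k\bigr) = \deg_{k,\om}(f^n)$. Finally, for $\nums(\X)$, the bound $\normsigma{\alpha}{\om} \ge (\alpha\cdot\om^{d-1})/(\om^d)$ for nef classes from~\eqref{eq:666} applied to the nef class $(f^n)^*\om$ gives $\normsigma{(f^*)^n}{\om} \ge \deg_{1,\om}(f^n)/\bigl((\om^d)\,\normsigma{\om}{\om}\bigr)$, closing the argument.

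The main obstacle is the compositional identity. Although it is classical for Cartier $b$-classes, some care is needed when transferring it to the completions; in particular, since $f_*$ on $\nums(\X)$ is only a densely defined closed operator (Proposition~\ref{prop_continuity_pushforward}), the analogous spectral statement for $f_*$ on $\nums(\X)$ is not asserted in the theorem. Once composition is secured, the six identities all reduce to the same interplay between Siu-type upper bounds and the lower bound coming from Lemma~\ref{lem:pleasethereferee} applied to a BPF test vector; the choice of test class ($\om^k$ vs.\ $\om^{d-k}$) and the projection formula are the only pieces that shift between pullback and pushforward.
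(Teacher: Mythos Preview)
Your proposal is correct and follows essentially the same route as the paper: apply the two-sided norm estimates of Propositions~\ref{prop:pullBPF}, \ref{prop:pushBPF}, \ref{prop_pullback_bpfs} and \ref{prop_continuity_pullback} to the iterate $f^n$, use the compositional identity $(f^n)^*=(f^*)^n$, and take $n$-th roots. Note that those propositions already contain the lower bounds (they are proved there exactly by evaluating on the test class $\om^k$, as you do), so your separate lower-bound paragraph is correct but redundant. One small imprecision: $\cnum^k(\X)$ is \emph{not} expected to be dense in $\Vect(\BPF^k(\X))$ for $\normbpf{\cdot}$ (see Remark~\ref{rem:not-dense}), so you cannot pass the compositional identity to that space by norm-density; instead, both $(f^n)^*$ and $(f^*)^n$ are weakly continuous on $\wnum^k(\X)$ and agree on the weakly dense set $\cnum^k(\X)$, hence agree everywhere, in particular on $\Vect(\BPF^k(\X))$.
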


\begin{proof}
The theorem is a consequence of Propositions~\ref{prop:pullBPF},~\ref{prop:pushBPF},~\ref{prop_pullback_bpfs} and~\ref{prop_continuity_pullback}, and the facts that $f_*$ and $f^*$ preserves
BPF classes, see Theorem~\ref{thm:extension} and Corollary~\ref{cor:thm-extension}.
\end{proof}

\begin{thm}\label{thm:construct-eigenvec-pushpull}
For any integer $0\le k\le d$, there exist
\begin{enumerate}
\item[(i)]a non-zero class $\theta^*_k \in \BPF^k (\X)$ such that $f^*\theta^*_k=\lambda_k(f)\, \theta^*_k$; and
\item[(ii)] a non-zero class $\theta_{*,k} \in \BPF^k (\X)$ such that $f_*\theta_{*,k}=\lambda_{d-k}(f)\, \theta_{*,k}$.
\end{enumerate}
\end{thm}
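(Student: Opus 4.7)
My plan is to apply the resolvent method, in the spirit of Krein--Rutman, to the operator $f^*\colon\numbpf{k}(\X)\to\numbpf{k}(\X)$. By Theorem~\ref{thm:dey-degree-interpret} its spectral radius equals $\lambda_k(f)$, and by Corollary~\ref{cor:thm-extension} it preserves the closed convex cone $K := \BPF^k(\X)\cap\numbpf{k}(\X)$. A preliminary observation I will use is that $\om^k$ lies in the interior of $K$ for the norm $\normbpf{\cdot}$: if $\normbpf{\alpha-\om^k}<\varepsilon$, one decomposes $\alpha-\om^k=\beta_+-\beta_-$ with $\beta_\pm\in\BPF^k(\X)$ and $\normbpf{\beta_+}+\normbpf{\beta_-}\le 2\varepsilon$, and Siu's inequalities~\eqref{eq:siu_bpf} yield $\beta_-\le 2C_d\varepsilon\,\om^k/(\om^d)$. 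For $\varepsilon$ small, $\om^k-\beta_-$ remains BPF and thus $\alpha=(\om^k-\beta_-)+\beta_+\in K$.

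For $\mu>\lambda_k(f)$, set $R(\mu)=(\mu-f^*)^{-1}=\sum_{n\ge 0}\mu^{-n-1}(f^*)^n$, which maps $K$ into itself by positivity. A classical theorem of Schaefer on bounded operators preserving a total cone in a Banach space (here $K-K\supseteq\cnum^k(\X)$ is dense) guarantees $\lambda_k(f)\in\sigma(f^*)$, so $R(\mu)$ cannot extend as a bounded operator at $\mu=\lambda_k(f)$. The uniform boundedness principle then yields some $\alpha_0\in\numbpf{k}(\X)$ with $\sup_{\mu>\lambda_k(f)}\normbpf{R(\mu)\alpha_0}=+\infty$; combining this with the interior property of $\om^k$, one of the classes $\om^k$ or $\om^k+t\alpha_0$ (for suitably small $t>0$) lies in $K$ and satisfies $\sup_{\mu>\lambda_k(f)}\normbpf{R(\mu)u}=+\infty$. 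Call such a class $u$.

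Now I would choose $\mu_n\to\lambda_k(f)^+$ with $c_n:=\normbpf{R(\mu_n)u}\to\infty$, and set $v_n:=R(\mu_n)u/c_n\in K$. By Lemma~\ref{lem:pleasethereferee} one has $(v_n\cdot\om^{d-k})=\normbpf{v_n}=1$, so Proposition~\ref{prop_bpf_weak_compactness} (whose proof extends from $\cBPF^k$ to $\BPF^k$ via density and Siu) provides a subsequence converging weakly in $\wnum^k(\X)$ to some $v\in\BPF^k(\X)$ with $(v\cdot\om^{d-k})=1$, in particular $v\neq 0$. From the identity $(\mu_n-f^*)v_n=u/c_n$, the right-hand side tends to $0$ in the $\numbpf{k}$-norm, while the left-hand side converges weakly in $\wnum^k(\X)$ to $(\lambda_k(f)-f^*)v$ by the weak continuity of $f^*$ recorded in Corollary~\ref{cor:thm-extension}. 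Hence $f^*v=\lambda_k(f)v$, and setting $\theta^*_k:=v$ proves (i).

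For (ii), the same argument runs verbatim with $f_*$ in place of $f^*$: by Theorem~\ref{thm:extension}, $f_*$ preserves $\BPF^k(\X)$, and by Theorem~\ref{thm:dey-degree-interpret} its spectral radius on $\numbpf{k}(\X)$ equals $\lambda_{d-k}(f)$. The main technical obstacle throughout is the passage from the abstract existence, via uniform boundedness, of \emph{some} class with unbounded resolvent orbit, to one lying inside the positive cone $K$: this is precisely where the interior property of $\om^k$ in $K$ intervenes, and it is also what ensures that the weak limit $v$ is nonzero and still BPF rather than vanishing or signed.
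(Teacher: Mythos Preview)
Your overall strategy—apply the resolvent to a BPF class, normalize, and extract a weak limit via Banach--Alaoglu—is the paper's proof. But the detour through Schaefer's theorem, uniform boundedness, and the claimed interior property of $\omega^k$ is both unnecessary and contains an unjustified step.

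The sentence ``for $\varepsilon$ small, $\omega^k-\beta_-$ remains BPF'' does not follow from what precedes it. Siu's inequality $\beta_-\le c\,\omega^k$ is an inequality in the \emph{pseudo-effective} order, and $\omega^k-\beta_-$ being pseudo-effective in no way forces it to be BPF. In fact $K$ should have empty interior in $\numbpf{k}(\X)$: already for $d=2$, $k=1$, if $\pi\colon X'\to\Pp^2$ is the blow-up of a point with exceptional curve $E$, then $\beta_-:=\varepsilon(\pi^*\omega-E)$ is a Cartier nef $b$-class with $\normbpf{\beta_-}=\varepsilon$, yet $\omega-\beta_-$ is determined in $X'$ by $(1-\varepsilon)\pi^*\omega+\varepsilon E$, which intersects $E$ negatively and hence is not nef.

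The fix is to bypass the detour and take $u=\omega^k$ from the start, which is exactly what the paper does. One sees directly that $\normbpf{R(\mu)\omega^k}\to\infty$: since $C\deg_{k,\omega}(f^n)$ is submultiplicative, Fekete's lemma gives $\deg_{k,\omega}(f^n)\ge C^{-1}\lambda_k(f)^n$, hence
\[
\normbpf{R(\mu)\omega^k}=(R(\mu)\omega^k\cdot\omega^{d-k})=\sum_{n\ge0}\mu^{-n-1}\deg_{k,\omega}(f^n)\longrightarrow+\infty
\]
as $\mu\to\lambda_k(f)^+$. With $u=\omega^k$ the remainder of your argument goes through unchanged and coincides with the paper's proof, written in the variable $t=1/\mu$.
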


\begin{rem}
The eigenclasses $\theta^*_k$ (and $\theta_{*,k}$) are expected not to belong to $\numbpf{k}(\X)$ in general. 
\end{rem}

\begin{proof}
Our construction of $\theta^*_k$ is classical in the theory of operators preserving convex cones, see~\cite{zbMATH03142405}.
We consider the 
following formal power series  $\Theta^*_{k}(t)=\sum_{n\ge0} t^n f^{n*}\om^k$, and observe that  
 \[T^*_k(t) := (\Theta^*_{k}(t)\cdot\om^{d-k}) = \sum_{n\ge0} t^n \deg_{\om,k}(f^n)~.\]
Observe that since $a\deg_{k,\om}(f^n)$ forms a sub-multiplicative sequence for some $a>0$, then
for any $\epsilon>0$, one can find a positive constant $C>0$ such that 
$C^{-1}\lambda_k(f)^n\le \deg_{\om,k}(f^n)\le C( \lambda_k(f)+\epsilon)^n$. It follows that the radius of convergence of
$\Theta^*_{k}(t)$ is equal to $\lambda_k(f)^{-1}$. For any $|t|<\lambda_k(f)^{-1}$, the class $\Theta^*_{k}(t)$
belongs to $\BPF^k(\X)$. 

For any $0<t<\lambda_k(f)^{-1}$, write $\theta^*_{k}(t)=\Theta^*_{k}(t)/T^*_k(t)$. A direct computation shows:
\[
t\times f^* \theta^*_k(t) = \theta^*_k(t) - \frac{\om^k}{T^*_k(t)}~.
\]
Now observe that $C^{-1}\lambda_k(f)^n\le \deg_{\om,k}(f^n)$ implies
$T^*_k(t)\to\infty$ as $t$ increases to $\lambda_k(f)^{-1}$. Since $(\theta^*_k(t)\cdot\om^{d-k})=1$, the classes $\theta_k^*(t)$ induce elements in the dual $\numbpf{d-k}(\X)^*$ which are bounded (in the unit ball), by Banach-Alaoglu's theorem, one can extract a sequence converging weakly to some BPF
class $\theta^*_k$ which satisfies $\lambda_k(f)^{-1}\,  f^* \theta^*_k = \theta^*_k$ by continuity.
\end{proof}


\section{The spectral gap on $\nums(\X)$ when $\lambda_1(f)^2>\lambda_2(f)$}

We analyze the spectral properties of the action by pull-back of $f^*\colon\nums(\X)\to\nums(\X)$
for a dominant rational map $f\colon X \dashrightarrow X$ satisfying $\lambda_1(f)^2>\lambda_2(f)$.


\subsection{Spectral theory of Banach operators}\label{sec:spectral-general}

Let $(E,\|\cdot\|)$ be any Banach space and $f\colon E \to E$ be any bounded linear operator. 
The spectrum $\Spec(f)$ of $f$  is defined as the set of complex numbers $\lambda $ for which the operator $f - \lambda \Id$ is not invertible.
It is a non-empty compact subset of $\C$.
Recall that the spectral radius $\rho(f) = \lim_{n\to\infty} \| f^n\|^{1/n}$ is equal to the supremum of $|\lambda|$ where $\lambda$ ranges over all elements in $\Spec(f)$, see \cite[Chapter 8]{yosida}.

A \emph{quasi-eigenvector} associated to a given $\lambda\in\Spec(f)$ is by definition a sequence of vectors $u_n$  such that  $\| u_n\| =1$ and $\|f u_n - \lambda u_n\| \to 0$.
Every point in the boundary of $\Spec (f)$ admits a quasi-eigenvector. Indeed, choose $\lambda_n \notin\Spec(f)$ such that $\lambda_n\to\lambda$, and write 
$T_n=  (f - \lambda_n \Id)^{-1}$.
We have
$(f - \lambda \Id) \circ T_n = \Id + (\lambda_n - \lambda) T_n$
hence
$\| (f - \lambda_n \Id)^{-1} \|\to \infty$, and  
there exists a sequence $w_n\in E$ such that $\|w_n\| =1$ and
\[
v_n  = \frac{(f - \lambda_n \Id)^{-1} w_n}{\| (f - \lambda_n \Id)^{-1} \|}
\]
where $\| v_n\|$ is bounded away from $0$. Then $u_n=\frac{v_n}{\|v_n\|}$ is a quasi-eigenvector.

\smallskip

Recall that an injective bounded operator $f\colon E \to F$ has closed range 
if and only if  $\| f (v)\| \ge c \| v\|$ for some $c>0$. One implication is trivial and the converse is the closed graph theorem due to
Banach, see~\cite[\S 2.3]{brezis}.

\smallskip

We shall also use the spectral decomposition theorem, see~\cite[Theorem 5.6.1]{hille_phillips}.

\begin{thm}\label{thm_spectral_decomp}
 Suppose that the spectrum of $f$ is the union of two disjoint compact sets $K_1$ and $K_2$.
Then there exist  two closed subspaces $E_1$ and $E_2$ such that 
$E= E_1 \oplus E_2$, and $\Spec(f|_{E_1})= K_1 $ and $\Spec(f|_{E_2})= K_2 $.
\end{thm}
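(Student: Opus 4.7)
The plan is to construct spectral projections via the Riesz--Dunford holomorphic functional calculus and verify that they decompose $E$ in the required way. Since $K_1$ and $K_2$ are disjoint compact subsets of $\C$, I would first choose bounded open sets $U_1, U_2 \subset \C$ with $K_i \subset U_i$, $\overline{U_1} \cap \overline{U_2} = \emptyset$, and whose boundaries $\Gamma_i = \partial U_i$ are finite unions of piecewise smooth Jordan curves oriented positively. Writing $R(z) := (z\Id - f)^{-1}$ for the resolvent, which is holomorphic on $\C \setminus \Spec(f)$, I would then define
\[
P_i := \frac{1}{2\pi i} \oint_{\Gamma_i} R(z) \, dz \in \mathcal{L}(E), \qquad i = 1, 2.
\]
By Cauchy's theorem these operators do not depend on the specific choice of contour surrounding $K_i$ within $\C \setminus \Spec(f)$.

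Next I would check the following algebraic identities, which are the heart of the argument. Using the resolvent identity $R(z) - R(w) = (w-z) R(z) R(w)$ together with Fubini on a pair of nested contours, one obtains $P_i^2 = P_i$ and $P_1 P_2 = P_2 P_1 = 0$, so the $P_i$ are commuting projections with disjoint ranges. Moreover, for $|z|$ sufficiently large the expansion $R(z) = \sum_{n \geq 0} z^{-n-1} f^n$ converges, and integrating over a large circle enclosing $\Spec(f)$ gives $\Id$; deforming this contour into $\Gamma_1 \cup \Gamma_2$ shows $P_1 + P_2 = \Id$. Commutativity $f P_i = P_i f$ is immediate from $f R(z) = R(z) f$. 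Setting $E_i := P_i(E) = \ker(P_{3-i})$, both subspaces are closed, $E = E_1 \oplus E_2$, and each $E_i$ is $f$-invariant.

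It remains to identify $\Spec(f|_{E_i})$ with $K_i$. For the inclusion $\Spec(f|_{E_i}) \subset K_i$, pick $\lambda \notin K_i$. If $\lambda \in U_j$ with $j \neq i$, I would define
\[
S_i := \frac{1}{2\pi i} \oint_{\Gamma_i} \frac{R(z)}{\lambda - z} \, dz,
\]
and verify by the resolvent identity that $(\lambda\Id - f) S_i = S_i (\lambda\Id - f) = P_i$, so $S_i|_{E_i}$ inverts $(\lambda \Id - f)|_{E_i}$; if $\lambda \notin \Spec(f)$, then $R(\lambda)$ itself restricts to the required inverse on $E_i$. For the reverse inclusion $K_i \subset \Spec(f|_{E_i})$, assume $\lambda \in K_i$ but $\lambda \notin \Spec(f|_{E_i})$. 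Applying the forward inclusion to the complementary subspace $E_{3-i}$ gives $\Spec(f|_{E_{3-i}}) \subset K_{3-i}$, hence $\lambda \notin \Spec(f|_{E_{3-i}})$; combining the two inverses through the direct sum decomposition produces an inverse of $\lambda \Id - f$ on all of $E$, contradicting $\lambda \in \Spec(f)$.

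The main obstacle is the careful bookkeeping with contour integrals: in particular proving $P_i^2 = P_i$ and $P_1 P_2 = 0$ requires taking two slightly different contours around $K_i$ (one inside the other) and applying the resolvent identity together with Cauchy's formula in the two variables. Once this contour manipulation is set up, the rest of the argument, including the identification of the spectra on $E_i$, follows by routine applications of the functional calculus.
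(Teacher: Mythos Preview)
Your proof is correct and follows essentially the same approach as the paper: both construct the spectral projections via the Riesz--Dunford holomorphic functional calculus (the paper phrases this abstractly as $\pi_1=\phi(f)$ for an idempotent $\phi$, while you write out the contour integrals explicitly). The only minor variation is in identifying $\Spec(f|_{E_i})$: the paper applies the spectral mapping theorem to an auxiliary function $\psi$ equal to $z$ near $K_1$ and $0$ near $K_2$ (and then must separately rule out $0$ when $0\notin K_1$), whereas you construct the resolvent on $E_i$ directly via a second contour integral, which is arguably cleaner.
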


\begin{proof}
This is a direct consequence of the holomorphic functional calculus.  
Let $\phi$ be any holomorphic function defined in a (disconnected) neighborhood of $\Spec(f)$, which is identically $1$ in a neighborhood of $K_1$ and 
$0$ near $K_2$. Note that we may suppose $\phi^2 = \phi$.
Pick a holomorphic function $\psi$ such that $\psi(z)=z$ near $K_1$ and $\psi \equiv 0$ near $K_2$.

Since $\phi^2 = \phi$, the operator $\pi_1:= \phi(f)$ defines a continuous projection
onto a closed subspace $E_1$ which is $f$-invariant. The restriction $f_1$ of $f$ to $E_1$
is equal to  $f_1 = \phi(f) \psi (f) = \pi_1 \circ \psi(f)$, and by the spectral formula we have
$\Spec(\psi (f)) = K_1 \cup \{0\}$ hence $\Spec(f_1) \subset K_1 \cup \{0\}$.

To conclude we need to argue that  $\Spec(f_1) \subset K_1$ if $0\notin K_1$. 
Suppose $0\notin K_1$, and let $\psi_-$ be a holomorphic function 
which is equal to $1/z$ near $K_1$ and $0$ near $K_2$. Then
$\psi \, \psi_- = \phi$ then $\psi(f) \, \psi_-(f) = \pi_1$ hence $f_1$ is invertible. 
This shows $0\notin\Spec(f_1)$ and completes the proof.
\end{proof}


\subsection{The eigenvector $\theta^*_1$ belongs to $\nums(\X)$}

In the next statement  $\theta^*_1$ denotes any eigenvector obtained as in Theorem~\ref{thm:construct-eigenvec-pushpull}.
We shall see in the next sections that under the assumption  $\lambda_1(f)^2>\lambda_2(f)$ this eigenclass is unique (up to normalization).

\begin{thm}\label{thm:eigenvector-nums}
For any dominant rational self-map $f \colon X\dashrightarrow X$ such that $\lambda_1(f)^2>\lambda_2(f)$
the eigenvector $\theta^*_1$ belongs to $\nums(\X)$ and satisfies $(\theta^*_1\cdot \theta^*_1)=0$.
\end{thm}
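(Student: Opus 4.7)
The plan is to upgrade the weak convergence used to construct $\theta^*_1$ in Theorem~\ref{thm:construct-eigenvec-pushpull} to norm convergence in $\nums(\X)$, by verifying the hypotheses of the compactness Theorem~\ref{thm_compactness} for the family of resolvents $\theta^*_1(t) = \Theta^*_1(t)/T^*_1(t)$ as $t \nearrow 1/\lambda_1(f)$. The isotropy $(\theta^*_1)^2=0$ will drop out of the same estimate.

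The first hypothesis of Theorem~\ref{thm_compactness} is essentially automatic: since $\theta^*_1(t)$ is nef with $(\theta^*_1(t)\cdot \om^{d-1})=1$, Siu's inequality~\eqref{eq:siu_bpf} gives $\normbpfs{\theta^*_1(t)\cdot\om} \le d/(\om^d)$ uniformly in $t$. The heart of the proof is to verify the second hypothesis, i.e.\ to show
\[
\normbpfs{\theta^*_1(t)\cdot\theta^*_1(s)} \to 0 \quad\text{as } t,s\nearrow 1/\lambda_1(f).
\]
For this I would fix $\gamma\in\cBPF^{d-2}(\X)$ with $(\om^2\cdot\gamma)=1$ and, for $n\le m$, combine the fact that $f^{n*}$ is a ring homomorphism on Cartier classes with the projection formula~\eqref{eq:proj-form} to write
\[
(f^{n*}\om\cdot f^{m*}\om\cdot \gamma) = (\om\cdot f^{(m-n)*}\om\cdot f^n_*\gamma).
\]
By Theorem~\ref{thm:extension}, $f^n_*\gamma\in\BPF^{d-2}(\X)$, so Siu's inequality applied to $f^n_*\gamma$ yields $f^n_*\gamma\le C(f^n_*\gamma\cdot \om^2)\om^{d-2}$. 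Coupled with the estimate $(f^n_*\gamma\cdot\om^2) = ((f^{n*}\om)^2\cdot\gamma)\le C\deg_{2,\om}(f^n)$ (again via Siu applied to $\gamma$), this produces the key bound
\[
(f^{n*}\om\cdot f^{m*}\om\cdot\gamma) \le C\,\deg_{2,\om}(f^n)\,\deg_{1,\om}(f^{m-n}).
\]
Summing the double series (and its symmetric counterpart with $m\le n$) and dividing by $T^*_1(t)\,T^*_1(s)$ produces, uniformly in $\gamma$,
\[
|(\theta^*_1(t)\cdot\theta^*_1(s)\cdot\gamma)| \le C\,G_2(ts)\left(\tfrac{1}{T^*_1(t)} + \tfrac{1}{T^*_1(s)}\right),
\]
where $G_2(u) := \sum_n u^n\deg_{2,\om}(f^n)$ has radius of convergence $1/\lambda_2(f)$.

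The main obstacle is precisely this last estimate, and it is exactly where the assumption $\lambda_1(f)^2>\lambda_2(f)$ is used in a decisive way: it guarantees that $ts$ remains inside the disc of convergence of $G_2$, so $G_2(ts)$ stays bounded while $T^*_1(t),T^*_1(s)\to\infty$. Once this condition is in place, Theorem~\ref{thm_compactness} applied with $\beta=0$ furnishes $\alpha\in\nums(\X)$ and a subsequence $t_n\nearrow 1/\lambda_1(f)$ with $\normsigma{\theta^*_1(t_n)-\alpha}{\om}\to 0$. Norm convergence implies weak convergence in $\wnum^1(\X)$, so by the construction in Theorem~\ref{thm:construct-eigenvec-pushpull} we may take $\theta^*_1 := \alpha$, proving $\theta^*_1\in\nums(\X)$. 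Finally, by the continuity of the intersection pairing~\eqref{eq:bound-norms2} of Theorem~\ref{thm_banach}, together with the same estimate, we obtain $(\theta^*_1)^2 = \lim_n(\theta^*_1(t_n))^2 = 0$ in $\numbpfs{2}(\X)$, hence in $\wnum^2(\X)$.
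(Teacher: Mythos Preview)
Your strategy is exactly the paper's: establish the key double-series estimate (your version via the projection formula and push-forward of $\gamma$ is a harmless variant of the paper's Proposition~\ref{prop_square_estimates}), then feed the resolvents into the compactness Theorem~\ref{thm_compactness} with $\beta=0$, and deduce $(\theta^*_1)^2=0$ by continuity of the pairing~\eqref{eq:bound-norms2}.

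There is one genuine technical gap. Theorem~\ref{thm_compactness} is stated and proved only for \emph{Cartier} sequences $\alpha_n\in\cnum^1(\X)$: the Cartier hypothesis is used to pass the estimate~\eqref{eq:76543} from $\gamma\in\cBPF^{d-2}(\X)$ to all of $\BPF^{d-2}(\X)$ by weak density, and again in the final step when invoking Lemma~\ref{lem_FS_to_nums}, whose proof manipulates $(\alpha\cdot\om\cdot\gamma)$ on a flat cover and needs $\alpha\cdot\om$ to be Cartier. Your classes $\theta^*_1(t)=\frac{1}{T^*_1(t)}\sum_{n\ge 0}t^n f^{n*}\om$ are nef and lie in $\nums(\X)$, but they are not Cartier. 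The paper closes this gap by replacing $\theta^*_1(t_n)$ with a Cartier truncation $\widetilde{\theta^*_1}(t_n):=\frac{1}{T^*_1(t_n)}\sum_{m=0}^{r_n}t_n^m f^{m*}\om$ chosen so that $\normsigma{\widetilde{\theta^*_1}(t_n)-\theta^*_1(t_n)}{\om}\le 1/n$; then~\eqref{eq:bound-norms2} transfers your estimate $\normbpfs{\theta^*_1(t_n)\cdot\theta^*_1(t_m)}\to 0$ to the truncations, Theorem~\ref{thm_compactness} applies verbatim to $\widetilde{\theta^*_1}(t_n)$, and one recovers the conclusion for $\theta^*_1(t_n)$ by the triangle inequality. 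You should insert this truncation step before invoking the compactness theorem.
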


We set 
\[
\Theta^*_1(t) = \sum_{n\ge0} t^n f^{n*}\om, \,
T^*_1(t)= \sum_{n\ge0} t^n \deg_{1,\om}(f^n), 
\text{ and }
\theta^*_1(t) = \frac{\Theta^*_1(t)}{ T^*_1(t)}
 \]
as in the proof of Theorem~\ref{thm:construct-eigenvec-pushpull}. Note that we proved that $\theta^*_1(t_n)\to \theta^*_1$ weakly for a sequence $t_n < 1/\lambda_1(f)$ satisfying $\lim_{n\rightarrow +\infty} t_n = 1/\lambda_1(f)$.

The proof of the theorem relies on the following general computation.
\begin{prop} \label{prop_square_estimates} 
Suppose that $\lambda_1(f)^2>\lambda_2(f)$.
Then there exists $C>0$ such that 
for any $0<t'\le t <\lambda_1(f)^{-1}$,
we have:
\begin{equation*}
0 \leq \theta^*_1(t)\cdot \theta^*_1(t')  \leq
\frac{C}{T^*_1(t')}\, \om^{2}.
\end{equation*}
\end{prop}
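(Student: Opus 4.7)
The plan is to establish positivity first and then reduce the upper bound to a uniform control on the scalar intersection $(\theta^*_1(t)\cdot\theta^*_1(t')\cdot\om^{d-2})$ via Siu's inequalities. Positivity is immediate: each class $f^{n*}\om$ lies in $\cBPF^1(\X)$ by Theorem~\ref{thm:extension} and Corollary~\ref{cor:thm-extension}, so $\Theta^*_1(t)$ and hence $\theta^*_1(t)$ sits in $\BPF^1(\X)$, and then Theorem~\ref{thm_bpf}(2) gives $\theta^*_1(t)\cdot\theta^*_1(t')\in\BPF^2(\X)$, which is pseudo-effective by Theorem~\ref{thm_bpf}(3). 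Granting this, Theorem~\ref{thm_siu_bpf} reduces the proposition to proving the scalar inequality
\[
(\Theta^*_1(t)\cdot\Theta^*_1(t')\cdot\om^{d-2})\le C\, T^*_1(t),
\]
since dividing through by $T^*_1(t)T^*_1(t')$ and invoking Siu gives $\theta^*_1(t)\cdot\theta^*_1(t')\le (C'/T^*_1(t'))\om^2$.

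The main computation is to bound each mixed term $(f^{n*}\om\cdot f^{m*}\om\cdot\om^{d-2})$, say for $n\ge m$. Using that $f^*$ is a ring homomorphism on Cartier $b$-classes together with the projection formula from Corollary~\ref{cor:thm-extension}, I would rewrite
\[
(f^{n*}\om\cdot f^{m*}\om\cdot\om^{d-2})=(f^{m*}(f^{(n-m)*}\om\cdot\om)\cdot\om^{d-2})=(f^{(n-m)*}\om\cdot\om\cdot f^m_*\om^{d-2}).
\]
Now $f^m_*\om^{d-2}\in\BPF^{d-2}(\X)$ by Theorem~\ref{thm:extension}, and $(f^m_*\om^{d-2}\cdot\om^2)=(\om^{d-2}\cdot f^{m*}\om^2)=\deg_{2,\om}(f^m)$ by projection formula. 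Siu's inequality (Theorem~\ref{thm_siu_bpf}) then yields $f^m_*\om^{d-2}\le C_d\,\deg_{2,\om}(f^m)\om^{d-2}$, so
\[
(f^{n*}\om\cdot f^{m*}\om\cdot\om^{d-2})\le C_d\,\deg_{1,\om}(f^{n-m})\,\deg_{2,\om}(f^m).
\]
The symmetric bound holds when $m>n$.

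Summing, and performing the change of variables $k=n-m$, $k=m-n$ in the two regions, the double series decouples:
\[
(\Theta^*_1(t)\cdot\Theta^*_1(t')\cdot\om^{d-2})\le C_d\bigl(T^*_1(t)+T^*_1(t')\bigr)\cdot\sum_{m\ge0}(tt')^m\deg_{2,\om}(f^m).
\]
Since $T^*_1$ is increasing in its argument, $T^*_1(t')\le T^*_1(t)$. The hypothesis $\lambda_1(f)^2>\lambda_2(f)$ enters precisely here: for $t'\le t<1/\lambda_1(f)$ we have $tt'\le t^2<1/\lambda_1(f)^2<1/\lambda_2(f)$, so choosing any $\epsilon>0$ with $\lambda_2(f)+\epsilon<\lambda_1(f)^2$ the bound $\deg_{2,\om}(f^m)\le C_\epsilon(\lambda_2(f)+\epsilon)^m$ makes the series $\sum_m(tt')^m\deg_{2,\om}(f^m)$ bounded by a \emph{uniform} constant over all $t\in[0,1/\lambda_1(f))$. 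This gives the required inequality $(\Theta^*_1(t)\cdot\Theta^*_1(t')\cdot\om^{d-2})\le C\,T^*_1(t)$ and completes the argument.

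The main obstacle is the cross-term estimate: one must exploit the multiplicativity of $f^*$ on Cartier $b$-classes and the projection formula to extract a common factor of $f^{m*}$, and then control $f^m_*\om^{d-2}$ by Siu. Once this is done, the hypothesis $\lambda_1^2>\lambda_2$ enters transparently through the convergence radius of $\sum_m u^m\deg_{2,\om}(f^m)$ at $u=1/\lambda_1^2$; the remainder of the proof is a formal manipulation of Cauchy-product series.
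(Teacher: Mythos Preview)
Your argument is correct and follows essentially the same strategy as the paper: expand the product as a double series, bound each cross term $f^{n*}\om\cdot f^{m*}\om$ by $C\,\deg_1(f^{|n-m|})\deg_2(f^{\min(n,m)})\,\om^2$ via Siu, and use $\lambda_1^2>\lambda_2$ to sum the resulting geometric series. The only cosmetic differences are that the paper works directly at the level of $b$-class inequalities (applying Siu to $f^{(n-m)*}\om$ and then pulling back by $f^{m*}$) whereas you pass to the scalar intersection with $\om^{d-2}$ first and apply Siu to the pushforward $f^m_*\om^{d-2}$; and the paper bounds $t'$ by $t$ at the outset while you keep both variables and decouple the Cauchy product directly---both routes yield the same estimate.
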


\begin{proof}[Proof of Proposition \ref{prop_square_estimates}]
We first expand the product $\Theta^*_1(t)\cdot \Theta^*_1(t')$:
\begin{equation*}
\Theta^*_1(t)\cdot \Theta^*_1(t') = \sum_{n,m} t^n (t')^m f^{n*} (\om) \cdot f^{m*}(\om) .
\end{equation*}
Suppose that $t\ge t'$.
Since each term in the sum is pseudo-effective, we have:
\begin{equation*} 
\Theta^*_1(t)\cdot \Theta^*_1(t') 
\le 
2 \sum_n t^{n} \left(\sum_{\substack{p + q = n\\ p\ge q}}
  f^{p*} (\om) \cdot f^{q*}( \om) \right).
\end{equation*}

By Siu's  inequalities~\eqref{eq:siu_bpf}, there exists $C>0$ such that
$f^{(p-q)*}( \om)  \le C \deg_1(f^{p-q}) \om$. 
Since $f^{q*}$ preserves pseudo-effectivity, we infer
\begin{equation*}
f^{p*}( \om)  \le C \deg_1(f^{p-q}) f^{q*}\om~.
\end{equation*}
Similarly, we have
\begin{equation*}
f^{q*} (\om^2) \le C \deg_2(f^q) \om^2,
\end{equation*}
hence
\begin{align*}
\Theta^*_1(t)\cdot \Theta^*_1(t')  
&\le 
2C \sum_n t^{n} \left(\sum_{\substack{p + q = n \\
p \ge  q
}}  \deg_1(f^{p-q}) f^{q*} \om^{2}\right)
\\
&
\le 2C' \sum_n t^{n} \left(\sum_{\substack{p + q = n \\
p \ge  q
}}  \deg_1(f^{p-q}) \deg_2(f^{q})\right) \om^{2}
\\
&=
2C' \sum_n t^{n} \left(\sum_{q=0}^{[n/2]}  \deg_1(f^{n-2q}) \deg_2(f^{q})\right) \om^{2}
\\
&=
2C' \sum_m \deg_1(f^m)  \left(\sum_{q\ge 0} t^{m+2q} \deg_2(f^{q})\right) \om^{2}
\\
&\le
2C'' \sum_m \deg_1(f^m) t^m \left(\sum_{q\ge 0} t^{2q} (\lambda'_2)^{q}
\right) \om^{2}
\\
&\le C''' \left(\sum_m \deg_1(f^m) t^m\right) \om^2.
\end{align*}
where we used the estimate $\deg_2(f^{q})
\le 
C 
(\lambda'_2)^{q}
$
for some $\lambda_2(f)< \lambda'_2 < \lambda_1(f)^2$.
\end{proof}

\begin{proof}[Proof of Theorem~\ref{thm:eigenvector-nums}]
We first prove that $\theta^*_1$ belongs to $\nums(\X)$.
Pick any sequence $t_n\to\lambda_1(f)^{-1}$
such that we have the weak convergence $\theta^*_1(t_n)\to\theta^*_1$.
As $T^*_1(t_n)\to\infty$, the preceding estimate implies 
\[\normbpfs{\theta^*_1(t_n) \cdot \theta^*_1(t_m)}\to0 \text{ as } n,m\to\infty.\]
Introduce the truncations $\widetilde{\theta^*_1}(t_n):=\frac1{T^*_1(t_n)} \sum_{m=0}^{r_n} t_n^m f^{m*} (\om)\in\cNef(\X)$, 
where $r_n$ is chosen large enough so that 
\[
\normsigma{\widetilde{\theta^*_1}(t_n) - \theta^*_1(t_n)}{\om}\le \frac1n
~.
\]
Then~\eqref{eq:bound-norms2} gives
$\normbpfs{\widetilde{\theta^*_1}(t_n) \cdot \widetilde{\theta^*_1}(t_m)}\to0$ as $n,m\to\infty$.
Since $\theta^*_1(t_n)$ is nef and satisfies $(\theta^*_1(t_n)\cdot\om^{d-1})=1$ for all $n$, we have
$\sup_n \normsigma{{\theta^*_1}(t_n)}{\om}<\infty$ by~\eqref{eq:666}, hence 
$\sup_n \normsigma{\widetilde{\theta^*_1}(t_n)}{\om} < \infty$.

From the compactness
result stated in Theorem~\ref{thm_compactness}, a subsequence of $\widetilde{\theta^*_1}(t_m)$ converges
in the Banach space $\nums(\X)$ (with respect to the norm $\normsigma{\cdot}{\om}$), and the
limit is necessarily
$\theta^*_1\in\nums(\X)$ since it is the weak limit of $\widetilde{\theta^*_1}(t_m)$.

We obtain
\[
 \normsigma{\theta^*_1(t_n) - \theta^*_1}{\om}
 \le
\normsigma{\widetilde{\theta^*_1}(t_n) - \theta^*_1}{\om}
+\normsigma{\widetilde{\theta^*_1}(t_n) - \theta^*_1(t_n)}{\om}
\to 0
\]
 therefore
$\normbpfs{\theta^*_1 \cdot \theta^*_1}=0$.
This concludes the proof of the theorem.
\end{proof}


\subsection{Quasi-eigenvectors}
We suppose as in the previous section that  $f \colon X\dashrightarrow X$ is a dominant rational self-map such that $\lambda_1(f)^2>\lambda_2(f)$.
Let  $\theta^*_1\in\nums(\X)$ be the eigenvector for the eigenvalue $\lambda_1(f)$ constructed above.
Write $\nums(\X)_\C = \nums(\X)\otimes_\R\C$.

\begin{prop}\label{prop:quasi-gap}
Let $(\alpha_n)\in\nums(\X)_\C$ be a quasi-eigenvector of $f^*\colon \nums(\X)_\C \to \nums(\X)_\C$ associated to an eigenvalue $\lambda$. 
If $|\lambda|>\sqrt{\lambda_2(f)}$, then we have $\alpha_n\to \frac{\theta^*_1}{\normsigma{\theta^*_1}{\om}}$ in $\nums(\X)_\C$.
\end{prop}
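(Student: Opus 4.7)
The plan is to leverage the spectral gap of $f^*$ on $\numbpfs{2}(\X)_\C$, whose spectral radius equals $\lambda_2(f)$ by Theorem~\ref{thm:dey-degree-interpret}, combined with the compactness theorem~\ref{thm_compactness} and Hodge index corollary~\ref{cor_hodge_index}. Set $\epsilon_n := f^*\alpha_n - \lambda\alpha_n$, so $\normsigma{\epsilon_n}{\om}\to 0$ while $\normsigma{\alpha_n}{\om}=1$. Since $f^*$ is a ring homomorphism on $\nums(\X)_\C$ (Proposition~\ref{prop_continuity_pullback}),
\[
(f^*-|\lambda|^2\Id)(\alpha_n\cdot \bar{\alpha}_m) = \lambda\,\alpha_n\cdot \bar{\epsilon}_m + \bar{\lambda}\,\bar{\alpha}_m\cdot \epsilon_n + \epsilon_n\cdot \bar{\epsilon}_m,
\]
and analogously for $\alpha_n\cdot\alpha_m$ with $\lambda^2$ in place of $|\lambda|^2$. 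By the product estimate~\eqref{eq:bound-norms2}, the right-hand sides tend to $0$ in $\numbpfs{2}(\X)_\C$ uniformly in $n,m$. Since $|\lambda|^2 = |\lambda^2| > \lambda_2(f) = \rho(f^*|\numbpfs{2})$, the operators $f^*-|\lambda|^2\Id$ and $f^*-\lambda^2\Id$ are invertible on $\numbpfs{2}(\X)_\C$, hence $\normbpfs{\alpha_n\cdot\bar\alpha_m}$ and $\normbpfs{\alpha_n\cdot\alpha_m}$ tend to $0$ uniformly in $n,m$.

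Combined with the trivial bound $\normbpfs{\alpha_n\cdot\om} \le 3\, \normsigma{\alpha_n}{\om}\normsigma{\om}{\om}$ coming from~\eqref{eq:bound-norms2}, the hypotheses of Theorem~\ref{thm_compactness} are met (after splitting into real and imaginary parts). Extracting a subsequence, $\alpha_{n_k}$ converges in $\nums(\X)_\C$ to some $\alpha$ with $\normsigma{\alpha}{\om}=1$; by continuity of $f^*$ (Proposition~\ref{prop_continuity_pullback}) we have $f^*\alpha = \lambda\alpha$, and passing to the limit yields $\alpha\cdot\bar\alpha = 0$ in $\numbpfs{2}(\X)_\C$.

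To identify $\alpha$ with $\theta^*_1$, I would observe that $\alpha\cdot\theta^*_1 \in \numbpfs{2}(\X)_\C$ is an exact eigenvector of $f^*$ with eigenvalue $\lambda\,\lambda_1(f)$. Since $|\lambda|\,\lambda_1(f) > \sqrt{\lambda_2(f)}\cdot\sqrt{\lambda_2(f)} = \lambda_2(f)$, the same invertibility argument forces $\alpha\cdot\theta^*_1 = 0$ in $\numbpfs{2}(\X)$, hence in $\wnum^2(\X)$ via the continuous embedding of Proposition~\ref{prop_inclusion_continuous_duals}. Because $\theta^*_1 \in \BPF^1(\X)$ is nef with $(\theta^*_1)^2 = 0$ (Theorem~\ref{thm:eigenvector-nums}) and $\alpha\cdot\bar\alpha = 0$, the positivity hypotheses of Corollary~\ref{cor_hodge_index} are satisfied, forcing $\alpha$ and $\theta^*_1$ to be proportional in $\nums(\X)_\C$. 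The eigenvalue equation then pins $\lambda = \lambda_1(f)$, and normalization gives $\alpha = c\,\theta^*_1/\normsigma{\theta^*_1}{\om}$ for some $c \in \C$ with $|c| = 1$.

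The main obstacle will be Step~1: converting the approximate eigenvalue relation on $\nums(\X)_\C$ into one on $\numbpfs{2}(\X)_\C$ with the uniformity in $n,m$ required to apply Theorem~\ref{thm_compactness}; this rests crucially on the continuity of the product asserted in~\eqref{eq:bound-norms2}. A secondary subtlety, which I would treat separately, is promoting subsequential convergence to norm convergence of the full sequence $\alpha_n$ to the specific unit vector $\theta^*_1/\normsigma{\theta^*_1}{\om}$: all cluster points of $(\alpha_n)$ lie on the complex line $\C\,\theta^*_1$, and pinning down the unit scalar should follow from a phase normalization implicit in the real structure and in the fact that $\lambda_1(f) \in \R_{>0}$.
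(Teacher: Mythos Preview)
Your proposal is correct and follows the same three-step architecture as the paper: show that the products $\alpha_n\cdot\bar\alpha_m$ (and $\alpha_n\cdot\alpha_m$) tend to $0$ in $\numbpfs{2}(\X)_\C$, feed this into the compactness Theorem~\ref{thm_compactness}, and conclude via the Hodge index Corollary~\ref{cor_hodge_index}.

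The one genuine difference is in how you establish $\normbpfs{\alpha_n\cdot\bar\alpha_m}\to 0$. The paper first passes to a subsequence so that $\normsigma{f^{j*}\alpha_n-\lambda^j\alpha_n}{\om}\le 1/n$ for all $j\le n$, then uses the projection formula to write $(f^{n*}\alpha_m\cdot f^{n*}\bar\alpha_n\cdot\gamma)=(\alpha_m\cdot\bar\alpha_n\cdot f^n_*\gamma)$ and bounds the latter by $C\deg_2(f^n)/|\lambda|^{2n}\to 0$. You instead invoke directly that $|\lambda|^2>\lambda_2(f)=\rho(f^*|\numbpfs{2})$ makes $f^*-|\lambda|^2\Id$ invertible on $\numbpfs{2}(\X)_\C$, and apply its bounded inverse to the identity $(f^*-|\lambda|^2\Id)(\alpha_n\cdot\bar\alpha_m)=\lambda\,\alpha_n\cdot\bar\epsilon_m+\bar\lambda\,\epsilon_n\cdot\bar\alpha_m+\epsilon_n\cdot\bar\epsilon_m$. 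Both arguments rest on the same spectral fact; yours is a bit cleaner in that it avoids the iteration and the preliminary subsequence extraction, and it yields the uniform Cauchy condition in one stroke. One small point you should make explicit (the paper does): replace $\alpha_n$ by nearby Cartier classes before invoking Theorem~\ref{thm_compactness}, since that theorem is stated for $\alpha_n\in\cnum^1(\X)$.

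Your closing remark about the phase is well taken: both your argument and the paper's produce only subsequential convergence to a unimodular multiple of $\theta^*_1/\normsigma{\theta^*_1}{\om}$, and the eigenvalue equation forces $\lambda=\lambda_1(f)$ but does not pin down the scalar. This is a slight imprecision in the statement itself; in the applications (Theorem~\ref{thm:spectrum-nums}) only proportionality to $\theta^*_1$ is ever used, so nothing is lost.
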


\begin{proof}
Since Cartier classes are dense in $\nums(\X)$, we can suppose that $\alpha_n \in \cnum^1(\X)$.
Observe that the sequence $\alpha_n \cdot \bar\alpha_m$ is bounded in $\numbpfs{2}(\X)$.
Indeed, by~\eqref{eq:bound-norms2} we have:
\begin{equation*}
\normbpfs{\alpha_n \cdot \bar \alpha_m} \le 3 \normsigma{\alpha_m}{\omega} \normsigma{\alpha_n}{\omega} = 3.
\end{equation*}
Let us show that $\normbpfs{\alpha_n\cdot \bar\alpha_m}$ converges uniformly in $m,n$ to zero. 
The computation runs as follows. We may suppose that
\[\max_{j\le n} \normsigma{f^{j*}\alpha_n - \lambda^j \alpha_n}{\om} \le \frac1n  \]
for all $n$.
For all $\gamma\in\cBPF^{d-2}(\X)$ such that $(\gamma\cdot\om^2)=1$, we then obtain using the previous inequality and \eqref{eq:bound-norms}:
\[
\left|( f^{n*}\alpha_m \cdot f^{n*} \bar\alpha_n\cdot\gamma) - |\lambda|^{2n} (\alpha_m \cdot \bar\alpha_n\cdot\gamma) \right|
\le 6 \frac{|\lambda|^n}n
\]
for all $m\ge n$,
so that 
\begin{align*}
\varlimsup_{n\to\infty}
|(\alpha_m \cdot \bar\alpha_n\cdot\gamma) |
&\le
\varlimsup_{n\to\infty}
\frac1{\lambda^{2n}}|(\alpha_m \cdot \bar\alpha_n\cdot f^n_* \gamma)|
\\
&\le 
\varlimsup_{n\to\infty}
\normbpfs{(\alpha_m \cdot \bar\alpha_n)}\, \frac{(f^n_* \gamma\cdot\om^2)}{\lambda^{2n}} 
\\
&\le
\varlimsup_{n\to\infty}
C' \, \frac{\deg_2(f^n)}{\lambda^{2n}} \longrightarrow 0
\end{align*}
where the last inequality follows from Siu's  inequalities~\eqref{eq:siu_bpf}.
Up to taking a subsequence, we can suppose that $\alpha_n$ converges weakly to $\alpha \in \wnum^1(\X)$. 
Since $\normbpfs{\alpha_n \cdot \omega}$ is bounded, we have by Theorem~\ref{thm_compactness} that the sequence $\alpha_n$ converges to $\alpha$ in $\nums(\X)_\C$ .
Since $f^*\alpha=\lambda\alpha$
we have $f^*(\alpha\cdot\bar{\alpha}) = |\lambda|^2 (\alpha\cdot\bar{\alpha})$.

But the spectral radius of $f^*\colon\numbpfs{2}(\X)\to\numbpfs{2}(\X)$ is equal to $\lambda_2(f) < |\lambda^2|$
hence $(\alpha\cdot\bar{\alpha})=0$ in $\numbpfs{2}(\X)$. In a similar way, we have $(\theta^*_1\cdot\bar{\alpha})=0$ in $\numbpfs{2}(\X)$
hence $\alpha$ and $\theta^*_1$ are proportional by Corollary~\ref{cor_hodge_index}.
\end{proof}


\subsection{Spectral analysis of $f^*\colon\nums(\X)\to\nums(\X)$}

\begin{thm}\label{thm:spectrum-nums}
Let  $f \colon X\dashrightarrow X$ be any dominant rational self-map such that $\lambda_1(f)^2>\lambda_2(f)$. 
Denote by $\theta^*_1\in\nums(\X)$ any non-zero nef class satisfying $f^*\theta^*_1=\lambda_1(f)\theta^*_1$.
  
Then there exists a closed hyperplane $E\subset \nums(\X)$ which is $f^*$-invariant, such that 
$\nums(\X)= \R\cdot\theta^*_1 \oplus E$, and $\Spec(f^*|_E)\subset \bar{\D}(0,\sqrt{\lambda_2(f)})$.
\end{thm}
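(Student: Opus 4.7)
My plan is to apply the spectral decomposition theorem (Theorem~\ref{thm_spectral_decomp}) to $f^*$ acting on the complexification $\nums(\X)_\C$, then descend to the real Banach space. The first step is to localize the spectrum in the region $U:=\{|z|>\sqrt{\lambda_2(f)}\}$. Since every topological boundary point of the spectrum of a bounded operator admits a quasi-eigenvector (as recalled in \S\ref{sec:spectral-general}), Proposition~\ref{prop:quasi-gap} immediately implies that $\partial\Spec(f^*|_{\nums(\X)_\C})\cap U\subset\{\lambda_1(f)\}$. The set $U\setminus\{\lambda_1(f)\}$ is open and connected and $\Spec(f^*)$ is bounded, so a standard clopen argument shows that $\Spec(f^*|_{\nums(\X)_\C})\cap U=\{\lambda_1(f)\}$. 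In particular $\lambda_1(f)$ is isolated in the spectrum, and Theorem~\ref{thm_spectral_decomp} yields a splitting $\nums(\X)_\C=E_1^\C\oplus E_2^\C$ into closed $f^*$-invariant complex subspaces with $\Spec(f^*|_{E_1^\C})=\{\lambda_1(f)\}$ and $\Spec(f^*|_{E_2^\C})\subset\bar{\D}(0,\sqrt{\lambda_2(f)})$.

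The main step is to identify $E_1^\C=\C\cdot\theta^*_1$, and this is where I expect the main technical difficulty to lie. I plan to show that $\alpha\cdot\beta=0$ in $\numbpfs{2}(\X)$ for all $\alpha,\beta\in E_1^\C$, and then invoke the Hodge index theorem (Corollary~\ref{cor_hodge_index}), using also $(\theta^*_1)^2=0$ from Theorem~\ref{thm:eigenvector-nums}, to conclude that every $\alpha\in E_1^\C$ is proportional to $\theta^*_1$. To prove the vanishing, I will curry the intersection as
\[
\Phi\colon E_1^\C\to\mathcal{L}(E_1^\C,\numbpfs{2}(\X)), \qquad \Phi(\alpha)(\beta):=\alpha\cdot\beta.
\]
The multiplicativity $f^*(\alpha\cdot\beta)=f^*\alpha\cdot f^*\beta$ from Corollary~\ref{cor:thm-extension}, combined with the invertibility of $f^*|_{E_1^\C}$ (whose spectrum $\{\lambda_1(f)\}$ avoids $0$), yields the intertwining $\Phi\circ f^*|_{E_1^\C}=\Psi\circ\Phi$ with $\Psi(T):=f^*|_{\numbpfs{2}}\circ T\circ (f^*|_{E_1^\C})^{-1}$. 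The operator $\Psi$ is the product of two commuting operators of spectral radii $\lambda_2(f)$ and $\lambda_1(f)^{-1}$ respectively, so $\rho(\Psi)\le\lambda_2(f)/\lambda_1(f)<\lambda_1(f)$ under the hypothesis $\lambda_1(f)^2>\lambda_2(f)$. The spectra $\Spec(\Psi)$ and $\Spec(f^*|_{E_1^\C})=\{\lambda_1(f)\}$ are therefore disjoint, and a Rosenblum-type theorem (a standard contour integration using the Riesz projection of $f^*|_{E_1^\C}$) forces $\Phi=0$.

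Finally, to descend to the real splitting, note that both spectral pieces $\{\lambda_1(f)\}$ and $\Spec(f^*)\setminus\{\lambda_1(f)\}$ are stable under complex conjugation, so the associated Riesz projections can be computed along conjugation-symmetric contours and commute with conjugation; they therefore preserve the real Banach space $\nums(\X)$. Setting $E:=E_2^\C\cap\nums(\X)$ then gives the required decomposition $\nums(\X)=\R\cdot\theta^*_1\oplus E$, with $E$ closed, $f^*$-invariant, and $\Spec(f^*|_E)\subset\bar{\D}(0,\sqrt{\lambda_2(f)})$ inherited from $E_2^\C$.
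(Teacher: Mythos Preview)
Your proof is correct and shares its overall architecture with the paper's: both localize the spectrum via Proposition~\ref{prop:quasi-gap} and the boundary/quasi-eigenvector observation, then apply Theorem~\ref{thm_spectral_decomp}. The genuine difference is in the identification $E_1^\C=\C\cdot\theta^*_1$. The paper argues by contradiction: assuming $F\supsetneq\R\theta^*_1$, it picks a closed complement $F'$, shows $(f^*-\lambda_1\Id)|_{F'}$ has closed range (otherwise Proposition~\ref{prop:quasi-gap} is violated), deduces $\theta^*_1\in(f^*-\lambda_1\Id)(F')$ from a further application of Proposition~\ref{prop:quasi-gap}, and then computes $f^{n*}$ on the products $\theta^*_1\cdot\bar\alpha$ and $\alpha\cdot\bar\alpha$ for the resulting ``Jordan block'' vector $\alpha$ to force a contradiction with Corollary~\ref{cor_hodge_index}. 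Your Rosenblum/Sylvester argument bypasses this: you kill \emph{all} products $\alpha\cdot\beta$ on $E_1^\C$ in one stroke by exploiting the spectral disjointness between $\Psi$ and $f^*|_{E_1^\C}$, and then Corollary~\ref{cor_hodge_index} applies directly. This is cleaner and avoids re-invoking Proposition~\ref{prop:quasi-gap}; the paper's route, on the other hand, stays closer to the concrete dynamics and never names an abstract operator-equation theorem. Two small remarks: the multiplicativity $f^*(\alpha\cdot\beta)=f^*\alpha\cdot f^*\beta$ on $\nums(\X)$ is Proposition~\ref{prop_continuity_pullback} rather than Corollary~\ref{cor:thm-extension}; and you should note that conjugation-invariance of $E_1^\C$ (which you establish in your final step) is already needed in Step~3 to ensure $\bar\alpha\in E_1^\C$, so that $\alpha\cdot\bar\alpha=0$ and Corollary~\ref{cor_hodge_index} applies.
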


\begin{rem}
The decomposition above is canonical since $E$ consists of those classes $\alpha$ such that $\normsigma{f^{n*}(\alpha)}{\om}\le C (\lambda'_2(f))^n$
for some $C>0$, some $\lambda'_2(f)<\lambda_1(f)$, and all $n\ge0$.
\end{rem}

\begin{proof}
Since $\theta^*_1$ is an eigenvector in $\nums(\X)$ of eigenvalue $\lambda_1(f)$ the latter point belongs to  $\Spec(f^*)$.
We claim that $\Spec(f^*)\setminus \{\lambda_1(f)\}\subset\bar{\D}(0,\sqrt{\lambda_2(f)})$. Indeed, if it were not the case, then we could
find $\lambda\neq\lambda_1(f)$ lying in the boundary of  $\Spec(f^*)$, and of norm $|\lambda|>\sqrt{\lambda_2(f)}$.
This would contradict Proposition~\ref{prop:quasi-gap}.

Let $B=\Spec(f^*)\cap\bar{\D}(0,\sqrt{\lambda_2(f)})$, and
apply the spectral theorem (Theorem \ref{thm_spectral_decomp}): we get an $f^*$-invariant decomposition $\nums(\X) = F \oplus E$ with $\Spec(f^*|_F) = \{\lambda_1\}$, and  $\Spec(f^*|_E)= B$.

Suppose by contradiction that $\R \cdot \theta^*_1\subsetneq F$, and
consider the restriction operator $f^*|_{F}$. By Hahn-Banach's theorem, we may find a closed hyperplane $F'\subset F$
such that $F = \R\cdot \theta^*_1 \oplus F'$, and write $g= f^*- \lambda_1 \Id$.

We claim that $g|_{F'}$ has closed range. To see this it is sufficient to prove that 
 $\| g(\alpha)\| \ge c \|\alpha\|$ for some $c>0$ and all $\alpha\in F'$. If it were not the case, we would find a sequence of vectors $\alpha_n $ in $F'$
 such that $\normsigma{f^*\alpha_n- \lambda_1 \alpha_n}{\om}\to0$ and $\normsigma{\alpha_n}{\om}=1$, contradicting
Proposition~\ref{prop:quasi-gap}. 

Since $g(F')$ is $f^*$-invariant and closed, it admits a quasi-eigenvector with eigenvalue $\lambda_1(f)$ which implies
$\theta^*_1 \in g(F')$ again by Proposition~\ref{prop:quasi-gap}. 
Pick $\alpha\in F'$ such that $f^*\alpha = \lambda_1(f) \alpha + \theta^*_1$.
Then we get
\[
(f^n)^* (\theta^*_1\cdot\bar{\alpha}) = 
\lambda_1(f)^{2n} (\theta^*_1\cdot\bar{\alpha}),
\text{ and } 
(f^n)^* (\alpha \cdot \bar{\alpha}) =  \lambda_1(f)^{2n} (\alpha \cdot \bar{\alpha})
\]
so that  $(\theta^*_1\cdot\bar{\alpha})=0$, and $(\alpha \cdot \bar{\alpha}) = 0$
in $\numbpfs{2}(\X)$ since 
\[\left(\normbpfs{f^{n*}|_{\numbpfs{2}(\X)}}\right)^{1/n} \to \lambda_2(f) < \lambda_1(f)^2.\]
Since $\alpha$ and $\theta^*_1$ are not proportional, 
we get a contradiction using Theorem~\ref{thm_hodge_index}.
\end{proof}


\begin{proof}[Proof of Theorem~\ref{thm_int_gap}]

Pick any big and nef class $\om\in\num^1(X)$ and identify it with its Cartier $b$-divisor class determined in $X$.
By the previous theorem, we may write $\om = t \theta^*_1 + v$ with $t\in \R$ and $v\in E$. 
Since the spectral radius of $f^*|_E$ is bounded by $\sqrt{\lambda_2(f)}$ we get 
that $\normsigma{f^{n*}v}{\om} \le C\, \lambda^n$ for any $\lambda_1(f)>\lambda >\sqrt{\lambda_2(f)}$,
hence
\[
\deg_{1,\om}(f^n) = (f^{n*}\om\cdot\om^{d-1}) = t \lambda_1(f)^n (\theta^*_1\cdot\om^{d-1}) + O(\lambda^n).
\]
Since $\deg_{1,\om}(f^n)^{1/n} \to \lambda_1(f) > \lambda$, we have $t>0$. This concludes the proof of our main theorem.
\end{proof}

\begin{cor}\label{cor:existence dual form}
There exists $\ell\in\nums(\X)^*$ such that for any $\alpha\in\nums(\X)$, we have
\[
\lambda_1(f)^{-n}\, f^{n*} \alpha \to  \ell(\alpha)\, \theta^*_1
\text{ in } \nums(\X).\]
Moreover, we have $\ell(\alpha)\ge 0$ whenever $\alpha \ge 0$.
\end{cor}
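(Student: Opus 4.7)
The plan is to deduce the corollary essentially as bookkeeping from the spectral decomposition of Theorem~\ref{thm:spectrum-nums}, which furnishes an $f^*$-invariant topological direct sum of closed subspaces
\[
\nums(\X) = \R\cdot\theta^*_1 \oplus E, \qquad \rho(f^*|_E)\le \sqrt{\lambda_2(f)} < \lambda_1(f).
\]
I would define $\ell$ to be the coordinate functional of this decomposition, namely the unique scalar $\ell(\alpha)$ such that $\alpha - \ell(\alpha)\,\theta^*_1 \in E$. That $\ell \in \nums(\X)^*$ is a standard Banach space fact, immediate from the closedness of the two summands via the closed graph theorem applied to the projector along $E$ (equivalently, from the open mapping theorem).

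The convergence statement then falls out at once. Writing $\alpha = \ell(\alpha)\theta^*_1 + v$ with $v \in E$, one has
\[
\lambda_1(f)^{-n}\, f^{n*}\alpha \;=\; \ell(\alpha)\,\theta^*_1 \;+\; \lambda_1(f)^{-n}\, f^{n*} v.
\]
Since $\rho(f^*|_E) < \lambda_1(f)$, the spectral radius formula produces, for any fixed $\mu \in (\sqrt{\lambda_2(f)},\lambda_1(f))$, a constant $C_v$ with $\normsigma{f^{n*}v}{\om} \le C_v \mu^n$, so the remainder vanishes in the $\normsigma{\cdot}{\om}$-norm.

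For positivity, my plan is to test the convergence against the Cartier class $\om^{d-1}$. The embedding $\imath_\Sigma \colon \nums(\X) \hookrightarrow \wnum^1(\X)$ from Theorem~\ref{thm_banach} is continuous for the weak topology on the target, so pairing with a Cartier class is continuous and norm convergence in $\nums(\X)$ yields
\[
\ell(\alpha)\,(\theta^*_1 \cdot \om^{d-1}) \;=\; \lim_{n\to\infty} \lambda_1(f)^{-n}(f^{n*}\alpha \cdot \om^{d-1}) \;=\; \lim_{n\to\infty} \lambda_1(f)^{-n}(\alpha \cdot f^n_* \om^{d-1})
\]
by the projection formula~\eqref{eq:proj-form}. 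By Theorem~\ref{thm:extension}, the class $f^n_* \om^{d-1}$ remains in $\cBPF^{d-1}(\X)$, in particular nef, so it pairs non-negatively with the pseudo-effective $\alpha$; each term in the limit is therefore $\ge 0$. The normalization $(\theta^*_1 \cdot \om^{d-1}) = 1$ is inherited from the approximants $(\theta^*_1(t_n) \cdot \om^{d-1}) = 1$ used to construct $\theta^*_1$ in Theorem~\ref{thm:construct-eigenvec-pushpull}, and survives the passage to the $\nums(\X)$-limit secured in the proof of Theorem~\ref{thm:eigenvector-nums}. This forces $\ell(\alpha) \ge 0$.

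I do not foresee any serious obstacle: the corollary is a formal consequence of the spectral gap already established in Theorem~\ref{thm:spectrum-nums}, where the genuinely difficult work (notably the compactness input of Theorem~\ref{thm_compactness} and the Hodge-type argument of Theorem~\ref{thm_hodge_index}) has been invested. The only minor points to verify are the continuity of the coordinate functional $\ell$ and the preservation of the BPF cone under pushforward, both of which are at hand.
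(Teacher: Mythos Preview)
Your proposal is correct and follows essentially the same route as the paper: both define $\ell$ as the continuous coordinate functional arising from the closed direct sum $\nums(\X)=\R\cdot\theta^*_1\oplus E$ of Theorem~\ref{thm:spectrum-nums}, and both derive the convergence from $\rho(f^*|_E)\le\sqrt{\lambda_2(f)}<\lambda_1(f)$.

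The only difference is in the positivity step. The paper argues that $f^*$ preserves pseudo-effective Cartier classes, so $\lambda_1(f)^{-n}f^{n*}\alpha$ stays psef and its weak limit $\ell(\alpha)\theta^*_1$ is psef, forcing $\ell(\alpha)\ge0$; a density remark extends this to general $\alpha\in\nums(\X)$. You instead pair the convergence against the Cartier BPF class $\om^{d-1}$ and use the duality $(f^{n*}\alpha\cdot\om^{d-1})=(\alpha\cdot f^n_*\om^{d-1})$, together with $f^n_*\om^{d-1}\in\cBPF^{d-1}(\X)$ from Theorem~\ref{thm:extension}. Your argument is arguably cleaner because it applies directly to any psef $\alpha\in\nums(\X)\subset\wnum^1(\X)$ without a density step: if $f^n_*\om^{d-1}$ is determined in $X'$, then $(\alpha\cdot f^n_*\om^{d-1})=(\alpha_{X'}\cdot(f^n_*\om^{d-1})_{X'})\ge0$ since $\alpha_{X'}$ is psef and BPF classes are nef. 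One cosmetic point: you only need $(\theta^*_1\cdot\om^{d-1})>0$, not $=1$; this holds for any non-zero nef $\theta^*_1$ by~\eqref{eq:5436}, so you need not invoke the specific normalization from the construction.
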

\begin{rem}
This corollary is the higher dimensional generalization of~\cite[Corollary 3.6]{boucksom_favre_jonsson_deggrowth}
where the linear form $\alpha \mapsto (\alpha\cdot \theta_*)$ in op. cit. is replaced by $\ell$.
\end{rem}
\begin{proof}\label{rem:existence dual form}
Since the hyperplane $E$ given by Theorem~\ref{thm:spectrum-nums} is closed, we may find $\ell\in\nums(\X)^*$ whose kernel is equal to $E$ that we may normalize so that $\ell(\theta^*_1)=+1$.
Then for any $\alpha\in\nums(\X)$, we have
\[
\lambda_1(f)^{-n}\, f^{n*} \alpha \to  \ell(\alpha) \theta^*_1
\text{ in } \nums(\X).\] The last statement follows by density from the fact that $f^*$ preserves 
psef classes in $\cnum^{1}(\X)$.
\end{proof}


\section{Polynomial maps of $\A^d$}

In this section, we prove Theorem~\ref{thm_poly}.

We fix some affine coordinates $x_1,\cdots, x_d$ on the affine space $\A^d_\K$, and let $\Pp^d_\K=\proj(\K[x_0,x_1,\cdots,x_d])$
be the canonical projective compactification of $\A^d_\K$. Denote by $H_\infty =\Pp^d_\K\setminus \A^d_\K$ the hyperplane at infinity, 
and let $\cP^d$ be the Riemann-Zariski space of $\Pp^d_\K$.

A pseudo-valuation on the ring $K[x_1, \cdots, x_d]$ is a function \[v\colon K[x_1, \cdots, x_d] \to  \R \cup \{ +\infty\}\]
such that $v(0)=+\infty$, $v(PQ)= v(P)+ v(Q)$, and $v(P+Q) \ge \min \{v(P),v(Q)\}$ for any pair of polynomials, 
and $v(c) =0$ for any $c\in K^*$. When $v(P)$ is finite for all non-zero polynomials, then $v$ is a (real-valued) valuation.
We refer to~\cite{vaquie} for generalities on these notions.

For any polynomial $P(x)= \sum_I a_I x^I\in K[x_1, \cdots, x_d]$, we set:
\[
-\deg(P) := - \max \{ |I|, a_I\neq0\}~.
\]
Observe that $-\deg$ is a valuation which takes negative values on any non-constant polynomial. 

We shall prove the following analog of~\cite[Theorem A']{favre_jonsson_eigenvaluations}.
\begin{thm}\label{thm:eigenvaluation}
Let $f\colon \A^d_\K\to \A^d_\K$ be any proper polynomial map such that $\lambda_1(f)^2>\lambda_2(f)$.
There exists a valuation $v$ on the ring $\K[x_1,\cdots, x_d]$ which is trivial on $\K$, takes non-positive values on non-zero polynomials,
satisfies $\min_{1\le i\le d}\{ v(x_i)\}=-1$ and $v(P\circ f) = \lambda_1(f)\, v(P)$
for all $P\in\K[x_1,\cdots,x_d]$.
\end{thm}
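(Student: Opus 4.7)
The plan is to construct $v$ as a normalized limit of iterated polynomial degrees, using the spectral gap from Theorem~\ref{thm:spectrum-nums} to secure convergence.

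\medskip

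\emph{Setup.} First I would extend $f$ to a dominant rational self-map $\Pp^d_K \dashrightarrow \Pp^d_K$, still denoted $f$. Properness of $f\colon \A^d_K\to\A^d_K$ forces $f^{-1}(H_\infty)\subseteq H_\infty$ set-theoretically, so $f$ acts on the Riemann-Zariski space $\cP^d$ with the hyperplane at infinity backward invariant. Applying Theorem~\ref{thm:spectrum-nums} yields a nef eigenclass $\theta\in\nums(\cP^d)$ with $f^*\theta=\lambda_1(f)\theta$, which I would normalize so $(\theta\cdot H_\infty^{d-1})=1$. Corollary~\ref{cor:existence dual form} then supplies a continuous linear form $\ell$ satisfying $\lambda_1(f)^{-n}f^{n*}\alpha\to\ell(\alpha)\theta$ in $\nums(\cP^d)$ for every $\alpha$; in particular $C:=\ell(H_\infty)=\lim_n \deg_1(f^n)/\lambda_1(f)^n>0$ by the normalization of $\theta$.

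\medskip

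\emph{Candidate valuation.} I would define
\[
v(P):=-C^{-1}\lim_{n\to\infty}\frac{\deg(P\circ f^n)}{\lambda_1(f)^n}, \qquad v(0):=+\infty.
\]
Granted the limit exists for every polynomial $P$, the valuation properties are immediate from elementary degree arithmetic: $v$ is trivial on $K^*$ and non-positive; multiplicativity follows from $\deg((PQ)\circ f^n)=\deg(P\circ f^n)+\deg(Q\circ f^n)$; the non-archimedean inequality $v(P+Q)\ge\min(v(P),v(Q))$ follows from $\deg((P+Q)\circ f^n)\le\max(\deg(P\circ f^n),\deg(Q\circ f^n))$; and the eigenvalue equation $v(P\circ f)=\lambda_1(f)\,v(P)$ falls out by a shift of index $n\mapsto n+1$ in the defining limit. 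The normalization by $C^{-1}$ together with $\max_i\deg(x_i\circ f^n)=\deg_1(f^n)$ forces $\min_i v(x_i)=-1$.

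\medskip

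\emph{Convergence of normalized degrees.} The crux is to establish that $\deg(P\circ f^n)/\lambda_1(f)^n$ converges for every $P$. Writing $\deg(P\circ f^n)=e\,\deg_1(f^n)-r_n(P)$ with $e=\deg P$ and $r_n(P)\ge 0$ the multiplicity at $H_\infty$ of the discrepancy between $\pi_{X''*}(f_{X''}^{n*}\tilde P)$ and $\widetilde{P\circ f^n}$ on a resolution $X''$ of $f^n$, the leading term satisfies $e\,\deg_1(f^n)/\lambda_1(f)^n\to eC$ by Corollary~\ref{cor:existence dual form} applied to $H_\infty$. To handle $r_n(P)$, I would apply the Banach-space convergence $\lambda_1(f)^{-n}f^{n*}\tilde P\to \ell(\tilde P)\theta=eC\theta$ and pair both sides against $\BPF$ classes of codimension $d-1$ built from strict transforms and exceptional components over $\Pp^d$. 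Combined with the monotonic behaviour of strict transforms under further blow-ups (as already used in the construction of $\FS(\cP^d)$) and continuity of $\ell$, this should yield convergence of $r_n(P)/\lambda_1(f)^n$.

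\medskip

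\emph{Main obstacle.} The difficulty concentrates in the last step. Purely numerically on $\Pp^d$ one has $\tilde P\equiv e H_\infty$, so the spectral gap in $\nums(\cP^d)$ alone determines only the leading $eC\lambda_1(f)^n$ term of $\deg(P\circ f^n)$; extracting the finer correction $r_n(P)$, which genuinely depends on $P$ beyond its degree, requires tracking non-pullback incarnations of $\tilde P$ on birational models of $\Pp^d$ together with a judicious choice of $\BPF$ test classes of codimension $d-1$ that detect the difference between the strict transforms of $\tilde P$ and of $H_\infty$. Once this convergence is established, $v$ satisfies all the required properties, and the theorem combines with Abhyankar's inequalities to yield Theorem~\ref{thm_poly}.
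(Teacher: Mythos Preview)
You have correctly located the obstacle, but your proposed way around it does not work, and the missing idea is precisely what the paper supplies.

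The problem is structural: as Cartier $b$-classes determined in $\Pp^d$, we have $[\tilde P]=e[H_\infty]$ in $\cnum^1(\cP^d)$, so $f^{n*}[\tilde P]=e\,f^{n*}[H_\infty]$ identically, and no pairing against any $\BPF^{d-1}$ test class can ever detect the correction $r_n(P)$. The information distinguishing $P$ from a generic polynomial of the same degree lives in the \emph{base locus} of the linear system $|\tilde P|$ along $H_\infty$, which is invisible numerically on $\Pp^d$. Your limit $\lim_n\deg(P\circ f^n)/\lambda_1^n$ therefore cannot be extracted from the convergence $\lambda_1^{-n}f^{n*}[\tilde P]\to eC\,\theta$ in $\nums(\cP^d)$, and your argument stalls exactly where you say it does.

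The paper's device is to replace the single polynomial $P$ by the family of fractional ideal sheaves $\fA_m(P)=(P)+\cO_{\Pp^d}(-mH_\infty)$ and to attach to each its Cartier $b$-class $Z(\fA_m(P))$, determined on a log-resolution of $\fA_m(P)$. This $b$-class \emph{does} depend on $P$ beyond its degree, because the blow-up separates $\{P=0\}$ from $H_\infty$. One then applies $\ell$ to $Z(\fA_m(P))$ (legitimate, since it is Cartier), obtaining a functional $L_\star(\fA)=-\ell(Z(\fA))$ on ideal sheaves that satisfies the valuation-type axioms (V1)--(V3). Finally one sets $v(P)=\lim_{m\to\infty}L_\star(\fA_m(P))$; this limit exists by monotonicity in $m$ and is $\le 0$ because $L_n(\fA_m(P))\nearrow -\deg(P\circ f^n)/\lambda_1^n\le 0$ for each fixed $n$. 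The eigenvalue equation follows from properness via the sandwich $f^*Z(\fA_{mN}(P))\le Z(\fA_m(P\circ f))\le f^*Z(\fA_{m/N}(P))$.

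In effect the paper swaps your order of limits: you want $\lim_n\lim_m$, which you cannot control; the paper takes $\lim_m\lim_n$, where the inner limit is the spectral gap applied to a genuine Cartier class and the outer one is monotone.
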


Let us explain how to infer Theorem~\ref{thm_poly} from the previous result. 
Consider the value group $\Gamma= v(K(x_1,\cdots,x_d)^*)\subset (\R,+)$. 
By Abhyankar's inequalities, see e.g.~\cite[Theorem~9.2]{vaquie}, we have $1\le \dim_\Q (\Gamma\otimes_\Z \Q) \le d$ so that we may find
$d$ polynomials $P_1, \cdots, P_d \in K[x_1, \cdots , x_d]$ 
 such that $\Gamma\otimes_\Z \Q = \Q \cdot v(P_1) + \ldots + \Q  \cdot v(P_d)$. Note that $v(P_i\circ f)\in \Gamma$ for all $i$ so that there exists rational numbers $q_{ij}$ such that
\[
\lambda_1(f) v(P_i) = \sum_{j=1}^d q_{ij} v(P_j)
\]
for all $i=1, \cdots, d$.
It follows that $\lambda_1(f)$ is an eigenvalue of a  $d \times d$ square matrix with rational coefficients, hence
is an algebraic number of degree $\le d$ over $\Q$.

\begin{rem}
As observed by M. Jonsson, when $v$ is an Abhyankar valuation, then its value group is isomorphic to $\Z^k$ for some $1\le k\le d$, therefore $\lambda_1(f)$ is an algebraic integer of degree $\le d$, see~\cite[Lemma~8.7]{MR3330767}.
\end{rem}

Our strategy is to prove that the opposite of the linear functional $\ell$ given by Corollary~\ref{cor:existence dual form} induces a valuation which is invariant by $f$. 
To better exploit our understanding of the action of $f$ on the space of Cartier $b$-divisors, it will be convenient to interpret valuations as functionals
on the set of fractional ideal sheaves. A similar presentation of valuations over a closed point is given in~\cite[\S 2.1]{MR3060755}. 

\subsection{Fractional ideal sheaves and valuations}
Recall that a fractional ideal sheaf $\fA$ on an algebraic variety $X$ is a coherent $\cO_{X}$-submodule of the sheaf of
meromorphic functions $\mathcal{K}_X$, so that locally near any point $g\cdot \fA \subset \cO_{X}$ for some $g\in\cO_{X}$. 

We shall say that $\fA$ is co-supported in a subvariety $Z$ when $\fA_x = \cO_{X,x}$ for any $x \notin Z$.
Any finite family of rational functions $\phi_1, \cdots, \phi_n$ generates a fractional ideal sheaf on $X$ that we denote 
$\langle \phi_1, \cdots, \phi_n\rangle$ so that $\fA_x = \phi_1 \cdot \cO_{X,x} + \cdots + \phi_n \cdot \cO_{X,x}\subset \mathcal{K}_{X,x}$ for all $x$.
For instance, we have  $ \cO_{\Pp^d_\K}(H_\infty)=\langle 1, x_1, \cdots, x_d\rangle$.

\smallskip 

Since $\cO_{\Pp^d_\K}(1)$ is very ample, for any fractional ideal sheaf $\fA$ on $\Pp^d_\K$ which is co-supported in $H_\infty$, 
one can find a family of polynomials $P_1, \cdots, P_n\in K[x_1, \cdots, x_d]$, and $\mu\in\N$ such that $\bigcap_1^n P_i^{-1}(0) \cap \A^d_\K = \emptyset$, and
\begin{equation}\label{eq:fraco}
\fA\cdot \cO_{\Pp^d_\K}(\mu H_\infty) = \langle P_1,\cdots, P_n\rangle
~.\end{equation}

Let $v\colon \K[x_1,\cdots, x_d] \to \R \cup \{ +\infty \}$ be any valuation as defined above.
We attach to $v$ a real-valued function $L_v$ on the set of fractional ideals sheaves
co-supported in $H_\infty$ as follows.
Take any fractional ideal sheaf $\fA$
satisfying~\eqref{eq:fraco}. Then we set
\begin{equation}
L_v(\fA)
 = \min_i\{v(P_i)\} - \mu\, \min_{1\le j\le d}\{v(x_j)\}. 
\end{equation}
This definition does not depend on the choice of generators, and
we have $L_v(\fA\cdot\fB)=L_v(\fA)+L_v(\fB)$, for any fractional ideal sheaves $\fA,\fB$, and
$L_v(\fA)\ge L_v(\fB)$ if $\fA\subset\fB$.
We also have 
\begin{lem}
$L_v(\fA+\fB)=\min\{L_v(\fA),L_v(\fB)\}$ for any fractional ideal sheaves $\fA,\fB$.
\end{lem}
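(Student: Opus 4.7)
The plan is to reduce the statement to an elementary computation on the level of generators, once both fractional ideal sheaves are written against a common twist of $\cO_{\Pp^d_\K}(H_\infty)$.

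First I would observe that $L_v$ is well-defined independently of the choice of $\mu$ in~\eqref{eq:fraco}; this is implicit in the construction and follows from the fact that $\cO_{\Pp^d_\K}(H_\infty)=\langle 1,x_1,\dots,x_d\rangle$, so that replacing $\mu$ by $\mu+1$ replaces the set of generators $\{P_i\}$ by $\{P_i\}\cup\{x_jP_i\}$, whose $v$-minimum is shifted by $\min_j v(x_j)$ (using that $v(x_j)\le 0$), exactly compensating the extra $-\min_j v(x_j)$ term in the definition of $L_v$. Granting this, any two fractional ideal sheaves $\fA,\fB$ co-supported in $H_\infty$ can be expressed using the \emph{same} integer $\mu$: pick $\mu$ large enough so that both
\[
\fA\cdot\cO_{\Pp^d_\K}(\mu H_\infty)=\langle P_1,\dots,P_n\rangle,\qquad
\fB\cdot\cO_{\Pp^d_\K}(\mu H_\infty)=\langle Q_1,\dots,Q_m\rangle
\]
hold for suitable polynomials $P_i,Q_k$.

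Next, I would use the compatibility of sums of fractional ideal sheaves with tensoring by a line bundle (the line bundle is flat, so tensoring commutes with the coherent sheaf sum $\fA+\fB$). This gives
\[
(\fA+\fB)\cdot\cO_{\Pp^d_\K}(\mu H_\infty)=\langle P_1,\dots,P_n,Q_1,\dots,Q_m\rangle,
\]
so that the generators of $(\fA+\fB)\cdot\cO_{\Pp^d_\K}(\mu H_\infty)$ are exactly the union of the two sets of generators.

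Finally, the computation is trivial:
\begin{align*}
L_v(\fA+\fB)&=\min\bigl\{v(P_1),\dots,v(P_n),v(Q_1),\dots,v(Q_m)\bigr\}-\mu\min_{j}v(x_j)\\
&=\min\Bigl\{\min_i v(P_i)-\mu\min_j v(x_j),\;\min_k v(Q_k)-\mu\min_j v(x_j)\Bigr\}\\
&=\min\{L_v(\fA),L_v(\fB)\}.
\end{align*}
The only subtle point is the independence of $L_v$ on $\mu$, which is the real content (and which also crucially uses the assumption $v(x_j)\le 0$); everything else is formal.
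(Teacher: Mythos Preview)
Your proof is correct. Once well-definedness of $L_v$ is granted (the paper asserts this just before the lemma), the argument is exactly as you write: pick a common $\mu$, observe that $(\fA+\fB)\cdot\cO_{\Pp^d_\K}(\mu H_\infty)$ is generated by the union of the two generating sets, and take the minimum. One minor correction: for the $\mu$-independence you only need $\min_j v(x_j)\le 0$ (equivalently, the center of $v$ lies on $H_\infty$), not $v(x_j)\le 0$ for every $j$; you are right that this is an implicit hypothesis in the paper's setup.

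The paper's own proof isolates an auxiliary fact you do not use: for any polynomials $P_1,\dots,P_n$ with no common zero on $\A^d_\K$, one has $\min_i v(P_i)=v\!\left(\sum_i a_iP_i\right)$ for a generic choice of $(a_i)\in K^n$, proved by a one-line residue-field argument. This interprets $\min_i v(P_i)$ intrinsically as the value of $v$ on a generic element of the $K$-span of the $P_i$, and the lemma is then read off from that viewpoint. Your direct computation avoids the residue field entirely and is shorter for the lemma as stated; the paper's observation is a somewhat stronger statement about how valuations interact with linear systems, but is not strictly needed here once one is willing to compute on explicit generators as you do.
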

\begin{proof}
The result follows immediately from the following observation. 
Let $P_1, \cdots, P_n$ be any collection of polynomials such that  $\bigcap_1^n P_i^{-1}(0) \cap \A^d_\K = \emptyset$. 
Then we have
\[
\min_i\{v(P_i)\}
= v \left (\sum_{i=1}^n a_i P_i \right )\]
for a generic choice of coefficients $(a_1, \ldots, a_n)\in \K^n$.

To see this, we may suppose that $v(P_1) =\min_i\{v(P_i)\}$, and consider the map $\Phi$
sending $a \in \K^n$ to the class of $(\sum a_i P_i)/P_1$ in the residue field of the valuation
$v$. This map is $\K$-linear and  non zero since $\Phi(1,0,\cdots,0) \neq 0$.
Any point $a$ for which $\Phi(a)\neq 0$ then satisfies $v(P_1)= v \left (\sum_{i=1}^n a_i P_i \right )$.
\end{proof}

Conversely, let $L$ be any real-valued function on the set of non-zero fractional ideal sheaves co-supported  in $H_\infty$
and such that the following conditions hold:
\begin{itemize}
\item[(V1)]
$L(\fA\cdot\fB)=L(\fA)+L(\fB)$, and $L(\fA+\fB)=\min\{L(\fA),L(\fB)\}$ for any fractional ideal sheaves $\fA,\fB$;
\item[(V2)]
$L(\fA)\ge L(\fB)$ if $\fA\subset\fB$;
\item[(V3)]
$L(\cO_{\Pp^d_\K})=0$ and
$L(\langle 1, x_1, \cdots, x_d\rangle )<0$.
\end{itemize}
For any polynomial $P$, we set 
\begin{equation}\label{eq:defvalo}
v_L(P) =\lim_{n\to \infty} L\left( \fA_n(P)\right),\end{equation}
where $\fA_n(P) =   (P) + \cO_{\Pp^d_\K}(-n H_\infty)$ (that is locally $\fA_n(P)$ is generated by $P$ viewed as a rational function on $\Pp^d_\K$ and an equation of the divisor $nH_\infty$). 
Note that $\fA_{n+1}(P)\subset \fA_n(P)$ so that
$L\left( \fA_n(P)\right)$ is increasing by (V2) and the limit in~\eqref{eq:defvalo} exists in $\R\cup \{+\infty \}$.
\begin{prop}\label{prop:end-proof} 
For any function $L$ as above, 
the function $v_L \colon \K[x_1,\cdots,x_d] \to \R \cup \{+\infty \}$ 
defined by \eqref{eq:defvalo} is a 
pseudo-valuation on $K[x_1,\cdots, x_d]$ which satisfies
$\min_j\{v(x_j)\}<0$.
\end{prop}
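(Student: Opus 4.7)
The starting observation is that $\fA_{n+1}(P) \subset \fA_n(P)$ for every $n$, so by axiom~(V2) the sequence $L(\fA_n(P))$ is non-decreasing and $v_L(P) = \sup_n L(\fA_n(P))$ is well-defined in $\R \cup \{+\infty\}$. The three auxiliary properties are then quick. For $v_L(0) = +\infty$, use $\fA_n(0) = \cO(-nH_\infty)$ together with (V1) and (V3) to see $L(\cO(-nH_\infty)) = -n\, L(\cO(H_\infty)) \to +\infty$. For $v_L(c) = 0$ when $c \in \K^*$, note $\fA_n(c) = \cO_{\Pp^d_\K}$. For $\min_j v_L(x_j) < 0$, apply (V1) to the identity $\fA_n(1) + \fA_n(x_1) + \cdots + \fA_n(x_d) = \cO(H_\infty)$ (valid for every $n \geq 0$) to deduce that $\min_{0 \leq j \leq d} L(\fA_n(x_j))$ equals the fixed negative number $L(\langle 1, x_1, \ldots, x_d\rangle)$. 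Since $L(\fA_n(1)) = 0$ the minimum is attained at some $j \geq 1$, and the monotonicity of each $L(\fA_n(x_j))$ in $n$ combined with a pigeonhole argument on the minimizing index produces an $j^*$ for which $L(\fA_n(x_{j^*}))$ is constantly equal to $L(\langle 1, x_1, \ldots, x_d\rangle)$, whence $v_L(x_{j^*}) < 0$.

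The subadditivity $v_L(P+Q) \geq \min\{v_L(P), v_L(Q)\}$ follows immediately from the inclusion $\fA_n(P+Q) \subset \fA_n(P) + \fA_n(Q)$ and (V1)--(V2), since the minimum of two non-decreasing sequences is non-decreasing and its limit is the minimum of the limits. For the multiplicativity $v_L(PQ) = v_L(P) + v_L(Q)$, I would prove two opposite inclusions between fractional ideal sheaves. The direction $v_L(PQ) \geq v_L(P) + v_L(Q)$ comes from $\fA_{2n}(PQ) \subset \fA_n(P) \cdot \fA_n(Q)$: both $(PQ)$ and $\cO(-2nH_\infty) = \cO(-nH_\infty) \cdot \cO(-nH_\infty)$ lie in the product. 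The reverse $v_L(PQ) \leq v_L(P) + v_L(Q)$ rests on the key claim
\[
\fA_m(P) \cdot \fA_m(Q) \subset \fA_n(PQ) \quad \text{whenever } m \geq n + \max(\deg P, \deg Q),
\]
after which (V1)--(V2) give $L(\fA_m(P)) + L(\fA_m(Q)) \geq L(\fA_n(PQ))$; sending $m \to \infty$ with $n$ fixed and then $n \to \infty$ produces the desired upper bound.

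The verification of the above claim is local, and this is the main obstacle. On $\A^d_\K$ it is tautological because $\cO(-mH_\infty)$ restricts to the full structure sheaf there. At a point of $H_\infty$, I would pick a local equation $x_0$ of $H_\infty$ and write $P = x_0^{-\deg P}\tilde{P}$, $Q = x_0^{-\deg Q}\tilde{Q}$ with $\tilde{P}, \tilde{Q}$ regular, so that $\fA_m(P) \cdot \fA_m(Q)$ is generated by four types of elements: $\tilde{P}\tilde{Q}\, x_0^{-\deg P - \deg Q}$, $\tilde{P}\, x_0^{m-\deg P}$, $\tilde{Q}\, x_0^{m-\deg Q}$, and $x_0^{2m}$. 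The first is precisely a generator of $\fA_n(PQ)$, while the remaining three belong to $(x_0^n) \subset \cO(-nH_\infty)$ exactly under the degree-dependent hypothesis $m \geq n + \max(\deg P, \deg Q)$. Bookkeeping the pole orders along $H_\infty$ so that this shift appears in the right place is the only non-formal step; everything else is manipulation with the axioms (V1)--(V3) and monotone-limit calculus.
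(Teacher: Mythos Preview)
Your argument is correct and follows the same route as the paper's proof: the same two inclusions $\fA_{2n}(PQ)\subset\fA_n(P)\cdot\fA_n(Q)$ and $\fA_m(P)\cdot\fA_m(Q)\subset\fA_n(PQ)$ for $m\ge n+\max(\deg P,\deg Q)$ drive the multiplicativity, and your local check at a point of $H_\infty$ is exactly the computation the paper carries out (the paper just packages it via the auxiliary sheaves $\hat\fA_n(P)=\fA_n(P)\cdot\cO(-\deg(P)\,H_\infty)$). Your treatment of $\min_j v_L(x_j)<0$ is in fact more careful than the paper's: you spell out the pigeonhole-plus-monotonicity step needed to pass from $\min_j L(\fA_n(x_j))=L(\cO(H_\infty))$ for every $n$ to the existence of a single $j^*$ with $v_L(x_{j^*})=L(\cO(H_\infty))$, a point the paper leaves implicit.
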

\begin{proof}
Up to multiplying $L$ by a suitable positive real constant, we may suppose $L(\langle 1, x_1, \cdots, x_d\rangle ) = -1$.

Recall  our convention on homogeneous coordinates
$(x_1, \cdots , x_d) = [1:x_1: \cdots : x_d]$.
Choose any point $p$ on the hyperplane at infinity. We may suppose
$p=[0:1: \cdots:0]$,
and choose local coordinates $[z_1:1:z_2: \cdots: z_d]$ near that point so that $H_\infty = \{ z_1 =0\}$,
and $x_1= 1/z_1$, $x_i = z_i/z_1$ for $d \ge i\ge 2$.

Fix any non-zero polynomial $P$. In the above coordinates $P$ can be written as a quotient $\bar{P}(z)/z_1^{\deg(P)}$
with $\bar{P}(z) = P(1/z_1, z_2/z_1, \cdots, z_d/z_1)z_1^{\deg(P)}$
so that  
\[
\fA_n(P)= \hat{\fA}_n(P) \cdot \cO_{\Pp^d_\K}(\deg(P) H_\infty)
\text{ with }
\hat{\fA}_n(P) = \left( \bar{P}, z_1^{n+\deg(P)}\right)~.\]
When $P=c$ is a non-zero constant, then $\fA_n(c) = \cO_{\Pp^d_\K}$ hence
$v_L(c) = 0$ by (V3), hence $v_L$ is the trivial valuation on $K$.

Pick any two non-zero polynomials $P$ and $Q$. We may suppose that 
$l:= \deg(P)\ge \deg(Q) =k$.  Consider as above the polynomials 
$\bar{P}$ and $\bar{Q}$.
The key observation is that 
\[
\hat{\fA}_n(P+Q) = \left( \bar{P}+ \bar{Q}z_1^{l-k}, z_1^{n+l}\right)
\subset
\hat{\fA}_n(P) +
\hat{\fA}_n(Q)\cdot  \cO_{\Pp^d_\K}(-(l-k) H_\infty)
\]
hence $\fA_n(P+Q) \subset \cO_{\Pp^d_\K}(l H_\infty)\cdot \hat{\fA}_n(P) + \cO_{\Pp^d_\K}(l - (l-k) H_\infty) \cdot \hat{\fA}_n(Q) = \fA_n(P) + \fA_n(Q)  $ and we obtain:
\begin{align*}
L(\fA_n(P+Q)) 
&\ge 
L(\fA_n(P) + \fA_n(Q))
\\
&=
 \min\{L(\fA_n(P)), L(\fA_n(Q))\}
\end{align*}
which implies $v_L(P+Q) \ge \min \{v_L(P), v_L(Q)\}$.

Observe now that 
\[
\hat{\fA}_{2n}(PQ) \subset\hat{\fA}_n(P)\cdot \hat{\fA}_n(Q)
\subset 
\hat{\fA}_{n-\deg(P)-\deg(Q)}(PQ) 
\]
for all $n$ sufficiently large so that (V1) implies $v_L(PQ)= v_L(P)+v_L(Q)$.

Finally we have 
$\fA_n(x_1) + \cdots + \fA_n(x_d) = \cO_{\Pp^d_\K}(H_\infty)$ 
hence
$
\min_j \{v_L(x_j)\}
= L(\langle 1, x_1, \cdots, x_d\rangle ) 
<0$.
This concludes the proof.
\end{proof}

\begin{rem}
Although we shall not need it, the maps $v \mapsto L_v$ and $L \mapsto v_L$ are inverse one to the other.
\end{rem}

\subsection{Fractional ideal sheaves and $b$-divisor classes}
Pick any proper birational map $\pi\colon X\to \Pp^d_\K$ which is an isomorphism over $\A^d_\K$, and 
let $\fA$ be any fractional ideal sheaf on $X$ co-supported in $\pi^{-1}(H_\infty)$. We  attach to $\fA$ a Cartier $b$-divisor $Z(\fA)\in\cnum^1(\cP^d)$ as follows. 

Choose any log-resolution of $\fA$, i.e. any proper birational map $\pi\colon Y\to X$ such that the fractional ideal sheaf $\fA\cdot \cO_Y$ is locally principal.
Since $\fA$ is assumed to be co-supported in $\pi^{-1}(H_\infty)$, we may suppose $\pi$ to be an isomorphism over $\A_\K^d$, and write $\fA\cdot \cO_Y= \cO_Y(W(\fA)_Y)$ for
a unique divisor $W(\fA)_Y$ supported at infinity, and let $Z(\fA)$ be the Cartier $b$-class determined by $W(\fA)_Y$ in $Y$. 

Pick any proper birational morphism $\pi'\colon X' \to \Pp^d_\K$. Then $Z(\fA)_{X'}$ is represented by the unique divisor $W(\fA)_{X'}$ supported in $(\pi')^{-1}(H_\infty)$
such that $\fA\cdot \cO_{X'}= \cO_{X'}(W(\fA)_{X'})\cdot \fB$ where $\fB$ is an ideal sheaf whose co-support has codimension at least $2$.

Note that the incarnation of the Cartier $b$-class $Z(\fA)$ in $\Pp^d_\K$
is determined by the divisor $W(\fA)_{\Pp} := -\deg(\fA) [H_\infty]$, and that 
the following properties hold:
\begin{align*}
Z(\fA\cdot\fB) &= Z(\fA)+ Z(\fB),\\
Z(\fA) &\le  Z(\fB) \text{ when } \fA \subset \fB~,
\end{align*}
and  
\[\ord_{H_\infty}(W(\fA+ \fB)_{\Pp}) = \max \{\ord_{H_\infty}(W(\fA)_{\Pp}), \ord_{H_\infty}(W(\fB)_{\Pp})\}
~.\]
We may naturally pull-back fractional ideal sheaves co-supported in $H_\infty$ by our given proper polynomial map $f\colon \A^d_\K\to\A^d_\K$. 
Fix any birational proper morphism $\pi\colon X \to \Pp^d_\K$ that is an isomorphism over $\A^d_\K$, and such that 
the map $F\colon X\to \Pp^d_\K$ induced by $f$ becomes regular.
If $\fA$ is co-supported in $H_\infty$, then $F^*\fA$ is a fractional ideal sheaf on $X$ co-supported in $\pi^{-1}(H_\infty)$, and 
we have $f^{*}Z(\fA)= Z(F^*\fA)$.

Introduce the Cartier $b$-class $\cL_\infty\in\cBPF^{d-1}(\cP^d)$ determined by a line in the hyperplane $H_\infty$.
Observe that $\cL_\infty = c_1(\cO_{\Pp^d_\K}(1))^{d-1}$ 
so that for any fractional ideal sheaf we have
\[
-\deg(\fA) = - (\cL_\infty\cdot Z(\fA)) =- \ord_{H_\infty}(W(\fA)_{\Pp})
~.\]
Set \[
L(\fA) := - (\cL_\infty\cdot f^{*}Z(\fA))~.
\]
\begin{lem}\label{lem:pull-val}
The function $L$ satisfies all conditions (V1)--(V3), and for any polynomial $P$, we have
\[ v_{L}(P) = -\deg(P\circ f)~.\]
\end{lem}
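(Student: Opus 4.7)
The plan is to compute $L(\fA)$ explicitly via the projection formula. Fix a smooth model $\pi\colon X \to \Pp^d_\K$, isomorphic over $\A^d_\K$, on which $F = \bar f \circ \pi$ is regular and $F^*\fA$ is locally principal, say $F^*\fA \cdot \cO_X = \cO_X(W)$ with $W = W(F^*\fA)_X$ supported in $\pi^{-1}(H_\infty)$. Since $\cL_\infty$ is determined in $\Pp^d_\K$ and $f^*Z(\fA) = Z(F^*\fA)$ is determined on $X$ by $W$, the projection formula yields
\[
(\cL_\infty \cdot f^*Z(\fA))_X = (\cL_\infty \cdot \pi_*W)_{\Pp^d_\K}.
\]
The only component of $\pi^{-1}(H_\infty)$ which is not $\pi$-exceptional is the strict transform $\tilde H_\infty$, which maps birationally onto $H_\infty$, so $\pi_* W = \ord_{\tilde H_\infty}(W) \cdot [H_\infty]$; combined with $(\cL_\infty \cdot [H_\infty]) = 1$, this gives the key identity
\[
L(\fA) = -\ord_{\tilde H_\infty}\, W(F^*\fA)_X.
\]

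Armed with this identity, properties (V1) and (V2) reduce to standard functorial facts about $W$. On a common log-resolution one has $W(\fA \cdot \fB) = W(\fA) + W(\fB)$, while the coefficients of $W(\fA + \fB)$ are the coefficient-wise maxima of those of $W(\fA)$ and $W(\fB)$ (recall the convention $\fA \cdot \cO = \cO(W(\fA))$, which makes $W$ the negative of the vanishing order for genuine ideals); both operations commute with $F^*$, so taking $\ord_{\tilde H_\infty}$ yields the additivity $L(\fA \fB) = L(\fA) + L(\fB)$, the min-rule $L(\fA + \fB) = \min\{L(\fA), L(\fB)\}$, and the monotonicity $L(\fA) \ge L(\fB)$ when $\fA \subset \fB$. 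For (V3), $L(\cO_{\Pp^d_\K}) = 0$ is immediate; and for $\langle 1, x_1, \ldots, x_d\rangle = \cO_{\Pp^d_\K}(H_\infty)$, the equality $F^*\cO_{\Pp^d_\K}(H_\infty) = \cO_X(F^*H_\infty)$ gives $L = -\mathrm{mult}_{\tilde H_\infty}(F^*H_\infty)$. This is strictly negative: the homogenization of any polynomial map satisfies $\bar f^* X_0 = X_0^{\deg f}$, so $\bar f(H_\infty) \subset H_\infty$ wherever defined, and since $\pi|_{\tilde H_\infty}$ is generically an isomorphism onto $H_\infty$, we deduce $F(\tilde H_\infty) \subset H_\infty$ and hence $\mathrm{mult}_{\tilde H_\infty}(F^*H_\infty) \ge 1$.

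Finally, to establish $v_L(P) = -\deg(P \circ f)$, I compute $L(\fA_n(P))$ for $n$ large. Locally near $H_\infty$, $\fA_n(P) = (P, z_1^n)$, so $F^*\fA_n(P) = (F^*P, (F^*z_1)^n)$ on $X$, and the coefficient of $W(F^*\fA_n(P))_X$ along each prime divisor $D$ is $-\min\{\ord_D(F^*P),\, n\,\mathrm{mult}_D(F^*H_\infty)\}$. Since $f$ and $\bar f$ agree on $\A^d_\K$, $F^*P = \pi^*(P \circ f)$ as rational functions, and the birationality of $\pi|_{\tilde H_\infty}$ gives $\ord_{\tilde H_\infty}(F^*P) = \ord_{H_\infty}(P \circ f) = -\deg(P \circ f)$. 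Choosing $n$ large enough that $n \cdot \mathrm{mult}_{\tilde H_\infty}(F^*H_\infty) > \deg(P \circ f)$, the minimum is realized by the first argument, so $\ord_{\tilde H_\infty}\, W(F^*\fA_n(P))_X = \deg(P \circ f)$ and hence $L(\fA_n(P)) = -\deg(P \circ f)$; this value stabilizes, which is the limit $v_L(P)$. The main technical obstacle is careful sign bookkeeping under the convention $\fA \cdot \cO = \cO(W(\fA))$, together with the elementary observation that $\bar f$ preserves $H_\infty$.
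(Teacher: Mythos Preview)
Your proof is correct. The key identity $L(\fA) = -\ord_{\tilde H_\infty} W(F^*\fA)_X$ is exactly what the paper also derives (phrased there as $-\ord_{H_\infty}(W(F^*\fA)_{\Pp})$, which is the same thing after pushing forward by $\pi$), and your verification of (V1)--(V3) from it is fine. One small slip: the condition you wrote for $n$, namely $n\cdot\mathrm{mult}_{\tilde H_\infty}(F^*H_\infty) > \deg(P\circ f)$, has the wrong sign; you need the second argument of the $\min$ to exceed $-\deg(P\circ f)$, which is automatic for all $n\ge 1$ since the left side is positive and the right side is non-positive. This is harmless --- your stronger condition is certainly satisfied for large $n$.

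Where your argument genuinely diverges from the paper's is the computation of $v_L(P)$. The paper argues by a sandwich: properness of $f$ forces $F^{-1}(H_\infty)$ and $\pi^{-1}(H_\infty)$ to have the same support, so there exists $N$ with $F^*\cO(-NH_\infty)\subset\pi^*\cO(-H_\infty)$ and conversely; this yields inclusions $\fA_{nN}(P\circ f)\cdot\cO_X \subset F^*\fA_n(P)$ and $F^*\fA_{nN}(P)\subset\fA_n(P\circ f)\cdot\cO_X$, which translate to inequalities of Cartier $b$-divisors and, after pairing with the BPF class $\cL_\infty$, squeeze $v_L(P)$ to $-\deg(P\circ f)$. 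You instead compute $\ord_{\tilde H_\infty}$ directly on the two generators $F^*P$ and $(F^*z_1)^n$ of $F^*\fA_n(P)$, using only that $F(\tilde H_\infty)\subset H_\infty$ (equivalently $\mathrm{mult}_{\tilde H_\infty}(F^*H_\infty)\ge 1$). This is more elementary and, notably, does not use the full strength of properness --- your observation that $\bar f$ preserves $H_\infty$ wherever defined holds for any dominant polynomial map. The paper's route is more in keeping with its ambient $b$-divisor and positivity machinery; yours is a cleaner local computation that incidentally shows the lemma itself does not require $f$ proper.
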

\begin{proof}
Observe that we have
\begin{align*}
(\cL_\infty\cdot f^{*}Z(\fA)) &= (c_1(\cO_{\Pp^d_\K}(1))^{d-1} \cdot Z(F^*\fA)_{\Pp})
\\
&
= - \ord_{H_\infty}(W(F^*\fA)_{\Pp})
~,\end{align*}
so that $L$ satisfies the three properties (V1)--(V3).

Note that $F^* \cO_{\Pp^d_\K}(-NH_\infty)$ and $\pi^* \cO_{\Pp^d_\K}(-H_\infty)$ are locally principal ideal sheaves in $X$.
Since $F$ is proper, they have the same co-support hence we may find a positive integer $N\in \N^*$ such that 
$F^* \cO_{\Pp^d_\K}(-NH_\infty) \subset \pi^* \cO_{\Pp^d_\K}(-H_\infty) 
$ and 
$ \pi^* \cO_{\Pp^d_\K}(-NH_\infty) \subset F^* \cO_{\Pp^d_\K}(-H_\infty)$. 

This yields the inclusion
$\fA_{nN}(P\circ f)\cdot \cO_X
\subset
F^* \fA_{n}(P)$. Translating this in terms of Cartier $b$-divisors, we obtain 
\begin{equation}\label{eq:Fproper1}
Z(\fA_{nN}(P\circ f)) \le f^* Z(\fA_{n}(P))
\end{equation}
 hence 
$(\cL_\infty\cdot Z(\fA_{nN}(P\circ f))) \le (\cL_\infty\cdot f^* Z(\fA_{n}(P)))$
 since $\cL_\infty$ is a BPF class. This implies
 $v_{L}(P) \le  -  \deg(P\circ f)$. 
We obtain in an analogous way 
\begin{equation}\label{eq:Fproper2}
Z(\fA_{n}(P\circ f)) \ge f^* Z(\fA_{nN}(P))
\end{equation}
which proves  $v_{L}(P) \ge  - \deg(P\circ f)$ as desired. 
\end{proof}
\subsection{Proof of Theorem~\ref{thm:eigenvaluation}}
By Corollary~\ref{cor:existence dual form},
there exists a continuous linear function $\ell\colon\nums(\cP^d)\to\R$ such that 
\[
\lim_{n\to\infty} \lambda_1(f)^{-n}\, f^{n*}Z = \ell(Z) \, \theta^*_1
~,\]
for any Cartier class $Z\in\cnum^1(\cP^d)$.
Normalizing the class $\theta^*_1$ so that $(\cL_\infty\cdot\theta^*_1)=1$, and setting  $L_\star(\fA) = - \ell(Z(\fA))$
for any fractional ideal sheaf co-supported in $H_\infty$,
we obtain $L_\star(\fA) = \lim_n L_n(\fA)$ for all $\fA$ where
\[
L_n(\fA) := \frac{- (\cL_\infty\cdot f^{n*}Z(\fA))}{\lambda_1(f)^{n}}~.
\]
By Lemma~\ref{lem:pull-val} and by continuity the functional $L_\star$  
again satisfies (V1),  (V2), $L_\star(\cO_{\Pp^d_\K})=0$ and it follows from Theorem~\ref{thm_int_gap}
that
\[
L_\star(\langle 1, x_1, \cdots, x_d\rangle )
= 
- \lim_{n\to\infty} \lambda_1(f)^{-n}\, (f^{n*}H_\infty\cdot \cL_\infty)
=- \lim_{n\to\infty} \frac{\deg(f^n)}{\lambda_1(f)^{n}} <0~
\]
so that (V3) holds.

We may thus apply Proposition~\ref{prop:end-proof}. To simplify notation, write $v_\star = v_{L_\star}$, and take any non-zero polynomial $P$.
Since $L_m(\fA_n(P))$ is increasing in $n$ to $v_{L_m}(P) \le 0$, we
get 
\[v_\star(P) = \lim_n L_\star( \fA_n(P)) = \lim_n \lim_m  L_m( \fA_n(P))\le0
\] 
hence $v_\star$ is a valuation taking non-positive values. 

Finally by Corollary~\ref{cor:existence dual form}, and applying~\eqref{eq:Fproper1} and~\eqref{eq:Fproper2}, we obtain
\[
v_\star( P\circ f) = 
- \lim_n \ell (Z(\fA_n(P\circ f)))
= \lambda_1 v_\star(P)~.
\]
This concludes the proof.
 

\bibliographystyle{alpha}
\bibliography{ref_degre_dynamique}
\addcontentsline{toc}{section}{References}

\end{document}